\newtheorem{thm}{Theorem}[subsection]
\newtheorem{lem}[thm]{Lemma}
\newtheorem{cor}[thm]{Corollary}
\newtheorem{prop}[thm]{Proposition}
\theoremstyle{definition}
\newtheorem{defn}[thm]{Definition}
\newtheorem{eg}[thm]{Example}
\theoremstyle{remark}
\newtheorem{rem}[thm]{Remark}
\numberwithin{equation}{section}
\newcommand{\mat}[1]{\ensuremath{
\left[\begin{matrix}#1
\end{matrix}\right]
}}
\newcommand\ul{\underline}
\newcommand\undim{\underline\dim\,}
\newcommand{\vs}[1]{\vskip .#1 cm} 
\newcommand{\noi}{\noindent}
\newcommand{\then}{\Rightarrow}
\newcommand{\ifff}{\Leftrightarrow}
 \newcommand{\onto}{\twoheadrightarrow}
\newcommand{\psip}[1]{\pi(#1)}
\DeclareMathOperator{\coker}{coker}
\DeclareMathOperator{\Hom}{Hom}%
\DeclareMathOperator{\Ext}{Ext}%
\DeclareMathOperator{\sgn}{sgn}%
\newcommand{\field}[1]{\mathbb{#1}}
\newcommand{\ZZ}{\ensuremath{{\field{Z}}}}
\newcommand{\RR}{\ensuremath{{\field{R}}}}
\newcommand{\QQ}{\ensuremath{{\field{Q}}}}
\newcommand{\NN}{\ensuremath{{\field{N}}}}
\newcommand{\kk}{{{\rm\bf{k}}}}  
\newcommand{\commentout}[1]{}
\newcommand{\cD}{\ensuremath{{\mathcal{D}}}}
\newcommand{\cR}{\ensuremath{{\mathcal{R}}}}
\newcommand{\cT}{\ensuremath{{\mathcal{T}}}}
\newcommand\del{\partial}
\title{Periodic trees and semi-invariants}
\author{Kiyoshi Igusa}
\address{Department of Mathematics, Brandeis University, Waltham, MA 02454}\email{igusa@brandeis.edu}
 \thanks{The first author is supported by NSA Grant \#H98230-13-1-0247}
\author{Gordana Todorov}
\address{Department of Mathematics, Northeastern University, Boston, MA 02115}
\email{g.todorov@neu.edu}
\thanks{The second author is supported by NSF Grant \#DMS-1103813}
\author{Jerzy Weyman}
\address{Department of Mathematics, University of Connecticut, Storrs, CT 06269}
\email{jerzy.weyman@uconn.edu}
\thanks{The third author is supported by NSF Grant \#DMS-1400740 and by the Alexander von Humboldt Foundation.}
\subjclass[2010]{
16G20; 20F55}
\begin{document}

\begin{abstract} Periodic trees are combinatorial structures which are in bijection with cluster tilting objects in cluster categories of affine type $\widetilde{A}_{n-1}$. The internal edges of the tree encode the $c$-vectors corresponding to the cluster tilting object, as well as the weights of the virtual semi-invariants associated to the cluster tilting object. We also show a direct relationship between the position of the edges of the tree and whether the corresponding summands of the cluster tilting object are preprojective, preinjective or regular.
\end{abstract}

\keywords{cluster tilting objects, c-vectors, exchange matrix, representations of quivers, real Schur roots, virtual representations, stability conditions}

\maketitle



 
\section*{Introduction}

The goal of this paper is to show that isomorphism classes of infinite $n$-periodic trees with a fixed sign function $\varepsilon$ are in bijection with cluster tilting objects in the cluster category of a quiver $\widetilde A_{n-1}^\varepsilon$ of affine type $\widetilde A_{n-1}$ given by the same sign function $\varepsilon$. ($\varepsilon$ determines orientation of the quiver.) It was shown in \cite{Lutz} that there is a correspondence between binary trees and tilting objects of type $A_{n}$ with straight orientation. The arbitrary $A_n$ case is explained in \cite{IOs} where a counting argument is used since the two sets are finite with the same cardinality.
 
 In both the finite and infinite case, the periodic trees give a visualization of cluster tilting objects and their relation to semi-invariants and $c$-vectors. In the affine case, the distinction between preprojective, regular and preinjective summands of a cluster tilting object are reflected in the geometry of the infinite periodic tree.

We define periodic trees abstractly (Definition \ref{def:periodic tree}). We need: a positive integer $n$, a function $\varepsilon$, and a periodic poset structure on $\ZZ$. More precisely: For a given $n\ge2$ we start with an $n$-periodic surjective sign function $\varepsilon:\ZZ/n\onto \{+,-\}$. This function determines an orientation for a quiver of type $\widetilde{A}_{n-1}$: the elements of $\ZZ/n$ correspond to the arrows of the quiver and the sign function determines the direction of each arrow. This oriented quiver is denote by $\widetilde A_{n-1}^\varepsilon$.

The following theorems relate $n$-periodic trees and cluster tilting objects.

\begin{thm}[Theorem \ref{thm:correspondence between trees and cluster tilting objects}]\label{main}
There is a 1-1 correspondence between isomorphism classes of $n$-periodic trees with sign function $\varepsilon$ and cluster tilting objects of the cluster category of the quiver $\widetilde A_{n-1}^\varepsilon$.
\end{thm}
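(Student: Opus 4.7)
The plan is to construct mutually inverse maps $\Phi$ and $\Psi$ between the two sets. Define $\Phi$ by sending an $n$-periodic tree $T$ with sign function $\varepsilon$ to a cluster tilting object as follows: each internal edge $e$ of $T$ should carry a real Schur root $\beta_e$ of $\widetilde{A}_{n-1}^\varepsilon$, obtained from the local combinatorics of the edge, and the associated indecomposable $M_{\beta_e}$ of the cluster category is declared a summand of $\Phi(T)$. Periodicity identifies edges giving the same root, so modulo the $\ZZ$-action there are exactly $n$ summands, matching the rank of a cluster tilting object for a quiver with $n$ vertices.

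First I would set up a dictionary between the geometry of the tree and the Auslander--Reiten structure on $\widetilde{A}_{n-1}^\varepsilon$: edges in the ``unbounded left'' part of the tree correspond to preprojective summands, edges in the ``unbounded right'' part to preinjective summands, and edges sitting in the finite/regular region correspond to summands in tubes. Once the sign function has pinned down the orientation, this correspondence is essentially a labeling lemma that can be proved locally, edge by edge.

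Second I would verify that $\Phi(T)$ really is a cluster tilting object. This amounts to showing (i) the roots $\{\beta_e\}$ are a compatible $c$-vector tuple --- equivalently, pairwise $\Hom$- and $\Ext^1$-orthogonal in the cluster category --- and (ii) the tuple is maximal. The natural tool is the semi-invariant interpretation of the edges promised in the abstract: planarity of the tree translates into the standard sign-coherence and compatibility relations for $c$-vectors, and maximality follows from counting $n$ edges per period. For the inverse $\Psi$ I would use the known classification of cluster tilting objects in affine type $\widetilde{A}_{n-1}$ as triangulations of an annulus with $n$ marked points (equivalently, as combinations of preprojective/regular/preinjective summands satisfying compatibility in tubes), and reverse the recipe to produce a periodic tree.

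The main obstacle, I expect, is not the construction of the two maps but the rigidity of the periodic-tree side: one must show that the compatibilities among $c$-vectors of a cluster tilting object force a unique (up to isomorphism) planar periodic arrangement of edges, with no freedom in how preprojective, regular, and preinjective summands fit together. In practice I would reduce this to checking that both sides transform the same way under mutation --- flipping an internal edge of the tree on one side, mutating a summand of the cluster tilting object on the other --- and then conclude that $\Phi$ and $\Psi$ are mutually inverse by propagating the bijection from one ``initial'' seed (for instance the standard periodic tree corresponding to the all-projective cluster tilting object $P = \bigoplus P_i$) via the connectedness of the cluster exchange graph.
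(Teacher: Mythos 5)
Your proposal takes a genuinely different route from the paper.  The paper does not construct $\Phi$ and $\Psi$ directly and then check they are mutually inverse via mutation; it instead works with the open cone $\cR(\cT)\subset\RR^{n+1}$ of periodic morphisms, pushes it forward under a fixed linear map $F:\RR^{n+1}\to\RR^n$, and proves the set equality $F\cR(\cT)=E_\varepsilon^t\cR(M)$, where $\cR(M)$ is the cone of positive combinations of $\undim M_i$.  Both families of cones are shown to densely and disjointly tile $\RR^n$ with common walls lying in the supports $D(\beta)$ of virtual semi-invariants (Lemmas \ref{first stability lemma}, \ref{second stability lemma} together with Theorem \ref{virtual stability theorem}), and the bijection drops out of comparing the two tilings.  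In particular, surjectivity of ``tree to cluster'' and of ``cluster to tree'' both come for free from density and from the generic canonical decomposition (Lemma \ref{null root in generic decomp}), with no appeal to mutation or to connectedness of the exchange graph.

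There are two concrete gaps in your plan.  First, the mutation-propagation step you rely on only shows that trees reachable from a fixed base tree by mutation biject with clusters reachable by mutation.  You would additionally have to prove that \emph{every} $n$-periodic tree lies in the mutation orbit of the base tree, and that loops in the cluster exchange graph (pentagon and square relations) close up on the tree side; neither is addressed and neither is obvious.  The paper does prove that tree mutation is well defined (Proposition \ref{prop: mutation of tree is tree}) and commutes with cluster mutation (Corollary \ref{cor: bijection respects mutation}), but only \emph{after} the bijection is established via the tiling argument, so those results cannot be recycled to bootstrap the bijection itself.  Second, your inverse $\Psi$ delegates the real content to an external classification of cluster tilting objects as triangulations of an annulus and a promised ``reverse recipe'' from annulus triangulations to periodic trees; that translation is precisely the nontrivial combinatorial step and is left unaddressed.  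A smaller point: your dictionary for preprojective/regular/preinjective is stated as left versus right unbounded region versus finite region; the correct statement (Corollary \ref{cor: components of cluster from topology of tree}) is that regular summands correspond to edges \emph{off} the doubly infinite path (branches), while preprojective and preinjective/shifted-projective summands both lie \emph{on} the infinite path and are distinguished by the slope of the path and of the edge.
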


\begin{thm}[Corollary \ref{cor: components of cluster from topology of tree}]\label{cor: which summands are regular, etc}
When a cluster tilting object corresponds to a tree, the summands of that object correspond to the edges of the tree. The position of the edge determines whether the corresponding summand is preprojective, regular, preinjective or shifted projective. More precisely:
\begin{enumerate}
\item Edges corresponding to regular summands of the cluster tilting object are those which lie on branches of the tree and not on the unique doubly infinite path in the tree.
\item Edges which lie in the doubly infinite path of $\cT$ are preprojective if either the tree has positive slope or the tree has zero slope and the edge has positive slope.
\item All other edges correspond either to preinjective summands or shifted projective summands of the cluster tilting object.
\end{enumerate}
\end{thm}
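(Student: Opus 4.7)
The plan is to derive this corollary from Theorem \ref{main} together with the standard description of the AR quiver of the cluster category of $\widetilde A_{n-1}$: two components of shape $\ZZ\widetilde A_{n-1}$ (preprojective and preinjective), a $\mathbb{P}^1$-family of regular tubes with exactly two non-homogeneous tubes of ranks $p,q$ satisfying $p+q=n$, and the finitely many shifted projectives $P_i[1]$. Correspondingly, the real Schur roots of $\widetilde A_{n-1}^\varepsilon$ split into those supported on a proper connected subquiver of the cycle (regular, in one of the two exceptional tubes), those with full support and positive defect (preprojective), and those with full support and negative defect (preinjective).

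\textbf{Regular summands.}
Each edge $e$ of an $n$-periodic tree is encoded under Theorem \ref{main} by an interval $[a,b]\subset\ZZ$, and the $c$-vector of the corresponding summand is supported on the arc $\{a+1,\ldots,b\}\bmod n$ of the cycle $\widetilde A_{n-1}$. Edges on a finite branch are exactly those whose interval has length $b-a<n$, so their $c$-vector is supported on a proper $A_{b-a}$-subquiver and hence the summand is regular, living in one of the two exceptional tubes. Every edge on the doubly infinite path has interval of length at least $n$, so its $c$-vector has full support. This establishes (1) and sets up (2) and (3).

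\textbf{Edges on the doubly infinite path.}
For an edge $e$ on the doubly infinite path, the $c$-vector takes the form $k\delta+v$ for some $k\ge 1$ and a bounded correction $v$; the defect of this $c$-vector is a nonzero integer whose sign determines whether the summand is preprojective, preinjective, or a shifted projective. The key computation is that this signed defect equals, up to the sign convention dictated by $\varepsilon$, the local slope of $e$ along the doubly infinite path. When the tree has positive global slope every such edge has positive local slope, so every corresponding summand is preprojective; when the global slope is zero, the local slope of the individual edge decides the trichotomy. A preinjective summand is distinguished from a shifted projective $P_i[1]$ by the fact that the $c$-vector of the latter is the negative of the dimension vector of an indecomposable projective, which occurs only for a finite, explicit list of edges along the path.

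\textbf{Main obstacle.}
The crux is the defect calculation: one must show that the defect of the $c$-vector of a doubly infinite path edge equals its local slope with the appropriate sign, and then sort out the finitely many exceptional edges whose $c$-vector is a negative projective dimension vector. Establishing this identification uses the explicit form of the null root $\delta$ for $\widetilde A_{n-1}^\varepsilon$ together with the $c$-vector formula coming from Theorem \ref{main}; once it is in place, parts (1)--(3) follow by tracking signs.
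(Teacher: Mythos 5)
The opening reduction, and with it the whole argument, rests on a false combinatorial claim. You assert that edges on a finite branch are exactly those with $b-a<n$, and that every edge on the doubly infinite path has length $\ge n$. Neither holds. In the zero--slope tree of Figure \ref{fig0a} (with $n=4$), the edges $\ell_1=(p_0,p_2)$ and $\ell_2=(p_2,p_4)$ both lie on the doubly infinite path and both have length $2<n$; the branch edges $\ell_3,\ell_4$ also have length $<n$. The correct combinatorial dichotomy (Proposition \ref{cor:edge vectors are Schur roots} and the Schur-root classification theorem) is that $\beta_{ij}$ is regular iff $\varepsilon_i=\varepsilon_j$ and $j-i<n$, while $\beta_{ij}$ with $\varepsilon_i\ne\varepsilon_j$ can be preprojective or preinjective even with $j-i<n$. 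Likewise, ``$c$-vector supported on a proper $A$-subquiver'' does not imply regular: the projective root $\pi_i$ and every root $e_i=\beta_{i-1,i}$ with $\varepsilon_{i-1}=-$, $\varepsilon_i=+$ are preprojective yet supported on proper subquivers. So your classification of regular summands does not get off the ground.

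The deeper gap is that you compute the wrong defect. The corollary classifies the summand $M_i$ by whether $M_i$ is preprojective/regular/preinjective, i.e., by the sign of $\left<\undim M_i,\eta\right>$. The $c$-vector $\gamma_i$ is the dimension vector of a different module (the ``complement'' $B_i$ orthogonal to the other summands, with $V^t E_\varepsilon \Gamma_\cT=I_n$), and its defect does not determine the type of $M_i$. For the initial cluster $\Lambda[1]$, every summand is shifted projective (so all in case (3)), yet the $c$-vectors $\gamma_i=-e_i$ have defects of varying sign depending on $\varepsilon_{i-1},\varepsilon_i$, so your proposed sign rule would misclassify. The paper avoids this by extracting $\undim M_i$ directly: the degenerate morphism $\psi_\infty^i$ flattens all edges except $\ell_i$, yielding $F(\psi_\infty^i)=E_\varepsilon^t\undim M_i$, and then the sign of the slope of $\psi_\infty^i$ is exactly $\left<\undim M_i,\eta\right>$. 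The trichotomy then follows from elementary geometry of $\cT$: flattening everything but a branch edge kills the slope; flattening everything but a path edge leaves the slope determined by the local rise of that edge along the periodic path. You would need that mechanism (or an equivalent $g$-vector style computation of $\undim M_i$), and your ``main obstacle'' paragraph is aimed at the $c$-vector, which is not the object that needs a defect computation.
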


The correspondence between periodic trees and cluster tilting objects is obtained as follows. We first show that each periodic tree $\cT$ admits an embedding into the plane $\RR^2$. Since such an embedding is determined by the $y$-coordinates of $n+1$ consecutive points, the space of all equivalent embeddings of $\cT$ into the plane is a convex open subset $\cR(\cT)$ of Euclidean space $\RR^{n+1}$. We define (Definition \ref{def: FRn+1 to Rn}) a particular linear map of $\RR^{n+1}$ onto $\RR^n$ and shows that this determines a unique cluster tilting object $M$ so that the image of $\cR(\cT)$ is equal to $\cR(M)$. (Theorem \ref{thm:correspondence between trees and cluster tilting objects}). The proof of the theorem uses the stability theorem from \cite{IOTW} for virtual semi-invariants. 

First, to each edge $\ell$ of a periodic tree we associate an edge vector $\beta$ which is determined by the endpoints of the edge and the sign of its slope. The edge vector is a real Schur root of the quiver $\widetilde A_{n-1}^\varepsilon$ and every real Schur root occurs in some tree. Each edge vector $\beta$ corresponds to a weight of a semi-invariant defined on a virtual representation space of $\widetilde A_{n-1}^\varepsilon$. In order to determine the real support $D(\beta)$ (Definition \ref{def: real support D(b)}, Theorem \ref{original def of virtual semi-invariant}, \cite{IOTW}) of the semi-invariants of weight $\beta$ we consider the points in the closure of the open region $\cR(\cT)$ and prove the following result.

\begin{thm} [Lemma \ref{second stability lemma}]
Let $\beta$ be an edge vector for the edge $\ell$ of a periodic tree $\cT$. The points in the closure of $\cR(\cT)$ corresponding to the limit points of the embeddings in which the edge $\ell$ becomes horizontal form a subset of the real support $D(\beta)$ of $\beta$ and $D(\beta)$ is the closure of the union of such limit points over all trees having edge vector $\pm\beta$.
\end{thm}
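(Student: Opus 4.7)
The plan is to match the geometric condition ``edge $\ell$ horizontal'' with the stability / non-vanishing condition defining $D(\beta)$ in \cite{IOTW}. First I would unpack the definition of the linear map $\RR^{n+1}\onto\RR^n$ from Definition \ref{def: FRn+1 to Rn} together with that of an edge vector, and check that when a point of $\cR(\cT)$ is viewed as an assignment of $y$-coordinates to the vertices of an embedding, equality of the two $y$-coordinates at the endpoints of $\ell$ is exactly the vanishing of the pairing $\langle \beta, \theta \rangle$, where $\theta\in\RR^n$ denotes the image point. This identifies the locus of horizontalizations of $\ell$ inside the image of $\overline{\cR(\cT)}$ with a piece of the hyperplane $\beta^\perp\subset\RR^n$; this identification is the backbone of both inclusions.

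For the forward inclusion, let $\theta$ be such a limit point. The identification above already yields $\langle \beta,\theta\rangle = 0$, so it remains to produce a virtual representation of dimension $\theta$ on which the virtual semi-invariant of weight $\beta$ is non-zero; by Theorem \ref{original def of virtual semi-invariant} (cf.\ \cite{IOTW}) this is equivalent to $\theta\in D(\beta)$. The combinatorics of the embedded tree supply the witness: the subtree on one side of the horizontalized edge $\ell$ has the right combinatorial data to give a subrepresentation of dimension vector $\beta$, with complementary quotient supported on the other side, and the short exact sequence read off from the embedding is precisely one on which the $\beta$-semi-invariant evaluates non-trivially.

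For the reverse inclusion, let $\theta\in D(\beta)$. By the stability theorem of \cite{IOTW}, $\theta$ lies on the $\beta$-wall of the cluster-stability fan, hence on the boundary of $\cR(M)$ for some cluster tilting object $M$ having a summand with $c$-vector $\pm\beta$. Theorem \ref{main} matches $M$ with a tree $\cT'$, and that summand matches an edge $\ell'$ of $\cT'$ whose edge vector is $\pm\beta$; the wall of $\cR(M)$ on which $\theta$ lies then corresponds, via the linear map, to the horizontalization locus of $\ell'$ in $\overline{\cR(\cT')}$. Varying $\cT'$ over the trees realizing $\pm\beta$ and closing up therefore sweeps out $D(\beta)$.

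The main obstacle is the density portion of the reverse inclusion in the affine setting: one has to assemble the $\beta$-walls of the various regions $\cR(\cT')$ so that their union is dense in $D(\beta)$. For preprojective or preinjective $\beta$ this should be a finiteness argument in the spirit of \cite{IOs}, but for a regular real Schur root $\beta$ (the essentially new phenomenon in $\widetilde A_{n-1}$, and the reason the ``closure'' is needed in the statement) one must use continuity of the tree-to-region assignment as the branches of $\cT'$ attached along $\ell'$ slide through a regular tube. This is where the stability theorem of \cite{IOTW} is invoked a second time, to produce the required limit points that fill in the regular stratum of $D(\beta)$.
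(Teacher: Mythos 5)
Your strategy for the reverse inclusion is circular. You invoke Theorem~\ref{thm:correspondence between trees and cluster tilting objects} (via ``Theorem 0.0.1'') to translate a point $\theta\in D(\beta)$ lying on a wall of some cluster region $\cR(M)$ into a periodic tree $\cT'$ with the appropriate edge. But the paper's proof of that correspondence theorem relies on Lemma~\ref{second stability lemma}: the argument there is precisely that the boundaries of $F\cR(\cT)$ and $E_\varepsilon^t\cR(M)$ are both contained in $\bigcup E_\varepsilon^tD(\beta)$, and that these walls stay off the interiors of both open regions --- facts that are supplied by this lemma and by Theorem~\ref{virtual stability theorem}. You cannot use the tree-to-cluster dictionary to prove the lemma that establishes the dictionary. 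Moreover, even granting the cluster-fan structure from \cite{IOTW} independently, you would still need to know that a wall of $\cR(M)$ maps to a horizontalization locus of a tree, which is the very content being proven.

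The paper avoids this circularity by arguing entirely on the combinatorial side. Given $y\in E_\varepsilon^t D(\beta_{ij})$, it first perturbs to a point $y''=F(\pi)$ in the interior so all subroot inequalities are strict, then perturbs $\pi$ again to $\pi'$ with rationally independent values off the coset $j+n\ZZ$, and finally shifts the values at $j+n\ZZ$ by a small $t\neq 0$ to get an injective periodic function $\pi_t$. Theorem~\ref{thm:every pi gives a unique T} then produces the tree $\cT_t$, and Corollary~\ref{converse of monotonic cor} (the converse of the monotonicity corollary, which you did not use) shows that $(p_i,p_j)$ is an edge of $\cT_t$ with sign $\sgn(t)$. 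Your forward-inclusion sketch is also off-target: you propose to build a virtual representation witnessing $\theta\in D(\beta)$, but $\theta$ is a real vector rather than a dimension vector, and the paper instead works with the stability-condition description of $D(\beta)$ (Definition~\ref{def: real support D(b)} and Proposition~\ref{prop: psi satisfies stability conditions for D(b)}), obtaining the inclusion from Corollary~\ref{monotonic cor} via Lemma~\ref{lem: second stability condition holds}, with no representation ever constructed. Finally, your expectation that regular real Schur roots require a separate ``tube'' argument does not materialize: the perturbation argument above is uniform across preprojective, preinjective, and regular $\beta$.
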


Finally, edge vectors of a periodic tree are related to $c$-vectors in the following way.

\begin{thm} [Theorem \ref{thm: edge vectors are negative c-vectors}]\label{c-vectors}
The edge vectors of $\cT$ are equal to the negatives of the $c$-vectors of the corresponding cluster tilting object.
\end{thm}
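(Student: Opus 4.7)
The strategy is to characterize $c$-vectors through the real supports $D(\cdot)$ of virtual semi-invariants, to match these supports with the degenerations of the tree via Lemma \ref{second stability lemma}, and then to pin down the sign by induction on mutations from a canonical initial tree.

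First, I would invoke the wall-chamber framework of \cite{IOTW}: for a cluster tilting object $M$ in the cluster category of $\widetilde A_{n-1}^\varepsilon$, the $c$-vectors $c_1,\dots,c_n$ are characterized as those weights $c$ such that $D(c)$ is a codimension-one face of the closure of the stability region $\cR(M)\subset\RR^n$, with the sign of $c$ fixed by the convention that $\cR(M)$ lies on a specified side of $D(c)$. Thus the theorem reduces to showing (a) $D(\beta_\ell)$ is a codimension-one face of $\overline{\cR(M)}$ for every internal edge $\ell$, and (b) the sign of $\beta_\ell$ is opposite to that of the corresponding $c$-vector.

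For (a), the linear projection of Definition \ref{def: FRn+1 to Rn} carries $\cR(\cT)\subset\RR^{n+1}$ surjectively onto $\cR(M)$ by Theorem \ref{thm:correspondence between trees and cluster tilting objects}. The codimension-one boundary strata of $\cR(\cT)$ are exactly the loci where a single internal edge $\ell$ becomes horizontal, and by Lemma \ref{second stability lemma} each such stratum lies in $D(\beta_\ell)$. Under the projection, these strata become codimension-one faces of $\overline{\cR(M)}$; since there are exactly $n$ internal edges in one period of $\cT$, matching the $n$ summands of $M$ and the $n$ faces of $\overline{\cR(M)}$, the $D(\beta_\ell)$ exhaust these faces bijectively.

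For (b), I would argue by induction on mutation distance from a canonical initial tree corresponding to the cluster tilting object $\bigoplus_i P_i[1]$ of shifted projectives. In the base case the $c$-vectors are, by the standard initialization in cluster theory, the negatives of the simple roots, while a direct computation of the initial tree shows that its edge vectors are the positive simple roots, establishing $\beta_\ell=-c_\ell$ here. For the inductive step, tree mutation (flipping an edge across its neighbors) corresponds under Theorem \ref{main} to cluster mutation of $M$; both operations invert the sign of precisely one distinguished vector, namely the edge vector of the flipped edge and the $c$-vector of the mutated summand respectively, while the remaining vectors are modified by parallel exchange rules. The main obstacle is verifying that the combinatorial edge-vector mutation rule (which comes from the planar geometry of the tree, and is sensitive to which of the adjacent edges have positive and negative slope) matches the tropical $c$-vector mutation rule with the correct sign, uniformly across the four types of summand (preprojective, regular, preinjective, shifted projective) classified in Theorem \ref{cor: which summands are regular, etc}. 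Once this compatibility is verified case by case for edges of each topological type, the identity $\beta_\ell=-c_\ell$ propagates through the mutation graph, yielding the theorem.
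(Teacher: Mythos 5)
Your high-level strategy matches the paper's: both prove the theorem by an induction on mutation from the initial tree $\cT_0$ corresponding to $\Lambda[1]=\bigoplus P_i[1]$, verifying at each step that tree mutation and exchange-matrix mutation are compatible. The paper packages this as Theorem \ref{thm: mutation of extended exchange matrix} and proves it via Propositions \ref{first prop}, \ref{second prop}, and \ref{third prop}. Your part (a), giving the edge vectors as $c$-vectors \emph{up to sign} via the wall-chamber picture and Lemma \ref{second stability lemma}, is a reasonable alternate framing; in the paper this information is already implicit in Theorem \ref{thm:correspondence between trees and cluster tilting objects} (in particular equation \eqref{Hugh Thomas equation}, $V^t E_\varepsilon \Gamma_\cT = I_n$).

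However, there is a real gap in how you propose to carry out the inductive step, which is where all the content lives. You say the verification should go ``case by case for edges of each topological type,'' organizing by whether the corresponding summand is preprojective, regular, preinjective, or shifted projective. That is not the right decomposition. The mutation formula $\gamma_j' = \gamma_j + c_j\gamma_k$ (the analogue of the Fomin--Zelevinsky rule on the edge-vector side) is governed entirely by the \emph{adjacency} between the edges $\ell_j$ and $\ell_k$ in the tree: whether they are disjoint, share one endpoint, or share two endpoints, and the sign $\varepsilon$ at each shared vertex. The key observation that makes the verification tractable --- and which your proposal does not identify --- is Proposition \ref{first prop}: the entry $\left<\gamma_j,\gamma_k\right>-\left<\gamma_k,\gamma_j\right>$ of the candidate exchange matrix equals, in absolute value, the number of shared endpoints, with sign given by the counterclockwise order of the edges around that vertex. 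Coupled with the explicit description of tree mutation in Proposition \ref{second prop} (a right child of $p_a$ slides to $p_b$, a unique/left parent of $p_b$ slides to $p_a$, etc.), this is what allows one to check, in Lemma \ref{lem: mutation of Gamma} and the Nakanishi--Zelevinsky lemma that follows, that the extended candidate matrix mutates by the FZ rules. Without this specific bookkeeping, the inductive step does not go through, and the summand-type decomposition (which is a global property of the edge in the tree, not a local one at the mutated vertex) is the wrong lens for it.

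Two smaller points. First, your appeal to ``the wall-chamber framework of \cite{IOTW}'' to characterize the $c$-vectors with their signs is stated too loosely: $D(\beta)$ is only defined for positive real Schur roots $\beta$, so it is not literally $D(c)$ that appears when $c$ is a negative $c$-vector, and \cite{IOTW} does not supply the $c$-vector-as-oriented-wall characterization you need --- that would require importing results of the Nakanishi--Zelevinsky / Speyer--Thomas kind, which the paper instead gets for free by running the mutation verification. Second, your base case has the signs reversed: the initial tree $\cT_0$ has edge matrix $\Gamma_{\cT_0}=-I_n$, so its edge vectors are $-e_i$ (negative simple roots), while the initial $c$-vectors (the lower block $I_n$ of $\widetilde B_0$) are $+e_i$. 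The relation $\gamma_i=-c_i$ still holds, so this does not break the argument, but it is an indication that the sign-tracking --- which is exactly the delicate part of the inductive step --- has not been pinned down carefully.
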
\vs2

%


We now describe the contents of the paper. In Section \ref{sec1} we define periodic trees and analyze their combinatorial structure. This section explains only the combinatorics of periodic trees. The relation with representation theory is explained in section \ref{sec2}.

Periodic trees are special cases of periodic posets. In subsection \ref{ss1.1: periodic posets} we briefly review the definition of a periodic poset which is based on the concept of a ``cyclic poset'' used in \cite{IT11} to construct certain Frobenius categories whose stable categories are continuous cluster categories.

In subsection \ref{ss1.2: periodic trees} we give the definition of an $n$-periodic tree which is admissible with respect to a given periodic sign functions. We derive several important properties. The first, Proposition \ref{prop: existence of periodic morphisms}, states that the set $\cR(\cT)$ of all embeddings (called periodic morphisms) for a periodic tree $\cT$ is convex and nonempty. After a change of variables this region will be seen, in Theorem \ref{thm:correspondence between trees and cluster tilting objects}, to be the set of all positive linear combinations of the components of a uniquely determined cluster tilting object corresponding to $\cT$. This leads to the Classification Theorem \ref{cor: classification of trees}: periodic trees fall into three classes: those of positive, negative and zero slope. The next result of this subsection, Corollary \ref{monotonic cor} will be interpreted later as the stability conditions for virtual semi-invariants. Finally, Proposition \ref{cor:edge vectors are Schur roots} is a disguised version of the statement that the set of all edge vectors of all admissible periodic trees is equal to the set of all real Schur roots of the corresponding quiver.

In subsection \ref{ss1.3: leaves of T} we recall the definition of a leaf and characterize the leaves of $\cT$ in terms of a periodic morphism on $\cT$. In subsection \ref{ss1.4: maxima and minima} we define internal maxima and minima of a periodic tree $\cT$ and characterize these vertices in terms of any periodic morphism on $\cT$. Maxima and minima only occur in trees of slope zero. In subsection \ref{ss1.5: periodic trees from periodic morphisms} we show that every $n$-periodic function $\ZZ\to \RR$ which is a monomorphism (and thus necessarily of nonzero slope) gives a periodic morphism for a unique $n$ periodic tree. This implies that the union of the disjoint regions $\cR(\cT)$ for all $n$-periodic trees $\cT$ is open and dense in $\RR^{n+1}$.

Section \ref{sec2} uses the ``{edge vectors}'' of a periodic tree to prove Theorem \ref{main}. In subsection \ref{ss2.1: edge vectors} we define edge vectors and derive basic properties which characterize these vectors. The relation to representations of the quiver $\widetilde{A}_{n-1}^\varepsilon$ is explained in subsections \ref{ss2.2: representations} and \ref{ss2.3: semi-invariants and cluster tilting objects}. In particular, subsection \ref{ss2.3: semi-invariants and cluster tilting objects} contains one of the main results of this paper: the correspondence between $n$-periodic trees and cluster tilting objects in the cluster category of $\kk \widetilde{A}_{n-1}^\varepsilon$. The correspondence, given in Theorems \ref{virtual stability theorem} and \ref{thm:correspondence between trees and cluster tilting objects} also gives, in Corollary \ref{cor: components of cluster from topology of tree} (Theorem \ref{cor: which summands are regular, etc} above), a description of which summands of the cluster tilting object are regular, preprojective and preinjective depending on the geometry of the periodic tree. Subsection \ref{ss2.5: example} explains these theorems on the example given in Figure \ref{fig0a}.

In Section \ref{sec3} we prove Theorem \ref{c-vectors} that edge vectors are negative $c$-vectors. We review the definition of an exchange matrix and the $c$-vectors of a cluster tilting object in subsection \ref{ss3.1: exchange matrix and cluster tilting objects}. We give several equivalent formulations of the theorem in subsection \ref{ss3.2: statement of the theorem}. The remainder of the paper gives the proof which is given by verifying that the edge vectors of a periodic tree transform according to the mutation rules of Fomin and Zelevinsky.\vs2

The authors would like to thank Kent Orr for numerous lengthy discussions leading to the results of this paper. We are also grateful to Hugh Thomas for helping us understand the relationship between semi-invariants and $c$-vectors. Also, an ongoing dialogue with Thomas Br\"ustle has motived us to add a discussion of periodic trees of slope zero for use in future work. The first author acknowledges the support of National Security Agency Grant \#H98230-13-1-0247. The second author acknowledges the support of National Science Foundation Grant \#DMS-1103813.  The third author acknowledges the support of the Alexander von Humboldt Foundation,
and of National Science Foundation Grants \#DMS-0901185 and DMS-1400740.

 

\section{Periodic trees}\label{sec1}

We define $n$-periodic trees with sign function $\varepsilon$ to be the Hasse diagrams of certain periodic posets whose vertex set $\{p_i\}$ is indexed by integers $i\in\ZZ$. This section explains only the combinatorics of periodic trees. The relation with representation theory is explained in the next section. For example, we show that there is a periodic tree having an edge with endpoints $p_i$ and $p_j$ if and only if the vector $\beta_{ij}$, defined in subsection \ref{ss2.1: edge vectors}, is a real Schur root of the quiver $\widetilde{A}_{n-1}^\varepsilon$.


\subsection{Periodic posets}\label{ss1.1: periodic posets}

Periodic trees are special cases of periodic posets. We briefly go over basic definitions of a periodic poset and related notions.

\begin{defn}
Let $P=\{p_i\,|\,i\in \ZZ\}$ be any set indexed by the integers and let $n\ge2$.
\begin{enumerate}
\item By an \emph{$n$-periodic function} on $P$ we mean a mapping $\psi:P\to\RR$ so that $m=\psi(p_{i+n})-\psi(p_i)$ is independent of $i$. We say that $\psi$ has \emph{slope} $\frac mn$. Let $V_n(P)$ be the vector space of all $n$-periodic maps on $P$.
\item A {partial ordering} on $P$ will be called an \emph{$n$-periodic partial ordering} if
\[
	p_i<p_j\ifff p_{i+n}<p_{j+n}\,.
\]
\item The set $P$ with an $n$-periodic partial ordering will be called an \emph{$n$-periodic poset}.
\item Given an $n$-periodic poset $P$, we get an \emph{$n$-periodic ordering} on $\ZZ$ by $i<'j\ifff p_i<p_j$.
\end{enumerate}
\end{defn}

\begin{rem} For any set $P$ indexed by $\ZZ$, a linear isomorphism
$
	V_n(P)\cong \RR^{n+1}
$
is given by
\[
	\psi\mapsto (\psi(p_1),\cdots,\psi(p_n),m)
\]
where $m=\psi(p_n)-\psi(p_0)$.
\end{rem}

\begin{rem} The notion of an $n$-periodic poset is closely related to the notion of ``cyclic poset''. A finite cyclic poset \cite{IT11} is an $n$-periodic poset with the additional property that, for any $p_i,p_j\in P$ there is an integer $k$ so that $p_i<p_{j+kn}$. We do not assume this here.
\end{rem}



Recall that, for any poset $P$, the \emph{Hasse diagram} of $P$, if it exists, is a directed graph whose vertices are the elements of $P$ and whose edges $p\to q$ indicate child-parent relationships. Recall that, $p$ is a \emph{child} of $q$ and $q$ is a \emph{parent} of $p$ if $p<q$ and there are no elements $x\in P$ so that $p<x<q$. The graph with vertex set $P$ and these edges is the Hasse diagram for $P$ if, whenever $p<q$, there is a directed path in the graph from $p$ to $q$. We consider only those posets $P$ which have Hasse diagrams. (This excludes, for example, the 2-periodic poset in which all even integers, ordered in the usual way, are less than all odd integers.)

Consider an edge $\ell$ in the Hasse diagram of a periodic poset $P$. Let the endpoints of $\ell$ be $p_i,p_j$ with $i<j$. There are two cases.
\begin{enumerate}
\item $p_i<p_j$. In this case we say that $\ell$ has \emph{positive slope}, $p_j$ is a \emph{right parent} of $p_i$ and $p_i$ is a \emph{left child} of $p_j$.
\item $p_i>p_j$. Then we say $\ell$ has \emph{negative slope}, $p_j$ is a \emph{right child} of $p_i$ and $p_i$ is a \emph{left parent} of $p_j$.
\end{enumerate}

\begin{defn}
Let $P=\{p_i\,|\, i\in\ZZ\}$ be an $n$-periodic poset and let $m\in\RR$. Then an \emph{$n$-periodic morphism} on $P$ is defined to be an $n$-periodic function $\psi:P\to \RR$ so that
\[
	 \psi(p_i)<\psi(p_j)\text{ if }p_i<p_j.
\]
Let $\cR(P)$ denote the set of all $\psi\in V_n(P)$ satisfying this condition. Then $\cR(P)$ is clearly a convex open subset of $V_n(P)$. To compare different periodic posets $P$ we will identity $V_n(P)$ with $V_n(\ZZ)$ using the correspondence $\psi\leftrightarrow \pi\in V_n(\ZZ)$ where $\pi(i)=\psi(p_i)$. The virtue of the notation $p_i$ is that $p_1>p_2$ makes more sense than $1>'2$.
\end{defn}


\subsection{Periodic trees}\label{ss1.2: periodic trees}

We will define periodic trees which are admissible with respect to a given periodic sign functions and derive several important properties. We show that every $n$-periodic tree admits an $n$-periodic morphism (\ref{prop: existence of periodic morphisms}) and use this to classify periodic trees into three classes: those of positive, negative and zero slope (\ref{cor: classification of trees}).

It will be convenient to consider partial ordering on subsets of $\ZZ$. We recall that a \emph{tree} is a connected graph with no cycles which we usually take to be infinite.

\begin{defn}\label{def: conditions T1234}
Let $S$ be a nonempty subset of $\ZZ$ and let $P=\{p_i\,|\, i\in S\}$ be a poset. Suppose that $P$ has a Hasse diagram $\cT$ which is a tree, i.e., simply connected. We say that the tree $\cT$ is \emph{admissible} with respect to a given sign function $\varepsilon:S\to\{+,-\}$ if it satisfies the following conditions. 
\begin{enumerate}
\item[T1.] Each $p_i\in P$ has at most one left parent, at most one right parent, at most one left child and at most one right child.
\item[T2.] If $\varepsilon_i=+$ then $p_i$ has at most one parent.
\item[T3.] If $\varepsilon_i=-$ then $p_i$ has at most one child.
\item[T4.] For any edge $\ell=(p_i,p_j)$ in $\cT$ and any $i<k<j$ in $S$ we have the following.
\begin{enumerate}
\item If $p_k<\min(p_i,p_j)$ then $\varepsilon_k=+$.
\item If $p_k>\max(p_i,p_j)$ then $\varepsilon_k=-$.
\end{enumerate}
\end{enumerate}
\end{defn}

An \emph{$n$-periodic sign function} is defined to be a mapping $\varepsilon:\ZZ\to\{+,-\},i\mapsto\varepsilon_i$ which is $n$-periodic, i.e., $\varepsilon_{i+n}=\varepsilon_i$ for all $i$. In the next section we will assume that $\varepsilon$ is surjective, i.e., $\varepsilon$ takes both values: $+,-$ and $n\ge2$. 

\begin{defn}\label{def:periodic tree}
An \emph{$n$-periodic tree} with periodic sign function $\varepsilon$ is defined to be a tree $\cT$ which is the Hasse diagram of an $n$-periodic poset $P$ which is admissible with respect to the $n$-periodic sign function $\varepsilon$. 
\end{defn}

We view the poset $P$ as the vertex set of the graph $\cT$ and we denote it by $P_\cT$. We also denote $\cR(P_\cT)$ by $\cR(\cT)$.

First we analyze the topology of an $n$-periodic tree. There is a free action of the additive group $\ZZ$ on $\cT$ given by $k\cdot p_i=p_{i+kn}$. Let $\cT/\ZZ$ denote the orbit space of this action.

\begin{lem}
$\cT/\ZZ$ is a connected graph with $n$ vertices and $n$ edges. It consists of one cycle with possible additional edges forming subtrees attached to this cycle at different points.
\end{lem}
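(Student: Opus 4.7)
My plan is to exploit that $\cT$ is contractible and that $\ZZ$ acts freely and cocompactly on it, so $\cT/\ZZ$ is a finite $K(\ZZ,1)$ and hence homotopy equivalent to a circle.

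First I would record the basic counts and finiteness. The vertex set of $\cT$ is $\{p_i:i\in\ZZ\}$ with the $p_i$ pairwise distinct, and the $\ZZ$-action $k\cdot p_i=p_{i+kn}$ partitions it into $n$ orbits (indexed by $\ZZ/n\ZZ$), giving exactly $n$ vertices in $\cT/\ZZ$. Condition T1 bounds the degree of each $p_i$ by $4$, so $\cT$ is locally finite and the quotient is a finite graph. The $n$-periodicity of the partial order guarantees the action carries edges to edges, and distinctness of the $p_i$ makes it free on edges as well as on vertices. Connectedness of $\cT/\ZZ$ follows from connectedness of $\cT$.

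Next, since $\cT$ is a connected graph with no cycles, it is simply connected, and hence (as a $1$-dimensional CW complex) contractible. A free cellular action of $\ZZ$ on a contractible CW complex produces a quotient that is an Eilenberg--MacLane space $K(\ZZ,1)$. Therefore $\cT/\ZZ$ is a finite connected graph with fundamental group $\ZZ$, hence homotopy equivalent to $S^1$. In particular $\chi(\cT/\ZZ)=0$, which together with the count of $n$ vertices forces exactly $n$ edges.

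To conclude the structural description I would argue as follows. Set $G=\cT/\ZZ$. Iteratively remove the degree-$1$ vertices of $G$ together with their unique incident edges; each such removal is a deformation retraction, so it preserves both the homotopy type and the equality $|V|=|E|$. The process terminates in a subgraph $G_0\subseteq G$ of minimum degree $\ge 2$ which is still connected with fundamental group $\ZZ$. A finite connected graph of minimum degree $\ge 2$ and first Betti number $1$ must be a single cycle. Reattaching the pruned edges and vertices displays $G$ as the cycle $G_0$ with (finite) subtrees hanging off at various points, which is the asserted structure. The only real subtlety is verifying that the $\ZZ$-action is free enough for the quotient to be a well-behaved CW complex, and this follows immediately from the distinctness of the $p_i$ and T1; conditions T2--T4 play no role at this purely topological stage.
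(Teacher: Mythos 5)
Your argument follows the paper's proof almost exactly: contractibility of $\cT$ makes $\cT/\ZZ$ a finite $K(\ZZ,1)$, Euler characteristic zero then forces $n$ edges, and the homotopy type of a circle yields the cycle-plus-trees decomposition (your iterative pruning is just an explicit version of the paper's observation that the cycle is a deformation retract).

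There is one small omission. The lemma asserts that the subtrees are attached to the cycle \emph{at different points}, and your write-up ends with the vague phrase ``hanging off at various points'' without justifying distinctness. The paper proves it via the degree bound: each vertex of $\cT$ is incident to at most three edges. You stated the weaker bound of four, which comes from T1 alone; but combining T1 with T2 (if $\varepsilon_i=+$, at most one parent, so at most one parent plus two children) and T3 (dually) sharpens it to three. With degree at most three, a vertex already on the cycle (degree $\ge 2$) can have at most one tree-edge attached, so no two pruned subtrees can share an attachment vertex. With your degree-four bound this conclusion does not follow, so you should tighten the bound and add that final sentence.

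Aside from that missing endgame, the strategy and all the essential steps match the paper's proof.
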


\begin{proof}
Since $\cT$ is contractible, the orbit space $\cT/\ZZ$ is a $K(\ZZ,1)$, i.e., it is homotopy equivalent to a circle. Also, $\cT/\ZZ$ is a finite graph with $n$ vertices and at most $3n/2$ edges since each vertex is incident to at most three edges. However, the Euler characteristic of any space homotopy equivalent to a circle is zero. And the Euler characteristic of a finite graph is equal to the number of vertices minus the number of edges. Therefore, $\cT/\ZZ$ is a connected graph with $n$ vertices and $n$ edges. It follows that $\cT/\ZZ$ has exactly one cycle. Since $\cT/\ZZ$ is a $K(\ZZ,1)$, this cycle is a deformation retract of the entire graph. So, the rest of the graph must consist of trees, being attached to the cycle at single points. These points must be distinct since every vertex is incident to at most three edges.
\end{proof}

 In the universal covering $\cT$ of $\cT/\ZZ$ we conclude the following.
 
\begin{prop}
An $n$-periodic tree consists of a single periodic doubly infinite path with at most one branch (finite subtree) attached to each point. \qed
\end{prop}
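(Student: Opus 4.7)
The plan is to lift the structure theorem for $\cT/\ZZ$ (established in the preceding lemma) to the universal cover $\cT$. By the previous lemma, $\cT/\ZZ$ decomposes into a unique embedded cycle $C$ together with finite subtrees attached to $C$ at pairwise distinct vertices. Since the action of $\ZZ$ on $\cT$ is free and $\cT$ is contractible, the quotient map $\cT \to \cT/\ZZ$ is the universal covering map, and the deck transformation group is $\ZZ$.

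First I would handle the cycle. Since the inclusion $C \hookrightarrow \cT/\ZZ$ induces an isomorphism on $\pi_1$ (the cycle is a deformation retract of $\cT/\ZZ$), the preimage of $C$ in $\cT$ is the universal cover of $C$, namely a doubly infinite path $L \subset \cT$. The $\ZZ$-action on $\cT$ restricts to the standard translation action on $L$, and since $C$ has $n$ vertices, $L$ is periodic of period $n$ with respect to this action.

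Next I would lift the branches. Let $B_1, \dots, B_k$ be the finite subtrees attached to $C$ at distinct vertices $v_1, \dots, v_k$ of $C$. Each $B_j$ is simply connected, so the restriction of the covering map to any connected component of the preimage of $B_j$ in $\cT$ is a homeomorphism onto $B_j$. The components of the preimage of $B_j$ are therefore in bijection with the fiber over $v_j$, i.e., with the $\ZZ$-orbit of any chosen lift $\widetilde v_j \in L$. Hence the preimage of $B_j$ consists of disjoint isomorphic copies of $B_j$, one attached to $L$ at each translate of $\widetilde v_j$.

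The last step is the disjointness claim: I need to check that each vertex of $L$ carries at most one branch. This follows because the attachment points $v_1,\dots,v_k$ are distinct in $C$, so their lifts $\widetilde v_1, \dots, \widetilde v_k$ can be chosen to lie in distinct $\ZZ$-orbits on $L$ — in fact, representatives $\widetilde v_j$ can be chosen within any fixed fundamental domain of $n$ consecutive vertices of $L$, and no two of them coincide. Together with the freeness of the $\ZZ$-action, this gives at most one branch per vertex of $L$. The main delicate point, and the only place requiring care, is verifying that the lifts of the $B_j$ do not interact with one another or with $L$ outside their attachment vertices; this is immediate from the fact that $\cT$ is a tree, so the component of $\cT \setminus L$ containing a vertex of any lifted branch must be entirely contained in that single lifted branch. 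This completes the description of $\cT$ as the doubly infinite path $L$ with at most one finite branch attached at each vertex. \qed
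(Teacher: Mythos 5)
Your proof is correct and follows the same route the paper takes: the paper states the proposition as an immediate consequence of the preceding lemma on $\cT/\ZZ$ via the universal-covering remark, and your argument simply spells out the lifting of the cycle to the doubly infinite line and the lifting of the attached subtrees to disjoint branches. The details you supply (connectedness of the preimage of $C$, triviality of the covering over each simply connected $B_j$, and the fibre argument giving at most one branch per vertex) are all sound.
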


The edges in the periodic infinite path are characterized by the property that removal of the edge breaks the tree into two infinite subtrees. The edges in the branches are characterized by the property that removal of the edge will break the tree into one finite subtree and one infinite subtree. There are $n$-periodic trees with no branches.

\begin{prop}\label{prop: existence of periodic morphisms}
Every $n$-periodic tree $\cT$ admits an $n$-periodic morphism $\psi:P_\cT\to\RR$. Therefore $\cR(\cT)$ is a nonempty convex open subset of $V_n(\ZZ)\cong\RR^{n+1}$.
\end{prop}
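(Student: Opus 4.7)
Since $\cR(\cT)$ is defined as the intersection of the open half-spaces $\{\psi(p_i)<\psi(p_j)\}$ indexed by the cover relations of $P_\cT$, it is automatically convex and open in $V_n(\ZZ)$, so the entire content of the proposition is the existence of at least one $n$-periodic morphism. The plan is to write one down explicitly by a signed edge-counting procedure on the tree $\cT$, and then to check strict monotonicity and periodicity separately.

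Orient each edge of $\cT$ from its smaller endpoint to its larger endpoint in the partial order. For any two vertices $u,v$ of $\cT$, let $c(u,v)$ be the signed length of the unique tree path from $u$ to $v$, counting $+1$ for each edge traversed in the direction of its orientation and $-1$ for each edge traversed against. A standard tree argument, obtained by looking at the meeting point of the three geodesics through a common basepoint, yields the cocycle identity $c(u,v)+c(v,w)=c(u,w)$. Fix a base vertex $p_0$ and define $\psi(v):=c(p_0,v)$, so that $\psi(v)-\psi(u)=c(u,v)$ for all $u,v$.

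Whenever $p_i<p_j$ in $P_\cT$, the relation is witnessed by a chain of cover relations $p_i=u_0<u_1<\cdots<u_k=p_j$. Since consecutive inequalities are strict, this chain is a non-backtracking walk in $\cT$ and therefore coincides with the unique tree path from $p_i$ to $p_j$. Every edge of this path is traversed in the direction of its orientation, so $\psi(p_j)-\psi(p_i)=c(p_i,p_j)=k>0$, and $\psi$ strictly respects the partial order.

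It remains to verify $n$-periodicity, which is the delicate step. The shift $\tau:p_i\mapsto p_{i+n}$ is by definition a poset automorphism of $P_\cT$, so it induces a graph automorphism of $\cT$ preserving the edge orientations; in particular $c(\tau u,\tau v)=c(u,v)$. Applying the cocycle identity to the triples $(p_i,p_{i+1},p_{i+n+1})$ and $(p_i,p_{i+n},p_{i+n+1})$ and subtracting gives
\[
c(p_{i+1},p_{i+n+1})-c(p_i,p_{i+n})=c(p_{i+n},p_{i+n+1})-c(p_i,p_{i+1}),
\]
and the right-hand side vanishes by $\tau$-equivariance. Hence $\psi(p_{i+n})-\psi(p_i)$ is independent of $i$, so $\psi\in V_n(\ZZ)$ and $\psi\in\cR(\cT)$. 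The main obstacle is precisely this last step: neither the $\pm 1$ assignment along a single path nor shift-equivariance alone forces the quantity $\psi(p_{i+n})-\psi(p_i)$ to be constant in $i$, but combining the cocycle identity with $\tau$-equivariance of the orientations closes the argument.
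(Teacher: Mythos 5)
Your proof is correct, and it takes a genuinely different route from the paper's. The paper reduces to the quotient graph $\cT/\ZZ$ and argues by cases: if the unique cycle in $\cT/\ZZ$ is not oriented, it produces a slope-zero morphism by composing with an admissible order on the finite acyclic quotient; if the cycle is oriented, it builds $\psi$ by unit steps along the doubly infinite oriented path and then "extends without problem" to the branches. Your construction is uniform and more explicit: you fix a base vertex and set $\psi(v)=c(p_0,v)$, the signed edge count along the unique geodesic, and verify monotonicity (cover-chain equals geodesic, all steps forward) and $n$-periodicity (cocycle identity plus $\tau$-equivariance forces $c(p_i,p_{i+n})$ to be constant in $i$) directly. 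Each step you flagged as potentially delicate is in fact sound: a strictly increasing chain of covers in a tree is necessarily the geodesic, and your subtraction of two cocycle identities correctly shows $c(p_{i+1},p_{i+n+1})=c(p_i,p_{i+n})$, which is exactly what $\tau$-equivariance alone does not give (it only yields $n$-periodicity of the sequence $i\mapsto c(p_i,p_{i+n})$, not constancy). One trade-off worth noting: the paper's case analysis is also the proof of the Classification Theorem \ref{cor: classification of trees} (positive/negative/zero slope), which the paper derives as a free corollary of the case structure; your uniform $\psi$ has slope $c(p_0,p_n)/n$, which need not vanish even when $\cT/\ZZ$ is acyclic, so it proves the existence statement cleanly but does not by itself exhibit a slope-zero morphism in the acyclic case.
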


\begin{proof}
By the lemma, $\cT/\ZZ$ has exactly one cycle. There are two cases.

\emph{Case 1:} $\cT/\ZZ$ has no oriented cycles. Then the vertex set $P_\cT/\ZZ$ obtains a partial ordering and there is an order preserving monomorphism $P_\cT/\ZZ\to \ZZ$ (often called an ``admissible order'' on the vertices of an acyclic quiver) which gives, by composition, an $n$-periodic morphism $\psi:P_\cT\to P_\cT/\ZZ\to\ZZ\to\RR$ whose slope is, by definition, $\frac1n(\psi(p_{i+n})-\psi(p_i))=0$. So, the proposition holds in this case.

\emph{Case 2:} Now suppose that $\cT/\ZZ$ has an oriented cycle of length, say $k$. Then the periodic infinite path in $\cT$ is an oriented path $\gamma_\infty$. If $p_i$ is any vertex on this path then $p_{i+n}$ is also a vertex on $\gamma_\infty$. There are two subcases: the infinite oriented path goes from $p_i$ to $p_{i+n}$ making $p_i<p_{i+n}$ or it goes the other way making $p_i>p_{i+n}$.

\emph{Case 2a:} $p_i<p_{i+n}$. In this case, define a function $\psi$ on each node in the path $\gamma_\infty$ by $\psi(p_i)=0$ at the point $p_i$ on $\gamma_\infty$ and $\psi=1$ at the next point on $\gamma_\infty$ and so on until we reach $\psi(p_{i+n})=k$. This defines an $n$-periodic morphism of positive slope $\frac kn$ on the periodic infinite path $\gamma_\infty$ and there is no problem extending it to an $n$-periodic morphism on all of $P_\cT$.

\emph{Case 2b:} $p_i>p_{i+n}$. In this case we get an $n$-periodic morphism on $P_\cT$ of slope $-\frac kn$.
\end{proof}

The case-by-case analysis in the above proof gives the following.

\begin{thm}[Classification of periodic trees]\label{cor: classification of trees}
There are three types of $n$-periodic trees.
\begin{enumerate}
\item Trees with slope zero: $\cT/\ZZ$ has no oriented cycles. Equivalently, there exists an $n$-periodic morphism $P_\cT\to\RR$ with slope 0.
\item Trees with positive slope: $\cT$ has an infinite monotonically increasing path. Equivalently, every $n$-periodic morphism $P_\cT\to\RR$ has positive slope.
\item Trees with negative slope: $\cT$ has an infinite monotonically decreasing path. Equivalently, every $n$-periodic morphism $P_\cT\to\RR$ has negative slope.\qed
\end{enumerate}
\end{thm}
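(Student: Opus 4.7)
The plan is to extract the trichotomy directly from the case analysis in the proof of Proposition~\ref{prop: existence of periodic morphisms}. By the preceding lemma the graph $\cT/\ZZ$ contains exactly one cycle, which must fall into exactly one of three mutually exclusive states: (a) the cycle is not a directed cycle, (b) the cycle is directed and its lift $\gamma_\infty\subset\cT$ satisfies $p_i<p_{i+n}$ for every vertex $p_i$ on $\gamma_\infty$, or (c) the cycle is directed the opposite way, so the lift satisfies $p_i>p_{i+n}$. The ``existence'' halves of (1)--(3) then come for free from Cases~1, 2a, 2b of that proof, which construct morphisms of slope $0$, strictly positive slope, and strictly negative slope respectively.

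Next I would upgrade Cases~2a and~2b to the stronger conclusion that \emph{every} periodic morphism on $\cT$ has nonzero slope of the prescribed sign. In Case~2a the doubly infinite path $\gamma_\infty$ is a chain $\cdots<p_{i_{-1}}<p_{i_0}<p_{i_1}<\cdots$ along which the generator of the $\ZZ$-action moves each vertex strictly upward, so for any periodic morphism $\psi$ one has $\psi(p_i)<\psi(p_{i+n})$, forcing the slope $m=(\psi(p_{i+n})-\psi(p_i))/n$ to be strictly positive. Case~2b is symmetric. Because the three slope-conditions ``there exists a slope-$0$ morphism'', ``every morphism has positive slope'', ``every morphism has negative slope'' are pairwise incompatible, the three cases are in fact disjoint and partition the set of all $n$-periodic trees; this also gives the equivalences between the ``morphism side'' and the ``cycle side'' of (1)--(3).

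Finally, for the topological characterizations in (2) and (3) I would invoke the earlier proposition describing $\cT$ as the union of $\gamma_\infty$ with finite subtrees attached at isolated vertices: an infinite monotone ray cannot be contained in any finite branch, so it must eventually lie in $\gamma_\infty$, and its monotonicity forces $\gamma_\infty$ itself to be a globally monotone chain, which is precisely Case~2a or~2b (according to direction). The one genuinely substantive step, which I expect to be the main obstacle, is verifying that in Case~1 the path $\gamma_\infty$ admits no infinite monotone ray; this follows because a non-oriented cycle in $\cT/\ZZ$ lifts to a $\gamma_\infty$ in which ascending and descending edges occur periodically, so no ray along $\gamma_\infty$ can be monotone.
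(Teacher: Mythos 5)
Your proposal is correct and follows the same route the paper takes: the theorem is stated as a corollary of the case analysis (Cases 1, 2a, 2b) in the proof of Proposition~\ref{prop: existence of periodic morphisms}, and you are unpacking exactly that case analysis. The additional steps you supply — that in Cases~2a/2b \emph{every} periodic morphism is forced to have the prescribed sign of slope (from $p_i<p_{i+n}$ or $p_i>p_{i+n}$ along $\gamma_\infty$), and that any infinite monotone ray must eventually run along $\gamma_\infty$ and hence forces $\gamma_\infty$ to be globally monotone by periodicity — are precisely the observations the paper leaves implicit.
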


\begin{rem}\label{rem: zero slope trees have no oriented cycles}
For a tree with zero slope the sign function $\varepsilon$ is necessarily surjective since the infinite path in $\cT$ must have local maxima and minima.
\end{rem}

Examples of periodic trees with zero, positive and negative slopes are given in Figures \ref{fig0a},\ref{fig1},\ref{fig0c} respectively.

%
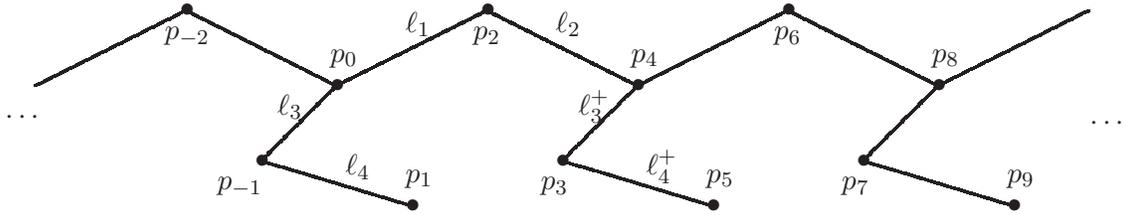
\begin{figure}[htbp]
\begin{center}
%
{
\setlength{\unitlength}{1cm}
{\mbox{
\begin{picture}(10,3)
      \thicklines
\put(0,0){
\qbezier(0,3)(1,2.5)(2,2)
\qbezier(2,2)(3,2.5)(4,3)
\qbezier(1,1)(1.5,1.5)(2,2)
\qbezier(1,1)(2,.7)(3,.4)
\put(-.08,-.08){
\put(1,1){$\bullet$}
\put(0,3){$\bullet$}
\put(-.2,2.7){$p_{-2}$}
\put(2,2.4){$p_0$}
\put(3,2.8){$\ell_1$}
\put(3,.8){$p_1$}
\put(2,2){$\bullet$}
\put(3,.4){$\bullet$}
\put(0.5,.7){$p_{-1}$}
\put(1.3,1.7){$\ell_3$}
\put(2.2,.9){$\ell_4$}
}
}
\put(-4,0){
\put(1.6,1.5){$\cdots$}
\qbezier(2,2)(3,2.5)(4,3)
}
\put(4,0){
\qbezier(0,3)(1,2.5)(2,2)
\qbezier(2,2)(3,2.5)(4,3)
\qbezier(1,1)(1.5,1.5)(2,2)
\qbezier(1,1)(2,.7)(3,.4)
\put(-.08,-.08){
\put(1,2.8){$\ell_2$}
\put(1.3,1.7){$\ell_3^+$}
\put(2.2,.9){$\ell_4^+$}
\put(1,1){$\bullet$}
\put(0,3){$\bullet$}\put(-.1,2.7){$p_{2}$}
\put(2,2.4){$p_4$}
\put(3,.8){$p_5$}
\put(2,2){$\bullet$}
\put(3,.4){$\bullet$}
\put(0.8,.7){$p_{3}$}
}
}
\put(8,0){
\qbezier(0,3)(1,2.5)(2,2)
\qbezier(2,2)(3,2.5)(4,3)
\qbezier(1,1)(1.5,1.5)(2,2)
\qbezier(1,1)(2,.7)(3,.4)
\put(-.08,-.08){
\put(1,1){$\bullet$}
\put(0,3){$\bullet$}\put(-.1,2.7){$p_{6}$}
\put(2,2.4){$p_8$}
\put(3,.8){$p_9$}
\put(2,2){$\bullet$}
\put(3,.4){$\bullet$}
\put(0.8,.7){$p_{7}$}
\put(4.1,1.5){$\cdots$}
}
}
\end{picture}}
}}
\caption{Example of periodic tree with slope zero (with $n=4$). The unique infinite path has edges of positive and negative slope ($\ell_1$ and $\ell_2$). The sign function is $\varepsilon_1=\varepsilon_2=\varepsilon_3=+$ and $\varepsilon_4=-$.}
\label{fig0a}
\end{center}
\end{figure}
%

%
\begin{figure}[htbp]
\begin{center}
%
{
\setlength{\unitlength}{1cm}
{\mbox{
\begin{picture}(10,2.5)
      \thicklines
\put(-2,1.25){
\qbezier(0,1.5)(2.5,1.25)(5,1)
}
\put(2,0){
\qbezier(0,1.5)(2.5,1.25)(5,1)
\qbezier(0,1.5)(1.5,1.75)(3,2)
\qbezier(1,2.25)(2,2.125)(3,2)
\qbezier(1,2.25)(1.5,2.5)(2,2.75)
\put(-.08,-.08){
\put(0,1.5){$\bullet$}
\put(-.1,1.2){$+$}
\put(1,2.5){$-$}
\put(2,3){$-$}
\put(3,2.2){$-$}
\put(1,2.25){$\bullet$}
\put(2,2.75){$\bullet$}
\put(3,2){$\bullet$}
}
}
\put(6,-1.25){
\qbezier(0,1.5)(2.5,1.25)(5,1)
\qbezier(0,1.5)(1.5,1.75)(3,2)
\qbezier(1,2.25)(2,2.125)(3,2)
\qbezier(1,2.25)(1.5,2.5)(2,2.75)
\put(-.08,-.08){
\put(-.1,1.2){$+$}
\put(1,2.5){$-$}
\put(2,3){$-$}
\put(3,2.2){$-$}
\put(0,1.5){$\bullet$}
\put(1,2.25){$\bullet$}
\put(2,2.75){$\bullet$}
\put(3,2){$\bullet$}
}
}
\end{picture}}
}}
\caption{Example of periodic tree with negative slope (with $n=4$). Vertices are labelled with just their sign $\varepsilon_i$. 
}
\label{fig0c}
\end{center}
\end{figure}
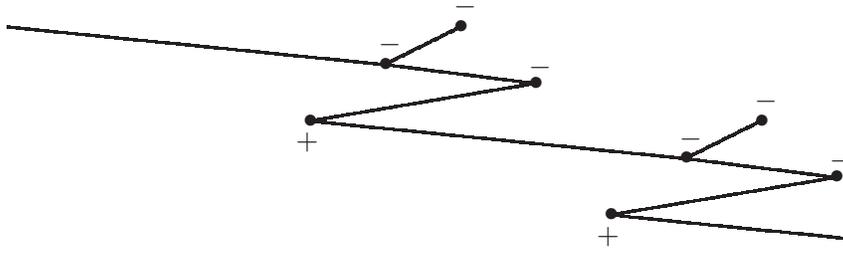

\begin{lem}\label{separation lemma}
Suppose that $\varepsilon_k=+$ (resp. $\varepsilon_k=-$) and $\lambda_L,\lambda_R$ are paths in $\cT$ starting at $p_k$ and passing through the left child and right child (resp. parents) of $p_k$. Then $\lambda_L$ stays to the left of $p_k$ and $\lambda_R$ stays to the right of $p_k$, i.e., $i\le k$ for every vertex $p_i$ in $\lambda_L$ and $k\le j$ for every vertex $p_j$ in $\lambda_R$.
\end{lem}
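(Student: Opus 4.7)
The plan is a proof by contradiction, focusing on the first edge of $\lambda_L$ whose endpoints straddle index $k$, and combining axiom T4 with two structural observations. I will treat only the case $\varepsilon_k=+$; the case $\varepsilon_k=-$ is formally dual, obtained by reversing the partial order on $P_\cT$, which interchanges parents with children and flips the sign function.

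Suppose for contradiction that $\lambda_L$ visits some vertex of index $>k$, and let $p_b$ be the first such along $\lambda_L$, with predecessor $p_a$. Since $\lambda_L$ does not revisit $p_k$, one has $a<k<b$, and $(p_a,p_b)$ is a cover relation in the Hasse diagram. I would proceed using three ingredients. First, T4 applied to $(p_a,p_b)$ and $p_k$ forbids $p_k>\max(p_a,p_b)$ in the poset, since that would give $\varepsilon_k=-$. Second, the cover property of $(p_a,p_b)$ forbids $p_k$ from lying strictly between $p_a$ and $p_b$ in the poset. Third, the \emph{saturated chain principle}: in any tree which is the Hasse diagram of a poset, the unique tree path between two comparable vertices is a monotonically increasing saturated chain. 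Applied to $p_k$ and $p_a$ (resp.\ $p_b$), it forbids $p_k<p_a$ and $p_k<p_b$, since the tree path from $p_k$ to either would have to be monotonically increasing, contradicting the opening descent $p_k\to p_i<p_k$ of $\lambda_L$. Together these reduce the problem to the case where at least one of $p_a,p_b$ is incomparable to $p_k$.

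If $p_b<p_k$ while $p_a$ is incomparable to $p_k$, the saturated chain from $p_k$ to $p_b$ passes through $p_a$ and forces $p_a<p_k$, contradicting the incomparability. Symmetrically, if $p_a<p_k$ while $p_b$ is incomparable to $p_k$, transitivity rules out the cover direction $p_b<p_a$ (which would force $p_b<p_k$), so $p_b>p_a$; then $p_b$ and the predecessor of $p_a$ on the chain from $p_k$ to $p_a$ are two distinct immediate parents of $p_a$. Axiom T1 forces them onto opposite sides of $a$ in the index (one left parent, one right parent), and T2 then eliminates the sub-case $\varepsilon_a=+$.

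The main obstacle is the residual incomparable configurations: the sub-case where $\varepsilon_a=-$ (so T2 permits multiple parents at $p_a$) and the case where both $p_a$ and $p_b$ are incomparable to $p_k$. To resolve these, I expect one must exploit the $n$-periodicity of $\cT$ to transport the local configuration at $p_a$ along the doubly infinite path $\gamma_\infty$, producing a shifted straddling edge to which T4 can be applied at a different vertex, ultimately yielding a global violation of T1 or T4. This incomparable residue is the delicate part of the argument and is where I would concentrate the main effort.
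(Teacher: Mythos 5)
Your proposal has a genuine gap, and you candidly flag it yourself: the residual incomparable configurations (case $\varepsilon_a=-$ with two parents, and the case where both $p_a$ and $p_b$ are incomparable to $p_k$) are left unresolved, with a speculative hope that $n$-periodicity will rescue the argument. That hope is misplaced: the paper's proof of this lemma does not invoke periodicity at all, and it is hard to see how transporting the configuration along $\gamma_\infty$ would produce a contradiction, since the same configuration simply repeats. The real difficulty is that nothing forces $p_a$ to be comparable to $p_k$, so the ``saturated chain principle'' simply does not fire on the first straddling edge; this is not a corner case but the generic situation.

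The paper sidesteps exactly this obstruction by a strong induction on the length of $\lambda_R$ (equivalently $\lambda_L$), with a preliminary reduction: if $\lambda_R$ is not monotone it has an interior local extremum $p_s$, and then the inductive hypothesis applied both to $\lambda_R$ up to $p_s$ (showing $s>k$) and again at $p_s$ (showing the two halves leave $p_s$ on opposite sides) finishes immediately. Once $\lambda_R$ is monotone, the last vertex $p_a$ automatically satisfies $p_a<p_k$, so the comparability you need is free, and T4 applied to the last edge closes the argument. In short, your idea of focusing on a single straddling edge and applying T4 is on the right track, but only after you have earned monotonicity of the path by induction; without that reduction, the incomparable case is not a ``residue'' to be mopped up but the bulk of the problem, and the tools you list (T1, T2, T4, the saturated chain principle) do not suffice to handle it directly.
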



\begin{proof}
By symmetry we consider only $\lambda_R$ in the case $\varepsilon_k=+$. Let $m$ be the length of $\lambda_R$. If $m=1$ then $\lambda_R$ is a single edge connecting $p_k$ to its right child. So, $\lambda_R$ lies to the right of $p_k$. Now suppose $m\ge2$ and the lemma holds for paths of length $<m$.

\ul{Claim:} We may assume that $\lambda_R$ is monotonically decreasing.

Pf: If not it reaches a local max or min in its interior, say at $p_s$. Let $\lambda_R=\lambda_1\lambda_2$ where $\lambda_1$ is the part of $\lambda_R$ from $p_k$ to $p_s$. Then, by induction on $m$, $\lambda_1$ lies to the right of $p_k$. In particular, $k<s$. Also, replacing $p_k$ by $p_s$, we know by induction on $m$ that $\lambda_1,\lambda_2$ lie on opposite sides of $p_s$. So, $\lambda_2$ lies to the right of $p_s$. This would imply that $\lambda_R=\lambda_1\lambda_2$ lies entirely to the right of $p_k$ and we would be done. This proves the claim.

To finish the proof, let $\lambda_R=\lambda_0\ell$ where $\ell=(p_j,p_i)$ is the last edge in $\lambda_R$ and $p_j$ is the last vertex of $\lambda_0$. By induction on $m$ we know that $\lambda_0$ is entirely to the right of $p_k$. So, $k<j$. Since $\lambda_R$ is monotonically decreasing we have that $p_k>\max(p_i,p_j)$. Since $\varepsilon_k=+$ this implies that $k<i$ since, otherwise, $i<k<j$ violating Condition T4. So, $\lambda_R$ is to the right of $p_k$.
\end{proof}

We recall that, in any tree $\cT$, any two points of the tree, not necessarily vertices, are connected by a unique (minimal) path.

\begin{rem}\label{separation remark} This lemma can be rephrased in two ways.

(a) If a path $\lambda$ in $\cT$ reaches a local maximum or minimum at an internal point $p_k$ in $\lambda$ then the starting and ending points of $\lambda$ are on opposite sides of $p_k$. 

(b) If a path $\lambda$ in $\cT$ begins at a vertex $p_k$ and ends in a point in the interior of an edge $(p_i,p_j)$ where $i<k<j$ then $\lambda$ is monotonically increasing or decreasing. 

To see that (b) follows from (a), note that otherwise, $\lambda$ reaches a local max or min with $p_k$ on one side and $p_i,p_j$ on the other contradicting the assumption.
\end{rem}

\begin{thm}\label{thm:linear embedding of T}
Let $\cT$ be an $n$-periodic tree and $\psi:P_\cT\to\RR$ any $n$-periodic morphism.
\begin{enumerate}
\item The linear map $\overline\psi:\cT\to \RR^2$ given on vertices by $\overline\psi(p_k)=(k,\psi(p_k))$ is an embedding.
\item If $\varepsilon_k=+$ then $\overline\psi(\cT)$ is disjoint from the set of all $(k,y)$ where $y<\psi(p_k)$.
\item If $\varepsilon_k=-$ then $\overline\psi(\cT)$ is disjoint from the set of all $(k,y)$ where $y>\psi(p_k)$.
\end{enumerate}
\end{thm}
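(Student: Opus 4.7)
My approach is to prove parts (2) and (3) first, using Remark~\ref{separation remark}(b), and then deduce part (1) via a closed-loop argument using Remark~\ref{separation remark}(a); throughout, condition T4 supplies a sign obstruction whenever a path from $p_k$ reaches a point ``too low'' or ``too high''. For (2), suppose some $(k,y_0)$ with $y_0<\psi(p_k)$ lies in $\overline\psi(\cT)$. It cannot be a vertex, since $p_k$ is the only vertex with $x$-coordinate $k$, so it lies in the interior of an edge $(p_i,p_j)$ with $i<k<j$. Remark~\ref{separation remark}(b) applied to the tree path from $p_k$ to this interior point forces the path to be strictly monotone in $\psi$, hence strictly decreasing since it ends below $\psi(p_k)$. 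The convex combination $y_0=\frac{j-k}{j-i}\psi(p_i)+\frac{k-i}{j-i}\psi(p_j)$ lies strictly between $\psi(p_i)$ and $\psi(p_j)$; the last vertex of the path (one of $p_i,p_j$) has $\psi$-value above $y_0$, forcing the other endpoint below $y_0$, so both $p_i$ and $p_j$ have $\psi$-value less than $\psi(p_k)$. Thus $p_k>\max(p_i,p_j)$, and T4(b) forces $\varepsilon_k=-$, contradicting $\varepsilon_k=+$. Part (3) is the exact dual, using T4(a).

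For (1), suppose for contradiction that distinct tree points $P\ne Q$ satisfy $\overline\psi(P)=\overline\psi(Q)=(x_0,y_0)$, and let $\gamma$ be the unique tree path from $P$ to $Q$. The $y$-coordinate along $\gamma$ is a continuous piecewise linear function that takes the value $y_0$ at both ends; it cannot be constant, since $\gamma$ contains at least a portion of an edge whose endpoints have distinct $\psi$-values. Hence the function attains either a strict maximum above $y_0$ or a strict minimum below $y_0$, and since the $y$-coordinate is linear along each edge, this extremum is achieved at a vertex $p_k$; because $P$ and $Q$ themselves have $y$-coordinate $y_0$, the vertex $p_k$ lies strictly interior to $\gamma$. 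Without loss of generality $p_k$ is a strict local maximum: both path-neighbors of $p_k$ then have $\psi$-value strictly below $\psi(p_k)$ and are therefore children of $p_k$, so T3 forces $\varepsilon_k=+$. Remark~\ref{separation remark}(a) now places $P$ and $Q$ on opposite sides of $p_k$ in $\cT$, and Lemma~\ref{separation lemma} implies that every tree point on the left side of $p_k$ different from $p_k$ itself has $x$-coordinate strictly less than $k$, and symmetrically for the right side. Since $P\ne p_k\ne Q$ both have $x$-coordinate $x_0$, we obtain $x_0<k$ and $x_0>k$ simultaneously, the desired contradiction.

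The main obstacle is the final sharpening from the weak inequality ``$\le k$'' (as given directly by Lemma~\ref{separation lemma}) to the strict inequality ``$<k$'' for tree points other than $p_k$ on the left subtree: one must observe that among edges in the left subtree of $p_k$ the only one touching the vertical line $x=k$ is the unique edge from $p_k$ to its left child $p_{k'}$, whose open interior lies in the open interval $(k',k)$, while every other edge in the left subtree has both endpoints of index strictly less than $k$. Once this strict separation is in place, the remainder of the argument is a direct combination of the separation lemma, the convex combination computation on the crossing edge, and the sign obstruction from T4.
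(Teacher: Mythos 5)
Your proposal is correct and follows essentially the same strategy as the paper: parts (2) and (3) via Remark~\ref{separation remark}(b) forcing a monotone path and then a T4 contradiction, and part (1) via a local extremum of the $y$-coordinate along the connecting path together with Remark~\ref{separation remark}(a). The reordering (proving (2)--(3) before (1)) is harmless since Remark~\ref{separation remark} depends only on Lemma~\ref{separation lemma}, not on the theorem itself, and you even spell out a step the paper leaves implicit --- promoting the weak conclusion $i\le k$ of Lemma~\ref{separation lemma} to the strict $x$-coordinate separation needed to derive the contradiction in (1). One minor imprecision in your write-up of (2): from ``$\psi(p_i),\psi(p_j)<\psi(p_k)$'' you conclude ``$p_k>\max(p_i,p_j)$'' in the partial order, which does not follow directly (a periodic morphism preserves order but need not reflect it); the correct justification is that the monotone decreasing path makes $p_k$ greater than its last vertex $\max(p_i,p_j)$, and the edge relation between $p_i,p_j$ plus transitivity then gives $p_k>\min(p_i,p_j)$ as well --- the same compression appears in the paper's own proof, so this is a quibble rather than a gap.
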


\begin{proof}
To prove (1) suppose that $p,q$ are two point in $\cT$ which map to the same point $z\in\RR^2$ under the linear mapping $\overline\psi:\cT\to\RR^2$. Let $\lambda$ be the unique path in $\cT$ connecting $p$ and $q$. The image of $\lambda$ in $\RR^2$ has points $v,w$ with maximal and minimal $y$-coordinates. Since $v\neq w$, one of them is not $z$, say $w\neq z$. Then $w$ is the image of some vertex $p_k$ which is a local minimum of the curve $\lambda$ and $p,q\neq p_k$. By Remark \ref{separation remark}(a), the endpoints $p,q$ of $\lambda$ lie on opposite sides of $p_k$ (in the $x$-direction). So, they cannot map to the same point in $\RR^2$.

To prove (2), suppose not. Then there is a point $q$ in $\cT$ so that $\overline \psi(q)$ lies directly below $\overline\psi(p_k)$ with $\varepsilon_k=+$. Then $q$ lies on an edge $\ell=(p_i,p_j)$ with $i<k<j$. Let $\lambda$ be the path in $\cT$ from $p_k$ to $q$. By Remark \ref{separation remark}(b), $\lambda$ must be monotonically decreasing. So, we have $p_k>\max(p_i,p_j)$ which contradicts condition T4.

The proof of (3) is analogous.
\end{proof}

Combining Theorem \ref{thm:linear embedding of T} and Remark \ref{separation remark}(b) we get the following important corollary. 

\begin{cor}\label{monotonic cor}
Suppose that $\ell=(p_i,p_j)$ is an edge in $\cT$ and $i<k<j$. Then the path in $\cT$ from $p_k$ to any point on $\ell$ is either monotonically increasing or monotonically decreasing depending on whether $\varepsilon_k=+$ or $-$ respectively. In particular, we have the following for any periodic morphism $\psi$ on $P_\cT$.
\begin{enumerate}
\item If $\varepsilon_k=+$ then $\psi(p_k)<\min(\psi(p_i),\psi(p_j))$.
\item If $\varepsilon_k=-$ then $\psi(p_k)>\max(\psi(p_i),\psi(p_j))$.\qed
\end{enumerate}
\end{cor}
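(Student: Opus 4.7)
The idea is to derive both the monotonicity claim and the inequalities (1), (2) from Theorem \ref{thm:linear embedding of T} together with Remark \ref{separation remark}(b). Fix any periodic morphism $\psi\in\cR(\cT)$, which exists by Proposition \ref{prop: existence of periodic morphisms}, and form the planar embedding $\overline\psi$. For any interior point $q$ of $\ell$, the unique path $\lambda_q$ from $p_k$ to $q$ in $\cT$ meets the hypothesis of Remark \ref{separation remark}(b) (since $i<k<j$), so $\psi$ is strictly monotone along $\lambda_q$. Removing $\ell$ breaks $\cT$ into two subtrees, and $p_k$ lies in the same component as exactly one endpoint of $\ell$, say $p_i$. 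Every $\lambda_q$ therefore begins with the common subpath $p_k\to\cdots\to p_i$ and ends with a subsegment of $\ell$, so all $\lambda_q$ share the same monotonicity direction.

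To pin down that direction, assume $\varepsilon_k=+$ and let $q_0\in\ell$ be the point with $\overline\psi(q_0)=(k,y_0)$, which exists uniquely since $i<k<j$; explicitly, $y_0=\psi(p_i)+\tfrac{k-i}{j-i}(\psi(p_j)-\psi(p_i))$. Theorem \ref{thm:linear embedding of T}(2) forbids $\overline\psi(\cT)$ from meeting the half-line $\{(k,y):y<\psi(p_k)\}$, forcing $y_0\ge\psi(p_k)$; injectivity of $\overline\psi$ (Theorem \ref{thm:linear embedding of T}(1)) rules out equality because $q_0\ne p_k$. Hence $y_0>\psi(p_k)$, so $\lambda_{q_0}$, and therefore every $\lambda_q$, is strictly monotone increasing.

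Strict monotone increase along $\lambda_q$ immediately yields $\psi(p_k)<\psi(p_i)$; letting $q$ tend to $p_j$ along $\ell$ gives $\psi(p_i)\le\psi(p_j)$, hence $\min(\psi(p_i),\psi(p_j))=\psi(p_i)>\psi(p_k)$, which is (1). The case $\varepsilon_k=-$ is entirely symmetric, invoking Theorem \ref{thm:linear embedding of T}(3) in place of (2) to obtain strict monotone decrease and the reversed inequality $\psi(p_k)>\max(\psi(p_i),\psi(p_j))$. The main obstacle, such as it is, lies in asserting that all the paths $\lambda_q$ share a common monotonicity direction; this is handled by the disconnection argument in the first paragraph, after which the corollary is a direct combination of the two cited results.
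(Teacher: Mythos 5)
Your proof is correct and fills in, with more explicit detail, exactly the combination of Theorem \ref{thm:linear embedding of T} and Remark \ref{separation remark}(b) that the paper invokes: Remark (b) gives monotonicity of each $\lambda_q$, the disconnection argument shows they all share a direction, and Theorem \ref{thm:linear embedding of T}(2)/(3) pins down that direction via the point $q_0$ above or below $p_k$. One small streamlining worth noting: once you know $\lambda_{q_0}$ is increasing and passes through $p_i$, you have both $\psi(p_k)<\psi(p_i)$ and $\psi(p_i)<y_0$; since $y_0$ is a strict convex combination of $\psi(p_i)$ and $\psi(p_j)$, the inequality $\psi(p_i)<y_0$ already forces $\psi(p_i)<\psi(p_j)$, avoiding the limit $q\to p_j$.
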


\begin{rem}
We will see later (Proposition \ref{prop: psi satisfies stability conditions for D(b)}) that these are stability conditions for virtual semi-invariants. To completely analyze stability conditions for virtual semi-invariants, we need the following converse of the above corollary.
\end{rem}

\begin{cor}\label{converse of monotonic cor}
Suppose $i<j$ and $\psi(p_i),\psi(p_j)$ are consecutive elements of the image of $\psi:P_\cT\to\RR$. Suppose also that, for all $i<k<j$, conditions {(1)} and {(2)} in Corollary \ref{monotonic cor} hold. Then $\ell=(p_i,p_j)$ is an edge in $\cT$.
\end{cor}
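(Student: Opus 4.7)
The plan is to argue by contradiction. I will suppose $(p_i,p_j)$ is not an edge of $\cT$ and let $\lambda=(q_0=p_i,q_1,\ldots,q_r=p_j)$ denote the unique path in $\cT$ joining these two vertices, so $r\geq 2$. After relabelling if necessary, I assume $\alpha:=\psi(p_i)<\psi(p_j)=:\beta$. Because $\psi$ is strictly order-preserving on comparable pairs and each consecutive pair $q_{l-1},q_l$ along $\lambda$ is comparable via its joining edge, I first conclude that no interior $\psi(q_l)$ equals $\alpha$ or $\beta$; combined with the consecutiveness hypothesis, this forces each interior $\psi(q_l)$ to lie strictly in $(-\infty,\alpha)\cup(\beta,\infty)$.

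Next I exploit the planar embedding $\overline\psi:\cT\to\RR^2$ from Theorem \ref{thm:linear embedding of T}. The piecewise-linear curve $\overline\psi(\lambda)$ joins $(i,\alpha)$ to $(j,\beta)$, so by the intermediate value theorem some edge $(q_s,q_{s+1})$ of $\lambda$, which I write as $(p_m,p_{m'})$ with $m<m'$, must ``span'' the strip $\{\alpha<y<\beta\}$: that is, $\psi(p_m)\leq\alpha$ and $\psi(p_{m'})\geq\beta$. Applying Corollary \ref{monotonic cor} to this spanning edge at $k'=i$ (when $m<i<m'$) immediately yields $\alpha<\alpha$ or $\alpha>\beta$, both absurd; evaluation at $k'=j$ is analogous. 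Hence $\{i,j\}\cap(m,m')=\emptyset$, and since $i<j$ and $m<m'$, only three positional cases remain: (I) $i<j\leq m<m'$; (II) $m<m'\leq i<j$; (III) $i\leq m<m'\leq j$. In case (III) with $m=i$ and $m'=j$ the spanning edge is $(p_i,p_j)$ itself, furnishing the desired contradiction.

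The remaining sub-cases I dispatch with a uniform template. Whenever an index of the spanning edge lies in $(i,j)$, the original hypothesis fixes its sign ($\varepsilon_m=+$ when $m\in(i,j)$, and $\varepsilon_{m'}=-$ when $m'\in(i,j)$), because the opposite sign contradicts $\psi(p_m)\leq\alpha$ or $\psi(p_{m'})\geq\beta$. Axioms T2 and T3 then force uniqueness of parent at $p_m$ or of child at $p_{m'}$, with the spanning edge itself playing the role of that unique edge. Continuing along $\lambda$ beyond the spanning endpoint and iterating Lemma \ref{separation lemma}, each subsequent vertex of $\lambda$ is constrained to a specific left/right position, and following the chain of $\pm$-signs forced by the hypothesis at each interior index leads to a vertex forced to carry two left-children or two right-parents, violating T1. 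As a representative example, sub-case (III) with $m=i<m'<j$ forces $\varepsilon_{m'}=-$, making $p_i$ the unique child of $p_{m'}$; thus $q_2$ must be a parent of $p_{m'}$, and Lemma \ref{separation lemma} picks it out as the right-parent (since $p_j$ lies to the right). Iterating eventually places two distinct left-children at a common ancestor, contradicting T1. Cases (I) and (II) and the strictly interior sub-case of (III) are handled by analogous traversals, with the initial edge of $\lambda$ at $p_i$ (or the terminal edge at $p_j$) steering the path into a region whose planar constraints ultimately produce the violation.

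The hard part will be this concluding case analysis: although the mechanism of sign-fixing, parent/child uniqueness, and iterated separation is uniform across sub-cases, identifying the precise T1--T3 violation in each planar configuration requires careful bookkeeping of the left/right structure and of the $\varepsilon$-signs along $\lambda$, particularly in cases (I) and (II) where the spanning edge lies outside the index range $[i,j]$ and one must rely more heavily on the planar arguments at $p_i$ and $p_j$ afforded by Lemma \ref{separation lemma}.
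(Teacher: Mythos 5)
Your proposal diverges substantially from the paper's proof and, more importantly, has genuine gaps. The paper argues as follows: if $\lambda$ is not a single edge it has an interior vertex, and by the consecutiveness hypothesis the $\psi$-values along $\lambda$ have no vertices strictly inside $(\alpha,\beta)$, so $\lambda$ achieves its highest or lowest $\psi$-value at some interior vertex $p_k$. At such a vertex $p_k$ has two children (resp.\ two parents), so T2/T3 force $\varepsilon_k=+$ (resp.\ $-$), and Remark \ref{separation remark}(a) then forces $i<k<j$. This directly contradicts condition (1) (resp.\ (2)) in the hypothesis. That is the entire proof; it requires no case analysis and no iteration.

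Your approach misses this key lever. Instead of the extremal interior vertex, you look at a ``spanning edge'' and then try to propagate T1--T4 constraints along $\lambda$. The difficulties are structural: the spanning edge's endpoints need not have indices in $(i,j)$, and the hypothesis says nothing about vertices with indices outside that range, so your ``sign-fixing'' mechanism has nothing to grip in your cases (I) and (II). You acknowledge this (``one must rely more heavily on the planar arguments''), but no argument is actually given there, so those cases remain open. Even in case (III) the ``iterating eventually places two distinct left-children at a common ancestor'' step is not demonstrated, and it is not clear why the traversal along $\lambda$ must terminate in a T1 violation rather than wandering off to indices where the hypothesis imposes no constraint. In short, the hard part you flag at the end is precisely where a proof is required, and it is absent.

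There is also a smaller but real error at the start. From ``$\psi$ is strictly order-preserving on comparable pairs'' you can only conclude $\psi(q_{l-1})\neq\psi(q_l)$ for consecutive vertices of $\lambda$, hence $\psi(q_1)\neq\alpha$ and $\psi(q_{r-1})\neq\beta$. You cannot conclude that \emph{no} interior vertex has $\psi$-value $\alpha$ or $\beta$: a periodic morphism need not be injective on incomparable elements, and an interior $q_l$ could well satisfy $\psi(q_l)=\psi(p_i)$. Consecutiveness gives $\psi(q_l)\in(-\infty,\alpha]\cup[\beta,\infty)$, not the strict version you assert. The paper's argument is robust to this (an interior vertex at level $\alpha$ or $\beta$ is still a local min or max of $\lambda$ and the contradiction goes through), but your set-up leans on the strict form without justification.

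To repair your approach you would essentially have to rediscover what the paper does: instead of a spanning edge, take the vertex of $\lambda$ at extremal $\psi$-value and invoke Remark \ref{separation remark}(a) to place it strictly between $i$ and $j$, at which point a single application of the hypothesis finishes. I would recommend abandoning the spanning-edge and traversal machinery and proving the statement that way.
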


\begin{proof}
Let $\lambda$ be the path in $\cT$ from $p_i$ to $p_j$. We claim that $\lambda$ stays between the horizontal lines $\RR\times \psi(p_i)$ and $\RR\times\psi(p_j)$ and therefore consists of a single edge from $p_i$ to $p_j$ no other points being in this region. To prove this claim, suppose not. Then $\lambda$ reaches either its highest or lowest point in its interior. If $\lambda$ reaches a minimum at $p_k$ then, by Remark \ref{separation remark}(a), we must have $\varepsilon_k=-$ and $i<k<j$ which contradicts \ref{monotonic cor}(2). If $\lambda$ reaches a maximum then \ref{monotonic cor}(1) is violated. So, the claim holds and the corollary follows.
\end{proof}

The above corollary will allow us to construct a periodic tree with any prescribed edge $(p_i,p_j)$ which satisfies (1) or (2) in the following proposition. We will see later that these conditions are equivalent to the statement that the edge vector corresponding to $\ell=(p_i,p_j)$ is a real Schur root. Namely, when $\varepsilon_i\neq \varepsilon_j$, the edge vector is either preprojective or preinjective and the edge vector is a regular exceptional root if and only if $\varepsilon_i=\varepsilon_j$ and $j-i<n$.

\begin{prop}\label{cor:edge vectors are Schur roots}
If $\ell=(p_i,p_j)$ with $i<j$ is an edge in an $n$-periodic tree $\cT$ and $\varepsilon$ is surjective then either 
$(1)$ $j-i<n$ or 
$(2)$ $\varepsilon_i\neq \varepsilon_j$.
In particular, the length $j-i$ of $\ell$ is not divisible by $n$.
\end{prop}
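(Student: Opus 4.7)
The approach is proof by contradiction: assume $j - i \geq n$ and $\varepsilon_i = \varepsilon_j$, and derive a contradiction in the two sub-cases $j-i > n$ and $j - i = n$. The ``in particular'' statement then follows, since $n \mid (j-i)$ forces $\varepsilon_j = \varepsilon_i$ by $n$-periodicity.

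For the sub-case $j - i > n$, both $k := i + n$ and $k' := j - n$ lie strictly between $i$ and $j$ and satisfy $\varepsilon_k = \varepsilon_i$ and $\varepsilon_{k'} = \varepsilon_j$. Assuming $\varepsilon_i = \varepsilon_j = +$, Corollary~\ref{monotonic cor}(1) applied at $k$ gives $\psi(p_{i+n}) < \psi(p_i)$, forcing the slope parameter $m := \psi(p_{i+n}) - \psi(p_i)$ to satisfy $m < 0$; applied at $k'$ it gives $\psi(p_{j-n}) < \psi(p_j)$, i.e.\ $-m < 0$, hence $m > 0$. Contradiction. The case $\varepsilon_i = \varepsilon_j = -$ is symmetric via Corollary~\ref{monotonic cor}(2).

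For the sub-case $j - i = n$, the equality $\varepsilon_i = \varepsilon_j$ is automatic. I will WLOG take $\varepsilon_i = +$ and positive slope (the other sign/slope combinations follow by analogous reflections). By $n$-periodicity, the edge $(p_i, p_{i+n})$ and its translates form the unique doubly infinite axis $\gamma$ of $\cT$. By surjectivity of $\varepsilon$, there exists $k$ with $i < k < i + n$ and $\varepsilon_k = -$; Corollary~\ref{monotonic cor}(2) gives $\psi(p_k) > \psi(p_{i+n})$, so in the planar embedding of Theorem~\ref{thm:linear embedding of T} the point $\overline\psi(p_k)$ lies strictly above the axis segment from $\overline\psi(p_i)$ to $\overline\psi(p_{i+n})$. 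On the other hand, conditions T1 and T2 (the left-child and right-parent slots of $p_i$ are taken by the axis, and $\varepsilon_i = +$ forbids a second parent) force the unique branch at $p_i$ to attach via a single right-child edge, whose first step descends strictly below the axis segment. I plan to show the branch is confined to the strip $i \le x < i+n$: if it reached $x = i+n$, continuity together with Theorem~\ref{thm:linear embedding of T}(2) at $x = i+n$ would force it to pass through $\overline\psi(p_{i+n})$, which is impossible. Connectedness of the branch minus $p_i$, together with disjointness from the axis segment in the embedding, then traps it entirely on the below-axis side of the strip. Since the branches at the axis vertices $p_{i+qn}$ have disjoint translated $x$-ranges $[i+qn, i+(q+1)n)$, the vertex $p_k$ must live in the branch at $p_i$, contradicting its location above the axis.

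The main obstacle is this second sub-case: specifically, the two planar-embedding facts that (i) the branch at $p_i$ is trapped in the strip $[i, i+n)$, and (ii) it lies entirely below the axis edge. Both combine Theorem~\ref{thm:linear embedding of T} with connectedness of the branch and the structural consequence of T1 and T2 that $p_i$ admits a unique branch edge.
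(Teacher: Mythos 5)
Your overall decomposition matches the paper's, and sub-case $j - i > n$ is correct: you deduce $m<0$ and $m>0$ from the slope of a periodic morphism $\psi$ (cite Proposition~\ref{prop: existence of periodic morphisms} for its existence), while the paper argues directly in the poset by applying T4/Corollary~\ref{monotonic cor} to the translated edge $(p_{i+n},p_{j+n})$ to get $p_{i+n}<p_j<p_{i+n}$; both are fine. The issue is in the $j-i=n$ sub-case, where your ``strip confinement'' step is circular as sketched and also unnecessary. To squeeze the branch onto $\overline\psi(p_{i+n})$ at the wall $x=i+n$ you invoke Theorem~\ref{thm:linear embedding of T}(2), which only gives the lower bound $y\ge\psi(p_{i+n})$; you need to already know the branch lies \emph{below} the axis to get the matching upper bound, yet you present ``below axis'' as a consequence of being trapped in the strip. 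The fix (and the paper's route) is to establish ``below axis'' first and directly: the branch minus $p_i$ is connected, disjoint from the \emph{entire} axis line by the embedding property of $\overline\psi$, and its initial edge starts strictly below, so the branch lies entirely in the lower half-plane. Once you have this, the strip plays no role: apply the same argument to the branch at every axis vertex $p_{i+qn}$, conclude that every vertex $p_k$ has a point of the axis strictly above it, and then Theorem~\ref{thm:linear embedding of T}(3) forces $\varepsilon_k=+$ for all $k$, contradicting surjectivity of $\varepsilon$ --- no need to locate $p_k$ in a particular branch or to check both walls of the strip (your sketch addresses $x=i+n$ but silently omits the wall $x=i$).
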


\begin{rem}\label{rem: first use of e surjective}
This is the only statement in this section which uses the assumption that $\varepsilon$ is surjective. When $\varepsilon$ is constant, $\cT$ can have an edge of any length $\le n$.
\end{rem}

\begin{proof} Suppose not. Then $j\ge i+n$ and $\varepsilon_i=\varepsilon_j$. Consider first the case $j>i+n$. Assume by symmetry that $\varepsilon_i=\varepsilon_j=\varepsilon_{i+n}=+$. Then $p_{i+n}<p_i,p_j$. Since $\ell'=(p_{i+n},p_{j+n})$ is also an edge of $\cT$ and $i+n<j<j+n$ we get $p_{j}<p_{i+n},p_{j+n}$ which is a contradiction.

So, $j=i+n$. Then $\ell$ and its translates form a straight line in the plane and this line cannot be connected to any point above it since $p_i$ can have only one parent. So, all other nodes of $\cT$ must be below this line. By Theorem \ref{thm:linear embedding of T}(3), we must have $\varepsilon_j=+$ for all $j$ contradicting the assumption that $\varepsilon$ is surjective.
\end{proof}


\subsection{Leaves of $\cT$}\label{ss1.3: leaves of T}

Recall that a \emph{leaf} of a graph is vertex which is an endpoint of exactly one edge. When a leaf and its incident edge are removed from any tree, what remains is a smaller tree. This is a very useful induction procedure. We will characterize the leaves of $\cT$ in terms of a periodic morphism $\psi$.

\begin{lem}\label{leaf corollary}
Suppose that $p_i$ is a leaf of an $n$-periodic tree $\cT$. Then:
\begin{enumerate}
\item If $\varepsilon_i=-$ then $p_i$ has one child and no parents.
\item If $\varepsilon_i=+$ then $p_i$ has one parent and no children. 
\end{enumerate}
\end{lem}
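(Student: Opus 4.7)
The plan is to prove (1) by contradiction and then note that (2) is completely symmetric. Suppose $\varepsilon_i = -$ and the unique neighbor of the leaf $p_i$ is a parent $p_j$, so $p_i < p_j$. By the left-right symmetry of the definitions I may assume $j > i$, i.e., $p_j$ is a right parent of $p_i$. The separation lemma (Lemma~\ref{separation lemma}) applied at $k = i$ with $\varepsilon_i = -$ then says that any path in $\cT$ which starts at $p_i$ and passes through the right parent $p_j$ stays to the right of $p_i$, meaning every vertex $p_s$ along such a path satisfies $s \ge i$.

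To contradict this, I use the $n$-periodicity of $\cT$: the vertex $p_{i-n}$ lies in $\cT$, and its $x$-coordinate is $i - n < i$. Since $\cT$ is a tree there is a unique path $\mu$ in $\cT$ from $p_i$ to $p_{i-n}$. The leaf $p_i$ has only one incident edge, namely $(p_i, p_j)$, so $\mu$ must begin with this edge and hence passes through $p_j$. By the previous paragraph every vertex of $\mu$ has index at least $i$; but the endpoint $p_{i-n}$ has index $i - n < i$, a contradiction. Therefore the unique neighbor of $p_i$ must be a child, which together with the fact that $p_i$ has exactly one incident edge yields (1): one child and no parents.

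Part (2) follows by an entirely analogous argument, replacing ``parent'' by ``child'', Condition T3 by Condition T2, and the $\varepsilon_k = -$ case of Lemma~\ref{separation lemma} by the $\varepsilon_k = +$ case (and using $p_{i+n}$ or $p_{i-n}$ on the opposite side, as dictated by symmetry). The only subtlety I would watch for is ensuring the separation lemma applies in the one-sided situation where $p_i$ has, say, only a right parent and no left parent; inspection of the proof of Lemma~\ref{separation lemma} shows that the induction on path length works independently for $\lambda_L$ and $\lambda_R$, so the one-sided case is already covered. The main conceptual step is thus the use of periodicity to force a ``long-distance'' path from $p_i$ to $p_{i-n}$ that is incompatible with local one-sidedness.
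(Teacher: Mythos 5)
Your proof is correct and takes essentially the same approach as the paper: both derive a contradiction from periodicity (via the forced path from $p_i$ to $p_{i\pm n}$) combined with the one-sidedness property coming from Lemma \ref{separation lemma}. The only minor technical difference is that you apply Lemma \ref{separation lemma} directly, while the paper routes the contradiction through Remark \ref{separation remark}(b) and the planar embedding of Theorem \ref{thm:linear embedding of T} (noting the path must cross $x = i$ at a point directly below $p_i$); your version is slightly more streamlined, and your explicit check that the separation lemma works one-sidedly is a sensible precaution.
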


\begin{proof}
To prove (1) suppose that $\varepsilon_i=-$ and $p_i$ has a right parent $p_j$. Let $\lambda$ be the path in $\cT$ from $p_i$ to $p_{i-n}$. Then $\lambda$ goes first to $p_j$ which is on the right side of $p_i$ and ends on the left side of $p_i$. So, $\lambda$ goes through a point $q$ which is directly below $p_i$. By Remark \ref{separation remark}(b), the part of the path $\lambda$ from $p_i$ to $q$ is monotonically decreasing. But this is impossible since its first step is upward.

The other case of (1) and both cases of (2) are similar.
\end{proof}

\begin{prop}\label{second leaf corollary}
Suppose there are $i<j<k$, $\varepsilon_i=\varepsilon_j=\varepsilon_k=-$ and $\varepsilon_t=+$ for all $t\neq j$ between $i$ and $k$. Then, for any $n$-periodic morphism $\psi$ on $P_\cT$, the following are equivalent.
\begin{enumerate}
\item $p_j$ is a leaf of $\cT$.
\item $\psi(p_j)>\psi(p_s)$ for all $i\le s\le k$ with $s\neq j$.
\end{enumerate}
Furthermore, in that case, the unique child of $p_j$ is $p_s$ where $i\le s\le k$ has the property that $s\neq j$ and $\psi(p_s)>\psi(p_t)$ for all $i\le t\le k$, $t\neq j,s$.
\end{prop}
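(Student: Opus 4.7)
For $(2)\Rightarrow(1)$: since $\varepsilon_j=-$, condition T3 already restricts $p_j$ to at most one child, so by Lemma~\ref{leaf corollary}(1) it suffices to rule out parents. A hypothetical parent $p_u$ of $p_j$ satisfies $\psi(p_u)>\psi(p_j)$; if $u\in[i,k]\setminus\{j\}$ this directly contradicts (2), and if $u$ lies outside $[i,k]$ (say $u>k$) then $p_k$ is strictly between $j$ and $u$ in $x$-coordinate and Corollary~\ref{monotonic cor}(2) applied to the edge $(p_j,p_u)$ (using $\varepsilon_k=-$) forces $\psi(p_k)>\psi(p_u)>\psi(p_j)$, again contradicting (2). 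Connectedness of $\cT$ then leaves exactly one neighbor at $p_j$, so $p_j$ is a leaf.

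For $(1)\Rightarrow(2)$, Lemma~\ref{leaf corollary} gives a unique child $p_c$ of $p_j$ (and no parents). Suppose for contradiction that $\psi(p_{t^*})\ge\psi(p_j)$ for some $t^*\in[i,k]\setminus\{j\}$. The unique path $\gamma$ in $\cT$ from $p_j$ to $p_{t^*}$ must begin with the edge $(p_j,p_c)$, so $\psi$ along $\gamma$ drops to $\psi(p_c)<\psi(p_j)\le\psi(p_{t^*})$ before returning; because tree-neighbors always have distinct $\psi$-values, the minimum of $\psi|_\gamma$ is attained at an interior vertex $p_r$ with both path-neighbors strictly above it, so $p_r$ is a local minimum of $\gamma$. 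Remark~\ref{separation remark}(a) places $r$ strictly between $j$ and $t^*$, so $r\in(i,k)\setminus\{j\}$, forcing $\varepsilon_r=+$; but then the two path-neighbors of $p_r$ give two tree-parents of $p_r$, violating T2.

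For the \emph{furthermore} clause, first $c\in[i,k]$: if $c>k$ (symmetrically $c<i$) then $p_k$ lies strictly between $j$ and $c$ and Corollary~\ref{monotonic cor}(2) gives $\psi(p_k)>\psi(p_j)$, contradicting the already-proved (2). To see that $p_c$ achieves the second largest $\psi$-value, assume $\psi(p_{s^*})\ge\psi(p_c)$ for some $s^*\in[i,k]\setminus\{j,c\}$. When $s^*$ is strictly between $j$ and $c$, Corollary~\ref{monotonic cor} applied to $(p_j,p_c)$ gives either $\psi(p_{s^*})<\psi(p_c)$ (if $\varepsilon_{s^*}=+$) or $\psi(p_{s^*})>\psi(p_j)$ (if $\varepsilon_{s^*}=-$), both contradictions. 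Otherwise, examine the path $p_j\to p_c\to\cdots\to p_{s^*}$: whenever $p_j$ and $p_{s^*}$ lie on the same side of $p_c$, Remark~\ref{separation remark}(a) forbids $p_c$ from being a local minimum, so the path descends strictly below $\psi(p_c)$ and produces an internal local minimum $p_r$ strictly between $j$ and $s^*$, which again lies in $(i,k)\setminus\{j\}$ and triggers T2; in the opposite configuration $p_c$ is itself the local minimum, with $\varepsilon_c=+$ (the endpoint cases $c\in\{i,k\}$ only render certain subcases vacuous), and T2 again applies.

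The main obstacle is ensuring that every internal local minimum produced by Remark~\ref{separation remark}(a) genuinely lies in $(i,k)\setminus\{j\}$, so that its sign is $+$ and T2 bites; this is handled uniformly by the strictness of ``opposite sides of $p_r$'' in Remark~\ref{separation remark}(a) combined with $t^*,s^*\in[i,k]$ and the observation that $p_j$, being a leaf, can never be interior to any path.
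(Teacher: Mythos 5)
Your proof is correct and follows the paper's own strategy: paths from the leaf that are not monotone must have an interior local minimum, Remark~\ref{separation remark}(a) forces that minimum to lie strictly between the two endpoints (hence in $(i,k)\setminus\{j\}$, hence $\varepsilon=+$), and T2 then gives the contradiction. The only substantive difference is in the \emph{furthermore} clause, where the paper takes the vertex $p_s$ with maximal $\psi$-value and shows the path to it is monotone and a single edge, whereas you take the unique child $p_c$ and show it has the maximal $\psi$-value; these are equivalent. Two small remarks. First, in $(2)\Rightarrow(1)$ the appeal to Lemma~\ref{leaf corollary}(1) is unnecessary (and reads as circular, since that lemma presupposes $p_j$ is a leaf): you already have ``at most one child'' from T3 and ``at least one neighbor'' from connectedness, so ruling out parents is all that is needed. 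Second, in the opposite-sides subcase of the \emph{furthermore} argument you assert that $p_c$ \emph{is} the local minimum, but the path may continue to descend after $p_c$ before rising back to $\psi(p_{s^*})$, in which case $p_c$ is not a local minimum; the fix is immediate (a later interior vertex $p_r$ with $\psi(p_r)<\psi(p_c)$ is then the local minimum, and the same ``strictly between $j$ and $s^*$, hence in $(i,k)\setminus\{j\}$, hence T2'' argument applies), but as written that subcase is incomplete.
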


\begin{rem}\label{rem: dual leaf formula}
The dual statement also holds: when the signs are reversed we change the inequalities $\psi(p_j)>\psi(p_s)$ and $\psi(p_s)>\psi(p_t)$ to $\psi(p_j)<\psi(p_s)$ and $\psi(p_s)<\psi(p_t)$.
\end{rem}

\begin{proof}
$(2)\then(1)$ Assuming (2), $p_j$ cannot have parents since any edge starting at $\overline\psi(p_j)$ and going up would hit one of the two ascending walls above $\overline\psi(p_i)$ and $\overline\psi(p_k)$ before reaching any other node of $\overline\psi(\cT)$. Since $\varepsilon_j=-$, $p_j$ has at most one child. It is a leaf.

$(1)\then(2)$ Suppose that $p_j$ is a leaf of $\cT$ and $\psi(p_s)\ge \psi(p_j)$ for some $s\neq j$. Then it suffices to show that $s$ does not lie in the closed interval $[i,k]$. By the proposition above, $p_j$ has one child and no parent. Therefore the path from $\overline\psi(p_j)$ to $\overline\psi(p_s)$ starts by going down. So, it reaches a minimum at, say $\overline\psi(p_t)$. Then $\varepsilon_t=-$ and the vertical wall $t\times\RR$ cuts the path into two parts with $\overline\psi(p_j)$ and $\overline\psi(p_s)$ on opposite sides of the wall. Since $\varepsilon_t=-$, either $t\le i$ or $t\ge k$. In either case, $s$ must be outside the closed interval $[i,k]$.

For the last statement, take any $i\le s\le k$, $s\neq j$ with maximal $\psi(p_s)$. Then we will show that $p_s$ is the unique child of $p_j$. To do this, consider the path from $\overline\psi(p_j)$ to $\overline\psi(p_s)$. If this path is not monotonically descending then, by the same argument as in the previous paragraph, we can conclude that $s$ lies outside the closed interval $[i,k]$ which is a contradiction. Therefore, the path is monotonically decreasing. By maximality of $\psi(p_s)$, this path cannot pass through any other node in the closed interval $[i,k]$. It cannot go outside the interval since it cannot cross the vertical lines $i\times\RR$ and $k\times\RR$. Therefore, the path is a single edge and $p_s$ is the unique child of $p_j$ as claimed.
\end{proof}


\subsection{Maxima and minima}\label{ss1.4: maxima and minima}

By an \emph{internal maximum} of $\cT$ we mean a vertex with two children and no parents. Dually, an \emph{internal minimum} of $\cT$ is a vertex with two parents and no children. The key point about internal maxima and minima is that they create ``vertical walls'' (Proposition \ref{cor:max min3}(1)) which cuts the tree $\cT$ into pieces which are finite subtrees which are easier to analyze. We will characterize these points in terms of the $n$-periodic morphism $\psi$.

\begin{lem}\label{cor:max min1} 
Suppose that $p_i$ is a node of an $n$-periodic tree $\cT$. Then:
\begin{enumerate}
\item If $\varepsilon_i=+$ and $p_i$ has no parents then it has two children.
\item If $\varepsilon_i=-$ and $p_i$ has no children then it has two parents.
\end{enumerate}
\end{lem}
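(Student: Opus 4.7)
The plan is to prove both parts by contradiction using the separation lemma (Lemma \ref{separation lemma}). Since the two assertions are symmetric under interchanging ``parent'' with ``child'' (and $+$ with $-$), I will concentrate on part (1) and obtain part (2) by the dual argument.

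For part (1), suppose $\varepsilon_i=+$ and $p_i$ has no parents. I would first observe that $p_i$ cannot be isolated in $\cT$: the tree is connected and contains the distinct vertex $p_{i-n}$ (using $n\ge 2$), so $p_i$ must have at least one neighbor. Combining this with the hypothesis and with conditions T1 and T2, the neighbor set of $p_i$ is either $\{$left child$\}$, $\{$right child$\}$, or $\{$left child, right child$\}$; I need to rule out the first two options. Suppose, for contradiction, that $p_i$ has only a right child $p_j$ (the case of only a left child is symmetric, with $p_{i+n}$ replacing $p_{i-n}$). Consider the unique path $\lambda$ in $\cT$ from $p_i$ to $p_{i-n}$. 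Because $p_j$ is the only neighbor of $p_i$, the path $\lambda$ must begin with the edge $(p_i,p_j)$, so $\lambda$ is of the form $\lambda_R$ in the sense of Lemma \ref{separation lemma}. That lemma then forces every vertex $p_s$ on $\lambda$ to satisfy $s\ge i$. But $p_{i-n}$ lies on $\lambda$ and has index $i-n<i$, a contradiction.

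Part (2) is the exact dual: with $\varepsilon_i=-$ and no children, Lemma \ref{separation lemma} is now invoked for paths through parents rather than children, and the same path from $p_i$ to $p_{i-n}$ supplies the contradiction. I do not anticipate any real obstacle: conditions T1--T3 already cap the degree of $p_i$ at two in each case, and the separation lemma is tailor-made to exclude the remaining degree-one configuration. The only minor point to check carefully is that $p_i$ cannot be isolated, which uses nothing beyond connectedness of $\cT$ together with the $n$-periodicity that guarantees $p_{i-n}$ (or $p_{i+n}$) is a distinct vertex of $\cT$.
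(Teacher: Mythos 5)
Your proof is correct, and it rests on the same underlying tool (Lemma \ref{separation lemma}) as the paper's. The difference is one of economy: the paper's one-line proof simply cites Lemma \ref{leaf corollary} (which already establishes that a leaf with $\varepsilon_i=+$ has exactly one parent and no children, so a parentless vertex with $\varepsilon_i=+$ cannot be a leaf), whereas you reprove the needed fact inline by applying Lemma \ref{separation lemma} directly to the path from $p_i$ to $p_{i\pm n}$. Your version does have the small virtue of yielding the contradiction purely from an index comparison, without invoking the planar embedding and the monotonicity remark that Lemma \ref{leaf corollary}'s own proof uses. Also note you correctly flag the need to rule out $p_i$ being isolated, a case the paper implicitly dismisses via connectedness of the infinite tree.
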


\begin{proof}
The other possibilities are excluded by Lemma \ref{leaf corollary}.
\end{proof}

\begin{lem}\label{cor:max min2}
Suppose there are $i<j<k$, $\varepsilon_i=\varepsilon_k=-$ and $\varepsilon_t=+$ for all $i<t<k$. In particular $\varepsilon_j=+$. Then, for any $n$-periodic morphism $\psi$ on $P_\cT$, the following are equivalent.
\begin{enumerate}
\item $p_j$ is an internal maximum of $\cT$.
\item $\psi(p_j)>\psi(p_s)$ for all $i\le s\le k$ with $s\neq j$.
\end{enumerate}
Furthermore, in that case, the left child of $p_j$ is $p_s$ where $i\le s<j$ has the property that $\psi(p_s)>\psi(p_t)$ for all $i\le t<j$, $t\neq s$. Similarly, the right child of $p_j$ is $p_r$ where $j<r\le k$ has the property that $\psi(p_r)>\psi(p_t)$ for all $j<t\le k$, $t\neq r$.
\end{lem}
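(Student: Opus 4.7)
The plan is to mirror the structure of the proof of Proposition \ref{second leaf corollary}, using the planar embedding from Theorem \ref{thm:linear embedding of T} together with the separation and monotonicity results (Lemma \ref{separation lemma}, Remark \ref{separation remark}, Corollary \ref{monotonic cor}). First, since $\varepsilon_j = +$, Lemma \ref{cor:max min1}(1) tells us that $p_j$ is an internal maximum exactly when $p_j$ has no parents, so it suffices to characterize this simpler condition.

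For $(2)\Rightarrow(1)$, I would suppose $p_j$ has a parent $p_m$ (WLOG a right parent) and derive a contradiction. If $m \in (j, k]$, then $\psi(p_m) > \psi(p_j)$ contradicts (2) directly. If $m > k$, the straight-line edge joining $\overline\psi(p_j)$ and $\overline\psi(p_m)$ in the embedding of Theorem \ref{thm:linear embedding of T} crosses the vertical line $x = k$ at a height strictly between $\psi(p_j)$ and $\psi(p_m)$, which by (2) is strictly above $\psi(p_k)$, violating the wall condition of Theorem \ref{thm:linear embedding of T}(3) (using $\varepsilon_k = -$). The left-parent case is symmetric.

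For $(1)\Rightarrow(2)$ together with the identification of the children, the central claim I would establish is: if $p_j$ has no parents, then for every $v \in [i,k]$ with $v \neq j$ the unique path $\lambda$ in $\cT$ from $p_j$ to $p_v$ is \emph{monotonically decreasing} in $\psi$-value. The argument: $\lambda$ starts downward since $p_j$ has no parents; if $\lambda$ fails to be monotone, the first index where monotonicity fails gives an interior vertex $p_t$ of $\lambda$ which is a local minimum, with both adjacent path-edges pointing upward, exhibiting two distinct parents of $p_t$. By T2 this forces $\varepsilon_t = -$, hence $t \notin (i,k)$, i.e. $t \le i$ or $t \ge k$. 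By Remark \ref{separation remark}(a) the endpoints $p_j, p_v$ must then lie strictly on opposite sides of $p_t$, but $j, v \in [i,k]$ both lie on the same side of $p_t$ in either case -- contradiction. Monotonicity immediately yields $\psi(p_v) < \psi(p_j)$, proving (2).

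For the identification of the left and right children, I would first apply Corollary \ref{monotonic cor}(2) to the edge $(p_s, p_j)$ with the wall at $x = i$: if $s < i$, then $\varepsilon_i = -$ would force $\psi(p_i) > \max(\psi(p_s),\psi(p_j))$, contradicting (2). Hence $s \in [i, j)$, and symmetrically $r \in (j, k]$. Now for any $v \in [i, j)$ with $v \neq s$, the monotone-decreasing path from $p_j$ to $p_v$ must begin with an edge to one of the two children of $p_j$; Lemma \ref{separation lemma} rules out the right child $p_r$ (whose subtree stays at $x \ge j$), so the path begins with the edge to $p_s$ and then strictly decreases, giving $\psi(p_s) > \psi(p_v)$. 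The right child is handled symmetrically. I expect the main obstacle to be the careful handling of the local-minimum argument in the $(1)\Rightarrow(2)$ direction: one must verify that the first failure of monotonicity really is a local \emph{minimum} of the path (not a local maximum), that this local min is interior (not the endpoint $p_v$), and that $p_t$ genuinely has two distinct parents so that $\varepsilon_t = -$ follows from T2.
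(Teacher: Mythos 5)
Your proof is correct and follows essentially the same approach as the paper's, which simply cites the proof of Proposition \ref{second leaf corollary}: the planar embedding $\overline\psi$ of Theorem \ref{thm:linear embedding of T}, the vertical walls above $p_i$ and $p_k$ where $\varepsilon=-$, and the monotonicity of paths emanating from a parentless vertex. Your minor reorganizations -- reducing to ``no parents'' via Lemma \ref{cor:max min1}, and identifying the children by showing that the child achieves the maximum rather than showing the maximum is the child -- are cosmetic and do not change the substance of the argument.
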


\begin{proof}
Same as the proof of Proposition \ref{second leaf corollary}.
\end{proof}

\begin{prop}\label{cor:max min3}
Suppose that $p_j$ is an internal maximum or minimum of $\cT$ and $\psi$ is any $n$-periodic morphism for $\cT$. Then all of the following hold.
\begin{enumerate}
\item The vertical line $j\times \RR$ meets $\overline\psi(\cT)$ at only one point $(j,\psi(p_j))$.
\item There are no edges $(p_i,p_k)$ in $\cT$ where $i<j<k$.
\item All edges of $\cT$ have length $<n$.
\item $\cT$ admits an $n$-periodic morphism of slope zero.
\item $\varepsilon$ is surjective.
\end{enumerate}
\end{prop}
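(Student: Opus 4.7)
By the symmetry interchanging $<$ with $>$ and $\varepsilon_i=+$ with $\varepsilon_i=-$, it suffices to treat the case where $p_j$ is an internal maximum; so $\varepsilon_j=+$, and $p_j$ has two children and no parents. I would prove the statements in the order $(2)$, $(1)$, $(3)$, $(4)$, $(5)$.

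For $(2)$: suppose, toward a contradiction, that $(p_i,p_k)$ is an edge with $i<j<k$. By Corollary~\ref{monotonic cor}, because $\varepsilon_j=+$, the path in $\cT$ from $p_j$ to any interior point of this edge is strictly monotonically increasing. Its first step therefore goes up from $p_j$, which means $p_j$ has a parent, contradicting that $p_j$ is an internal maximum. Statement $(1)$ is then immediate: by Theorem~\ref{thm:linear embedding of T}(1), $\overline\psi$ is an embedding, and the only edges of $\cT$ whose image meets the line $j\times\RR$ are those with an endpoint equal to $p_j$ (which meet the line only at $(j,\psi(p_j))$) or those corresponding to $(p_i,p_k)$ with $i<j<k$, which are ruled out by $(2)$.

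For $(3)$: every translate $p_{j+rn}$ is also an internal maximum, so $(2)$ holds with $j$ replaced by $j+rn$ for every $r\in\ZZ$. If $(p_a,p_b)$ is an edge of $\cT$ with $a<b$ and $b-a>n$, then the open interval $(a,b)$ contains some $j+rn$, contradicting $(2)$ at that index. If $b-a=n$, then $a\equiv b\equiv j\pmod n$, so $p_a$ and $p_b$ are both internal maxima, hence both have no parents; but the edge $(p_a,p_b)$ makes one of them a parent of the other, which is a contradiction. Hence every edge has length $<n$.

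For $(4)$: I would show $p_j$ lies on the unique doubly infinite path $\gamma_\infty$ and is a strict local maximum of it, which forces $\gamma_\infty$ to be nonmonotonic and so, by the case analysis in the proof of Proposition~\ref{prop: existence of periodic morphisms}, forces $\cT/\ZZ$ to have no oriented cycle. Theorem~\ref{cor: classification of trees} then yields a periodic morphism of slope zero. To show $p_j\in\gamma_\infty$: by $(1)$, the unique path in $\cT$ from $p_j$ to $p_{j+n}$ cannot cross the vertical line $j\times\RR$ again, so it leaves $p_j$ through the right child $p_b$ (the left child $p_a$ has $a<j$); symmetrically, the unique path from $p_j$ to $p_{j-n}$ leaves $p_j$ through $p_a$. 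Hence removal of either edge at $p_j$ separates $\cT$ into two infinite subtrees, so both edges at $p_j$ belong to $\gamma_\infty$. Since both neighbors of $p_j$ on $\gamma_\infty$ are children of $p_j$, the path $\gamma_\infty$ has a strict local maximum there, as required. Finally, $(5)$ follows from $(4)$ together with Remark~\ref{rem: zero slope trees have no oriented cycles}.

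The main obstacle is the step in $(4)$ showing that $p_j$ actually lies on $\gamma_\infty$; everything else is a short consequence of $(1)$, $(2)$ and periodicity. The care needed is to combine the tree structure (uniqueness of paths) with $(1)$ to confine the $p_j$-to-$p_{j\pm n}$ paths to the strip of width $n$ around $p_j$ and thereby identify which edges at $p_j$ are on $\gamma_\infty$.
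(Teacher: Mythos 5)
Your proof is correct and relies on the same machinery as the paper's (the separation lemma \ref{separation lemma} and its consequences: Remark \ref{separation remark}, Theorem \ref{thm:linear embedding of T}, Corollary \ref{monotonic cor}), but it spells out in full what the paper's one-line proof leaves to the reader; in particular your careful argument that $p_j$ must lie on the doubly infinite path (via part (1) confining the paths to $p_{j\pm n}$ to the two half-planes) fills a real gap in the terse ``(4) follows by the classification theorem.'' One small wording issue: in part (3) the step ``if $b-a=n$ then $a\equiv b\equiv j\pmod n$'' is not an immediate consequence of $b-a=n$; you should first note that when $a\not\equiv j\pmod n$ the open interval $(a,b)$ of length $n$ still contains a translate $j+rn$, so the $b-a>n$ argument applies, and only in the remaining case $a\equiv j$ do you invoke that $p_a$ and $p_b$ are both translates of the internal maximum.
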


\begin{proof}
Statement (1) follows from Lemma \ref{separation lemma} since $p_j$ has no parents. The other statements follow. For example, (4) follows by the classification of periodic trees (Theorem \ref{cor: classification of trees}) and (4) implies (5).
\end{proof}


%
\begin{figure}[htbp]
\begin{center}
%
{
\setlength{\unitlength}{1cm}
{\mbox{
\begin{picture}(12,4.9)
      \thicklines
\put(-3,-1.5){
\qbezier(2.6,1.4)(4.8,1.7)(7,2)
\put(1.7,1.5){$\cdots$}
}
\put(0,0){
\put(-.05,1.2){$-$}\put(3.9,.7){$+$}\put(4.9,.2){$+$}
\qbezier(0,1)(2,.75)(4,.5)
\qbezier(4,.5)(4.5,.25)(5,0)
\qbezier(0,1)(3.5,1.5)(7,2)\put(4.9,-.1){$\bullet\ p_3$}
\put(3.9,.4){$\bullet$}\put(3.85,.1){$p_2$}
\put(-.1,.92){$\bullet$}\put(0,.7){$p_{-2}$}
}
\put(3,1.5){
\put(-.05,1.2){$-$}\put(3.9,.7){$+$}\put(4.9,.2){$+$}
\qbezier(0,1)(2,.75)(4,.5)
\qbezier(4,.5)(4.5,.25)(5,0)
\qbezier(0,1)(3.5,1.5)(7,2)\put(4.9,-.1){$\bullet\ p_6$}
\put(3.9,.4){$\bullet$}\put(3.85,.1){$p_5$}
\put(-.1,.92){$\bullet$}\put(0,.7){$p_1$}
}
\put(6,3){
\put(-.05,1.2){$-$}\put(3.9,.7){$+$}\put(4.9,.2){$+$}
\qbezier(0,1)(2,.75)(4,.5)
\qbezier(4,.5)(4.5,.25)(5,0)
\qbezier(0,1)(3.5,1.5)(6.3,1.9)
\put(4.9,-.1){$\bullet\ p_9$}
\put(3.9,.4){$\bullet$}\put(3.85,.1){$p_8$}
\put(-.1,.92){$\bullet$}\put(0,.7){$p_4$}
\put(-1.5,-1.7){$\ell_3$}
\put(-.4,0.1){$\ell_3^+$}
\put(-.8,-.6){$\ell_1$}
\put(1.4,-1){$\ell_2$}
\put(6.7,1.5){$\cdots$}
}
\end{picture}}
}}
\caption{Example of leaf formula on positive slope tree. This is the unique $3$-periodic tree with $\pi(k)=5,1,0,8,4,3,\cdots$ and $\varepsilon(k)=-,+,+,-,+,+,\cdots$ for $k=1,2,3,\cdots$. By Remark \ref{rem: dual leaf formula}, $p_3$ is a leaf since $\pi(3)<\pi(2),\pi(4),\pi(5)$. There are three edges $\ell_1,\ell_2,\ell_3$ of lengths 4,1,7. $\ell_3^+$ is a translation of $\ell_3$. This is a positive slope tree in Classification Theorem \ref{cor: classification of trees}.}
\label{fig1}
\end{center}
\end{figure}
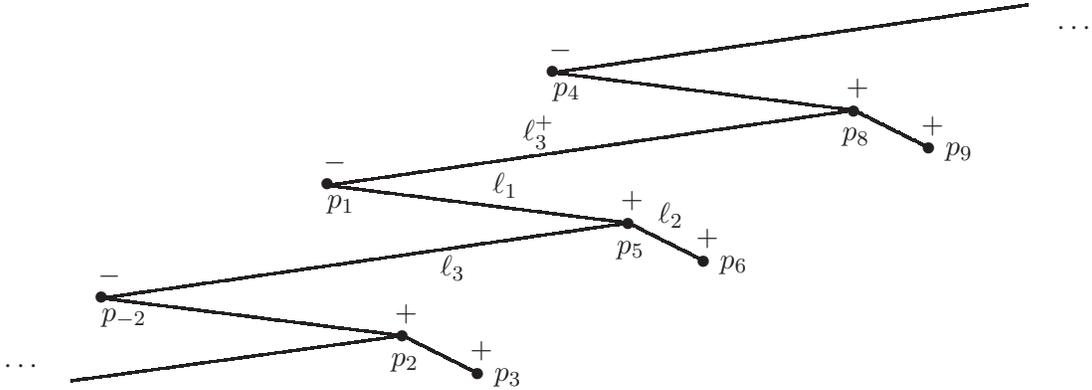
%


\subsection{Periodic trees corresponding to periodic morphisms}\label{ss1.5: periodic trees from periodic morphisms}

The purpose of this subsection is to prove the following theorem which is equivalent to the statement that the regions $R(\cT)$ are disjoint for nonisomorphic $\cT$ and their union is dense in $\RR^{n+1}$.

\begin{thm}\label{thm:every pi gives a unique T}
Given any $\varepsilon$ and any $n$-periodic function $\pi:\ZZ\to \RR$ of slope $\frac mn\neq0$ taking distinct values ($\pi(i)\neq \pi(j)$ for $i\neq j$), there is a unique $n$-periodic tree $\cT$ with sign function $\varepsilon$ on which $\psi(p_i)=\pi(i)$ is a periodic morphism.
\end{thm}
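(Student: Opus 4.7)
The plan is to handle existence and uniqueness separately. By symmetry under reversal of signs, I may assume the slope $m/n$ is positive; the negative case is analogous.

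For uniqueness, suppose $\cT$ is any $n$-periodic admissible tree on which $\psi(p_i)=\pi(i)$ is a periodic morphism. By Corollary \ref{monotonic cor}, every edge of $\cT$ satisfies the sign inequalities at all intermediate integers. More importantly, Proposition \ref{second leaf corollary} together with its dual (Remark \ref{rem: dual leaf formula}) gives an explicit formula, depending only on the values $\pi(k)$ and the signs $\varepsilon_k$, that identifies every leaf of $\cT$ and picks out its unique incident edge. Hence the set of leaves and the neighbor of each leaf are determined by $\pi$ alone, not by $\cT$. Removing the $\ZZ$-orbit of one leaf together with its orbit of incident edges produces a strictly smaller periodic tree on which the restriction of $\pi$ is still a periodic morphism, and the argument iterates. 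Because the slope is nonzero, Proposition \ref{cor:max min3} forbids any internal maximum or minimum, so the peeling eventually strips away every finite branch and exposes the unique bi-infinite monotone increasing path $\gamma_\infty$ predicted by Theorem \ref{cor: classification of trees}. The vertices of $\gamma_\infty$ are precisely those never peeled off, and its edges connect successive values of $\pi$ in the monotone order restricted to these vertices; both are determined by $\pi$, so $\cT$ is determined.

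For existence, reverse this process: run the leaf-peeling rule of Proposition \ref{second leaf corollary} and Remark \ref{rem: dual leaf formula} as a \emph{definition} on the data $(\pi,\varepsilon)$ rather than as a statement about an a priori tree. Each stage detects a $\ZZ$-orbit of leaves via the explicit formula and assigns each leaf a unique parent; collect all resulting leaf-edges, together with the edges of the final bi-infinite monotone path $\gamma_\infty$, into a graph $\cT$ on $P=\{p_k\mid k\in\ZZ\}$, declaring $p_a<p_b$ iff $(p_a,p_b)$ is an edge and $\pi(a)<\pi(b)$, and extending by transitivity. I would then verify conditions T1--T4 directly: T4 follows from the sign content of the leaf formula, while T1--T3 follow from the bookkeeping that each leaf contributes exactly one incident edge and each interior vertex of $\gamma_\infty$ contributes exactly two. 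Periodicity of the resulting graph is inherited from that of $\pi$ and $\varepsilon$, and the graph is a tree because adding leaves one orbit at a time never creates cycles.

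The main obstacle will be showing that the leaf-peeling algorithm is well-defined and terminates correctly: specifically, that at every stage some vertex orbit satisfies the leaf formula, and that the limit of the process is a single bi-infinite monotone path rather than a tangled residue. The nonzero-slope hypothesis, via Proposition \ref{cor:max min3}, is the key tool for this control, since it rules out internal maxima and minima at every intermediate stage and so forces the residue of the peeling to be a strictly monotone path whose edges and vertices are read off unambiguously from $\pi$.
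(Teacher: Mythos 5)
Your uniqueness argument (and, symmetrically, the existence construction) rests on the claim that ``because the slope is nonzero, Proposition \ref{cor:max min3} forbids any internal maximum or minimum.'' That is a misreading of the proposition: part (4) of Proposition \ref{cor:max min3} says only that a tree with an internal max or min \emph{admits} some periodic morphism of slope zero, not that every morphism on such a tree has slope zero. In fact a zero-slope tree (one with internal maxima and minima) typically admits morphisms of strictly positive or strictly negative slope as well. For a concrete instance, take $n=4$, $\varepsilon_1=\varepsilon_3=+$, $\varepsilon_0=\varepsilon_2=-$, and let $\cT$ be the leafless tree whose bi-infinite path alternates $\cdots<p_1>p_2<p_3>p_4<\cdots$; every $p_{2k+1}$ is an internal max and every $p_{2k}$ an internal min. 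The periodic function $\pi$ with base values $(\pi(0),\pi(1),\pi(2),\pi(3))=(-0.5,\,1.3,\,0,\,1.1)$ extended by $\pi(k+4)=\pi(k)+1$ is injective, of slope $1/4>0$, and is a periodic morphism for this $\cT$. Your leaf-peeling formula correctly detects that this tree has no leaves, so the residue at the end of your process is the entire bi-infinite path --- which is not monotone. Your prescription to read off the residue's edges as ``successive values of $\pi$ in the monotone order'' then produces a different (and incorrect) graph, so both the uniqueness and the existence halves break down here.

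The paper's proof handles exactly this situation as its Case 2. Rather than claiming the residue is monotone, it uses Lemma \ref{cor:max min2} to locate from $\pi$ alone the (possibly nonempty) set $M$ of indices that must be internal maxima or minima, and it shows (Claims 1 and 2 of the proof) that between consecutive elements of $M$ the values of $\pi$ are strictly monotone on the subsets of indices of each sign. The poset is then reconstructed as a concatenation of alternately ascending and descending monotone segments separated by the vertical walls of Proposition \ref{cor:max min3}(1), which is a genuinely different structure from a single monotone path. Your Case 1 (leaf-peeling) idea is essentially the paper's Case 1, but you cannot dispense with Case 2 the way you propose; you would need to add the wall/segment analysis above to make the argument complete.
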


\begin{proof}
The theorem holds for $n=1$ since the partial ordering is a total ordering and $\cT$ is a straight line. So, suppose $n\ge2$. Then we will show by induction on $n$ that there is a unique $n$ periodic tree $\cT$ with sign function $\varepsilon$ and prescribed $n$-periodic morphism $\psi$. The proof breaks up into two cases depending on whether or not $\cT$ has a leaf.
\vs1

\emph{Case 1:} Suppose that $\psi$ satisfies the condition in Proposition~\ref{second leaf corollary}(2) or its dual which is equivalent to $\cT$ having a leaf $p_j$ when $\cT$ exists. \vs1

(Uniqueness) By symmetry, we may assume $\varepsilon_j=-$. Then $p_j$ has no parent in any tree $\cT$ and the unique child $p_s$ of $p_j$ is determined by the function $\psi$. If we remove this leaf and the abutting edge (and all of their translates and renumber the nodes to fill in the gaps), we will obtain an $n-1$ periodic tree $\cT'$ which is unique by induction on $n$. Since $\cT$ is obtained from $\cT'$ by adding a leaf $p_j$ attached to the point $p_s$, it is also unique.

(Existence). Remove the coset $j+n\ZZ$ from the integers and renumber $\ZZ$ to fill in the gap. Then $\pi$ induces an $n-1$ periodic function $\pi'$ on the new set and therefore there exists an $n-1$ periodic tree $\cT'$ with this periodic morphism. The tree $\cT$ is obtained by adding the node $p_j$ (and translates) to $\cT'$ and edge from $p_j$ to $p_s$. So, $\cT$ exists in Case 1.\vs1

\emph{Case 2:} Suppose we are not in Case 1, i.e., $\pi$ is such that $\cT$ would have no leaves if it were to exist.\vs1

(Uniqueness) If $\cT$ exists it is homeomorphic to a line since any branches would terminate in a leaf. So, any local maxima or minima in $\cT$ are internal. Let $M$ be the set of $j\in\ZZ$ at which, by Lemma \ref{cor:max min2}, $p_j$ is an internal max or min of any tree $\cT$. ($M$ might be empty.) Then, given any two consecutive numbers $i,j\in M$ and any integers $a,b\in [i,j]$, we must have $p_a<p_b$ if and only if $\pi(a)<\pi(b)$ (since there are no internal maxima or minima in the open interval $(a,b)$). Furthermore, vertices not in the same such interval $[i,j]$ cannot be related in the partial ordering by Proposition \ref{cor:max min3}. Therefore $\cT$ is unique if it exists.

(Existence) To prove existence, it suffices to show that the partial ordering on the set $\{p_i\}$ described in the previous paragraph satisfies the definition of an $n$-periodic tree. So, let $i,j$ be consecutive point in the set $M$ and suppose by symmetry that $\varepsilon_i=-$. (When $M$ is empty, we will assume by symmetry that $m>0$. So, $\pi$ has positive slope.) Let $S_-=\{s_1<s_2<\cdots\}$ be the set of all $i\le s_k\le j$ so that $\varepsilon_{s_k}=-$ starting with $s_1=i$. When $M$ is empty, $S_-$ also contains $s_p$ for $p<0$.
\vs1

\emph{Claim 1:} The value of $\pi$ is monotonically increasing on the set $S_-$, i.e., $\pi(s_k)<\pi(s_{k+1})$.\vs1

Pf: By construction of $S_-$, the open interval $(s_k,s_{k+1})$ contains no elements of $M$. Therefore, by the characterization of elements of $M$ (Lemma \ref{cor:max min2}(2)), we have 
\begin{equation}\label{eq: part of Claim 1}
\pi(t)<\max(\pi(s_k),\pi(s_{k+1}))\text{ for all }s_k<t<s_{k+1}.
\end{equation}
However, when $k=1$, we also have $\pi(s_1)<\pi(s_1+1)$ since $s_1=i$ is an internal minimum. (When $M$ is empty, we use the assumption that $m>0$. This implies that $\pi(s_p)<\pi(s_{p+1})$ for some $p$. We renumber $S_-$ so that $p=1$.) This implies $\pi(s_1)<\pi(s_2)$. If the Claim is false, take the smallest $k>0$ so that $\pi(s_k)>\pi(s_{k+1})$. Then $p_{s_k}$ would be a leaf by Proposition~\ref{second leaf corollary}. This is the situation being excluded in Case 2. So, Claim 1 holds.\vs1

Claim 1 implies that $j\notin S_-$. So, $j$ must be an internal maximum with $\varepsilon_j=+$. By the dual of Claim 1 we also have that $\pi$ is monotonically increasing on the set $S_+$ of all $k\in[i,j]$ with $\varepsilon_k=+1$.\vs1

\emph{Claim 2:} $\pi(i)<\pi(k)<\pi(j)$ for all $i< k<j$.\vs1

Pf: We are assuming that $i$ satisfies the dual of Lemma \ref{cor:max min2}(2) which says that $\pi(i)<\pi(k)$ where $k$ is the first element of $S_+$. Since $\pi$ is monotonically increasing on both $S_+$ and $S_-$, $\pi(i)<\pi(k)$ for all $k\in S_-\cup S_+=[i,j]$, $k\neq i$. Similarly, $\pi(k)<\pi(j)$ if $i\le k<j$.\vs1

Claim 2 implies that the Hasse diagram of the partial ordering under consideration is a line (which is a tree). Over the interval $[i,j]$, this line goes from $p_i$ up to $p_j$. We see that T1, T2, T3 in Definition \ref{def: conditions T1234} are satisfied. At each point $s\in S_-$, we have shown, in Claim 1 and \eqref{eq: part of Claim 1} that $\pi(k)<\pi(s)$ for all $i\le k<s$. Therefore, the contrapositive of T4(a) is satisfied. Similarly, T4(b) is also satisfied. Therefore, the partial ordering on $P=\{p_i\}$ defined in the Uniqueness paragraph gives an $n$-periodic tree. So, $\cT$ exists in Case 2.

We have shown existence and uniqueness of $\cT$ in both cases.
\end{proof}

\begin{cor}
For a fixed $n$-periodic sign function $\varepsilon$, the regions $\cR(\cT)$ in $V_n(\ZZ)\cong\RR^{n+1}$ are disjoint and their union is dense.
\end{cor}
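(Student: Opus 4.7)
The plan is to deduce both statements from the existence-uniqueness Theorem~\ref{thm:every pi gives a unique T}. Let $U \subseteq V_n(\ZZ)$ be the set of $n$-periodic functions $\pi:\ZZ\to\RR$ having nonzero slope and taking distinct values on $\ZZ$. By Theorem~\ref{thm:every pi gives a unique T}, every $\pi\in U$ is a periodic morphism for a unique $n$-periodic tree $\cT$ with sign function $\varepsilon$, and so lies in exactly one region $\cR(\cT)$. Hence both density of the union and disjointness will follow once I establish that $U$ is dense in $V_n(\ZZ)\cong\RR^{n+1}$.

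To verify density of $U$, I will show that its complement is a countable union of affine hyperplanes and then invoke the Baire category theorem. Under the coordinates $(\pi(0),\ldots,\pi(n-1),m)$ on $V_n(\ZZ)$, the slope-zero condition $\{m=0\}$ is one hyperplane. For any two integers $i\neq j$, writing $i=i'+an$ and $j=j'+bn$ with $i',j'\in\{0,\ldots,n-1\}$, the periodicity identity $\pi(i)-\pi(j)=\pi(i')-\pi(j')+(a-b)m$ shows that $\{\pi(i)=\pi(j)\}$ is a nontrivial linear condition, nontrivial either because $i'\neq j'$ or because $i'=j'$ forces $a\neq b$ and constrains $m$. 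This is a countable family of proper linear subspaces, so by Baire the complement $U$ is dense. Since $U\subseteq \bigcup_\cT \cR(\cT)$, the union is also dense.

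For disjointness, suppose $\cR(\cT)\cap\cR(\cT')\neq\emptyset$ for two periodic trees $\cT$ and $\cT'$ with the same sign function $\varepsilon$. Each region is open in $V_n(\ZZ)$, so the intersection is a nonempty open set and therefore meets the dense set $U$. Any $\pi\in U\cap \cR(\cT)\cap \cR(\cT')$ is simultaneously a periodic morphism for both $P_\cT$ and $P_{\cT'}$, which by the uniqueness clause of Theorem~\ref{thm:every pi gives a unique T} forces $\cT=\cT'$.

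The only subtle point, and hence the main (though modest) obstacle, is the density check for $U$: because $\pi$ is indexed by all of $\ZZ$, the ``distinct values'' requirement produces infinitely many hyperplane conditions rather than finitely many, and one must verify that this countable union still has meager complement. Baire category handles this immediately, but it is worth noting explicitly since $V_n(\ZZ)$ is only finite-dimensional.
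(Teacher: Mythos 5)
Your proposal is correct and follows the same overall strategy as the paper's proof: reduce both claims to Theorem \ref{thm:every pi gives a unique T} by establishing that the set $U$ of $n$-periodic injective functions of nonzero slope is dense, and then exploit openness of the $\cR(\cT)$ for disjointness. The paper establishes density of $U$ by noting that any nonempty open set contains a vector with $\QQ$-linearly independent coordinates (a concrete witness that avoids the countably many hyperplanes), whereas you invoke Baire category on the countable union of proper linear subspaces; these are two standard phrasings of the same underlying fact, so the proofs are essentially identical in substance.
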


\begin{proof} 
Any nonempty open subset of $\RR^{n+1}$ contains a vector whose coordinates are linearly independent over $\QQ$. Such a vector represents an $n$-periodic function $\pi:\ZZ\to \RR$ of nonzero slope which is a monomorphism. By the Theorem, $\pi\in\cR(\cT)$ for a unique $n$-periodic tree $\cT$. Therefore, the sets $\cR(\cT)$ are disjoint and their union is dense in $\RR^{n+1}$.
\end{proof}

By an \emph{$n$-periodic permutation} of $\ZZ$ we mean any bijection $\pi:\ZZ\to\ZZ$ with the property that $\pi(i+n)=\pi(i)+n$ and $\sum_{1\le i\le n}(\pi(i)-i)=0$.
For any $n$-periodic permutation $\pi$ of $\ZZ$, let $\cR(\pi)\subset V_n(\ZZ)\cong \RR^{n+1}$ denote the set of all injective $n$-periodic functions $\psi:\ZZ\to\RR$ so that $\psi(i)<\psi(j)$ if and only if $\pi(i)<\pi(j)$ and let $\cR(-\pi)$ be the set of all injective $n$-periodic functions $\psi:\ZZ\to\RR$ so that $\psi(i)>\psi(j)$ when $\pi(i)<\pi(j)$. It is clear that the regions $\cR(\pi)$ are disjoint and the closure of their union is upper half space (given by $m\ge0$) and the regions $\cR(-\pi)$ are also disjoint from each other and from any $\cR(\pi')$ and the closure of their union is the set of all $\psi$ with $m\le0$. 

Classically \cite{Hum:ReflGps}, the sets $\cR(\pi)$, intersected with an $n-1$ dimensional affine plane, are studied as the open cells of a simplicial decomposition of the affine $n-1$ plane.

\begin{cor}
For each fixed $\cT$, $\cR(\cT)$ contains the regions $\cR(\pi)$ for all $\pi$ so that $\cT(\pi)=\cT$ and $\cR(\cT)\supseteq \cR(-\pi')$ for all $\pi'$ so that $\cT(-\pi')=\cT$. Furthermore, the union of these regions $\cR(\pi)$ and $\cR(-\pi')$ is dense in $\cR(\cT)$. \qed
\end{cor}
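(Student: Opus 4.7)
The plan is to treat the statement in two halves: the inclusions, which follow directly from the definitions, and the density, which reduces to a density already noted in the paragraph just before the corollary. For the inclusions, if $\cT(\pi)=\cT$ then by the defining property of $\cT(\pi)$ in Theorem~\ref{thm:every pi gives a unique T}, the function $\pi:\ZZ\to\RR$ (a periodic function of slope $1$) is itself an $n$-periodic morphism on $P_\cT$; that is, $p_i<p_j$ implies $\pi(i)<\pi(j)$. Any $\psi\in\cR(\pi)$ satisfies $\psi(i)<\psi(j)\ifff\pi(i)<\pi(j)$ by definition, so $p_i<p_j$ forces $\psi(i)<\psi(j)$, giving $\psi\in\cR(\cT)$. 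The same argument, with the slope $-1$ function $-\pi'$ in place of $\pi$, yields $\cR(-\pi')\subseteq\cR(\cT)$.

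For density, $\cR(\cT)$ is open in $V_n(\ZZ)\cong\RR^{n+1}$ by Proposition~\ref{prop: existence of periodic morphisms}, and the paragraph preceding the corollary observes that $W:=\bigcup_{\pi}\cR(\pi)\cup\bigcup_{\pi'}\cR(-\pi')$ (over \emph{all} $n$-periodic permutations) is dense in $\RR^{n+1}$. Hence $W\cap\cR(\cT)$ is dense in $\cR(\cT)$. To finish, it suffices to show that every component $\cR(\pi)$ (resp.\ $\cR(-\pi')$) meeting $\cR(\cT)$ already satisfies $\cT(\pi)=\cT$ (resp.\ $\cT(-\pi')=\cT$). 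Given $\psi'\in\cR(\pi)\cap\cR(\cT)$, the function $\psi'$ is an injective $n$-periodic function of nonzero slope which is a periodic morphism both for $\cT$ (because $\psi'\in\cR(\cT)$) and for $\cT(\pi)$ (because $\psi'$ shares the strict ordering of $\pi$, itself a periodic morphism on $\cT(\pi)$). The uniqueness clause of Theorem~\ref{thm:every pi gives a unique T} then forces $\cT(\pi)=\cT$, and the case $\cR(-\pi')\cap\cR(\cT)$ is handled identically.

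The only genuinely nontrivial step is the final matching of each contributing region to the correct tree label, which is resolved by the uniqueness half of Theorem~\ref{thm:every pi gives a unique T}; every other step is a direct unpacking of the definitions of $\cR(\cT)$, $\cR(\pi)$, and $\cR(-\pi')$.
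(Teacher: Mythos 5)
Your proof is correct and is the natural argument implied by the paper's presentation (the paper states this corollary with a bare \verb+\qed+, so there is no explicit proof to compare against). Both the inclusions and the density use exactly the ingredients the paper has already set in place: for the inclusions, that $\pi$ (resp.\ $-\pi'$) is by definition a periodic morphism on $P_{\cT(\pi)}$ (resp.\ $P_{\cT(-\pi')}$) combined with the fact that members of $\cR(\pi)$ share the strict ordering of $\pi$; for density, the openness of $\cR(\cT)$ from Proposition~\ref{prop: existence of periodic morphisms}, the density of $\bigcup\cR(\pi)\cup\bigcup\cR(-\pi')$ in $\RR^{n+1}$ noted in the preceding paragraph, and the uniqueness half of Theorem~\ref{thm:every pi gives a unique T} to identify which cells fall inside $\cR(\cT)$. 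You were careful to verify the applicability of the uniqueness statement --- noting that any $\psi'\in\cR(\pi)\cap\cR(\cT)$ is injective of strictly positive slope because $\pi(i+n)=\pi(i)+n$ forces $\psi'(i+n)>\psi'(i)$ --- which is precisely the hypothesis needed. No gaps.
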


\subsection{Mutation of periodic trees}

In this section we will define the mutation $\mu_k\cT$ of an $n$-periodic tree in the direction of the edge $\ell_k$.

We define $\mu_k\cT$ as a directed graph, then show that it satisfies the definition of an $n$-periodic tree with the given sign function. For simplicity of terminology we assume that the edge $\ell_k$ has positive slope. The definition is worded so that the negative slope case is given by switching the words ``left'' and ``right''.

\begin{defn}\label{def: mutation of T}
If $\cT$ is an $n$-periodic tree with edges $\ell_i$ and $\ell_k=(p_a,p_b)$ with $p_a<p_b$ and $p_a$ is to the left of $p_b$. Then let $\mu_k\cT$ be the oriented graph with the same vertex set $P=\{p_i\}$ as $\cT$, with automorphism given by translation by $n$: $p_i\mapsto p_{i+n}$ and with $n$ oriented edges $\ell_i'$ given, up to translation, as follows.
\begin{enumerate}
\item $\ell_k'=(p_b,p_a)$ which is $\ell_k$ oriented in the opposite direction $p_b\to p_a$ as in $\cT$. 
\item If $\ell_i=(p_b,p_c)$ where $p_c$ is the unique/left parent of $p_b$ in $\cT$ (for $\varepsilon_b=+,-$, resp.) and $c\not\equiv a$ mod $n$ then $\ell_i'=(p_a,p_c)$ in $\cT'$.
\item If $\ell_i=(p_d,p_a)$ where $p_d$ is the right/unique child of $p_a$ in $\cT$ (for $\varepsilon_a=+,-$, resp.) and $d\not\equiv b$ mod $n$ then $\ell_i'=(p_d,p_b)$ in $\cT'$.
\item If $\ell_i=(p_b,p_{a+sn})$ where $p_{a+sn}$ is the unique/left parent of $p_b$ in $\cT$ (for $\varepsilon_b=+,-$, resp.) then $\ell_i'=(p_a,p_{b+sn})$ in $\cT'$.
\item $\ell_i'=\ell_i$ if none of the above apply.
\end{enumerate}
\end{defn}

When (4) applies, two edges change. Otherwise, at most three edges change according to (1), (2), (3).

\begin{lem}\label{lem: for tree mutation}
For any edge $\ell_i'=(p_s,p_t)$ in $\cT'$ with $i\neq k$, if $p_s<p_t$ in $\cT'$ then $p_s<p_t$ in $\cT$ and the unique monotonically increasing path from $p_s$ to $p_t$ in $\cT$ consists of $\ell_i$ and 0,1 or 2 translates of the edge $\ell_k$.
\end{lem}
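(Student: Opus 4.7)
The plan is to prove the lemma by case analysis on the four clauses of Definition~\ref{def: mutation of T} that describe how an edge $\ell_i'$ with $i\ne k$ in $\cT'$ can arise from $\cT$. For each clause I will exhibit an explicit finite sequence of edges of $\cT$ joining $p_s$ to $p_t$, verify that each edge in the sequence is oriented upward in $\cT$, and count how many translates of $\ell_k$ it uses. Uniqueness of the resulting path will then be immediate from the fact that $\cT$ is a tree.

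Concretely, in clause (5) the edge is unchanged, so the required path is $\ell_i$ itself and uses zero translates of $\ell_k$. In clause (2), $\ell_i=(p_b,p_c)$ with $p_c$ a parent of $p_b$ and $\ell_i'=(p_a,p_c)$; concatenating $\ell_k$ followed by $\ell_i$ yields the monotone path $p_a<p_b<p_c$ in $\cT$, using one translate of $\ell_k$. Clause (3) is symmetric: with $\ell_i=(p_d,p_a)$ for $p_d$ a child of $p_a$ and $\ell_i'=(p_d,p_b)$, the path $\ell_i$ followed by $\ell_k$ gives $p_d<p_a<p_b$, again using one translate of $\ell_k$. In clause (4), $\ell_i=(p_b,p_{a+sn})$ for some $s\ne 0$ and $\ell_i'=(p_a,p_{b+sn})$; using $\ell_k$, then $\ell_i$, then the $sn$-translate of $\ell_k$ (which by $n$-periodicity of $\cT$ is the edge joining $p_{a+sn}$ to $p_{b+sn}$), one obtains the chain $p_a<p_b<p_{a+sn}<p_{b+sn}$, which uses two translates of $\ell_k$.

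In each case every edge of the exhibited path is oriented upward in $\cT$, so the path is monotonically increasing, and in particular $p_s<p_t$ in $\cT$. Since $\cT$ is a tree, any two of its vertices are joined by a unique path, hence the sequence I have written down is automatically the unique path from $p_s$ to $p_t$ in $\cT$; its length and the number of translates of $\ell_k$ in it are read off directly from the case.

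The argument is essentially routine unpacking of the mutation definition; the only mildly subtle point is clause (4), where I need the $sn$-translate of $\ell_k$ to be an actual edge of $\cT$ with endpoints $p_{a+sn}$ and $p_{b+sn}$. This is however immediate from the $\ZZ$-action on the $n$-periodic tree $\cT$, which sends the edge $\ell_k=(p_a,p_b)$ to $(p_{a+sn},p_{b+sn})$. The case in which $\ell_k$ has negative slope is handled in parallel, interchanging the words ``left'' and ``right'' as already indicated in the preamble to Definition~\ref{def: mutation of T}.
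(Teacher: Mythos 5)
The paper states this lemma without a proof, so there is no paper argument to compare against; the check is purely whether your argument is sound. It is. Unpacking the five clauses of Definition~\ref{def: mutation of T} is exactly the right move, and in each case you produce the monotone chain explicitly: clause (5) gives $\ell_i$ alone; clauses (2) and (3) give $\ell_i$ preceded or followed by $\ell_k$; clause (4) gives the chain $p_a<p_b<p_{a+sn}<p_{b+sn}$ using $\ell_k$, $\ell_i$, and the $sn$-translate of $\ell_k$, which is an edge of $\cT$ by the $\ZZ$-action. The vertices appearing in each chain are pairwise distinct (the identifications $a\equiv b$, $a+sn\equiv b$, etc.\ modulo $n$ are all ruled out because $b-a$ is not divisible by $n$ and $s\neq 0$), so each chain is a simple path, and uniqueness in a tree gives the rest. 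The proof is correct and complete.
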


\begin{prop}\label{prop: mutation of tree is tree}
$\mu_k\cT$ is an $n$-periodic tree with sign function $\varepsilon$.
\end{prop}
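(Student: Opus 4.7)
The plan is to produce a periodic morphism for $\mu_k\cT$ and then invoke Theorem \ref{thm:every pi gives a unique T}, which asserts that an injective $n$-periodic function of nonzero slope uniquely determines an $n$-periodic tree with respect to the given sign function $\varepsilon$. This converts the problem of verifying the combinatorial axioms T1--T4 into the problem of identifying the tree that $\psi'$ canonically produces.

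First I would choose a periodic morphism $\psi\in\cR(\cT)$ with $\epsilon:=\psi(p_b)-\psi(p_a)$ strictly smaller than every other nonzero difference $|\psi(p_i)-\psi(p_j)|$ for $i,j$ ranging over a fundamental window (and smaller than the absolute value of the slope, if the slope is nonzero; otherwise further perturb to make the slope nonzero). Such $\psi$ exists because $\cR(\cT)$ is a nonempty open convex set (Proposition \ref{prop: existence of periodic morphisms}) and the inequality $\psi(p_a)<\psi(p_b)$ is a facet condition, since $\ell_k$ is a Hasse edge. Now define an $n$-periodic function $\psi'$ by simultaneously interchanging the values on the residues $a$ and $b$ modulo $n$: set $\psi'(p_{a+jn})=\psi(p_{b+jn})$ and $\psi'(p_{b+jn})=\psi(p_{a+jn})$ for all $j\in\ZZ$, and $\psi'(p_i)=\psi(p_i)$ otherwise. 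This $\psi'$ is $n$-periodic with the same slope as $\psi$, and for $\epsilon$ small enough it takes pairwise distinct values. By Theorem \ref{thm:every pi gives a unique T}, $\psi'$ is the periodic morphism of a unique $n$-periodic tree $\cT'$.

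It then remains to show $\cT'=\mu_k\cT$. Here I would enumerate the Hasse edges of $\cT'$ using Corollary \ref{converse of monotonic cor}. For an edge $\ell_i=(p_s,p_t)$ of $\cT$ with neither $s$ nor $t$ congruent to $a$ or $b$ modulo $n$, the values $\psi'(p_s)$ and $\psi'(p_t)$ coincide with $\psi(p_s)$ and $\psi(p_t)$, so the stability conditions from Corollary \ref{monotonic cor} persist---and the smallness of $\epsilon$ guarantees no new vertex slips between them---producing an edge of $\cT'$ matching case (5) of Definition \ref{def: mutation of T}. For edges of $\cT$ incident to $p_a$ or $p_b$, Lemma \ref{lem: for tree mutation} provides the explicit short monotone path in $\cT$ between the endpoints of each candidate new edge of $\mu_k\cT$, and applying Corollary \ref{converse of monotonic cor} to $\psi'$ at these endpoints verifies cases (1)--(4). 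A parallel argument shows that no other edges of $\cT'$ can occur, since any such edge would yield a new covering relation already forbidden in $\cT$ by a translated edge of $\ell_k$.

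The main obstacle is the case analysis, split on the four combinations of $\varepsilon_a,\varepsilon_b\in\{+,-\}$, and in particular verifying case (4). There the candidate new edge $(p_a,p_{b+sn})$ has length at least $n$, so one must check that for every integer $k$ strictly between $a$ and $b+sn$ the value $\psi'(p_k)$ satisfies the appropriate inequality of Corollary \ref{monotonic cor}. This follows from the fact that the corresponding inequalities held for the old edge $(p_b,p_{a+sn})$ of $\cT$ and that the swap of $\psi$-values at $p_a$ and $p_b$ is by less than every other gap in the fundamental window. However, the bookkeeping must be executed carefully across all residue classes that are shifted, ensuring that the parent/child relationships at the translates $p_{a+jn}$ and $p_{b+jn}$ are tracked consistently.
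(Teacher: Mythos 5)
Your proposal takes a genuinely different route from the paper's. The paper's proof verifies conditions T1--T4 directly: it introduces the collapsed tree $\overline\cT$ obtained by identifying $p_a$ and $p_b$, observes that $\cT$ and $\cT'$ both collapse to $\overline\cT$ so $\cT'$ is a tree, and then checks parent/child counts and the T4 ``vertical wall'' condition by a path argument. You instead build a candidate periodic morphism $\psi'$ for $\mu_k\cT$ and invoke Theorem \ref{thm:every pi gives a unique T} to get a tree $\cT'$ for free, then try to match $\cT'$ with $\mu_k\cT$. This inverts the logic attractively: existence of the tree is automatic, and the work moves to identifying it.

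However, the edge-matching step has a real gap. To conclude that a pair $(p_s,p_t)$ is an edge of $\cT'$, you repeatedly invoke Corollary \ref{converse of monotonic cor}, but that corollary has \emph{two} hypotheses: (i) the inequalities of Corollary \ref{monotonic cor} hold for all $s<k<t$, and (ii) $\psi'(p_s)$ and $\psi'(p_t)$ are \emph{consecutive} elements of the image of $\psi'$. Your argument only addresses (i) (``the stability conditions persist''). Hypothesis (ii) is generally false for the edges you want to produce: even in $\cT$ itself, an edge $(p_s,p_t)$ need not have consecutive $\psi$-values, since a vertex $p_m$ with $m$ outside the interval $[s,t]$ can have $\psi(p_m)$ strictly between $\psi(p_s)$ and $\psi(p_t)$ (e.g.\ $m=s-n$ or $m=t+n$ in a tree of nonzero slope). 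Shrinking $\epsilon$ only controls the values at residues $a,b$; it cannot make a non-consecutive pair consecutive. The phrase ``the smallness of $\epsilon$ guarantees no new vertex slips between them'' conflates (i) with (ii). The same issue is even more pressing for the new candidate edges of cases (2)--(4), whose endpoints are typically far from consecutive. So as written, the identification $\cT'=\mu_k\cT$ is not established. To repair it you would need either a stronger converse of Corollary \ref{monotonic cor} that drops the consecutive-values hypothesis (which the paper does not supply), or a direct tracking of the leaf-peeling induction in the proof of Theorem \ref{thm:every pi gives a unique T} applied to $\psi$ versus $\psi'$; either would amount to a substantial additional argument, and you flag the case analysis as the main obstacle but do not carry it out.
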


\begin{proof}
We observe first that $\cT$, $\cT'$ become isomorphic as directed graphs if the edges $\ell_k,\ell_k'$ are collapsed and the vertices $p_a,p_b$ are identified. Let $\overline\cT$ denote this collapsed tree. Since $\cT$ is a tree, so are $\overline \cT$ and $\cT'$. Therefore, $\cT'$ is the Hasse diagram of some $n$-periodic partial ordering on $P=\{p_i\}$. We observe that the corresponding edges $\ell_i,\ell_i'$ in $\cT,\cT'$ has the same image in $\overline\cT$ which we denote $\overline\ell_i$.

It is easy to see that Conditions T1,T2,T3 in Definition \ref{def: conditions T1234} hold for $\cT'$. For example, take T2. When the orientation of $\ell_k=(p_a,p_b)$ is changed then $p_b$ gains a new parent $p_a$ which is a unique or left parent of $p_b$ in $\cT'$ depending of $\varepsilon_b$. However, by (2), any already existing unique or left parent of $p_b$ in $\cT$ becomes a parent of $p_a$ in $\cT'$. Therefore, $p_b$ has the correct number of parents in $\cT'$. Also, $p_b$ loses $p_a$ as left child and possibly gains a child $p_d$. We need to check that $p_d$ becomes a left child of $p_b$ even though $p_d$ was a right child of $p_a$ in case $\varepsilon_b=+$. But, in that case, the edge $(p_d,p_a)$ in $\cT$ cannot cross under the point $p_b$. So, $d<b$ as required.

Similar, $p_a$ gains $p_b$ as a new right or unique child depending on $\varepsilon_a$. But it also loses the right/unique child it already had by (3).

In Case (4) the two movements of parents and children happen simultaneously: the bottom endpoint of $\ell_i$ slides from $p_b$ to $p_a$ and the top endpoint slides from $p_{a+sn}$ to $p_{b+sn}$. Considering these as two separate moves as in (2), (3), we see that each vertex ends up with the correct number of parents and children.

It remains to show that $\cT'$ satisfies T4. Suppose that $p_i<p_j$ in the tree $\cT'$. Then there is a unique path $\gamma$ from $p_i$ to $p_j$ in $\cT'$ which is monotonically increasing. The image $\overline\gamma$ of this path in the collapsed tree $\overline\cT$ will also be monotonically increasing. So, $p_i\le p_j$ in $\overline\cT$. But, the lifting $\tilde\gamma$ of $\overline\gamma$ to a path from $p_i$ to $p_j$ in $\cT$ might be decreasing on some occurrences of the edge $\ell_k=(p_a,p_b)$ and increasing on all other parts of the path. Since $\cT$ is an $n$-periodic tree, the occurrence of $\ell_k$ in the path $\tilde\gamma$ gives it local maxima and minima and forces the beginning and end of the path to be separated by a vertical wall by Lemma \ref{separation lemma}. In other words, either $p_i<p_j$ in $\cT$ or there is a vertical wall separating $p_i,p_j$. This implies that $\cT'$ satisfies T4: Suppose $i<s<j$, $\cT'$ has an edge $\ell'=(p_i,p_j)$ and $p_s<\min(p_i,p_j)$ in $\cT'$. Then we let $z$ be the unique point on $\ell'$ with $x$-coordinate $s$. By Lemma \ref{lem: for tree mutation} there is a monotonically increasing path in $\cT$ from $p_i$ to $p_j$. Let $z'$ be any point on this path with $x$-coordinate $s$. Then $z,z'$ map to two points on the edge $\overline \ell$ in $\overline\cT$. Since $p_s$ is less than all points of $\overline \ell$ in $\overline \cT$ by assumption, $p_s<z$ in $\cT$ since $p_s,z$ cannot be separated by a vertical wall. Therefore $\varepsilon_s=+$ as required by T4(a). T4(b) is similar.
\end{proof}

%
\begin{figure}[htbp]
\begin{center}
%
{
\setlength{\unitlength}{1cm}
{\mbox{
\begin{picture}(12,3.5)
      \thicklines
\put(3,0){
\put(-3.05,1.2){$-$}
\put(3.9,.7){$+$}\put(4.9,.2){$+$}
\qbezier(-3,1)(.5,.75)(4,.5)
\qbezier(4,.5)(4.5,.25)(5,0)
\qbezier(-3,1)(2,1.5)(7,2)
\put(4.9,-.1){$\bullet\ p_6$}
\put(3.9,.4){$\bullet$}\put(3.85,.1){$p_5$}
\put(-3.1,.92){$\bullet$}
\put(-3,.7){$p_{-2}$}
}
\put(6,1.5){
\put(-3.05,1.2){$-$}
\put(3.9,.7){$+$}\put(4.9,.2){$+$}
\qbezier(-3,1)(.5,.75)(4,.5)
\qbezier(4,.5)(4.5,.25)(5,0)
\qbezier(-3,1)(2,1.5)(7,2)
\put(4.9,-.1){$\bullet\ p_9$}
\put(3.9,.4){$\bullet$}\put(3.85,.1){$p_8$}
\put(-3,0){\put(-.1,.92){$\bullet$}\put(0,.7){$p_1$}}
\put(1,.9){$\ell_3'$}
\put(-2,0.15){$\ell_1'$}
\put(1.4,-1){$\ell_2'$}
}
\end{picture}}
}}
\caption{Example of tree mutation, case (4): $\cT'=\mu_3\cT$ where $\cT$ is given in Figure \ref{fig1} with $n=3,a=-2,b=5,s=1$. In $\cT$, $p_{a+sn}=p_1$ is the unique parent of $p_b=p_5$ and, in $\mu_3\cT$, $p_{b+sn}=p_8$ is the right parent of $p_a=p_{-2}$.}
\label{fig1b}
\end{center}
\end{figure}
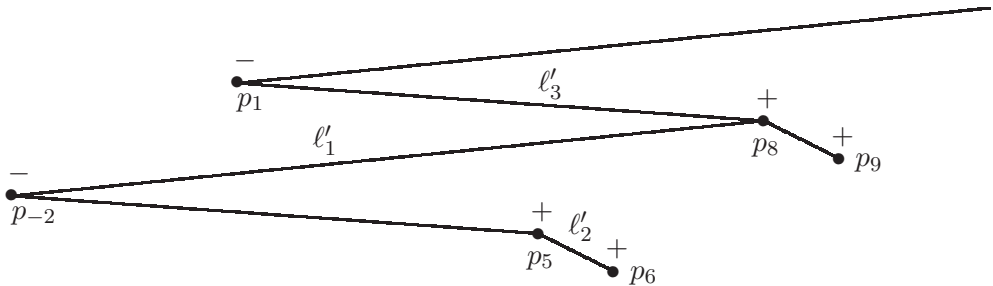
%

%



\section{Edge vectors, semi-invariants and cluster tilting objects}\label{sec2}

We assume from now on that $\varepsilon$ is surjective. (So far we have used this assumption only in Proposition \ref{cor:edge vectors are Schur roots}.)
We define edge vectors of periodic trees and verify the stability conditions of \cite{IOTW}. As a consequence we obtain a bijection between $n$-periodic trees and cluster tilting objects of type $\widetilde{A}_{n-1}$. We will show in the next section that the $c$-vectors corresponding to a cluster tilting object are the \emph{negatives} of the edge vectors of the corresponding periodic tree. 


\subsection{Edge vectors}\label{ss2.1: edge vectors}

We define edge vectors of a periodic tree and derive basic properties which characterize these vectors.

Let $\cT$ be an $n$-periodic tree with vertices $p_i$ and edges $\ell=(p_i,p_j)$ with sign $\delta_\ell$ equal to the sign of the slope of the edge. Then we define the \emph{edge vectors} of $\cT$ to be $\gamma(\ell)=\delta_\ell \beta(\ell)$ where
\[
	\beta(\ell)=\beta_{ij}:=e_{\overline{i+1}}+e_{\overline{i+2}}+\cdots+e_{\overline{j}}\in \ZZ^n
\]
where $\overline i$ is one plus the reduction of $i-1$ modulo $n$ and $e_i$ is the $i$th unit vector of $\ZZ^n$. Note that $\beta(\ell)=|\gamma(\ell)|$ and that the edge vectors determine $\cT$ since they give all the edges of $\cT$ and their orientation.

\begin{eg}
In Figure \ref{fig1} the edges are (up to translation by $n=3$) $(p_1,p_5)$ and $(p_2,p_3)$ with negative slope and $(p_1,p_8)$ with positive slope. Therefore the edge vectors are the column vectors $\gamma(\ell_1)=-\beta_{15}=(-1,-2,-1)^t$, $\gamma(\ell_2)=-\beta_{23}=(0,0,-1)^t$ and $\gamma(\ell_3)=\beta_{18}=(2,3,2)^t$.
\end{eg}

\begin{defn}\label{def: FRn+1 to Rn}
Let $F:V_n(\ZZ)\cong\RR^{n+1}\to\RR^n$ be the linear map which sends an $n$-periodic function $\pi:\ZZ\to\RR$ to the vector $y\in\RR^n$ with coordinates $y_i=\pi(i)-\pi(i-1)$ for $1\le i\le n$. Note that the sum of the $n$ coordinates of $y$ is $\sum y_i=m=\pi(n)-\pi(0)$.
\end{defn}

\begin{prop}
For any $n$-periodic tree $\cT$, the subset $\cR(\cT)\subseteq V_n(\ZZ)\cong\RR^{n+1}$ is the inverse image under $F$ of the set of all $y\in\RR^n$ satisfying $y^t \gamma(\ell)>0$ for all edge vectors $\gamma(\ell)$ of $\cT$.
\end{prop}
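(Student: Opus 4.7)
The plan is to reduce the proposition to a single telescoping identity and then invoke the fact that $\cT$ is the Hasse diagram of its poset, so edge inequalities generate all order relations.

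First I would observe that both sides are saturated under $\ker F$ (the constant functions): the inequalities $y^t\gamma(\ell)>0$ depend only on the differences $y_i=\pi(i)-\pi(i-1)$, and being a periodic morphism is unchanged by adding a constant to $\pi$. So the equality $\cR(\cT)=F^{-1}(\cdots)$ is well-posed regardless of the $1$-dimensional ambiguity in $F$.

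The computational heart of the argument is the identity, for any edge $\ell=(p_i,p_j)$ with $i<j$ and any $\pi\in V_n(\ZZ)$ with $y=F(\pi)$,
\[
y^t\beta(\ell)\;=\;\sum_{k=i+1}^{j}y_{\overline k}\;=\;\sum_{k=i+1}^{j}\bigl(\pi(k)-\pi(k-1)\bigr)\;=\;\pi(j)-\pi(i).
\]
The middle equality uses $n$-periodicity in the form $\pi(k)-\pi(k-1)=\pi(\overline k)-\pi(\overline k-1)=y_{\overline k}$ for every integer $k$; the outer equality is telescoping. Multiplying by $\delta_\ell$ yields $y^t\gamma(\ell)=\delta_\ell(\pi(j)-\pi(i))$, which is strictly positive exactly when $\pi$ respects the order at the two endpoints of $\ell$ (positive slope means $p_i<p_j$, so we need $\pi(i)<\pi(j)$; negative slope means $p_i>p_j$, so we need $\pi(i)>\pi(j)$, and the sign $\delta_\ell=-1$ flips the inequality accordingly).

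Given this identity the proposition follows from the tree structure. The forward direction is immediate: any edge encodes an order relation in $P_\cT$, so $\pi\in\cR(\cT)$ satisfies $y^t\gamma(\ell)>0$ for every edge $\ell$. For the converse, assume all edge inequalities hold and take any pair with $p_a<p_b$ in $P_\cT$. Since $\cT$ is the Hasse diagram of $P_\cT$, there exists a directed path $p_a=p_{c_0}\to p_{c_1}\to\cdots\to p_{c_N}=p_b$ of cover relations, each being (a translate of) an edge of $\cT$. The edge inequality at each step gives $\pi(p_{c_{r-1}})<\pi(p_{c_r})$, and chaining these yields $\pi(p_a)<\pi(p_b)$. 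Hence $\pi\in\cR(\cT)$.

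The only real obstacle is getting the $\overline{(\cdot)}$ reduction and the periodicity straight in the telescoping identity; once $y^t\beta(\ell)=\pi(j)-\pi(i)$ is in hand, the proposition is essentially a restatement of the fact that a poset relation is a composition of covering relations.
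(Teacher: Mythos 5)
Your proof is correct and takes essentially the same route as the paper's: the core is the telescoping identity $y^t\gamma(\ell)=\delta_\ell(\pi(j)-\pi(i))$, from which membership in $\cR(\cT)$ translates to the edge inequalities. The one place you are more careful than the paper's terse ``By definition\ldots'' is in explicitly invoking the Hasse-diagram property to chain cover relations and pass from ``all edges'' to ``all comparable pairs''; this is the right justification for the reduction the paper leaves implicit.
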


\begin{proof}
By definition, $\cR(\cT)$ is the set of all $n$-periodic functions $\psi:P_\cT\to \RR$ with the property that $\psi(p_i)<\psi(p_j)$ for any edge $\ell=(p_i,p_j)$ with positive slope and $\psi(p_i)>\psi(p_j)$ if $\ell$ has negative slope. One formula can be used for both cases:
\[
	\delta_\ell(\psi(p_j)-\psi(p_i))>0\,.
\]
Using the vector $y=F(\psi)$, this can be written as:
\[
	\delta_\ell(\psi(p_j)-\psi(p_i))=\delta_\ell(y_{\overline{i+1}}+\cdots+y_{\overline{j}})=\delta_\ell y^t\beta_{ij}=y^t \gamma(\ell)>0\,
\]
where $\overline k\equiv k$ mod $n$, $1\le \overline k\le n$. The proposition follows.
\end{proof}

Note that the linear condition $y^t\gamma(\ell)=0$ on $y=F(\psi)$ is equivalent to the condition $\psi(p_i)=\psi(p_j)$.

\begin{prop}\label{sign coherence of c-inverse}
An $n$-periodic tree $\cT$ has exactly $n$ distinct edge vectors $\gamma_1,\cdots,\gamma_n\in \ZZ^n$ with the following properties.
\begin{enumerate}
\item The sum of the coordinates of $\gamma_i$ is not divisible by $n$ for any $i$.
\item The determinant of the $n\times n$ integer matrix $\Gamma_\cT=[\gamma_1,\cdots,\gamma_n]$ is $\pm1$.
\item All nonzero entries in any column of $\Gamma_\cT^{-1}$ have the same sign.
\end{enumerate}
\end{prop}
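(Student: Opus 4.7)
The plan is to prove all three parts together via a single construction: for each $k\in\{1,\dots,n\}$, either $e_k$ or $-e_k$ is realized explicitly as a nonnegative integer combination of the edge vectors of $\cT$, obtained by summing the edge vectors along the unique path in $\cT$ from $p_{k-1}$ to $p_k$.

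First I would verify that $\cT$ has exactly $n$ distinct edge vectors. Section \ref{ss1.2: periodic trees} shows $\cT/\ZZ$ is a connected graph with $n$ vertices and $n$ edges, giving $n$ $\ZZ$-orbits of edges in $\cT$, each contributing a single edge vector since $\beta_{ij}$ depends only on $\overline i$ and $j-i$. Distinct orbits give distinct edge vectors: writing $j-i=qn+r$ with $0<r<n$ (using Proposition \ref{cor:edge vectors are Schur roots}), one recovers $q$ as the minimum coordinate of $\beta_{ij}$, $r$ as the number of coordinates equal to $q+1$, and the cyclic position of those coordinates as $\overline i$. Part (1) is then immediate: the coordinate sum of $\gamma(\ell)=\delta_\ell\beta_{ij}$ is $\pm(j-i)$, which is not divisible by $n$ by Proposition \ref{cor:edge vectors are Schur roots}.

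For parts (2) and (3), I fix $k\in\{1,\dots,n\}$ and let $\lambda_k$ be the unique path in $\cT$ from $p_{k-1}$ to $p_k$. The key claim is that $\lambda_k$ is monotone in the partial order on $P_\cT$. An internal local extremum of $\lambda_k$ would occur at some vertex $p_t$; Remark \ref{separation remark}(a) then places the endpoints $p_{k-1},p_k$ on opposite sides of the vertical line $\{t\}\times\RR$, forcing $k-1<t<k$, which is impossible for integer $t$. So $\lambda_k$ is monotone, and by symmetry I assume it is increasing, with vertex sequence $p_{i_0}<p_{i_1}<\cdots<p_{i_r}$, $i_0=k-1$, $i_r=k$, and consecutive edges $\ell_l=(p_{i_l},p_{i_{l+1}})$.

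The next step is the telescoping identity $\sum_{l=0}^{r-1}\gamma(\ell_l)=e_k$ in $\ZZ^n$. I lift each $\gamma(\ell_l)$ to the free abelian group on $\{e_j:j\in\ZZ\}$ by setting $\tilde\gamma_l=\sum_{i_l<j\le i_{l+1}}e_j$ when $i_l<i_{l+1}$ and $\tilde\gamma_l=-\sum_{i_{l+1}<j\le i_l}e_j$ when $i_l>i_{l+1}$; in both cases $\tilde\gamma_l$ is the oriented integer interval from $i_l$ to $i_{l+1}$. A short case check (the two possibilities for the sign of $i_{l+1}-i_l$ under the hypothesis $p_{i_l}<p_{i_{l+1}}$) confirms that the sign $\delta_{\ell_l}$ and the cyclic reduction conspire so that $\tilde\gamma_l$ projects to $\gamma(\ell_l)$ under the map $e_j\mapsto e_{\overline j}$. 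The oriented intervals $\tilde\gamma_l$ telescope to the oriented interval from $i_0=k-1$ to $i_r=k$, which is the single basis vector $e_k$; projecting mod $n$ gives $\sum_l\gamma(\ell_l)=e_k$ in $\ZZ^n$. Regrouping the $\ell_l$ by $\ZZ$-orbit produces nonnegative integers $m_{ik}$ with $e_k=\sum_i m_{ik}\gamma_i$.

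All three conclusions follow quickly. Since both $e_k$ and $-e_k$ occur (for various $k$) as nonnegative integer combinations of the $\gamma_i$, these $n$ vectors span $\RR^n$ and hence form a $\ZZ$-basis. The inclusions $\ZZ^n\subseteq\Gamma_\cT\ZZ^n\subseteq\ZZ^n$ then force $|\det\Gamma_\cT|=1$, establishing (2). For (3), uniqueness of the expansion in this basis forces the $k$-th column of $\Gamma_\cT^{-1}$ to have entries $\pm m_{ik}$, with the overall sign determined by whether $\lambda_k$ is increasing or decreasing and therefore independent of $i$. The main obstacle is the sign bookkeeping in defining $\tilde\gamma_l$: once $\tilde\gamma_l$ is aligned correctly with both $\delta_{\ell_l}$ and the direction of $i_{l+1}-i_l$ in all cases, telescoping handles all intermediate indices of $\lambda_k$, no matter how far they may stray from the segment $[k-1,k]$.
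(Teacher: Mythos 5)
Your proposal is correct and follows essentially the same route as the paper's proof: express $\pm e_k$ as a signed sum of edge vectors by traversing the unique path from $p_{k-1}$ to $p_k$, and show this path is monotone so that all signs agree. Your monotonicity argument (deriving the contradiction $k-1<t<k$ directly from Remark~\ref{separation remark}(a)) is a clean short-circuit of the paper's slightly more roundabout appeal to Corollary~\ref{monotonic cor}, and your explicit telescoping in the free abelian group on $\{e_j : j\in\ZZ\}$ and your reconstruction of $(q,r,\overline{i})$ from $\beta_{ij}$ spell out details the paper leaves implicit, but the underlying idea is identical.
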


\begin{rem}
We call $\Gamma_\cT$ the \emph{edge matrix} of $\cT$. It is well defined up to permutation of the columns.
\end{rem}

\begin{proof}
(1) follows from Proposition \ref{cor:edge vectors are Schur roots}.

To prove (2) we will show that each unit vector $e_i\in\ZZ^n$ is an integer linear combination of edge vectors. To do this, take the path $\lambda$ from $p_{i-1}$ to $p_{i}$ in the tree $\cT$. This path is a sequence of edges $\ell_1,\ell_2,\cdots,\ell_m$ in $\cT$. This gives an equation of the form:
\[
	e_i=\sum\pm \gamma(\ell_j)
\]
where each sign $\pm$ tells whether $\lambda$ goes up or down along the edge $\ell_j$ (since $\pm\gamma(p_i,p_j)=\beta_{kj}-\beta_{ki}$ for $k<i,j$ making $\sum \pm\gamma(\ell_j)=\beta_{ki}-\beta_{k,i-1}=e_i$). This proves (2).

Statement (3) is that all terms in this sum have the same sign. Equivalently, the path $\lambda$ is either monotonically increasing or monotonically decreasing. This is true when $\lambda$ is the single edge $(p_{i-1},p_i)$. In other cases, $\lambda$ must contain an edge $\ell$ which straddles either $p_i$ or $p_{i-1}$, say the latter. If $\ell$ has positive slope then, by Corollary \ref{monotonic cor}, the part of $\lambda$ which goes from $p_{i-1}$ to $\ell$ is monotonically increasing. For the same reason, the remainder of the path $\lambda$ is also monotonically increasing. Similarly if $\ell$ has negative slope. This proves (3).
\end{proof}

\begin{eg}
In Figure \ref{fig1}, the edge matrix and its inverse are given by:
\[
	\Gamma_\cT=\mat{-1&0&2\\
	-2& 0&3\\
	-1&-1&2
	}\,,\quad \Gamma_\cT^{-1}=\mat{3&-2&0\\
	1&0&-1\\
	2&-1&0}
\]
The columns of $\Gamma_\cT^{-1}$ indicate the paths from $p_{i-1}$ to $p_{i}$. For example, the path from $p_3$ to $p_4$ goes up the edge $\ell_2=(p_3,p_2)$, up the edge $\ell_1=(p_5,p_1)$ and two of its translates, and it goes up along two translates of the edge $\ell_3=(p_1,p_8)$, giving the first column of $\Gamma_\cT^{-1}$ as $(3,1,2)^t$.
\end{eg}

\begin{cor}\label{FR(T) is an octant} For any periodic tree $\cT$, the linear isomorphism $\RR^n\to \RR^n$ whose matrix is the transpose $\Gamma_\cT^t$ of the edge matrix of $\cT$ sends the region $F\cR(\cT)$, resp. $F\overline \cR(\cT)$, to the set of all vectors in $\RR^n$ whose coordinates are all positive, resp. nonnegative.
\end{cor}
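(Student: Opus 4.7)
The plan is to combine the previous proposition (which describes $F\cR(\cT)$ by linear inequalities) with Proposition~\ref{sign coherence of c-inverse}(2) (invertibility of $\Gamma_\cT$), observing that the linear transformation $\Gamma_\cT^t$ rewrites the defining inequalities as coordinate inequalities in the target $\RR^n$.

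More precisely, I would first note that since $F:V_n(\ZZ)\to\RR^n$ is a surjective linear map and $\cR(\cT)=F^{-1}\{y:y^t\gamma(\ell)>0\text{ for every edge }\ell\}$ by the proposition preceding this corollary, we have $F\cR(\cT)=\{y\in\RR^n:y^t\gamma_i>0\text{ for }i=1,\dots,n\}$. The key observation is then that the $i$-th entry of $\Gamma_\cT^t y$ is exactly $\gamma_i^t y=y^t\gamma_i$, so the condition $y\in F\cR(\cT)$ is equivalent to $\Gamma_\cT^t y$ lying in the open positive octant of $\RR^n$. Because $\det\Gamma_\cT=\pm1$ by Proposition~\ref{sign coherence of c-inverse}(2), the map $y\mapsto\Gamma_\cT^t y$ is a linear automorphism of $\RR^n$, and the equivalence just noted shows that it restricts to a bijection between $F\cR(\cT)$ and the open positive octant.

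For the closed version I would show that $F\overline{\cR(\cT)}=\{y:y^t\gamma_i\ge0\text{ for all }i\}$ and then apply the same linear isomorphism argument. The containment $F\overline{\cR(\cT)}\subseteq\{y:y^t\gamma_i\ge0\}$ is immediate from continuity. For the reverse containment, use Proposition~\ref{prop: existence of periodic morphisms} to pick some $\psi_0\in\cR(\cT)$; given any $\psi$ with $\delta_\ell(\psi(p_j)-\psi(p_i))\ge0$ for every edge $\ell=(p_i,p_j)$, the convex combination $t\psi_0+(1-t)\psi$ satisfies the corresponding strict inequalities for every $t\in(0,1]$, so it lies in $\cR(\cT)$ and converges to $\psi$ as $t\to0$; hence $\psi\in\overline{\cR(\cT)}$. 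Surjectivity of $F$ then gives $F\overline{\cR(\cT)}=\{y:y^t\gamma_i\ge0\}$, which under $\Gamma_\cT^t$ corresponds exactly to the closed positive octant.

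The only non-routine point is the identification of $F\overline{\cR(\cT)}$ with weak inequalities; this is where the nonemptiness provided by Proposition~\ref{prop: existence of periodic morphisms} is crucial, since without an interior point the closure of an open polyhedral cone need not be defined by the weakened inequalities. Everything else is a direct manipulation of the matrix $\Gamma_\cT^t$.
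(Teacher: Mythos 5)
Your proposal is correct and follows essentially the same approach as the paper: the one-line observation that the conditions $y^t\gamma_i > 0$ characterizing $F\cR(\cT)$ (from the preceding proposition) are exactly the conditions $\gamma_i^t y > 0$, i.e.\ that the coordinates of $\Gamma_\cT^t y$ are positive. Your treatment of the closed case via a convex-combination argument with an interior point from Proposition~\ref{prop: existence of periodic morphisms} is sound and fills in a detail the paper leaves implicit, but it is not a departure from the paper's method.
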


\begin{proof}
The conditions $y^t\gamma_i>0$ which characterize $F\cR(\cT)$ are equivalent to $\gamma_i^t y>0$.
\end{proof}


\subsection{Representations}\label{ss2.2: representations}
We describe the edge vectors of a periodic tree in terms of representations of a quiver given by the sign function of the tree.

Given a (surjective) $n$-periodic sign function $\varepsilon$, we have a quiver $\widetilde{A}_{n-1}^\varepsilon$ with $n$ vertices and $n$ edges forming one cycle as follows. The vertices of $\widetilde{A}_{n-1}^\varepsilon$ are $1,2,\cdots,n$. For each $i$ there is one arrow between ${i}$ and $\overline{i+1}$ which goes to the left if $\varepsilon_i$ is positive and goes to the right if $\varepsilon_i$ is negative. For example, for $\varepsilon=(-,+,+)$ we have
\[
\xymatrixrowsep{1pt}
\xymatrix{
\\
Q_{-++}:&1\ar@/^1pc/[rr] \ar[r] & 
	2 &
	3\ar[l]\,.	}
\]
By assumption, the signs $\varepsilon_i$ are not all equal. So $\widetilde{A}_{n-1}^\varepsilon$ has no oriented cycles and $\kk\widetilde{A}_{n-1}^\varepsilon$ is a finite dimensional hereditary algebra over any field $\kk$. 

We recall that a \emph{representation} $M$ of $\widetilde{A}_{n-1}^\varepsilon$ consists of a vector space $M_i$ at each vertex, which we always assume to be finite dimensional, and a linear map $M_i\to M_j$ for every arrow $i\to j$ in the quiver. Representations are equivalent to modules over the ring $\kk\widetilde{A}_{n-1}^\varepsilon$. The \emph{dimension vector} of a representation $M$ is
$
\underline\dim M:=(\dim_\kk M_1,\dim_\kk M_2,\cdots,\dim_\kk M_n)\in\NN^n$.

Isomorphism classes of representations $M$ of $\widetilde{A}_{n-1}^\varepsilon$ are in bijection with homotopy classes of monomorphisms $p:P_1\to P_0$ between projective $\kk\widetilde{A}_{n-1}^\varepsilon$-modules. The correspondence sends $M$ to it projective representation and $f$ to its cokernel. A \emph{virtual representation} of $\widetilde{A}_{n-1}^\varepsilon$ is defined to be the homotopy class of a not necessarily injective morphism $f:P_1\to P_0$ between projective modules $P_i$. As an example, take the morphism $P\to 0$ for any projective $P$. We denote this virtual representation as $P[1]$ and call it a \emph{shifted projective}. It is easy to see that any indecomposable virtual representation of $\widetilde{A}_{n-1}^\varepsilon$ is either a standard representation or a shifted projective. The dimension vector of a virtual representation $P_1\to P_0$ is defined to be $\underline\dim P_0-\underline\dim P_1\in\ZZ^n$.

The \emph{Euler matrix} $E_\varepsilon$ is the matrix with 1's on the diagonal, $ij$ entry equal to $-1$ if there is an arrow $i\to j$ in $\widetilde{A}_{n-1}^\varepsilon$ and 0 elsewhere. For example,
\[
E_{-++}=\mat{1 & -1 & -1\\
0&1&0\\
0&-1&1}\,.
\]
The Euler matrix of a quiver without oriented cycles is invertible and therefore gives a nondegernate form $\left<\cdot,\cdot\right>:\ZZ^n\times \ZZ^n\to\ZZ$ called the \emph{Euler-Ringel form} given by
\[
	\left<x,y\right>:=x^tE_\varepsilon y\,.
\]
This form has the property that, for any two representations $M,N$ of $\widetilde{A}_{n-1}^\varepsilon$,
\begin{equation}\label{Ringel form}
	\left<\underline\dim M,\underline\dim N\right>=\dim\Hom(M,N)-\dim\Ext(M,N)\,.
\end{equation}

The following calculation shows that the columns $\pi_j$ of the matrix $(E_\varepsilon^t)^{-1}$ are the dimension vectors of the indecomposable projective $\kk\widetilde{A}_{n-1}^\varepsilon$-modules. 
\[
	\left<\pi_j,\underline\dim M\right>=\pi_j^t E_\varepsilon\underline\dim M=e_j^t\underline\dim M=\dim M_j=\dim\Hom(P_j,M)-\dim\Ext(P_j,M).
\]
In the example, these are $\pi_1=(1,2,1)^t,\pi_2=(0,1,0)^t,\pi_3=(0,1,1)^t$. These vectors are called the \emph{projective roots} of $\widetilde{A}_{n-1}^\varepsilon$.

The \emph{positive roots} of $\widetilde{A}_{n-1}^\varepsilon$ are those of the form $\beta_{ij}$ where $i<j$ and $j-i$ is not divisible by $n$. For example, the projective roots are always positive roots. The \emph{negative roots} are $-\beta_{ij}$ where $\beta_{ij}$ is a positive root. By Proposition \ref{cor:edge vectors are Schur roots}, the edge vectors of a periodic tree are positive and negative roots. However, only certain ones called ``real Schur roots'' occur. These are defined as follows

\begin{defn}\cite{K},\cite{S92},\cite{DW}.
A (positive) \emph{Schur root} of $\widetilde{A}_{n-1}^\varepsilon$ is a vector $\beta\in\NN^n$ so that the general representation of $\kk\widetilde{A}_{n-1}^\varepsilon$ with dimension vector $\beta$ has endomorphism ring $\kk$. In particular, the representation is indecomposable. The Schur root $\beta$ is called \emph{real} or \emph{imaginary} depending on whether $\left<\beta,\beta\right>>0$ or $\le 0$ respectively. In the first case, the general representation of dimension vector $\beta$ is \emph{rigid}, i.e., has no self-extensions. In the second case, it has self-extensions. A rigid indecomposable module is called \emph{exceptional}.
\end{defn}

\begin{thm}
The Schur roots of $\widetilde{A}_{n-1}^\varepsilon$ are given as follows.
\begin{enumerate}
\item[(0)] \emph{(null root)} The null root $\eta=\beta_{0n}$.
\item \emph{(preprojective roots)} $\beta_{ij}$ where $(\varepsilon_i,\varepsilon_j)=(-,+)$ and $i<j$.
\item \emph{(preinjective roots)} $\beta_{ij}$ where $(\varepsilon_i,\varepsilon_j)=(+,-)$ and $i<j$.
\item \emph{(regular roots)} $\beta_{ij}$ where $\varepsilon_i=\varepsilon_j$ and $i<j<i+n$.
\end{enumerate}
The null root is an imaginary root and the others are the real Schur roots.
\end{thm}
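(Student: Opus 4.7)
My strategy is to handle the null root and the three real-root families separately, combining a direct Euler-form computation with the Auslander--Reiten structure of $\widetilde{A}_{n-1}^\varepsilon$.

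For the null root, the vector $\eta = (1,\ldots,1)$ satisfies $\langle \eta, \eta\rangle = n - n = 0$ since each of the $n$ arrows contributes $-1$ to the off-diagonal part of $\eta^t E_\varepsilon \eta$. A generic representation of dimension $\eta$ assigns a one-dimensional space to every vertex and a generic scalar to every arrow; because $\varepsilon$ is surjective, the arrows fall into maximal monochromatic strings in each direction, and composing along these strings yields two scalars whose ratio is a free generic parameter. A direct check shows the endomorphism algebra of such a generic representation is $\kk$, so $\eta$ is an imaginary Schur root. For $k \ge 2$, the generic representation of dimension $k\eta$ decomposes as a direct sum of $k$ non-isomorphic quasi-simples from distinct homogeneous tubes, so its endomorphism ring is $\kk^k$, not Schur; hence $\eta$ is the unique imaginary Schur root.

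For real root candidates $\beta_{ij}$ with $d := j - i$ not a multiple of $n$, writing $d = qn + r$ with $0 < r < n$, one has $\beta_{ij} = q\eta + \beta_{i,i+r}$. Since $\eta$ lies in the radical of the Euler form on $\widetilde{A}_{n-1}^\varepsilon$, this gives $\langle \beta_{ij}, \beta_{ij}\rangle = \langle \beta_{i,i+r}, \beta_{i,i+r}\rangle = r - (r-1) = 1$, the last equality because the support of $\beta_{i,i+r}$ is an $A_r$-shaped subpath with $r$ vertices and $r - 1$ arrows. Thus every $\beta_{ij}$ with $d \not\equiv 0 \pmod n$ is a real root.

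To decide which of these real roots are Schur, I would invoke the AR-theoretic classification of indecomposables over $\widetilde{A}_{n-1}^\varepsilon$. The preprojective component is generated by iterating $\tau^{-1}$ on the indecomposable projectives $P_v$, whose dimension vectors are the columns $\pi_v$ of $(E_\varepsilon^t)^{-1}$; as in the example worked out in the text, these are all of the form $\beta_{ij}$ with $(\varepsilon_i, \varepsilon_j) = (-,+)$. Using the Coxeter transformation $c = -E_\varepsilon^{-1}E_\varepsilon^t$, I would show by induction on AR-distance from the projective component that $c^{-1}$ sends each such vector to another $\beta_{i',j'}$ with the same sign pattern, and that every pair $(i,j)$ with $(\varepsilon_i, \varepsilon_j) = (-,+)$ and $i < j$ arises in this way; this establishes class (1). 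Class (2) is dual. Finally, the regular module category forms a $\mathbb{P}^1(\kk)$-family of tubes, with two exceptional tubes of ranks $p = |\{i : \varepsilon_i = +\}|$ and $q = |\{i : \varepsilon_i = -\}|$; the modules at the mouths of these two tubes are the thin indecomposables on the two maximal monochromatic subpaths, whose dimension vectors are precisely the vectors in class (3). These are exceptional, while every $\beta_{ij}$ with $\varepsilon_i = \varepsilon_j$ and $d \ge n$ decomposes in the tubular structure into $q\eta$ plus a tube-mouth module, hence is not Schur. The main technical obstacle will be verifying that $c^{-1}$ preserves the $(-,+)$ sign pattern and enumerates all such pairs; I plan to carry this out by expressing $c$ in the difference coordinates $y_i = \pi(i) - \pi(i-1)$ of Definition \ref{def: FRn+1 to Rn}, where the tridiagonal structure of $E_\varepsilon$ on the affine cycle makes the action of $c$ essentially a sign-dependent shift, reducing the verification to combinatorial tracking of how the endpoints $(i,j)$ evolve under iterated application of $\tau^{-1}$.
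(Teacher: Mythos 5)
The paper actually states this theorem without proof---it is invoked as a background fact about the affine quiver $\widetilde{A}_{n-1}^\varepsilon$, with the general references \cite{K}, \cite{S92}, \cite{DW} cited nearby for Schur roots---so there is no internal argument to compare against. Evaluating your proposal on its own terms: the overall framework (Euler-form computations for the real/imaginary split, then sorting the real Schur roots into preprojective, preinjective and regular via the AR quiver) is the standard route and is fundamentally sound. However, two things need repair, one cosmetic and one substantive.

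The cosmetic one: $\eta$ lies in the radical of the \emph{symmetrized} form $(x,y)=\langle x,y\rangle+\langle y,x\rangle$, not of $\langle\cdot,\cdot\rangle$ itself. Indeed, the paper uses $\langle\eta,\alpha\rangle=-1$ for preprojective $\alpha$ in its lemma on generic decompositions. Your computation $\langle\beta_{ij},\beta_{ij}\rangle=1$ survives, because expanding $\langle q\eta+\beta_{i,i+r},\,q\eta+\beta_{i,i+r}\rangle$ only uses $\langle\eta,\eta\rangle=0$ and $\langle\eta,\beta\rangle+\langle\beta,\eta\rangle=0$, both of which do hold; but you should state it that way. (Incidentally, the preprojective/preinjective/regular split can be read off instantly from the identity $\langle\beta_{ij},\eta\rangle=\tfrac{1}{2}(\varepsilon_j-\varepsilon_i)$, which follows from the fact that the $k$th entry of $\eta^tE_\varepsilon^t$ is $\tfrac{1}{2}(\varepsilon_k-\varepsilon_{k-1})$; this is a cheaper route than tracking $\tau^{-1}$-orbits to establish the sign pattern.)

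The substantive error is in the regular case. You identify the vectors of class (3) with ``the modules at the mouths of the two [exceptional] tubes,'' i.e.\ the quasi-simples. A tube of rank $t$ has exactly $t$ quasi-simples but $t(t-1)$ exceptional (rigid indecomposable) objects: every regular module of quasi-length strictly less than $t$ is exceptional, not just those of quasi-length one. Your claim coincidentally holds only when $p,q\le 2$. Counting class (3): for a fixed residue $i\bmod n$ with $\varepsilon_i=+$, there are exactly $p-1$ residues $j\bmod n$ with $\varepsilon_j=+$ and $i<j<i+n$, giving $p(p-1)+q(q-1)$ vectors---which matches the count of exceptional regulars, not the count $p+q$ of quasi-simples. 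Relatedly, ``the two maximal monochromatic subpaths'' is not a well-defined object once $\varepsilon$ changes sign more than twice around the $n$-cycle; there are equally many $+$-blocks as $-$-blocks, and in general several of each. To fix this you should match class (3) with \emph{all} exceptional regular modules, characterized e.g.\ as the thin indecomposable regular modules with dimension vector strictly below $\eta$, and verify these are exactly the string modules $M_{ij}$ with $\varepsilon_i=\varepsilon_j$ and $j-i<n$. The exclusion of $\beta_{ij}$ with $\varepsilon_i=\varepsilon_j$ and $j-i\ge n$ is then best phrased as: its generic representation decomposes (with at least one $\eta$-summand in its canonical decomposition), rather than as a sum of ``$q\eta$ plus a tube-mouth module,'' since $q\eta$ is not itself a Schur root and the remaining summand need not sit at the mouth of a tube.

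The Coxeter-transformation step for preprojective/preinjective is the part you flag as remaining work; that is a fair self-assessment, and the plan of tracking the interval endpoints under $\tau^{-1}$ in the coordinates $y_i=\pi(i)-\pi(i-1)$ is a reasonable way to carry it out, though as noted above the Euler pairing with $\eta$ gives a shortcut.
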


We will show that every real Schur root occurs as a edge vector of some periodic tree. Multiples of the null root are called roots, but they are not Schur roots.

\begin{prop}
Given a real Schur root $\beta_{ij}$ there is an exceptional representation $M_{ij}$ with dimension vector $\beta_{ij}$ which is unique up to isomorphism. Let $P_1\to P_0\to M_{ij}$ be a minimal projective presentation of $M_{ij}$. Then the number of summands of $P_0$ minus the number of summands of $P_1$ is positive if $\beta_{ij}$ is preprojective, negative if $\beta_{ij}$ is preinjective and zero if $\beta_{ij}$ is regular.
\end{prop}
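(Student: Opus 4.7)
My plan splits the proposition into two parts: existence and uniqueness of $M_{ij}$, and the evaluation of the difference $\#P_0-\#P_1$.

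For the first part, I would invoke the classical fact that a real Schur root $\beta$ of an acyclic hereditary algebra is the dimension vector of an exceptional representation which is unique up to isomorphism. Existence is guaranteed by Kac's theorem together with $\langle\beta,\beta\rangle=1$; uniqueness follows from the irreducibility of the representation variety $\mathrm{rep}(\widetilde A_{n-1}^\varepsilon,\beta)$, since the rigid locus is a single nonempty open $\mathrm{GL}(\beta)$-orbit, so any two rigid representations with dimension vector $\beta$ lie in the same orbit.

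For the second part, I would start with the minimal projective resolution $0\to P_1\to P_0\to M_{ij}\to 0$, which exists with $P_1$ projective because $\kk\widetilde A_{n-1}^\varepsilon$ is hereditary. Applying $\Hom(-,S_k)$ and using minimality (the differential $P_1\to P_0$ lies in $\rad P_0$, so the induced map $\Hom(P_0,S_k)\to\Hom(P_1,S_k)$ vanishes), I would identify the multiplicity of $P_k$ in $P_0$ with $\dim\Hom(M_{ij},S_k)$ and the multiplicity of $P_k$ in $P_1$ with $\dim\Ext(M_{ij},S_k)$. Summing over $k$ and substituting into \eqref{Ringel form} gives
\[
\#P_0-\#P_1 \;=\; \sum_k\bigl(\dim\Hom(M_{ij},S_k)-\dim\Ext(M_{ij},S_k)\bigr) \;=\; \langle \beta_{ij},\eta\rangle,
\]
where $\eta=\sum_k e_k=\beta_{0n}$ is the all-ones vector (the null root).

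The remaining task is to evaluate $\beta_{ij}^t E_\varepsilon \eta$. The $k$-th entry of $E_\varepsilon\eta$ equals $1$ minus the out-degree of vertex $k$ in $\widetilde A_{n-1}^\varepsilon$. Using the convention that $\varepsilon_i=+$ orients the $i$-th arrow leftward while $\varepsilon_i=-$ orients it rightward, a direct check of the four possibilities for $(\varepsilon_{k-1},\varepsilon_k)$ shows that this entry equals $\sigma_k-\sigma_{k-1}$, where $\sigma_k:=1$ if $\varepsilon_k=+$ and $\sigma_k:=0$ if $\varepsilon_k=-$. Extending $\sigma$ to an $n$-periodic function on $\ZZ$ and using that $(\beta_{ij})_k$ counts the integers $m\in[i+1,j]$ with $\overline m=k$, the inner product telescopes over integers:
\[
\beta_{ij}^t E_\varepsilon\eta \;=\; \sum_{m=i+1}^{j}(\sigma_m-\sigma_{m-1})\;=\;\sigma_j-\sigma_i.
\]
Reading off the three cases of real Schur roots from the classification theorem then yields $+1$ in the preprojective case ($\sigma_i=0,\sigma_j=1$), $-1$ in the preinjective case ($\sigma_i=1,\sigma_j=0$), and $0$ in the regular case ($\sigma_i=\sigma_j$), as required. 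The only potentially subtle point, beyond routine bookkeeping with the sign conventions, is that when $j-i\geq n$ (possible for preprojective or preinjective roots) the telescoping crosses multiple periods; this causes no trouble because the sum is performed over integers and $\sigma$ is $n$-periodic, so each summand $\sigma_m-\sigma_{m-1}$ is unambiguous regardless of how many periods the index range spans.
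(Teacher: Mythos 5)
The paper states this proposition without a proof, so there is nothing of the paper's to compare against; your argument stands as a correct proof of the statement. Both halves of your reasoning check out: the reduction of $\#P_0-\#P_1$ to $\langle\beta_{ij},\eta\rangle$ via minimality of the presentation (zero differential on the quotient $P_0/\rad P_0$, hence $\Hom(P_0,S_k)\cong\Hom(M_{ij},S_k)$ and $\Hom(P_1,S_k)\cong\Ext(M_{ij},S_k)$) is the standard argument over a hereditary algebra, and the explicit evaluation $\langle\beta_{ij},\eta\rangle=\sigma_j-\sigma_i$ follows correctly from the paper's sign convention for the arrows: with $\varepsilon_i=+$ giving an arrow $\overline{i+1}\to i$ and $\varepsilon_i=-$ giving $i\to\overline{i+1}$, the out-degree of vertex $k$ is $[\varepsilon_k=-]+[\varepsilon_{k-1}=+]$, so indeed $(E_\varepsilon\eta)_k=\sigma_k-\sigma_{k-1}$ and the pairing telescopes over the integers in $(i,j]$. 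The final case check against the classification of real Schur roots (preprojective: $\varepsilon_i=-,\varepsilon_j=+$; preinjective: $\varepsilon_i=+,\varepsilon_j=-$; regular: $\varepsilon_i=\varepsilon_j$) gives $+1,-1,0$ respectively. One small observation worth noting: your computation shows more than the signs claimed --- it shows $\#P_0-\#P_1\in\{+1,-1,0\}$, i.e.\ the defect of any real Schur root of $\widetilde A_{n-1}^\varepsilon$ is at most one in absolute value, which is the affine $\widetilde A$ fact the authors implicitly use in the proof of Corollary \ref{cor: components of cluster from topology of tree} when they distinguish regular, preprojective and preinjective by the sign of $\langle\beta,\eta\rangle$.
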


More generally, for any $a<b$, let $M_{ab}$ denote the string module with dimension vector $\beta_{ab}$ which comes from a generic indecomposable finite dimensional representation of the infinite covering quiver $\widetilde{A_\varepsilon}$ of $\widetilde{A}_{n-1}^\varepsilon$. We say that $\beta_{ab}$ is a \emph{subroot} of $\beta_{ij}$ and we write $\beta_{ab}\subseteq \beta_{ij}$ if $M_{ab}$ is isomorphic to a submodule of $M_{ij}$.

\begin{lem}\label{subroot lemma}
Suppose that $i\le a<b\le j$. Then $\beta_{ab}\subseteq\beta_{ij}$ if and only if the following are satisfied for the same integer $s$.
\begin{enumerate}
\item Either $a=i+sn$ or $\varepsilon_{a}=-1$.
\item $b=j+sn$ or $\varepsilon_b=+1$.
\end{enumerate}
\end{lem}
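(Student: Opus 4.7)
The approach is to interpret the subroot relation via submodules of string modules in the covering quiver $\widetilde{A_\varepsilon}$. Recall that $M_{ij}$ is the string module in the cover whose support is the set of consecutive vertices $\{i+1,\dots,j\}$, with each structure map between adjacent basis vectors being an isomorphism. The group of translations $k\mapsto k+n$ acts by automorphisms of $\widetilde{A_\varepsilon}$ (because $\varepsilon$ is $n$-periodic), so for any $s\in\ZZ$ there is an isomorphism $M_{ab}\cong M_{a-sn,\,b-sn}$ in the cover. Consequently, $\beta_{ab}\subseteq\beta_{ij}$ is equivalent to the existence of an integer $s$ such that $M_{a-sn,\,b-sn}$ is literally a submodule of $M_{ij}$ inside the cover.

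First I would establish the standard fact that an interval substring with support $\{a'+1,\dots,b'\}$, where $i\le a'<b'\le j$, is closed under the module action of $M_{ij}$ if and only if the two boundary conditions hold: $(\mathrm{i})$ $a'=i$ or $\varepsilon_{a'}=-$ (so that the arrow between vertices $a'$ and $a'+1$ points out of the substring, rather than forcing the missing vector $v_{a'}$ into the submodule), and $(\mathrm{ii})$ $b'=j$ or $\varepsilon_{b'}=+$ (dually at the right end). At any interior vertex both ends of the adjacent arrow already lie in the substring, so closure there is automatic.

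Next I would apply this with $a'=a-sn$ and $b'=b-sn$. Using $\varepsilon_{a'}=\varepsilon_a$ and $\varepsilon_{b'}=\varepsilon_b$, the two closure conditions become exactly the two conditions of the lemma: $a=i+sn$ or $\varepsilon_a=-$, and $b=j+sn$ or $\varepsilon_b=+$, for the same $s$. The converse is immediate: if $M_{ab}$ is isomorphic to a submodule of $M_{ij}$, that submodule must be a string module $M_{a'b'}$ with $\beta_{a'b'}=\beta_{ab}$; the dimension-vector equation forces $b'-a'=b-a$ (as these count the number of summands) and $a'\equiv a\pmod n$, so $(a',b')=(a-sn,b-sn)$ for a unique integer $s$.

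The main obstacle is to verify that the support condition $i\le a-sn<b-sn\le j$, which is needed for the submodule interpretation but not mentioned in the lemma, is automatically compatible with the two boundary conditions, so that no extra clause on $s$ is required. This is a short case analysis using $i\le a<b\le j$: if the first condition forces $a-sn=i$ then $b-sn=(b-a)+i\le(j-i)+i=j$, so the right end fits; if the second forces $b-sn=j$ then symmetrically $a-sn=a-b+j\ge i$; and if both conditions hold via the $\varepsilon$ clauses alone (so $\varepsilon_a=-$ and $\varepsilon_b=+$) then $s=0$ already works because $a,b\in[i,j]$.
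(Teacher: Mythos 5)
Your proof is correct and takes the same approach as the paper's brief argument: conditions (1) and (2) are precisely the closure conditions at the two boundary vertices that make the translated substring $M_{a-sn,b-sn}$ a submodule of $M_{ij}$, and you additionally verify (which the paper leaves implicit) that the support condition $i\le a-sn<b-sn\le j$ comes for free. One wording slip: when $\varepsilon_{a'}=-$ the boundary arrow $a'\to a'+1$ points \emph{into} the substring $\{a'+1,\dots,b'\}$ rather than out of it (the paper says ``arrows\dots point inward''), though your parenthetical reason --- avoiding forcing $v_{a'}$ into the submodule --- correctly identifies why this is the right condition.
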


\begin{proof}
These are the conditions which make the arrows in the quiver point inward towards the support of $M_{ab}\cong M_{a-sn,b-sn}$ making it a submodule of $M_{ij}$.
\end{proof}

\begin{lem}\label{lem: second stability condition holds}
Suppose that $\cT$ is an $n$-periodic tree and $\ell=(p_i,p_j)$ is an edge in $\cT$. Let $\beta_{ab}$ be any subroot of $\beta_{ij}=\beta(\ell)$. Then
\[
	F(\psi)^t\beta_{ab}<0
\]
for any $\psi\in \cR(\cT)$.
\end{lem}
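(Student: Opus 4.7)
The plan is to translate the claim into a statement about $\psi$-values at tree vertices and then invoke the monotonicity provided by Corollary~\ref{monotonic cor}. By definition of $F$,
\[
F(\psi)^t\beta_{ab}=\sum_{k=a+1}^{b}y_{\overline k}=\sum_{k=a+1}^{b}(\psi(p_k)-\psi(p_{k-1}))=\psi(p_b)-\psi(p_a),
\]
so the inequality amounts to $\psi(p_b)<\psi(p_a)$. I would first use Lemma~\ref{subroot lemma} to fix an integer $s$ with $i\le a<b\le j$ such that: either $a=i+sn$ or $\varepsilon_a=-$; and either $b=j+sn$ or $\varepsilon_b=+$. Under $i\le a<b\le j$ the two equalities $a=i+sn$ and $b=j+sn$ can hold simultaneously only when $s=0$ and $(a,b)=(i,j)$, i.e.\ $\beta_{ab}=\beta_{ij}$; this degenerate ``full root'' case is the weight equation for the semi-invariant rather than a stability inequality, so the lemma pertains to proper subroots, and in every remaining configuration at least one of $\varepsilon_a=-$ or $\varepsilon_b=+$ is forced.

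The argument then proceeds by splitting on the sign of $s$, exploiting the fact that the translate $\ell_s=(p_{i+sn},p_{j+sn})$ is again an edge of $\cT$ by $n$-periodicity. For $s>0$ the bound $b\le j<j+sn$ excludes $b=j+sn$, forcing $\varepsilon_b=+$. If additionally $a=i+sn$, then $b$ lies strictly in $(i+sn,j+sn)$, and Corollary~\ref{monotonic cor}(1) applied to $\ell_s$ at $k=b$ gives $\psi(p_b)<\min(\psi(p_{i+sn}),\psi(p_{j+sn}))\le\psi(p_{i+sn})=\psi(p_a)$. If instead $\varepsilon_a=-$, then $a,b\in[i,j]$ with opposite signs; Corollary~\ref{monotonic cor} applied to $\ell$ itself yields $\psi(p_a)>\max(\psi(p_i),\psi(p_j))$ when $a\in(i,j)$ and $\psi(p_b)<\min(\psi(p_i),\psi(p_j))$ when $b\in(i,j)$, while the boundary cases $a=i$ (which forces $\varepsilon_i=-$) and $b=j$ (which forces $\varepsilon_j=+$) are handled by the trivial identities $\psi(p_a)=\psi(p_i)$ and $\psi(p_b)=\psi(p_j)$. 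Chaining the strict inequalities produces $\psi(p_b)<\psi(p_a)$ in every proper sub-configuration. The case $s<0$ is symmetric with $a$ and $b$ interchanged (applying Corollary~\ref{monotonic cor}(2) to $\ell_s$ when $b=j+sn$), and $s=0$ reduces to the $\varepsilon_a=-$ or $\varepsilon_b=+$ subargument above.

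The main obstacle I anticipate is the endpoint bookkeeping: verifying that in each combination of ``$a$ or $b$ interior versus coinciding with an endpoint of $\ell$ or $\ell_s$'' the resulting chain of inequalities remains strict. The only potentially dangerous configuration is $a=i$, $b=j$ with $\ell$ of positive slope, which would give $\psi(p_a)=\psi(p_i)<\psi(p_j)=\psi(p_b)$; but this configuration is precisely $\beta_{ab}=\beta_{ij}$, the improper case already excluded. In every other sub-case the constraint $\varepsilon_a=-$ or $\varepsilon_b=+$ supplies a strict inequality via Corollary~\ref{monotonic cor} that combines with the remaining endpoint information to give strictness overall.
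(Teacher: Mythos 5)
Your proof is correct and follows the same route as the paper's: reduce $F(\psi)^t\beta_{ab}$ to $\psi(p_b)-\psi(p_a)$, then apply Lemma~\ref{subroot lemma} and Corollary~\ref{monotonic cor}. You are considerably more thorough than the paper, which works out only the generic case $i<a<b<j$ with $\varepsilon_a=-$, $\varepsilon_b=+$ and labels the rest ``similar,'' whereas you explicitly handle the endpoint cases and the translate cases $a=i+sn$, $b=j+sn$ by invoking Corollary~\ref{monotonic cor} on the translated edge $\ell_s$; your observation that the statement must implicitly exclude $\beta_{ab}=\beta_{ij}$ (since $F(\psi)^t\beta_{ij}=\psi(p_j)-\psi(p_i)>0$ when $\ell$ has positive slope) is a legitimate correction to the lemma as written.
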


\begin{rem}\label{second stability condition is satisfied on FR(T)}
When we go to the closure of $\cR(\cT)$, the inequality could become an equality. So, we conclude that $y^t\beta_{ab}\le0$ for all $y\in F\overline \cR(\cT)$.
\end{rem}

\begin{proof}
Consider the case $i<a<b<j$, the other cases being similar. Then $\varepsilon_a=-1$ and $\varepsilon_b=+1$. By Corollary \ref{monotonic cor} we have
\[
	\psi(p_a)>\max(\psi(p_i),\psi(p_j))>\min(\psi(p_i),\psi(p_j))>\psi(p_b)
\]
which implies that that the point $(b,\psi(p_b))\in \RR^2$ is below and to the right of $(a,\psi(p_a))$. This is equivalent to the equation $F(\psi)^t\beta_{ab}<0$.
\end{proof}


\subsection{Semi-invariants and cluster tilting objects}\label{ss2.3: semi-invariants and cluster tilting objects} This subsection contains the main result of Section \ref{sec2}: the 1-1 correspondence between $n$-periodic trees and cluster tilting objects in the cluster category of $\kk \widetilde{A}_{n-1}^\varepsilon$. The correspondence, given in Theorems \ref{virtual stability theorem} and \ref{thm:correspondence between trees and cluster tilting objects} also gives, in Corollary \ref{cor: components of cluster from topology of tree}, a description of which components of the cluster tilting object are regular, preprojective and preinjective or shifted projective depending on the geometry of the periodic tree.

The Stability Theorem for virtual semi-invariants from \cite{IOTW} characterizes vectors in the support of a semi-invariant. There are several equivalent versions of the stability conditions which we now review. To simplify the logical development of this subject we use these equivalent formulas as a definition. The original definition of a virtual semi-invariant is Theorem \ref{original def of virtual semi-invariant} below.

\begin{prop}\label{prop: equivalent versions of stability conditions}
Suppose that $\beta_{ij}$ is a real Schur root and $v\in\RR^n$ so that $\left<v,\beta_{ij}\right>=0$. Then the following are equivalent.
\begin{enumerate}
\item $\left<v,\beta\right>\le 0$ for all proper subroots $\beta\subsetneq\beta_{ij}$.
\item $\left<v,\beta\right>\le 0$ for all proper subroots $\beta\subsetneq\beta_{ij}$ of the form $\beta=\beta_{aj}$ and $\beta=\beta_{ib}$.
\item $\left<v,\beta\right>\le 0$ for all real Schur subroots $\beta\subsetneq\beta_{ij}$.
\end{enumerate}
Furthermore, these conditions are still equivalent if we replace $\le$ with $<$.
\end{prop}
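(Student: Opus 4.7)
The implications $(1) \Rightarrow (2)$ and $(1) \Rightarrow (3)$ are immediate, since the conditions in (2) and (3) restrict the quantifier in (1) to subclasses of proper subroots. The substantive content lies in the reverse implications.

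For $(2) \Rightarrow (1)$, the plan is to exploit the dimension-vector identity
\[
\beta_{ab} \;=\; \beta_{ib} + \beta_{aj} - \beta_{ij}, \qquad i \le a < b \le j,
\]
verified directly by expanding each side as a sum of unit vectors $e_{\overline k}$. Pairing with $v$ under the Euler form and using the hypothesis $\langle v, \beta_{ij}\rangle = 0$ yields
\[
\langle v, \beta_{ab}\rangle \;=\; \langle v, \beta_{ib}\rangle + \langle v, \beta_{aj}\rangle.
\]
For $\beta_{ab}$ a proper subroot of $\beta_{ij}$ in the $s = 0$ case of Lemma \ref{subroot lemma}, the sign conditions $\varepsilon_a = -$ (when $a > i$) and $\varepsilon_b = +$ (when $b < j$) propagate to ensure that both $\beta_{ib}$ and $\beta_{aj}$ are themselves subroots of $\beta_{ij}$ of the form named in (2), and the inequality follows by adding the two applications of (2). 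The boundary cases $a = i$ or $b = j$ degenerate to a single slider of (2), while non-zero $s$ cases translate back to $s = 0$ by $\varepsilon$-periodicity.

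For $(3) \Rightarrow (2)$, I would first check by case analysis on the Ringel type of $\beta_{ij}$ that each slider is a real Schur subroot in most subcases. If $\beta_{ij}$ is preprojective, then a slider $\beta_{aj}$ with $a > i$ satisfies $\varepsilon_a = -$ and $\varepsilon_j = +$, hence is itself preprojective, and similarly for $\beta_{ib}$. If $\beta_{ij}$ is regular (so $j - i < n$), every slider has length less than $n$ and is real Schur of preprojective, preinjective, or regular type. In all these situations (3) applies directly.

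The hard part is the preinjective case $\varepsilon_i = +, \varepsilon_j = -$, where a slider $\beta_{aj}$ with $\varepsilon_a = -$ and $j - a \ge n$ is a subroot of $\beta_{ij}$ but is not itself a real Schur root. The plan to handle this is to decompose, as vectors,
\[
\beta_{aj} \;=\; K \eta \;+\; \beta_{a,\, j - Kn}, \qquad K = \lfloor (j-a)/n \rfloor \ge 1,
\]
where $\eta$ denotes the null root. The piece $\beta_{a, j - Kn}$ has length strictly less than $n$, has matching signs $\varepsilon_a = \varepsilon_{j-Kn} = -$, and is therefore a regular real Schur root; moreover it is a proper subroot of $\beta_{ij}$ via Lemma \ref{subroot lemma} with $s = -K$, so its sign is controlled by (3). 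This reduces the problem to showing $\langle v, \eta\rangle \le 0$. To achieve this I would apply the analogous $K'\eta$-decomposition to $\beta_{ij}$ itself with $K' = \lfloor (j-i)/n \rfloor$, use the vanishing $\langle v, \beta_{ij}\rangle = 0$ to express $K'\langle v, \eta\rangle$ in terms of the Euler pairing on the short preinjective remainder, and then bound that remainder by writing it as a combination of real Schur subroots of $\beta_{ij}$ obtained by interposing vertices of opposite sign, which exist in any window of length $n$ by surjectivity of $\varepsilon$.

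Finally, the strict versions with $<$ in place of $\le$ propagate through the same identities: the vanishing $\langle v, \beta_{ij}\rangle = 0$ together with the decomposition ensures that any proper subroot breaks up into pieces, not all of which can be trivial, so a single strict input forces the sum to be strict.
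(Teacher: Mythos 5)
Your reductions $(1)\Rightarrow(2),(3)$ and your handling of $(2)\Rightarrow(1)$ via the identity $\beta_{ab}=\beta_{ib}+\beta_{aj}-\beta_{ij}$ are correct and, modulo presentation, equivalent to the paper's argument (the paper phrases everything in terms of the $n$-periodic function $\pi_v$ with $E_\varepsilon^t v = F\pi_v$, which turns Euler pairings into differences $\pi_v(j)-\pi_v(i)$ and makes the slider reformulation $(2')$ transparent; your direct Euler-form computation buys nothing extra but is fine). Your reduction of nonzero-$s$ subroots to $s=0$ by identifying $\beta_{ab}=\beta_{i,b-sn}$ or $\beta_{a-sn,j}$ as vectors is also sound. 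For $(3)\Rightarrow(2)$ your case split is right, and the $\eta$-decomposition $\beta_{aj}=K\eta+\beta_{a,j-Kn}$ with $\beta_{a,j-Kn}$ a short regular subroot (via Lemma \ref{subroot lemma} with $s=-K$) is a legitimate alternative to the paper's argument, reducing everything to $\langle v,\eta\rangle\le 0$ exactly as the paper's $m\le 0$ step does.

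The one place where your sketch does not yet constitute a proof is the final bound on $\langle v,\eta\rangle$. You say you will bound the preinjective remainder ``by writing it as a combination of real Schur subroots of $\beta_{ij}$ obtained by interposing vertices of opposite sign, which exist in any window of length $n$ by surjectivity of $\varepsilon$.'' That appeal is both vague and pointed at the wrong source: what actually produces the needed preprojective subroot is not surjectivity of $\varepsilon$ but the fact that $\beta_{ij}$ is preinjective ($\varepsilon_i=+$, $\varepsilon_j=-$) together with $n$-periodicity, which gives $\varepsilon_{j-K'n}=-$ and $\varepsilon_{i+n}=+$, hence the preprojective real Schur subroot $\beta_{j-K'n,\,i+n}\subsetneq\beta_{ij}$. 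Writing $\eta=\beta_{i,j-K'n}+\beta_{j-K'n,\,i+n}$ and combining with $0=\langle v,\beta_{ij}\rangle=K'\langle v,\eta\rangle+\langle v,\beta_{i,j-K'n}\rangle$ yields $(K'+1)\langle v,\eta\rangle=\langle v,\beta_{j-K'n,\,i+n}\rangle\le 0$, which is what you want (and gives the strict version as well). The paper obtains the same conclusion more directly with the single preprojective subroot $\beta_{j-kn,\,i+kn}$ ($k$ maximal with $i+kn<j$), since $\langle v,\beta_{j-kn,i+kn}\rangle=\pi_v(i+kn)-\pi_v(j-kn)=2km\le 0$ forces $m\le 0$. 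You should replace the ``interposing vertices of opposite sign'' sentence by one of these explicit subroots; as written, the reader cannot tell whether you have identified a subroot that (3) actually controls.
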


In the following proof and in the rest of this paper we will use the correspondence between vectors $v\in\RR^n$ and $n$-periodic functions $\pi_v:\ZZ\to\RR$ given by $E_\varepsilon^tv=y=F\pi_v$. Then
\[
	\left<v,\beta_{ij}\right>=v^tE_\varepsilon\beta_{ij}=y^t\beta_{ij}=y_{i+1}+\cdots+y_j=\pi_v( j)-\pi_v( i).
\]
So, the condition $\left<v,\beta_{ij}\right>=0$ on $v$ is equivalent to the condition $\pi_v(i)=\pi_v(j)$ and the condition $\left<v,\beta_{ij}\right><0$ is equivalent to the condition $\pi_v(j)<\pi_v(i)$.

\begin{proof} We are given that $\pi_v(i)=\pi_v(j)$. Condition (1) implies (2) and (3) since these are special cases of (1). So it suffices to show $(2) \then (1)$ and $(3) \then (2)$.

By Lemma \ref{subroot lemma}, Condition (2) is equivalent to the following condition on $\pi_v$:\vs2

$(2')$ \quad $\pi_v(a)\ge \pi_v(j)$ for all $i<a<j$ with $\varepsilon_a=-$ and $\pi_v(b)\le \pi_v(i)$ for all $i<b<j$ with $\varepsilon_b=+$. \vs2

\noi Since $\pi_v( i)=\pi_v( j)$, this implies that $\left<v,\beta_{ab}\right>=\pi_v( b)-\pi_v( a)\le0$ for any proper subroot $\beta_{ab}\subsetneq \beta_{ij}$ which is not of the form $\beta_{ia}$ or $\beta_{ib}$ and $\pi_v(b)<\pi_v(a)$ if the inequality in $(2')$ is strict. Therefore, (2) implies (1).

Finally, we will show that (3) implies $(2')$. In the case where $\beta_{ij}$ is either preprojective or has length $<n$, all subroots of $\beta_{ij}$ will be real Schur roots and (1), (3) are equivalent. Therefore, we may assume that $\beta_{ij}$ is preinjective and $i+n<j$. Let $k\ge1$ be maximal so that $i+kn<j$. Then 
\[
i<j-kn<i+n\le i+kn<j
\]
and $\beta_{j-kn,i+kn}$ is a preprojective subroot of $\beta_{ij}$. So, (3) implies
\[
\left<v,\beta_{j-kn,i+kn}\right>=\pi_v({i+kn})-\pi_v({j-kn})\le0\,.
\]
This implies $m\le0$ where $m=\left<v,\eta\right>=y_1+\cdots+y_n=\pi_v({i+n})-\pi_v({i})$ since $m>0$ would give $\pi_v({i+kn})>\pi_v( i)=\pi_v( j)>\pi_v({j-kn})$, a contradiction. 

To prove $(2')$, let $i<a<j$ with $\varepsilon_a=-$. Let $s\ge0$ be maximal so that $a+sn\le j$. Then
\[
	\pi_v( a)\ge\pi_v({a+sn})\ge\pi_v( j)
\]
since $\beta_{j,a+sn}$ is a real Schur subroot of $\beta_{ij}$. Similarly, $\pi_v( b)\le\psip i$ if $i<b<j$ with $\varepsilon_b=+$. So (3) implies $(2')$ and if the inequality in (3) is strict then the inequality in $(2')$ is strict.
\end{proof}

\begin{defn}\label{def: real support D(b)}
Suppose that $\beta_{ij}$ is a real Schur root of $\widetilde{A}_{n-1}^\varepsilon$. Then the \emph{real support} $D(\beta_{ij})$ of the associated semi-invariant is defined to be the set of all vectors $v\in \RR^n$ satisfying the following.
\begin{enumerate}
\item $\left<v,\beta_{ij}\right>=0$.
\item $\left<v, \beta_{ab} \right>\le0$ for all real Schur subroots $\beta_{ab}\subseteq \beta_{ij}$ (and thus for all subroots by the proposition above).
\end{enumerate}
These are called the \emph{Stability Conditions} (on $v$). Let $H(\beta_{ij})$ denote the hyperplane in $|RR^n$ of all vectors $v$ satisfying (1). By the \emph{interior} of $D(\beta_{ij})$ we mean its interior as a subset of this hyperplane.
\end{defn}

\begin{lem}\label{lem:D(b) has nonempty interior for b real Schur root}
If $\beta_{ij}$ is a real Schur root of $\widetilde{A}_{n-1}^\varepsilon$ then $D(\beta_{ij})$ has a nonempty interior. In other words, there is a $v\in D(\beta_{ij})$ so that $\left<v, \beta \right><0$ for all proper subroots $\beta\subsetneq \beta_{ij}$.
\end{lem}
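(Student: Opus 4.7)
The plan is to realize $v_0$ as the image under $(E_\varepsilon^t)^{-1}\circ F$ of a boundary point of $\cR(\cT)$ for an appropriately chosen $n$-periodic tree $\cT$. By the construction of trees with prescribed edges (the discussion following Corollary~\ref{converse of monotonic cor}), there is an $n$-periodic tree $\cT$ having $\ell := (p_i, p_j)$ as an edge with edge vector $\pm\beta_{ij}$; if $\beta_{ij}$ is preprojective I further choose $\cT$ of positive slope, which is possible by Theorem~\ref{thm:every pi gives a unique T}. By Corollary~\ref{FR(T) is an octant}, $F\overline\cR(\cT)$ is a closed $n$-dimensional simplicial cone whose facet $F_\ell := \{y\in F\overline\cR(\cT): y^t\beta_{ij}=0\}$ has nonempty relative interior, given by those $y$ with $y^t\gamma_k>0$ for every edge vector $\gamma_k\neq\pm\gamma(\ell)$. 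Fix such a $y_0$, set $v_0 := (E_\varepsilon^t)^{-1}y_0\in H(\beta_{ij})$, and let $\pi_0 := \pi_{v_0}$, so that $\pi_0(i)=\pi_0(j)$ while $\pi_0$ is strictly monotonic along every other edge of $\cT$.

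By Proposition~\ref{prop: equivalent versions of stability conditions} (strict version) it suffices to verify $\langle v_0, \beta\rangle<0$ for every proper subroot of the form $\beta=\beta_{aj}$ or $\beta=\beta_{ib}$. A direct check using Lemma~\ref{subroot lemma} shows these split into \emph{unshifted} subroots ($\varepsilon_a=-$ with $i<a<j$, or $\varepsilon_b=+$ with $i<b<j$) and \emph{shifted} subroots $\beta_{i+sn,j}$ or $\beta_{i,j-sn}$ with $s\ge 1$, and the latter arise only when $\beta_{ij}$ is preprojective (so $\varepsilon_i=-$, $\varepsilon_j=+$) and $j-i>n$. For an unshifted $\beta_{aj}$ the required inequality becomes $\pi_0(a)>\pi_0(i)=\pi_0(j)$. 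By Corollary~\ref{monotonic cor} (via Remark~\ref{separation remark}(b)), the path in $\cT$ from $p_a$ to any interior point of $\ell$ is monotonically decreasing, so its subpath from $p_a$ to $p_i$ consists of edges distinct from $\ell$; each such edge remains strictly monotonic at $\pi_0$, yielding the strict inequality. The $\beta_{ib}$ case is dual.

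For a shifted subroot $\beta_{i+sn,j}$, periodicity gives $\langle v_0,\beta_{i+sn,j}\rangle = \pi_0(j)-\pi_0(i)-sm = -sm$ where $m=\pi_0(n)-\pi_0(0)$, so the inequality reduces to $m>0$. Since $\cT$ has positive slope, $m>0$ throughout $\cR(\cT)$, and hence $m\ge 0$ on $F\overline\cR(\cT)$; expanding $\eta$ in the basis of edge vectors shows that $m=0$ at $y_0$ would force $\eta$ to be a scalar multiple of $\gamma(\ell)=\pm\beta_{ij}$, which is impossible since $\eta$ is imaginary while $\beta_{ij}$ is a real Schur root. Hence $m>0$ and the inequality holds; the $\beta_{i,j-sn}$ case is analogous. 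The principal obstacle is verifying the strict inequalities at the boundary $\pi_0$ (where only $\ell$ has become flat), which is handled by the monotonic-path observation for unshifted subroots and by the slope-positivity check for the shifted subroots that arise in the preprojective case.
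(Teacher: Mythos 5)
Your proof has a circularity problem in its very first step. You begin by asserting that there exists an $n$-periodic tree $\cT$ having $\ell=(p_i,p_j)$ as an edge (and, in the preprojective case, that this tree may be chosen of positive slope), citing ``the discussion following Corollary~\ref{converse of monotonic cor}.'' But that discussion is only a forward-pointing promise: ``The above corollary \emph{will allow us} to construct a periodic tree with any prescribed edge\ldots'' The place where that construction is actually carried out is the proof of Lemma~\ref{second stability lemma}, and its first sentence is ``We start by choosing a point $y''\in U\cap E_\varepsilon^t D(\beta_{ij})$ in the interior of $E_\varepsilon^t D(\beta_{ij})$ so that the second stability condition is strict\ldots'' — i.e.\ it starts from precisely the conclusion of the lemma you are trying to prove. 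So your Step~1 quietly assumes the result. The same circularity affects your invocation of a positive-slope choice (which, as phrased, Theorem~\ref{thm:every pi gives a unique T} does not give you directly; a rigorous justification in the paper would appeal to Corollaries~\ref{cor: components of cluster from topology of tree}/\ref{cor: objects corresponding to trees with different slopes}, both of which sit downstream of the lemma).

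The paper's own proof sidesteps all of this by exhibiting an explicit $n$-periodic function $\pi_v$ (the greatest-integer formula) and verifying the strict inequalities directly, with no appeal to trees, their classification by slope, or the tree--cluster correspondence. The content of that explicit $\pi_v$ is exactly the missing ingredient in your argument: to get a tree with the prescribed edge one needs \emph{some} $n$-periodic injective $\pi$ with $\pi(i),\pi(j)$ consecutive and the inequalities of Corollary~\ref{monotonic cor} holding for $i<k<j$, and producing such a $\pi$ is equivalent to the lemma itself. Your downstream reasoning — choosing $y_0$ in the relative interior of the facet $F_\ell$, the reduction via Proposition~\ref{prop: equivalent versions of stability conditions} to subroots sharing an endpoint, the monotone-path argument for the unshifted subroots, and the observation that $m\ge 0$ on $F\overline\cR(\cT)$ together with $\eta$ not proportional to $\gamma(\ell)$ forces $m>0$ at an interior $y_0$ — is sound and correctly handles the delicate strictness at the boundary. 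If you supply the explicit starting $\pi$ yourself (as the paper does), the rest of your argument would constitute a valid, if more elaborate, proof.
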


\begin{proof}
By symmetry we may assume that $\varepsilon_i=-$. So, $\beta_{ij}$ is either preprojective or regular. Let $v\in \RR^n$ be the vector corresponding to the $n$-periodic function $\pi_v$ given by
\[
	\pi_v( k)=\begin{cases}    \left[\frac{k-j}n\right] & \text{if $\varepsilon_k=+$ or if $k\equiv j$ modulo $n$}\\
\left[\frac{k-i+n-1}n\right] & \text{otherwise}
    \end{cases}
\]
where $[\cdot]$ is the greatest integer function. Then $\pi_v( j)=0=\psip i$. So, $\left<v,\beta_{ij}\right>=\pi_v( j)-\pi_v( i)=0$. If $i<k<j$, then the case where $k\equiv j$ mod $n$ and $\varepsilon_k=-$ cannot occur since $\varepsilon_i=-=\varepsilon_j$ only in the regular case $j-n<i$. Therefore, we can delete the ``or if $k\equiv j$ modulo $n$'' clause in the definition of $\pi_v( k)$. Consequently, if $\varepsilon_k=-$, then $\pi_v( k)\ge1$ making $\left<v,\beta_{kj}\right>\le -1$ and, if $\varepsilon_k=+$, $\pi_v( k)\le-1$ making $\left<v,\beta_{ik}\right>\le-1$ in that case. Therefore, $\left<v,\beta\right>\le-1$ for any proper subroot $\beta\subsetneq\beta_{ij}$.
\end{proof}

We observe that, since $\beta_{ij}$ has only a finite number of subroots, $D(\beta_{ij})$ is a closed convex polyhedral region in the hyperplane in $\RR^n$ given by (1). Proper subroots $\beta\subsetneq \beta_{ij}$ are not collinear with $\beta_{ij}$. So, for such $\beta$, we have strict inequalities $\left<v,\beta\right><0$ for all $v$ in the interior of $D(\beta_{ij})$ which is nonempty by the lemma above.

In terms of the corresponding $n$-periodic function $\pi_v$, the set $D(\beta_{ij})$ and its interior can be very usefully described as follows.

\begin{prop}\label{prop: psi satisfies stability conditions for D(b)}
A vector $v\in \RR^n$ lies in $D(\beta_{ij})$ if and only if the corresponding function $\pi_v:\ZZ\to\RR$ satisfies the following.
\begin{enumerate}
\item $\pi_v(i)=\pi_v(j)$
\item $\pi_v(a)\ge\pi_v(j)$ for all $i<a<j$ with $\varepsilon_a=-$
\item $\pi_v(b)\le \pi_v(i)$ for all $i<b<j$ with $\varepsilon_b=+$.
\end{enumerate}
Furthermore, $v$ lies in the interior of $D(\beta_{ij})$ (as a subset of the hyperplane given by (1)) if and only if all of the inequalities in (2) and (3) are strict.\qed
\end{prop}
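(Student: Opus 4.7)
The plan is to derive this directly from Proposition~\ref{prop: equivalent versions of stability conditions} together with the dictionary between $v \in \RR^n$ and the associated periodic function $\pi_v$, which satisfies the identity
\[
\left<v,\beta_{ab}\right> = \pi_v(b) - \pi_v(a)
\]
for every $a<b$ (established just before that proposition). Under this dictionary, condition (1) $\pi_v(i)=\pi_v(j)$ is literally equivalent to $\left<v,\beta_{ij}\right>=0$, i.e.\ to membership in the hyperplane $H(\beta_{ij})$.

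For the remaining equivalence, I would appeal to the characterization in Proposition~\ref{prop: equivalent versions of stability conditions}(2): for $v\in H(\beta_{ij})$, membership in $D(\beta_{ij})$ is equivalent to $\left<v,\beta\right>\le 0$ holding for all subroots of the special forms $\beta=\beta_{aj}$ and $\beta=\beta_{ib}$. By Lemma~\ref{subroot lemma}, the subroots of the form $\beta_{aj}$ with $a\neq i$ correspond exactly to indices $i<a<j$ with $\varepsilon_a=-$, and subroots of the form $\beta_{ib}$ with $b\neq j$ correspond exactly to indices $i<b<j$ with $\varepsilon_b=+$. Substituting into the identity above,
\[
\left<v,\beta_{aj}\right>=\pi_v(j)-\pi_v(a)\le 0 \iff \pi_v(a)\ge\pi_v(j),
\]
and
\[
\left<v,\beta_{ib}\right>=\pi_v(b)-\pi_v(i)\le 0 \iff \pi_v(b)\le\pi_v(i),
\]
yielding precisely conditions (2) and (3).

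For the interior statement, I would invoke the final sentence of Proposition~\ref{prop: equivalent versions of stability conditions}, which says that the equivalence of the three stability formulations persists when $\le$ is replaced by $<$. Combined with the hyperplane identification, this means strict inequality on the special subroots $\beta_{aj},\beta_{ib}$ is equivalent to strict inequality on all proper subroots of $\beta_{ij}$, which is exactly the condition that $v$ lie in the relative interior of $D(\beta_{ij})$ inside $H(\beta_{ij})$. Translating back via the dictionary gives the claim that the strict inequalities in (2) and (3) characterize the interior.

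The proof is essentially bookkeeping, so I expect no serious obstacle; the only point requiring a little care is verifying that the ``interior of $D(\beta_{ij})$'' as defined in the hyperplane $H(\beta_{ij})$ really coincides with the locus of strict inequalities, which follows because $D(\beta_{ij})$ is cut out from $H(\beta_{ij})$ by finitely many non-redundant linear inequalities (none of the proper subroot vectors $\beta$ is a scalar multiple of $\beta_{ij}$ within $H(\beta_{ij})$).
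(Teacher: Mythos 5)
Your proof is correct and follows essentially the same route the paper intends: the proposition is stated with an immediate \qed precisely because it is a restatement, via the dictionary $\left<v,\beta_{ab}\right>=\pi_v(b)-\pi_v(a)$, of the equivalence already proved in Proposition~\ref{prop: equivalent versions of stability conditions} (in particular, condition~(2$'$) appearing inside its proof is exactly conditions (2) and (3) of the present proposition). The one phrase worth tightening is ``non-redundant linear inequalities'': redundancy is actually harmless here; what matters is that no proper subroot is collinear with $\beta_{ij}$, which together with nonemptiness (Lemma~\ref{lem:D(b) has nonempty interior for b real Schur root}) identifies the relative interior with the strict-inequality locus -- you do state the collinearity point in the parenthetical, so this is a matter of phrasing rather than substance.
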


Let $D_B(\beta_{ij})=D(\beta_{ij})\cap B^n$ for any subring $B$ of $\RR$. Then $D_\QQ(\beta_{ij})$ is dense in $D(\beta_{ij})$ and contains the zero vector. So, $D(\beta_{ij})$ is the closure of the convex hull of $D_\ZZ(\beta_{ij})$ in $\RR^n$. Therefore, the following theorem gives the representation theoretic meaning of $D(\beta_{ij})$.

\begin{thm}\cite{IOTW}\label{original def of virtual semi-invariant}
A vector $v$ lies in $D_\ZZ(\beta_{ij})$ if and only if there exists a virtual representation $V:P_1\to P_0$ of dimension vector $v$ with the property that
\[
	V^\ast:\Hom(P_0,M_{ij})\to \Hom(P_1,M_{ij})
\]
is an isomorphism where $M_{ij}$ is the unique exceptional module with dimension vector $\beta_{ij}$.
\end{thm}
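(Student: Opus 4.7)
The plan is to prove both directions using the Euler--Ringel form together with Schofield's theory of perpendicular categories for real Schur roots.

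\emph{Equal dimensions and forward direction.} I would first observe that for any projective $P$ and any module $N$, $\dim\Hom(P,N)=\left<\undim P,\undim N\right>$ since $\Ext(P,N)=0$. Hence
\[
\dim\Hom(P_0,M_{ij})-\dim\Hom(P_1,M_{ij})=\left<v,\beta_{ij}\right>,
\]
so the condition $\left<v,\beta_{ij}\right>=0$ is exactly what makes $V^\ast$ a map between spaces of equal dimension. Suppose now $V^\ast$ is an isomorphism. For any real Schur subroot $\beta'\subseteq\beta_{ij}$, choose a submodule $N\subseteq M_{ij}$ with $\undim N=\beta'$. Projectivity of $P_0,P_1$ embeds $\Hom(P_i,N)$ into $\Hom(P_i,M_{ij})$, and $V^\ast$ restricts to a map $V^\ast_N:\Hom(P_0,N)\to\Hom(P_1,N)$. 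Since the restriction of an isomorphism to a subspace is injective, $\dim\Hom(P_0,N)\le\dim\Hom(P_1,N)$, i.e., $\left<v,\beta'\right>\le 0$. Combined with $\left<v,\beta_{ij}\right>=0$, this places $v$ in $D_\ZZ(\beta_{ij})$.

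\emph{Reverse direction.} Given $v\in D_\ZZ(\beta_{ij})$, I would produce the required $V$ via Schofield's right perpendicular category $M_{ij}^\perp=\{N:\Hom(N,M_{ij})=0=\Ext(N,M_{ij})\}$. Because $\beta_{ij}$ is a real Schur root, $M_{ij}^\perp$ is equivalent to the module category of a hereditary algebra of rank $n-1$, and its Grothendieck group is the hyperplane $\{w\in\RR^n:\left<w,\beta_{ij}\right>=0\}$. The key step is to identify the lattice $D_\ZZ(\beta_{ij})$ with the set of virtual dimension vectors of objects in $M_{ij}^\perp$. For a module $N\in M_{ij}^\perp$ with projective resolution $0\to P_1\to P_0\to N\to 0$ (available because $\kk\widetilde A_{n-1}^\varepsilon$ is hereditary), applying $\Hom(-,M_{ij})$ to this resolution together with $\Hom(N,M_{ij})=0=\Ext(N,M_{ij})$ yields that the induced $V:P_1\to P_0$ has $V^\ast$ an isomorphism. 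An equivalent route uses determinantal semi-invariants $c^V$: because $M_{ij}$ is exceptional its $\Aut$-orbit is open in the representation variety of dimension $\beta_{ij}$, so any nonzero semi-invariant of weight $v$ is nonzero at $M_{ij}$, and the Derksen--Weyman theorem that the $c^V$ span the semi-invariants translates non-vanishing into the existence of the desired $V$.

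\emph{Main obstacle.} The hard part is the reverse direction, specifically when $v$ has negative coordinates (so no honest module has dimension vector $v$) or when $v$ lies on the boundary of $D(\beta_{ij})$. For negative coordinates I would write $v=\undim P_0-\undim P_1$ with $P_0,P_1$ sharing no summand and realize $V$ directly as a morphism of projectives coming from the projective resolution of an object of $M_{ij}^\perp$ whose virtual dimension vector is $v$. For boundary $v$, the strategy is a limiting argument: interior integer points of $D(\beta_{ij})$ are dense, by Lemma \ref{lem:D(b) has nonempty interior for b real Schur root} combined with convexity and the integer structure, and the set of $v$ admitting the required $V$ is stable under taking limits of such interior approximations, because the non-vanishing locus of $c^V$ is Zariski open and $V^\ast$ being an isomorphism on the exceptional $M_{ij}$ is equivalent to its being an isomorphism on a generic representation of dimension $\beta_{ij}$.
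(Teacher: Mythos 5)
The statement you are proving is cited from \cite{IOTW} and is not reproved in this paper, so there is no in-paper argument to compare against; I will assess your proof on its own terms.

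Your forward direction is correct. The identity $\dim\Hom(P,N)=\langle\undim P,\undim N\rangle$ for projective $P$, together with restricting $V^\ast$ to the subspaces $\Hom(P_\bullet,N)$ for a submodule $N\subseteq M_{ij}$ realizing a subroot (Lemma \ref{subroot lemma} guarantees such $N$ exists for each Schur subroot), gives exactly the inequalities defining $D(\beta_{ij})$.

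The reverse direction, however, has genuine gaps, and they sit precisely at the hard part you flag.  First, the perpendicular-category route does not close.  What $V^\ast$ being an isomorphism actually forces is that the virtual representation decomposes (up to homotopy) as $C\oplus K[1]$ with $C=\coker V\in M_{ij}^\perp$ and $K=\ker V$ a \emph{projective module of the ambient algebra} satisfying $\Hom(K,M_{ij})=0$, i.e.\ $K$ is supported off $\beta_{ij}$.  So the set of realizable $v$ is $\{\undim C-\undim K : C\in M_{ij}^\perp,\ K \text{ projective with }\Hom(K,M_{ij})=0\}$, which is \emph{a priori} much smaller than the full hyperplane and need not be the whole lattice $D_\ZZ(\beta_{ij})$.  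Knowing that the Grothendieck group of $M_{ij}^\perp$ is the hyperplane $\langle w,\beta_{ij}\rangle=0$ is too coarse: the negative part $v_-$ of a vector in the hyperplane is in general the class of an arbitrary object of $M_{ij}^\perp$, not of an ambient projective with the required vanishing.  You state ``the key step is to identify $D_\ZZ(\beta_{ij})$ with the set of virtual dimension vectors of objects in $M_{ij}^\perp$'' but do not carry it out, and this identification is exactly what needs to be proved.  Second, the Derksen--Weyman route quietly assumes what is essentially King's criterion: that the stability inequalities $\langle v,\beta'\rangle\le 0$ \emph{produce} a nonzero semi-invariant of weight $\langle v,-\rangle$ on $\operatorname{Rep}(\beta_{ij})$.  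Given such a nonzero semi-invariant, your argument (DW spanning by determinantal $c^V$'s, openness of the orbit of the exceptional $M_{ij}$) is fine, but the existence step is the nontrivial heart of the theorem and should be established or cited explicitly --- particularly since you must apply it with a weight $v$ that may have negative entries, which is the whole point of the ``virtual'' setting.  Finally, the boundary limiting argument is not viable as stated: $D_\ZZ(\beta_{ij})$ is a discrete set, so a boundary lattice point $v$ is not a nontrivial limit of interior lattice points; if $v_k\to v$ with $v_k\in\ZZ^n$ then $v_k=v$ eventually.  The cleaner fix is to note that King's criterion gives a nonzero semi-invariant already in the semistable (boundary) case, so no approximation is needed once that step is in place.
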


\begin{rem} Since ``$V^\ast$ is an isomorphism'' is a Zariski open condition on $V$ in the affine space $\Hom(P_1,P_0)$, the existence of one such $V$ implies that the general element of $\Hom(P_1,P_0)$ has this property.
Also, this condition is equivalent to the condition that $\Hom_{\cD^b}(V,M_{ij})=0=\Ext_{\cD^b}^1(V,M_{ij})$ in the bounded derived category ${\cD^b}={\cD^b(mod\text-\kk\widetilde A_{n-1}^\varepsilon)}$ of $mod\text-\kk\widetilde A_{n-1}^\varepsilon$ since $\Hom_{\cD^b}(V,M_{ij})=\ker V^\ast$ and $\Ext_{\cD^b}^1(V,M_{ij})=\coker V^\ast$.
\end{rem}

\begin{lem}\label{first stability lemma}
Suppose that $\cT$ is an $n$-periodic tree with sign function $\varepsilon$. Let $\gamma_1,\cdots,\gamma_n$ be the edge vectors of $\cT$. Then the boundary $\del F\cR(\cT)$ of the open region $F\cR(\cT)\subseteq\RR^n$ is the union of $n$ sets $\del_kF\cR(\cT)$ satisfying the following.
\begin{enumerate}
\item $y\in \del_kF\cR(\cT)$ if and only if
	\begin{enumerate}
	\item $y^t \gamma_k=0$ and
	\item $y^t \gamma_i\ge0$ for all $i\neq k$.
	\end{enumerate}
\item $\del_k F\cR(\cT)\subseteq E_\varepsilon^t D(|\gamma_k|)$.
\end{enumerate}
\end{lem}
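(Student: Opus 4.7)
The proof splits along the two assertions. For part (1), I would invoke Corollary \ref{FR(T) is an octant}: the transpose of the edge matrix, $\Gamma_\cT^t$, is a linear automorphism of $\RR^n$ (since $\det\Gamma_\cT = \pm 1$ by Proposition \ref{sign coherence of c-inverse}) carrying $F\cR(\cT)$ onto the open positive octant of $\RR^n$. Consequently it sends $\del F\cR(\cT)$ onto the boundary of that octant, which is a union of $n$ closed ``faces'' $F_k$---namely, the set where the $k$-th coordinate vanishes and all other coordinates are $\ge 0$. Defining $\del_k F\cR(\cT) := (\Gamma_\cT^t)^{-1}(F_k)$ and noting that the $k$-th coordinate of $\Gamma_\cT^t y$ is precisely $\gamma_k^t y = y^t \gamma_k$, the inequalities in (1a)--(1b) are just the translated face conditions.

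For part (2), fix $y \in \del_k F\cR(\cT)$ and let $v := (E_\varepsilon^t)^{-1} y$, so that $y = E_\varepsilon^t v$ and $\langle v, \beta\rangle = v^t E_\varepsilon \beta = y^t \beta$ for every $\beta \in \RR^n$. Writing $\ell_k = (p_i, p_j)$ with edge vector $\gamma_k = \delta_{\ell_k}\beta_{ij}$, I would verify the two stability conditions of Definition \ref{def: real support D(b)} for $v$ with respect to $|\gamma_k| = \beta_{ij}$. The equality $\langle v, \beta_{ij}\rangle = 0$ is immediate from $y^t\gamma_k = 0$ (the scalar $\delta_{\ell_k} = \pm 1$ is harmless). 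For the remaining inequalities, I would observe that $\del_k F\cR(\cT) \subseteq F\overline{\cR(\cT)}$ (since $y^t\gamma_i \ge 0$ for all $i$), so that $y = F(\psi)$ for some $\psi$ in the closure of $\cR(\cT)$. Applying Lemma \ref{lem: second stability condition holds} together with Remark \ref{second stability condition is satisfied on FR(T)} then yields $y^t \beta_{ab} \le 0$ for every subroot $\beta_{ab} \subseteq \beta_{ij}$, which is precisely $\langle v, \beta_{ab}\rangle \le 0$. By Proposition \ref{prop: equivalent versions of stability conditions}, checking this for all subroots (or equivalently just the real Schur subroots) is the required stability condition, so $v \in D(\beta_{ij}) = D(|\gamma_k|)$.

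The main obstacle is not conceptual but bookkeeping: one must match the two possible signs of the slope $\delta_{\ell_k}$, translate freely between the $\psi$, $y$, and $v$ pictures via the formulas $y = F(\psi)$ and $y = E_\varepsilon^t v$, and be careful that Lemma \ref{lem: second stability condition holds} is stated on the open region $\cR(\cT)$ with strict inequality. The passage to the closure, needed for points on the boundary, is precisely the content of Remark \ref{second stability condition is satisfied on FR(T)} and follows from continuity of the linear forms $y \mapsto y^t\beta_{ab}$. Once these translations are set up, both parts become direct consequences of earlier results.
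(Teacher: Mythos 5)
Your proof is correct and follows the same route as the paper: part (1) is read off from Corollary \ref{FR(T) is an octant} (with $\Gamma_\cT^t$ invertible by Proposition \ref{sign coherence of c-inverse}) carrying $F\cR(\cT)$ to the open positive octant, and part (2) checks the two stability conditions of Definition \ref{def: real support D(b)} for $v=(E_\varepsilon^t)^{-1}y$, using $y^t\gamma_k=0$ for the equality and Remark \ref{second stability condition is satisfied on FR(T)} for the inequalities on subroots. The paper's own proof is just a terser version of exactly this argument.
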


We call $\del_kF\cR(\cT)$ the \emph{face} of $F\cR(\cT)$ corresponding to the edge $\ell_k$.

\begin{proof}
(1) follows immediately from Corollary \ref{FR(T) is an octant}. To prove (2), we need to show that $(E_\varepsilon^t)^{-1}y$ satisfies the stability conditions defining $D(|\gamma_k|)$. The first stability condition holds since $y^t \gamma_k=\left<(E_\varepsilon^t)^{-1}y,\gamma_k\right>=0$. The second stability condition holds by Remark \ref{second stability condition is satisfied on FR(T)}.
\end{proof}

\begin{lem}\label{second stability lemma}
For any real Schur root $\beta_{ij}$ of $\widetilde{A}_{n-1}^\varepsilon$, the set $E_\varepsilon^t D(\beta_{ij})$ is equal to the closure of the union of the sets $\del_k F\cR(\cT)$ for all $n$-periodic trees $\cT$ for which $|\gamma_k|=\beta_{ij}$ for some numbering of the edge vectors of $\cT$. In particular, the open sets $F\cR(\cT)$ are disjoint from all sets of the form $E_\varepsilon^tD(\beta_{ij})$.
\end{lem}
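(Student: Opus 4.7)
I shall prove the equality by two inclusions and then deduce the ``in particular'' statement by a variant of the same construction.

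The inclusion $(\supseteq)$ is immediate: Lemma~\ref{first stability lemma}(2) gives $\partial_k F\cR(\cT) \subseteq E_\varepsilon^t D(\beta_{ij})$ whenever $|\gamma_k| = \beta_{ij}$, and the set on the right is closed (being the image under the linear isomorphism $E_\varepsilon^t$ of the closed convex polyhedron $D(\beta_{ij})$), so the closure of the union remains inside.

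For the inclusion $(\subseteq)$, I would reduce to the interior of $D(\beta_{ij})$ inside the hyperplane $H(\beta_{ij})$, which is dense in $D(\beta_{ij})$ by Lemma~\ref{lem:D(b) has nonempty interior for b real Schur root}. For $v$ in this interior, Proposition~\ref{prop: psi satisfies stability conditions for D(b)} says $\pi_v$ satisfies $\pi_v(p_i) = \pi_v(p_j)$ together with the strict versions of the inequalities in Corollary~\ref{monotonic cor} at every $i<k<j$. Note also that when $\beta_{ij}$ is preprojective, the vertex $k = i+n$ lies in the interval, satisfies $\varepsilon_k = -$, and forces the slope $m_v$ of $\pi_v$ to be strictly positive; dually $m_v < 0$ for preinjective roots, and only regular roots permit $m_v = 0$.

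The key step is to produce, for each such $v$, a small perturbation $\pi_\epsilon$ of $\pi_v$ which is injective on $\ZZ$, has nonzero slope, preserves the strict stability inequalities, and for which $\pi_\epsilon(p_i)$ and $\pi_\epsilon(p_j)$ are consecutive in the image. After first perturbing $v$ generically inside the interior of $D(\beta_{ij})$ to make $\pi_v(p_k)\neq \pi_v(p_i)$ for all $k \not\equiv i,j\pmod n$, I set
\[
\pi_\epsilon \;=\; \pi_v + \epsilon\,\sigma + \delta\,\tau,
\]
where $\sigma$ is the $n$-periodic indicator of the residue class $j \bmod n$ (slope $0$) and $\tau(p_k)=k/n$ (slope $1$). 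I choose $\epsilon>0$ small and $\delta\ge 0$ so that the slope $m_v+\delta$ of $\pi_\epsilon$ strictly exceeds the gap $\pi_\epsilon(p_j)-\pi_\epsilon(p_i) = \epsilon + \delta(j-i)/n$; this forces every translate $\pi_\epsilon(p_{i+sn})$ and $\pi_\epsilon(p_{j+sn})$ with $s\neq 0$ to stay outside the gap, making $\pi_\epsilon(p_i)$ and $\pi_\epsilon(p_j)$ consecutive. The inequality is automatic with $\delta=0$ for non-regular $\beta_{ij}$ by the sign of $m_v$ noted above, while for regular roots with $j-i<n$ one sets $\delta\ge (\epsilon-m_v)n/(n-(j-i))$, still of order $\epsilon$. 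By Theorem~\ref{thm:every pi gives a unique T}, $\pi_\epsilon$ is the periodic morphism of a unique tree $\cT_\epsilon$, and Corollary~\ref{converse of monotonic cor} then identifies $(p_i,p_j)$ as an edge $\ell_k$ of $\cT_\epsilon$ with $|\gamma_k|=\beta_{ij}$. The corresponding $y_\epsilon = E_\varepsilon^t v_\epsilon$ lies in $F\cR(\cT_\epsilon)$; passing $\epsilon\to 0$ through a subsequence on which the tree is constant (using that only finitely many trees can appear in a bounded neighborhood of $\pi_v$), the limit $y=E_\varepsilon^t v$ lies in $\overline{F\cR(\cT_\epsilon)}$, and the equality $\langle v,\beta_{ij}\rangle=0$ places it on the face $\partial_k F\cR(\cT_\epsilon)$, hence in the union.

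For the ``in particular'' assertion, suppose $y\in F\cR(\cT)\cap E_\varepsilon^t D(\beta_{ij})$, and set $\psi=F^{-1}(y)$, so that $\psi$ strictly respects every edge of $\cT$ while $\psi(p_i)=\psi(p_j)$. If $(p_i,p_j)$ is an edge of $\cT$ this is an immediate contradiction, so assume otherwise; then a generic small perturbation of $\psi$ inside the open set $F\cR(\cT)$ to an injective $\psi'$ of nonzero slope with $\psi'(p_j)>\psi'(p_i)$ still lies in $F\cR(\cT)$, and the construction above applied to $\psi'$ produces a tree $\cT''$ containing the edge $(p_i,p_j)$ with $\psi'\in F\cR(\cT'')$. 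Since the open regions $F\cR(\cT)$ and $F\cR(\cT'')$ are disjoint for distinct trees, $\cT=\cT''$, contradicting the assumption. The principal technical difficulty throughout is arranging the ``consecutive'' condition in the regular case with $j-i$ close to $n$, where $\delta$ must be scaled carefully so that the slope dominates the edge gap.
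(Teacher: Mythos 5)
Your overall strategy---perturb $v$ into the interior of $D(\beta_{ij})$, then perturb $\pi_v$ further to an injective $n$-periodic function of nonzero slope with $\pi(p_i),\pi(p_j)$ consecutive, and invoke Theorem \ref{thm:every pi gives a unique T} together with Corollary \ref{converse of monotonic cor}---is exactly the paper's. The concrete perturbation differs in detail: the paper perturbs $\pi_v$ so that $n-1$ of its values are $\QQ$-linearly independent and then adds $t$ on the residue class of $j$, observing directly that the resulting tree depends only on the sign of $t$, which immediately places $F(\pi')$ on a face. You instead use $\pi_v+\epsilon\sigma+\delta\tau$ and recover the face through a limit along a subsequence. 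The idea is right, but two steps as written have genuine gaps.

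First, the slope case split. You argue that $\delta=0$ works for non-regular $\beta_{ij}$ because "the vertex $k=i+n$ lies in the interval" and the strict stability inequality there forces $m_v>0$. That only applies when $j>i+n$. Preprojective and preinjective roots $\beta_{ij}$ with $j-i<n$ certainly occur (e.g.\ simple projectives), and for those the stability conditions constrain $\pi_v$ only on $[i,j]$, leaving the slope $m_v$ completely free---it can be zero or of the "wrong" sign. In those cases you must take $\delta>0$ exactly as in your regular case, and if $m_v$ is negative with the wrong sign the required $\delta$ is not $O(\epsilon)$, so one must check separately that $\pi_\epsilon$ stays close enough to $\pi_v$ to remain in the interior of the stability region. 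The dichotomy "$\delta=0$ unless regular" is false, although the formula you wrote in the regular case is the correct repair.

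Second, the justification of the subsequence. The claim that "only finitely many trees can appear in a bounded neighborhood of $\pi_v$" is not proved and is false in general for affine type: the hyperplanes $\langle\,\cdot\,,\beta\rangle=0$ for preprojective and preinjective $\beta$ accumulate along the null-root hyperplane $\langle\,\cdot\,,\eta\rangle=0$, so when $m_v=0$ (which, per the first gap, can happen for regular \emph{and} short non-regular roots) infinitely many regions $F\cR(\cT)$ meet every neighborhood of $y$. What the argument actually needs is that $\cT_\epsilon$ is constant for small $\epsilon>0$ along your specific one-parameter family. The paper gets this for free by making the image of $\pi'$ a discrete set with minimum gap $\delta>0$ (via $\QQ$-linear independence), so that the only order relations affected by the perturbation are $\pi_t(i+sn)$ versus $\pi_t(j+sn)$, whose outcome depends only on the sign of $t$. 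You should make the analogous direct claim for $\pi_\epsilon$ instead of appealing to finiteness; this also requires strengthening your genericity assumption, since making $\pi_v(p_k)\neq\pi_v(p_i)$ for $k\not\equiv i,j$ does not by itself make $\pi_v$ (and hence $\pi_\epsilon$ with $\delta=0$) injective away from the forced coincidences $\pi_v(p_{i+sn})=\pi_v(p_{j+sn})$.
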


\begin{proof}
For any $y\in E_\varepsilon^t D(\beta_{ij})$ and any open neighborhood $U$ of $y$ in $\RR^n$, we will find another point $y'\in U\cap E_\varepsilon^t D(\beta_{ij})$ and an $n$-periodic tree $\cT$ so that $\ell=(p_i,p_j)$ is an edge of $\cT$ and $y'$ lies in the face of $F\cR(\cT)$ corresponding to $\ell$.

We start by choosing a point $y''\in U\cap E_\varepsilon^t D(\beta_{ij})$ in the interior of $E_\varepsilon^t D(\beta_{ij})$ so that the second stability condition is strict for all $\beta\subsetneq \beta_{ij}$. Since $F$ is surjective, $y''=F(\pi)$ for some $n$-periodic function $\pi:\ZZ\to \RR$. Any $\pi=\pi_v$ for $v\in D(\beta_{ij})$ must satisfy the conditions of Lemma \ref{subroot lemma}:
\begin{enumerate}
\item $\pi(i)=\pi(j)$.
\item For any $i<k<j$ we have:
	\begin{enumerate}
	\item $\pi(k)>\pi(i)$ if $\varepsilon_k=-1$
	\item $\pi(k)<\pi(i)$ if $\varepsilon_k=1$ 
	\end{enumerate}
\end{enumerate}
Let $m_1,\cdots,m_{n-2}$ be integers which are not congruent to $i$ or $j$ or to each other modulo $n$. Choose $\pi'$ close to $\pi$ so that $\pi'$ satisfies the conditions above and the additional condition that the $n-1$ real numbers $\pi'(m_1),\cdots,\pi'(m_{n-2}),\pi'(i)=\pi'(j)$ are linearly independent over $\QQ$. Then the slope $s$ of $\pi'$ is nonzero and any interval of length $|sn|$ contains at most $n$ values of $\pi'$. Therefore, the distance between consecutive elements in the image of $\pi'$ is bounded below by, say $\delta$. For any real number $t$, let $\pi_t:\ZZ\to\RR$ be the function given by
\[
	\pi_t(k)=\begin{cases} \pi'(k)+t & \text{if } k\equiv j \text{ mod }n\\
   \pi'(k) & \text{otherwise}
    \end{cases}
\]
Then $\pi_t$ is injective for any nonzero $t$ with $|t|<\delta$. Furthermore, for such values of $t$, $\pi_t(i),\pi_t(j)$ will be consecutive values of $\pi_t$ and $\pi_t$ will satisfy Condition (2) above. By Theorem \ref{thm:every pi gives a unique T}, there is a unique $\cT_t$ which clearly depends only on the sign of $t$ so that $\psi(p_k)=\pi_t(k)$ is a periodic morphism for $\cT_t$. By Corollary \ref{converse of monotonic cor}, $\ell(p_i,p_j)$ is an edge of $\cT_t$ with sign equal to the sign of $t$.
\end{proof}

Now we come to the main theorem of this section which is that there is a 1-1 correspondence between $n$-periodic trees with sign function $\varepsilon$ and cluster tilting objects in the cluster category of $\kk\widetilde{A}_{n-1}^\varepsilon$. We first recall definitions.

Recall \cite{BMRRT} that the \emph{cluster category} of $\Lambda=\kk\widetilde{A}_{n-1}^\varepsilon$ is a triangulated Krull-Schmidt category whose indecomposable objects are either  indecomposable $\Lambda$-modules or indecomposable shifted projective $\Lambda$-modules. A \emph{cluster tilting object} is a rigid object which has a maximal number of nonisomorphic direct summands $M_i$. Each summand $M_i$ is \emph{exceptional} (indecomposable and rigid). This is equivalent to $\underline\dim M_i$ being either a positive real Schur root or negative projective root (and $M_i$ being the unique rigid representation of that dimension vector). The summands $M_i$ form a maximal collection of compatible exceptional objects where $M_i,M_j$ are \emph{compatible} if:
\begin{enumerate}
\item $M_i,M_j$ are modules which do not extend each other or
\item $M_i,M_j$ are any two shifted projective modules or
\item One of the objects is a shifted projective $P[1]$ and the other is a module $M$ so that $\Hom_\Lambda(P,M)=0$.
\end{enumerate}
In all three cases we have: $\left<\underline\dim\,M_i,\underline\dim\,M_j\right>\ge0$. We use the notation $|M_i|$ for the underlying module of $M_i$ i.e., $|M|=M$ if $M$ is a module and $|P[1]|=P$.

\begin{lem}\label{null root in generic decomp}
Every $v\in\ZZ^n$ has a generic decomposition
\[
	v=\sum \beta_i
\]
where $\beta_i$ are Schur roots so that $ext(\beta_i,\beta_j)=0$ for $\beta_i\neq\beta_j$. Furthermore, if one of the $\beta_i$ is a null root, then $\left<v,\eta\right>=0$.
\end{lem}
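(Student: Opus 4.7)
The statement splits cleanly into two parts, so I would handle them separately.

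For existence of the generic decomposition I would invoke classical Kac--Schofield theory, extended to virtual representations to handle the ``negative'' Schur roots $-\pi_j$ coming from shifted projectives $P_j[1]$. The general statement is that the affine variety of morphisms $P_1 \to P_0$ with $\underline\dim P_0 - \underline\dim P_1 = v$ contains a Zariski-dense open subset on which the resulting virtual representation splits as a direct sum of indecomposables with a fixed multiset of dimension vectors $\{\beta_i\}$. These $\beta_i$ are Schur roots in the (possibly virtual) sense, and for distinct $\beta_i,\beta_j$ generic indecomposables satisfy $ext(\beta_i,\beta_j)=0$.

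For the vanishing identity I would use the special structure of $\widetilde A_{n-1}$: its underlying graph is a cycle, so every vertex has exactly two neighbors, and consequently the symmetrized Euler matrix $E_\varepsilon + E_\varepsilon^t = 2I - (\text{adjacency matrix of } \widetilde A_{n-1})$ has vanishing row sums. Therefore $\eta=(1,\ldots,1)^t$ lies in its kernel, yielding the antisymmetry identity
\[
   \left<x,\eta\right>\;=\;-\left<\eta,x\right>\qquad\text{for every } x\in\ZZ^n,
\]
and in particular $\left<\eta,\eta\right>=0$. Now suppose $v=\sum_i\beta_i$ is the generic decomposition and one of the summands is $\eta$. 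For any other summand $\beta_i\neq\eta$, the compatibility $ext(\beta_i,\eta)=0=ext(\eta,\beta_i)$ together with the Ringel formula \eqref{Ringel form}, applied in the derived category so as to cover the virtual case, gives
\[
   \left<\beta_i,\eta\right>=hom(\beta_i,\eta)\ge 0\quad\text{and}\quad \left<\eta,\beta_i\right>=hom(\eta,\beta_i)\ge 0,
\]
and the antisymmetry above forces both to vanish. Since summands equal to $\eta$ also contribute $\left<\eta,\eta\right>=0$, summing gives $\left<v,\eta\right>=\sum_i\left<\beta_i,\eta\right>=0$.

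The main technical point I expect to require care is justifying the equality $\left<\beta,\beta'\right>=hom(\beta,\beta')-ext(\beta,\beta')$ in the virtual setting, particularly when one of $\beta,\beta'$ is a negative projective root: one must work in $\cD^b(\text{mod-}\kk\widetilde A_{n-1}^\varepsilon)$ with the correct shifts and verify that the ``$ext$'' appearing in the Kac--Schofield generic decomposition really matches $\dim\Ext^1_{\cD^b}$ of generic representatives. Once that identification is in place, the two nonnegativity bounds and the antisymmetry identity combine immediately to give the result.
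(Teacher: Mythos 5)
Your argument is correct but takes a genuinely different route to the vanishing identity. The paper handles it by explicitly classifying which roots can coexist with $\eta$ in a generic decomposition: it computes $\left<\eta,\alpha\right>=-1$ for every preprojective root $\alpha$ and $\left<\beta,\eta\right>=-1$ for every preinjective root $\beta$, concludes in each case that $ext>0$ so these are excluded, and then observes that what remains (regular Schur roots and copies of $\eta$ itself) all pair to zero with $\eta$, giving $\left<v,\eta\right>=0$. Your proof instead exploits the structural fact that $\eta$ lies in the radical of the symmetrized Euler form $E_\varepsilon+E_\varepsilon^t$, yielding the antisymmetry $\left<x,\eta\right>=-\left<\eta,x\right>$, and combines the two-sided vanishing of ext (which the generic decomposition supplies for every pair of distinct summands) with non-negativity of hom to force $\left<\beta_i,\eta\right>=0$ summand by summand, with no need to compute the pairing for each root type. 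This is cleaner and would generalize verbatim to any affine quiver, since the null root is always in the radical of the symmetrized form. You rightly flag the one subtlety, namely justifying $\left<\cdot,\cdot\right>=\hom-\mathrm{ext}$ for shifted projectives in the derived category; in that case one has $\Ext^1_{\cD^b}(P_j[1],M_\eta)=\Hom(P_j,M_\eta)\neq 0$ because $M_\eta$ is sincere, so no shifted projective is ever compatible with $\eta$ and the worry does not materialize. Both proofs are sound; the paper's is more computational and tied to the $\widetilde A_{n-1}$ root classification, yours more structural.
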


\begin{proof}
Since $\left<\eta,\alpha\right>=-1$ for all preprojective roots $\alpha$, $ext(\eta,\alpha)>0$. Also, $\left<\beta,\eta\right>=-1$ for all preinjective roots $\beta$. So, $ext(\beta,\eta)>0$. The only roots compatible with null roots are the regular Schur roots $\gamma$ with $\left<\gamma,\eta\right>=0$. If one of the objects in the generic decomposition of $v$ is a null root, then $v$ is a sum of regular roots and null roots and $\left<v,\eta\right>=0$.
\end{proof}

The following characterization of cluster tilting objects in terms of semi-invariants and canonical decompositions of general representations is essentially proved in \cite{IOTW}.

\begin{thm}\label{virtual stability theorem}
Suppose that $M_1,\cdots,M_n$ are indecomposable virtual representations of $\widetilde{A}_{n-1}^\varepsilon$ corresponding to exceptional objects in the cluster category of $\kk\widetilde{A}_{n-1}^\varepsilon$. Then the following numbered conditions are equivalent.
\begin{enumerate}
\item $M_1\oplus\cdots\oplus M_n$ is a cluster tilting object in the cluster category.
\item (virtual canonical decomposition theorem) $\underline\dim M_i$ are linearly independent and, for any $v\in \ZZ^n$ which is a nonnegative rational linear combination of the vectors $\underline\dim M_i$, the general virtual representation with dimension vector $v$ is isomorphic to a direct sum of the virtual representations $M_i$.
\item (virtual stability theorem) $\underline\dim M_i$ are linearly independent and the following hold.
	\begin{enumerate}
	\item For each $j$, the set of nonnegative real linear combinations of $\underline\dim M_i$ for $i\neq j$ is contained in the support $D(\beta_j)$ for a uniquely determined real Schur root $\beta_j$.
	\item The set of all $\sum a_i \underline\dim M_i$ where $a_i>0$ for all $i$ is disjoint from $D(\beta)$ for all real Schur roots $\beta$.
	\item If $v=\sum a_i \underline\dim M_i$, $a_i\ge0$, then $\left<v,v\right>\ge0$ and equality holds only when $a_i=0$ for all $i$.
	\end{enumerate}
\end{enumerate}
\end{thm}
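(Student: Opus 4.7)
The plan is to deduce this theorem from the virtual stability theorem and virtual canonical decomposition theorem of \cite{IOTW}, translating between the language of cluster tilting objects (compatibility of summands) and the language of semi-invariants (wall-and-chamber structure). The extra ingredient needed for affine type, beyond what \cite{IOTW} provides for arbitrary hereditary algebras, is a careful treatment of the null root using Lemma \ref{null root in generic decomp}.

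I would first establish $(1)\Rightarrow(3)$ directly. Linear independence of $\underline\dim\,M_i$ follows from Krull-Schmidt together with the fact that a cluster tilting object has exactly $n$ nonisomorphic indecomposable summands, with $n$ the rank of the Grothendieck group. For (3)(a), fix $j$ and let $\beta_j$ be the real Schur root determined by the condition $\left<\underline\dim\,M_i,\beta_j\right>=0$ for all $i\neq j$ together with a sign convention; pairwise compatibility of the $M_i$ and the description of $D(\beta_j)$ in terms of subroot inequalities (Proposition \ref{prop: psi satisfies stability conditions for D(b)}) will imply that the cone $\{\sum_{i\neq j}a_i\underline\dim\,M_i:a_i\ge0\}$ lies in $D(\beta_j)$. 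For (3)(b), if an interior point of the full positive cone lay in some $D(\beta)$, then the semi-invariant of weight $\beta$ would force one of the $M_i$ to degenerate, contradicting rigidity; equivalently, one could strip off an $M_i$ to produce a smaller compatible collection, contradicting maximality. For (3)(c), pairwise compatibility gives $\left<\underline\dim\,M_i,\underline\dim\,M_j\right>\ge0$ for all $i,j$, while $\left<\underline\dim\,M_i,\underline\dim\,M_i\right>\ge 1$ since each $M_i$ is exceptional, so the Euler form is positive on the positive cone.

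For $(3)\Rightarrow(2)$, the stability conditions (3)(a) and (3)(b) together with linear independence place the positive cone $\{\sum a_i\underline\dim\,M_i:a_i\ge0\}$ inside a single chamber of the wall-and-chamber structure cut out by the supports $D(\beta)$; by the virtual canonical decomposition theorem of \cite{IOTW}, such chambers correspond precisely to generic decompositions, yielding the desired description of the general virtual representation of each $v$ in the cone. Finally $(2)\Rightarrow(1)$ is immediate: a generic decomposition whose summands are exceptional and pairwise ext-orthogonal yields a cluster tilting object in the cluster category. The main obstacle I anticipate is the affine complication that a regular combination $v=\sum a_i\underline\dim\,M_i$ could a priori equal a multiple of the null root $\eta$, in which case Lemma \ref{null root in generic decomp} would force a null-root summand in the generic decomposition and contradict (2); condition (3)(c) is exactly what excludes this, since $\left<v,v\right>=\left<k\eta,k\eta\right>=0$ would force all $a_i=0$. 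Verifying this compatibility between (3)(c) and the non-appearance of imaginary summands—thereby closing the loop between the three equivalent formulations in affine type—is where the most care is required.
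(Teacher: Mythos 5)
Your route $(1)\Rightarrow(3)\Rightarrow(2)\Rightarrow(1)$ differs from the paper's, which cites \cite{IOTW} for the equivalence $(1)\Leftrightarrow(2)$ and then proves $(1),(2)\Rightarrow(3)$ and $(3)\Rightarrow(1)$. This choice of cycle is not cosmetic: the paper's proof of $(3)(b)$ from $(1)$ genuinely relies on $(2)$ — if $v\in D_\ZZ(\beta)$ with $v$ an integer positive combination of the $\ul\dim M_i$, then by $(2)$ the general virtual representation of dimension $v$ is a sum of $M_i$'s, whence every $\ul\dim M_i\in D(\beta)$, contradicting that they span $\RR^n$ while $D(\beta)$ lies in a hyperplane. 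Your version of $(3)(b)$ — ``the semi-invariant would force $M_i$ to degenerate, contradicting rigidity'' — is not an argument, and without $(2)$ in hand you have no substitute.

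Your sketch of $(3)(a)$ also has a missing ingredient. You write ``let $\beta_j$ be the real Schur root determined by $\left<\ul\dim M_i,\beta_j\right>=0$,'' but the orthogonal complement of $\{\ul\dim M_i\}_{i\ne j}$ under the Euler form is just a line; nothing you say produces a \emph{real Schur root} on that line, and compatibility plus Proposition \ref{prop: psi satisfies stability conditions for D(b)} alone will not do it. The paper's device is Schofield's theorem that the underlying modules form an exceptional sequence, then braid moves to isolate an exceptional $B_j$ with $\Hom(|M_i|,B_j)=0=\Ext(|M_i|,B_j)$ for $i\neq j$; the dimension vector $\beta_j=\ul\dim B_j$ is the needed real Schur root, and $\ul\dim M_i\in D_\ZZ(\beta_j)$ follows from Theorem \ref{original def of virtual semi-invariant}.

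Finally, your treatment of the affine complication is too coarse. You only rule out $v=k\eta$. The actual danger is that $\eta$ appears as \emph{one summand} in the generic decomposition of $v=\sum a_i\ul\dim M_i$ while $v$ itself is not proportional to $\eta$. The paper's Case 2 handles exactly this: an induction on the number of nonzero $a_i$ shows the offending imaginary dimension vector would itself be a nonnegative combination of the $\ul\dim M_i$, which then violates (3c). Condition (3c) is the right tool, as you say, but it must be applied to the imaginary summand, not to $v$.
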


\begin{proof}
The equivalence $(1)\ifff (2)$ is proved in \cite{IOTW}. The equivalence with (3) is not too difficult but we did not state this in \cite{IOTW}. The proof is based on ideas in \cite{ST}

$(1),(2)\then (3)$ Suppose that $M_1,\cdots,M_n$ form a cluster tilting object. Then, by a result of Schofield we can arrange the objects so the underlying modules $|M_i|$ form an exceptional sequence (with shifted projective objects $M_i$ moved to the right end and replaced by the projective $|M_i|$). Using braid moves, we can move $M_j$ to the left end and we have a new exceptional sequence: $B_j,|M_1|,\cdots,\widehat{|M_j|},\cdots,|M_n|$ where $B_j$ is an exceptional module with the property that $\Hom(|M_i|,B_j)=0=\Ext(|M_i|,B_j)$ for all $i\neq j$. Equivalently, $\underline\dim M_i\in D_\ZZ(\beta_j)$ where $\beta_j=\underline\dim B_j$. Then (5a) is satisfied. (Need the easy lemma that a projective root $\pi_i$ lies in $D(\beta)$ if and only $\left<\pi_i,\beta\right>=0$ if and only if $-\pi_i\in D(\beta)$.)

To verify (3b), suppose not. Then there is a vector $v$ with integer coefficients which is a positive linear combination of $\underline\dim M_i$ and so that $v\in D_\ZZ(\beta)$ where $\beta$ is the dimension vector of an exceptional object $B$. But this implies that $\Hom(V,B)=0=\Ext^1(V,B)$ for the general virtual representations $V$ of dimension $v$. By assumption, $V$ is a direct sum of copies of the objects $M_i$. So, we must have $\underline\dim M_i\in D_\ZZ(\beta)$ for all $i$. But this is impossible since the vectors $\underline\dim M_i$ span $\RR^n$.

(3c) follows from (1) since $\left<\underline\dim\,M_i,\underline\dim\,M_j\right>\ge0$ for all $i,j$ and $\left<\underline\dim\,M_i,\underline\dim\,M_i\right>=1$ making $\left<v,v\right>\ge \sum a_i>0$ if $a_i$ are not all zero.

Conversely, $(3)\then (1)$. Let $v=\sum\underline\dim\,M_i$ and let $N=\bigoplus b_kN_k$ be the generic decomposition of the general virtual representation with dimension vector $v$.

\underline{Case 1:} All $N_k$ are rigid. ($\underline\dim\,N_k$ are real Schur roots.) 

Then we can extend the set $\{N_k\}$ to a cluster tilting object and $v$ lies in the positive cone $C\Delta_N$ of the $n-1$ simplex spanned by the dimension vectors of the $N_k$. ($C\Delta_N$ is the set of all nonnegative linear combinations of the vectors $\underline\dim\,N_k$.) By assumption (3b) on $\{M_i\}$, the boundary of $C\Delta_N$ does not meets the interior of $C\Delta_M$, the corresponding set for $M$. So $C\Delta_M\subseteq C\Delta_N$. Similarly, $C\Delta_N\subseteq C\Delta_M$. This implies $C\Delta_M= C\Delta_N$. So, $\bigoplus M_i=\bigoplus N_k$ is a cluster tilting object, proving (1)

\underline{Case 2:} At least one of the $N_k$ is not rigid. (So, $\gamma_k=\underline\dim\,N_k$ is an imaginary root.)

We claim that this case is not possible. We prove this by induction on $m$ where $m$ is minimal so that a positive linear combination $v=\sum a_i\beta_i$ of $m$ of the roots $\alpha_i=\underline\dim\,M_i$ contains an imaginary root in its canonical decomposition: $N=\bigoplus b_kN_k$. Let $\Delta_N$ the the $m-1$ simplex spanned by $\underline\dim\,N_k$. Then $v\in C\Delta_M$ where $\Delta_M$ is the $m-1$ simplex spanned by the dimension vectors of all $M_i$ so that $a_i\neq 0$. By assumption (3a), this implies that $C\Delta_M$ is contained in the intersection $L$ of all $D(\beta_j)$ for all $j$ so that $a_j=0$. So, $v$ lies in this intersection. Since this is an open condition, each $\underline\dim\,N_k$ also lies in $L$. By induction on $m$, the interior of $\Delta_N$ does not meet the boundary of $C\Delta_M$. This implies that $\Delta_N\subset C\Delta_M$. In particular $\underline\dim\,N_k\in C\Delta_M$. But this contradicts (3c). So, Case 2 is not possible and $(3)\then(1)$ in both cases.
\end{proof}

\begin{cor}\label{cor: V is invertible}
For any cluster tilting object $M=\bigoplus M_i$, the $n\times n$ integer matrix $V$ whose columns are the dimension vectors $\ul\dim M_i$ has determinant $\pm1$.
\end{cor}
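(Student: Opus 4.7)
The plan is to reduce the statement to the classical unimodularity of the dimension-vector matrix of a full exceptional sequence, exploiting the exceptional sequence structure already brought out in the proof of $(1)\Rightarrow(3)$ of Theorem \ref{virtual stability theorem}.

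First, following the opening of that proof, I would invoke Schofield's result to reorder the summands of the cluster tilting object so that the underlying modules $|M_1|,\ldots,|M_n|$ (with $|P[1]|:=P$ for a shifted projective summand) form a full exceptional sequence in $mod\text-\kk\widetilde A_{n-1}^\varepsilon$, with the summands coming from shifted projectives placed at the right end.

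Next, I would appeal to the classical Crawley-Boevey--Ringel theorem: for any full exceptional sequence $E_1,\ldots,E_n$ in the module category of a hereditary algebra with $n$ simples, the dimension vectors $\ul\dim E_1,\ldots,\ul\dim E_n$ form a $\ZZ$-basis of $\ZZ^n$. The argument proceeds by induction via braid mutations, starting from the exceptional sequence of simples $(S_1,\ldots,S_n)$, whose dimension vectors are the standard basis, and observing that each elementary braid mutation acts on the ordered tuple of dimension vectors by a unimodular integer matrix. Applied to our sequence, this yields that the matrix $|V|$ with columns $\ul\dim |M_i|$ satisfies $\det|V|=\pm1$.

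Finally, by the paper's definition of the dimension vector of a virtual representation, $\ul\dim M_i=\ul\dim|M_i|$ when $M_i$ is a module and $\ul\dim M_i=-\ul\dim|M_i|$ when $M_i=P[1]$ is a shifted projective (since $\ul\dim P[1]=-\ul\dim P$). Hence $V$ is obtained from $|V|$ by right multiplication with a diagonal $\pm1$ matrix, composed with a column permutation coming from the reordering; both operations preserve the determinant up to sign, giving $\det V=\pm1$. The only step requiring justification outside the present paper is the classical unimodularity of exceptional-sequence matrices, which could be either cited directly (e.g., via Ringel's work on the braid group action on exceptional sequences) or established by a short induction on braid moves; I would expect this to be the main technical point to address.
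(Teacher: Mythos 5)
Your proposal is correct, but it takes a genuinely different route from the paper. The paper gives a self-contained argument using condition (2) of Theorem \ref{virtual stability theorem} (the virtual canonical decomposition theorem): letting $a_{ij}$ be the entries of $V^{-1}$ (invertible over $\QQ$ by condition (3)) and $B$ a large enough integer, one observes that $e_j + B\sum_i \ul\dim M_i = \sum_i (B+a_{ij})\ul\dim M_i$ is an integer vector lying in the positive rational cone of the $\ul\dim M_i$; by the canonical decomposition statement it must be a \emph{nonnegative integer} linear combination, so by linear independence each $B+a_{ij}$ is a nonnegative integer and $V^{-1}$ is an integer matrix, forcing $\det V = \pm1$. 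Your approach instead externalizes the work to the Crawley-Boevey--Ringel theorem that the dimension vectors of a full exceptional sequence form a $\ZZ$-basis of $\ZZ^n$, after rearranging the summands into an exceptional sequence via Schofield (as the paper itself does at the start of the proof of $(1)\Rightarrow(3)$) and tracking the sign flip $\ul\dim P[1]=-\ul\dim P$. Both are valid; the paper's argument has the virtue of relying only on machinery already established within the paper, while yours is perhaps more conceptual and connects the statement to well-known structure theory of exceptional sequences, at the cost of needing an outside citation (and a short justification that the underlying modules of a cluster tilting object really do form a \emph{full} exceptional sequence after reordering, which the paper asserts but does not prove).
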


\begin{proof} By (3) in Theorem \ref{virtual stability theorem}, $V$ is invertible as a matrix over $\QQ$. Let $B$ be an integer larger than the absolute value of any entry $a_{ij}$ of $V^{-1}$. By (2) in Theorem \ref{virtual stability theorem} we see that, for each $j$, the integer vector $\sum_i(B+a_{ij})\ul\dim M_i=e_j+B\sum\ul\dim M_i$ is an integer linear combination of the integer vectors $\ul\dim M_i$. Thus, each $a_{ij}$ must be an integer and $V$ is invertible as an integer matrix.
\end{proof}

\begin{thm}\label{thm:correspondence between trees and cluster tilting objects}
There is a 1-1 correspondence between $n$-periodic trees $\cT$ with sign function $\varepsilon$ and cluster tilting objects $M=\bigoplus M_i$ in the cluster category of $\Lambda=\kk\widetilde{A}_{n-1}^\varepsilon$ given by the equation
\begin{equation}\label{cluster region equality}
	F\cR(\cT)=E_\varepsilon^t\cR(M)
\end{equation}
where $\cR(M)\subseteq \RR^n$ is the set of all positive real linear combinations of the vectors $\undim M_i$. Furthermore,
\begin{equation}\label{Hugh Thomas equation}
V^tE_\varepsilon \Gamma_\cT=I_n
\end{equation}
where $V$ is the $n\times n$ matrix whose $k$th column is $\underline\dim M_k$ and $\Gamma_\cT$ is the edge matrix of $\cT$.
\end{thm}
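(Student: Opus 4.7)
The plan is to use the Hugh Thomas equation~\eqref{Hugh Thomas equation} to \emph{define} the cluster tilting object attached to $\cT$, and then to deduce the region identity~\eqref{cluster region equality} from the Euler--Ringel duality between $\alpha_i$ and $\gamma_j$. Given $\cT$, set $V:=(E_\varepsilon\Gamma_\cT)^{-t}$ and let $\alpha_1,\ldots,\alpha_n$ be its columns. Since $\det\Gamma_\cT=\pm1$ by Proposition~\ref{sign coherence of c-inverse}(2) and $\det E_\varepsilon=1$, $V$ is an invertible integer matrix and the $\alpha_i$ form a $\ZZ$-basis of $\ZZ^n$. The equation $V^tE_\varepsilon\Gamma_\cT=I_n$ reads $\langle\alpha_i,\gamma_j\rangle=\delta_{ij}$; hence for $v=\sum x_i\alpha_i$ and $y:=E_\varepsilon^tv$ one has $y^t\gamma_j=x_j$, so $y\in F\cR(\cT)$ iff every $x_j>0$. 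Identifying the $\alpha_i$ as dimension vectors of a cluster tilting object will immediately yield~\eqref{cluster region equality}.

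The substance of the proof is therefore to verify the three virtual stability conditions of Theorem~\ref{virtual stability theorem}(3) for $\alpha_1,\ldots,\alpha_n$. Linear independence is immediate from $\det V=\pm1$. Condition~(3a) follows from Lemma~\ref{first stability lemma}(2): the face $\{x_k=0,\ x_j>0\text{ for }j\neq k\}$ of the positive cone maps under $E_\varepsilon^t$ to the relative interior of $\del_kF\cR(\cT)$, which is contained in $E_\varepsilon^tD(|\gamma_k|)$, where $\beta_k:=|\gamma_k|$ is a real Schur root by Proposition~\ref{cor:edge vectors are Schur roots}. Condition~(3b) follows from Lemma~\ref{second stability lemma}: the open positive cone maps to $F\cR(\cT)$, which is disjoint from $E_\varepsilon^tD(\beta)$ for every real Schur root $\beta$.

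Condition~(3c) is the main obstacle. Since the symmetrized Euler form on $\widetilde A_{n-1}^\varepsilon$ is positive semidefinite with one-dimensional kernel $\RR\eta$, any nonzero $v$ in the closed positive cone with $\langle v,v\rangle\le 0$ must satisfy $v=c\eta$ for some $c>0$. To rule this out I would observe that $c\eta$ automatically lies in $D(\beta)$ for every regular real Schur root $\beta$ (every subroot of a regular root is regular, and $\langle\eta,\cdot\rangle$ vanishes on regular roots). Since $\varepsilon$ is surjective, infinitely many such $\beta$ exist and one can be chosen distinct from each of the finitely many $\beta_k=|\gamma_k|$; then $c\eta\in D(\beta)$ combined with~(3b) (if $c\eta$ is in the open cone) or with a careful refinement of~(3a) at the appropriate boundary face (ruling out $c\eta$ via the identity $0=\langle\eta,c\eta\rangle=\sum a_i\langle\eta,\alpha_i\rangle$ and the defect signs of the $\alpha_i$) delivers the contradiction.

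Once Theorem~\ref{virtual stability theorem} is applied, we obtain a cluster tilting object $M(\cT)=\bigoplus M_i$ with $\ul\dim M_i=\alpha_i$, and~\eqref{cluster region equality} as well as~\eqref{Hugh Thomas equation} hold by construction. The inverse assignment $M\mapsto\cT(M)$ comes from Theorem~\ref{thm:every pi gives a unique T}: a generic $v\in\cR(M)$ with $\QQ$-linearly independent coordinates lifts (via $F\pi=E_\varepsilon^tv$) to an $n$-periodic monomorphism $\pi$ of nonzero slope, which picks out the unique tree $\cT(M)$. Independence of the choice of $v$ follows from the disjointness of the regions $\cR(\cT)$ proved just after Theorem~\ref{thm:every pi gives a unique T}, and the two assignments are mutually inverse by the region identity.
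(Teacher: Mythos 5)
Your plan --- define $V := (E_\varepsilon\Gamma_\cT)^{-t}$ and certify that its columns $\alpha_i$ underlie a cluster tilting object by verifying conditions (3a)--(3c) of Theorem \ref{virtual stability theorem} --- inverts the paper's order of argument, which first establishes the bijection (via a generic point of $\cR(\cT)$, its generic decomposition with no null root, and the disjointness/density of the regions) and only afterwards derives \eqref{Hugh Thomas equation} using exceptional sequences and the determinant facts. The inverted route has a gap you cannot skip: Theorem \ref{virtual stability theorem} is not a criterion on arbitrary integer vectors. Its hypothesis is that $M_1,\ldots,M_n$ are \emph{already} indecomposable virtual representations corresponding to exceptional objects; conditions (1)--(3) then say only when such a given collection is a cluster tilting object. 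Your $\alpha_i$ are merely columns of an integer matrix, and nothing in your proposal shows that each $\alpha_i$ is a positive real Schur root or a negative projective root, i.e.\ that an exceptional object $M_i$ with $\ul\dim M_i=\alpha_i$ exists at all. Without this, the step ``Once Theorem \ref{virtual stability theorem} is applied, we obtain a cluster tilting object $M(\cT)$ with $\ul\dim M_i=\alpha_i$'' is unjustified. One could try to salvage it by re-running the \emph{proof} of $(3)\Rightarrow(1)$ on the vectors $\alpha_i$ directly (take the generic decomposition of $\sum\alpha_i$, show the resulting cone equals the cone of the $\alpha_i$, then use $\det V=\pm1$ to match primitive ray generators), but that is essentially the paper's generic-decomposition route in disguise and is not in your write-up.

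There is also a concrete error in your sketch of (3c): the parenthetical ``every subroot of a regular root is regular'' is false. If $\beta_{ij}$ is regular with $\varepsilon_i=\varepsilon_j=+$ and some $i<a<j$ has $\varepsilon_a=-$, then $\beta_{aj}$ is a subroot of $\beta_{ij}$ by Lemma \ref{subroot lemma}, and $\beta_{aj}$ is preprojective. The inequality $\left<c\eta,\beta'\right>\le0$ can still hold for such $\beta'$ when $c>0$ (because $\left<\eta,\cdot\right>=-1$ on preprojective roots), but your stated justification is wrong, the sign of $c$ is never pinned down, and the closing contradiction is deferred to ``a careful refinement of (3a)'' that you do not supply. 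So even granting exceptionality of the $\alpha_i$, condition (3c) is not established as written.
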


\begin{rem}\label{rem: edge matrix of initial cluster tilting object} For example, let $\Lambda[1]=P_1[1]\oplus \cdots\oplus P_n[1]$ be the cluster tilting object of shifted projective modules. The matrix is $V=(-E_\varepsilon^t)^{-1}$. So, \eqref{Hugh Thomas equation} implies that $\Gamma_\cT=-I_n$, making the edge vectors equal to $-\beta_{i,i+1}$ and $\cT$ is a straight line with slope $-1$. We denote this tree $\cT_0$.
\end{rem}

\begin{proof} Given a cluster tilting object $\bigoplus M_i$, $\cR(M)$ is a nonempty open subset of $\RR^n$. For a general point $v\in \cR(\cT)$ the corresponding $n$-periodic function $\pi_v:\ZZ\to\RR$ (given by $E_\varepsilon^tv=F(\pi)$) takes distinct values on all integers. So, there is a unique $n$-periodic tree $\cT$ so that $\pi_v\in \cR(\cT)$. This implies that $F\cR(\cT)$ and $E_\varepsilon^t\cR(M)$ have a nonempty intersection. However, the boundaries of both sets are contained in the union of the supports $E_\varepsilon^tD(\beta)$. And the sets $E_\varepsilon^tD(\beta)$ do not meet the interior of either set by Lemma \ref{second stability lemma} and Theorem \ref{virtual stability theorem}. Therefore, $F\cR(\cT)=E_\varepsilon^t\cR(M)$.

Conversely, let $\cT$ be an $n$-periodic tree. Then $\cR(\cT)$, being an open set contains a rational point $\pi:\ZZ\to\QQ$ with nonzero slope $\frac mn$. Multiplying by the common denominator we may assume that $\pi$ takes integer values. Then the corresponding dimension vector $v=(E_\varepsilon^t)^{-1}F(\pi)$ has a generic virtual decomposition $v=\sum \beta_i$ which does not contain a null root by Lemma \ref{null root in generic decomp} since $\left<v,\eta\right>=\pi(n)-\pi(0)=\frac mn\neq0$ by construction. Therefore, the roots $\beta_i$ correspond to modules or shifted projective modules which don't extend each other. We can extend this to a cluster tilting object $\bigoplus M_i$ so that $v$ lies in $\overline \cR(M)$. By a small pertubation of $v/||v||$, we may assume that $v\in \cR(M)$. Then \eqref{cluster region equality} holds and furthermore gives a bijection between periodic trees and cluster tilting objects.

Having established the correspondence between $n$-periodic trees and cluster tilting objects, we will now prove the equation \eqref{Hugh Thomas equation}. Given a cluster tilting object $\bigoplus M_i$, let $\beta_i$ be the real Schur roots given in Theorem \ref{virtual stability theorem}. Then
\[
	\left<\ul\dim M_i,\beta_j\right>=0
\]
for $i\neq j$. Then $d_k=\left<\ul\dim M_k,\beta_k\right>$ cannot be zero since the Euler-Ringel form is nondegenerate and $\dim M_k$ span $\RR^n$. So we obtain the matrix equation:
\[
	V^tE_\varepsilon B=D
\]
where $V$ is the $n\times n$ matrix whose $i$th column is $\undim M_i$, $B$ is the integer matrix whose columns are $\beta_i$ and $D$ is a diagonal matrix with diagonal entries $d_k$. By comparing the two descriptions of the set $F\cR(\cT)=E_\varepsilon^t\cR(M)$ given in Lemma \ref{first stability lemma} and Theorem \ref{virtual stability theorem}, we see that the real Schur roots $\beta_k$, are up to sign, equal to the edge vectors of $\cT$. This gives another equation:
\[
	V^tE_\varepsilon \Gamma_\cT=D'
\]
where $D'$ is the diagonal matrix with diagonal entries $|d_k|$. The sign is defined in such a way that, for any $n$-periodic function $\psi$ on $\cT$ and any real Schur root $\beta_k=\beta_{ij}$ of $\cT$, $\left<v,\delta_k\beta_{ij}\right>=\delta_k(\psi(p_j)-\psi(p_i))$ is positive. So, the entries of $D'$ are positive integers.

Finally, we know, by Proposition \ref{sign coherence of c-inverse} and Corollary \ref{cor: V is invertible} that the matrices $V,\Gamma_\cT$ have determinant $\pm1$. So, $D'=I_n$ is the identity matrix as claimed.
\end{proof}

\subsection{Formula for summands of $M$}\label{ss2.4: psi-infty}

The following corollary shows how the geometry of the period tree relates to the summands of the corresponding cluster tilting object. Recall that every periodic tree has a unique periodic infinite path. If this path is monotonically increasing/decreasing the periodic tree has positive/negative slope according to the Classification Theorem \ref{cor: classification of trees}.

Let $\cT$ be an $n$-periodic tree and let $M=\bigoplus M_i$ be the corresponding cluster tilting object. For each summand $M_i$ of $M$, let $\ell_i$ be the corresponding edge of the tree $\cT$. Then the dimension vector $\undim M_i$ can be computed as follows. 

\begin{defn}\label{def: psi infty}
Let $\psi_\infty^i:\{p_k\,:\,k\in\ZZ\}\to\RR$ denote any $n$-periodic function satisfying the following two conditions.
\begin{enumerate}
\item $\psi_\infty^i$ takes the same value at the endpoints of every edge $\ell_j$ not equal to $\ell_i$ or its translates, i.e., the edges $\ell_j$ of $\cT$ become horizontal.
\item $\psi_\infty^i(p_b)=\psi_\infty^i(p_a)+1$ if $\ell_i$ has endpoints $p_a,p_b$ with $p_a<p_b$.
\end{enumerate}
\end{defn}

Then we get the following formula.

\begin{cor}\label{cor: components of cluster from topology of tree} 
Let $\psi_\infty^i$ be given as above. Then $F\psi_\infty^i=E_\varepsilon^t\undim M_i$. The sign of the slope of $\psi_\infty^i$ is equal to the sign of $\left<\undim M_i,\eta\right>$. Thus:
\begin{enumerate}
\item $M_i$ is regular if and only if $\ell_i$ does not lie on the periodic infinite path in $\cT$.
\item $M_i$ is preprojective if and only if $\ell_i$ lies on the periodic infinite path of $\cT$ and either $\cT$ has positive slope or $\cT$ has zero slope and $\ell_i$ has positive slope.
\item $M_i$ is preinjective or shifted projective if and only if $\ell_i$ lies on the periodic infinite path of $\cT$ and either $\cT$ has negative slope or $\cT$ has zero slope and $\ell_i$ has negative slope.
\end{enumerate}
\end{cor}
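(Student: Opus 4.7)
The plan is to establish $F\psi_\infty^i = E_\varepsilon^t\undim M_i$ and then read off the three cases from the sign of the slope of $\psi_\infty^i$. Well-definedness of $\psi_\infty^i$ as an $n$-periodic function is immediate from $\cT$ being a tree: prescribing a zero jump across every edge not in the $\ZZ$-orbit of $\ell_i$ and a signed jump of $\pm 1$ across $\ell_i$ determines $\psi_\infty^i$ by integration along the unique path between any two vertices, and $\ZZ$-equivariance of the jumps makes $m=\psi_\infty^i(p_{k+n})-\psi_\infty^i(p_k)$ independent of $k$.

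For each edge $\ell_j=(p_{a_j},p_{b_j})$ with $a_j<b_j$ and edge vector $\gamma_j=\delta_j\beta_{a_jb_j}$, a direct computation gives $\gamma_j^{\,t}F\psi_\infty^i = \delta_j(\psi_\infty^i(p_{b_j})-\psi_\infty^i(p_{a_j})) = \delta_{ij}$, where the $j=i$ case uses that the sign $\delta_i$ and the signed jump across $\ell_i$ multiply to $+1$ in either slope case. Hence $\Gamma_\cT^{\,t}F\psi_\infty^i = e_i$, and transposing \eqref{Hugh Thomas equation} shows that $\Gamma_\cT^{\,t}(E_\varepsilon^t\undim M_i)=e_i$ as well; invertibility of $\Gamma_\cT$ then forces $F\psi_\infty^i = E_\varepsilon^t\undim M_i$. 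Pairing both sides with the null root now gives $n\cdot\text{slope}(\psi_\infty^i) = \eta^{\,t}E_\varepsilon^t\undim M_i = \langle\undim M_i,\eta\rangle$. Since $\eta$ lies in the radical of the symmetrized Euler form for $\widetilde A_{n-1}^\varepsilon$, one has $\langle v,\eta\rangle = -\langle\eta,v\rangle$, and combined with $\langle\eta,\alpha\rangle=-1$ for preprojective $\alpha$ and $\langle\beta,\eta\rangle=-1$ for preinjective $\beta$ (recalled in the proof of Lemma~\ref{null root in generic decomp}), the value $\langle\undim M_i,\eta\rangle$ is $+1,-1,0,-1$ accordingly as $M_i$ is preprojective, preinjective, regular, or shifted projective (in the last case $\undim M_i=-\pi$ with $\pi$ preprojective).

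It remains to translate the sign of the slope into the topology of $\cT$. The quantity $\psi_\infty^i(p_{k+n})-\psi_\infty^i(p_k)$ is the signed count of translates of $\ell_i$ crossed along the unique path from $p_k$ to $p_{k+n}$. Any excursion of this path into a branch of $\cT$ is traversed both in and out by $\ZZ$-equivariance and so cancels, which shows the slope vanishes precisely when $\ell_i$ lies on a branch, giving (1). When $\ell_i$ lies on the infinite path the path projects to one traversal of the cycle of $\cT/\ZZ$ and crosses exactly one translate of $\ell_i$, contributing $\pm 1$. For a tree of positive slope the path is monotonically increasing in $\psi$, so the crossing is in the direction of increasing partial order and the contribution is $+1$; the symmetric statement gives $-1$ in the negative-slope case. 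In the slope-zero case the forward direction (the one in which the $\ZZ$-action advances indices by $+n$) traverses the cycle-edge at $\ell_i$ from its lower to its higher partial-order endpoint, so the contribution at $\ell_i$ is $+1$ or $-1$ according as $\ell_i$ has positive or negative slope. Matching these signs against the classification of $\langle\undim M_i,\eta\rangle$ above yields cases (2) and (3).

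The main obstacle is the last assertion for slope-zero trees: one must show that the forward direction of the unique cycle in $\cT/\ZZ$ consistently agrees at every edge with its partial-order orientation, i.e.\ that the infinite path in $\cT$ traverses each cycle-edge from its lower to its higher partial-order endpoint. This is a purely combinatorial statement about admissibility (T1--T4) combined with the absence of oriented cycles in $\cT/\ZZ$, but it requires a careful case analysis at local maxima and minima of the path (as in Proposition~\ref{cor:max min3}) to rule out configurations that would reverse the traversal direction at an edge; this is where the bulk of the work lies.
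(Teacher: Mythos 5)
Your algebraic derivation of $F\psi_\infty^i = E_\varepsilon^t\undim M_i$ --- computing $\Gamma_\cT^{\,t}F\psi_\infty^i=e_i$ directly and invoking the transpose of equation~\eqref{Hugh Thomas equation} --- is correct and is in fact a cleaner route than the paper's argument, which identifies $\psi_\infty^i$ as a limit of periodic morphisms and appeals to \eqref{cluster region equality}. The identity $\eta^tF\psi_\infty^i=\left<\undim M_i,\eta\right>$, the resulting sign classification of preprojective/preinjective/regular, and the treatment of case (1) and of cases (2)--(3) for trees of nonzero slope are all sound.

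The difficulty is the assertion you isolate as the ``main obstacle'' for slope-zero trees: that the forward traversal of the infinite path crosses each cycle-edge from its lower-index endpoint to its higher-index endpoint (this is the version you actually need, so that the contribution at $\ell_i$ is $+1$ or $-1$ according to the sign of the slope of $\ell_i$; the ``lower to higher partial-order endpoint'' phrasing would give $+1$ always and contradicts the very next clause). This is not a hard lemma awaiting a case analysis --- it is false. Take $n=6$, with $\varepsilon_1=\varepsilon_2=\varepsilon_6=+$ and $\varepsilon_3=\varepsilon_4=\varepsilon_5=-$, and let $\cT$ be the $6$-periodic tree whose poset is generated by $p_5<p_2<p_3<p_0$, $p_1<p_2$, $p_3<p_4$, $p_5<p_6$ (and their translates). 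Its Hasse edges are $(p_0,p_3),(p_2,p_3),(p_2,p_5),(p_5,p_6),(p_1,p_2),(p_3,p_4)$; the periodic morphism $\psi(p_0,\dots,p_5)=(10,5,6,8,9,4)$ verifies T1--T4 and that this is a slope-zero tree with internal maximum $p_0$, internal minimum $p_5$, and infinite path $\cdots - p_0 - p_3 - p_2 - p_5 - p_6 - \cdots$. The edge $\ell=(p_2,p_3)$ has positive slope and lies on the infinite path, yet the forward traversal from $p_0$ to $p_6$ crosses it from $p_3$ to $p_2$, i.e.\ from higher to lower index, so $\psi_\infty^\ell(p_6)-\psi_\infty^\ell(p_0)=-1$. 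Explicitly $F\psi_\infty^\ell=(-1,0,1,0,-1,0)^t$ and $\undim M=(0,1,1,1,0,0)^t=\beta_{14}$, which is \emph{preinjective}. So the conclusion of the corollary --- that a positive-slope cycle-edge of a slope-zero tree yields a preprojective summand --- fails for this tree. The paper's own proof rests on the same unstated hypothesis: the sentence ``the edges $\ell_{i+kn}$ make the height of each finite tree greater than the previous one'' tacitly presumes the infinite path advances monotonically in index, which T1--T4 and Proposition~\ref{cor:max min3} do not force. What actually determines the sign is the orientation in which the forward direction crosses $\ell_i$ (equivalently, which of the two adjacent finite trees contains the lower-index endpoint of $\ell_i$), and this is not read off the slope of $\ell_i$ alone.
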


\begin{proof}
Up to a positive scalar multiple, the dimension vector of $M_i$ is given by taking the limit of $n$-periodic morphisms when the slopes of all edges except for $\ell_i$ and its translates become zero and applying the linear map $(E_\varepsilon^t)^{-1}F$. Thus $\psi_\infty^i$ is this limiting periodic morphism. By \eqref{cluster region equality}, $F(\psi_\infty^i)$ is proportional to $E_\varepsilon^t\undim M_i$. By condition (2) in the definition, $F\psi_\infty^i$ has integer coordinates and $F\psi_\infty^i\cdot \beta_{ab}=1$. So, $F(\psi_\infty^i)=E_\varepsilon^t\undim M_i$. 

Regular, preprojective and preinjective roots can be distinguished by the sign of $\left<\beta,\eta\right>=\eta^tE_\varepsilon^t\beta$ which is equal to the sign of $\eta^tF(\psi_\infty^i)=\psi_\infty^i(p_n)-\psi_\infty^i(p_0)=m$ which is equal to the sign of the slope of $\psi_\infty^i$.

In Case (1) when $\ell_i$ and its translates lie on the branches of $\cT$, all edges in the infinite path become horizontal in the limit and $\psi_\infty^i$ takes the same value at all point in this infinite path. So, its slope is zero and $M_i$ is regular.

In Case (2) when $\ell_i$ is part of the periodic infinite path in $\cT$, suppose that either $\cT$ has zero slope and $\ell_i$ has positive slope or that $\cT$ has positive slope. Then the complement of $\ell_i$ and its translates in $\cT$ is an infinite union of finite trees each of which becomes horizontal by $\psi_\infty^i$. In both subcases of Case (2), the edges $\ell_{i+kn}$ make the height of each finite tree greater than the previous one making $\psi_\infty^i$ to have positive slope.

Similarly, $\psi_\infty^i$ has negative slope in Case (3). So, the correspondence is accurate.
\end{proof}

This leads to the following characterization of periodic trees of positive, negative and zero slope.

\begin{cor}\label{cor: objects corresponding to trees with different slopes}
Let $\cT$ be an $n$-periodic tree and let $M=\bigoplus M_i$ be the corresponding cluster tilting object.
\begin{enumerate}
\item $\cT$ is a zero slope tree if and only if $M$ contains at least one preprojective summand and at least one summand which is either preinjective or negative shifted projective.
\item $\cT$ is a positive slope tree if and only if $M$ has no preinjective or negative shifted projective summands.
\item $\cT$ is a negative slope tree if and only if $M$ has no preprojective summands.\qed
\end{enumerate}
\end{cor}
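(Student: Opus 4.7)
The plan is to derive all three equivalences by reading off Corollary \ref{cor: components of cluster from topology of tree} against the trichotomy in Classification Theorem \ref{cor: classification of trees}. Since ``positive slope,'' ``negative slope,'' and ``zero slope'' partition the set of $n$-periodic trees, it suffices to prove the forward implication in each of (1), (2), (3); the three converses then follow by elimination.

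For the forward direction of (2), I would observe that part (3) of Corollary \ref{cor: components of cluster from topology of tree} exhibits a preinjective or shifted projective summand only when $\cT$ itself has negative or zero slope, so a positive slope $\cT$ produces none. The forward direction of (3) is symmetric, using part (2) of that corollary: if $\cT$ has negative slope, then no summand $M_i$ can be preprojective, since the criterion for preprojectivity requires $\cT$ to have positive or zero slope.

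The substantive step is the forward direction of (1). Given a zero slope tree $\cT$, the key is to show that its unique doubly infinite periodic path contains both an edge of positive slope and an edge of negative slope. This follows from Remark \ref{rem: zero slope trees have no oriented cycles}: a zero slope tree admits an $n$-periodic morphism $\psi$ of slope zero, so the restriction of $\psi$ to the infinite path is bounded and periodic, hence cannot be monotonic; therefore the path must have both local maxima and local minima. An edge leaving a local minimum along the path has positive slope, and an edge leaving a local maximum has negative slope. Applying parts (2) and (3) of Corollary \ref{cor: components of cluster from topology of tree} respectively to these two edges produces a preprojective summand and a preinjective-or-shifted-projective summand of $M$.

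Finally, the three converses follow formally by contrapositive and the trichotomy: if $M$ has no preinjective or shifted projective summand, then the forward directions of (1) and (3) rule out zero slope and negative slope for $\cT$, forcing $\cT$ to have positive slope, and the other two converses are analogous. The only nontrivial obstacle in the plan is the verification that the infinite path in a zero slope tree contains edges of both signs; once this is in hand, everything else is a direct reading-off from Corollary \ref{cor: components of cluster from topology of tree}.
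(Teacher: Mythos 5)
Your forward directions are correct, and your resolution of the one nontrivial point---that the doubly infinite periodic path in a zero-slope tree must carry edges of both signs because a zero-slope periodic morphism cannot be monotonic on it---is exactly the observation the paper itself makes in Remark~\ref{rem: zero slope trees have no oriented cycles} and is the right way to get the forward implication of (1). The paper supplies no written proof (the corollary is marked with $\qed$), so the intended argument is precisely a reading-off of Corollary~\ref{cor: components of cluster from topology of tree} as you are doing.

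However, your elimination step for the converses of (2) and (3) does not close as written. To get the converse of (2) you must rule out both zero and negative slope under the hypothesis that $M$ has no preinjective or shifted-projective summand. The contrapositive of forward (1) rules out zero slope, fine, but the contrapositive of forward (3) reads ``if $M$ has a preprojective summand then $\cT$ is not negative slope,'' and your hypothesis says nothing about preprojective summands. What you actually need is the implication ``negative slope $\Rightarrow$ some preinjective or shifted-projective summand,'' and symmetrically ``positive slope $\Rightarrow$ some preprojective summand''; these are not among the forward directions you isolated. They do follow immediately from Corollary~\ref{cor: components of cluster from topology of tree}: the doubly infinite periodic path is nonempty, so some $\ell_i$ lies on it, and by parts (2)--(3) of that corollary the corresponding $M_i$ is preprojective when $\cT$ has positive slope and preinjective/shifted projective when $\cT$ has negative slope. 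In other words, the three conditions ``$M$ has a preprojective and a preinjective/shifted-projective summand,'' ``$M$ has none of the latter,'' ``$M$ has no preprojective summand'' are pairwise disjoint only because no cluster tilting object can consist entirely of regular summands, and that is a separate (if easy) observation. Add those two one-line facts and the elimination is airtight; as stated, the logic has a hole.
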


\begin{cor}\label{cor: bijection respects mutation}
The bijection between periodic trees and cluster tilting objects commutes with mutation, i.e., if $\cT$ corresponds to $M=\bigoplus M_i$ then $\mu_k\cT$ corresponds to the cluster tilting object $\mu_kM$ uniquely determined by the formula $\mu_kM=M/M_k\oplus M_k'$ where $M_k'\not\approx M_k$. 
\end{cor}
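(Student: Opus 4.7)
The plan is to interpret both mutations as crossing the same wall in $\RR^n$ under the bijection of Theorem \ref{thm:correspondence between trees and cluster tilting objects}, so that they land in the same chamber. Let $\cT'$ denote the periodic tree corresponding to $\mu_kM$ under the bijection; I must prove $\cT' = \mu_k\cT$. The wall between $\cR(M)$ and $\cR(\mu_kM)$ is the facet spanned by $\undim M_i$ for $i\neq k$; by Theorem \ref{virtual stability theorem}(3a), this facet lies in $D(\beta_k)$ for a unique real Schur root $\beta_k$, and equation \eqref{Hugh Thomas equation} identifies $\beta_k$ (up to sign) with the edge vector $\gamma_k$ of $\ell_k$. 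Applying $E_\varepsilon^t$ via \eqref{cluster region equality}, this wall maps exactly to the face $\del_kF\cR(\cT)$ of Lemma \ref{first stability lemma}. Hence $F\cR(\cT')$ is the unique open chamber across the hyperplane $\{y : y^t\gamma_k = 0\}$ from $F\cR(\cT)$ whose closure contains $\del_kF\cR(\cT)$.

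Next I would verify that $F\cR(\mu_k\cT)$ admits the same characterization. The key observation from the proof of Proposition \ref{prop: mutation of tree is tree} is that collapsing the edges $\ell_k$ and $\ell_k'$ (together with their translates) yields the same quotient tree $\overline\cT$ from either $\cT$ or $\mu_k\cT$, with the other edges $\ell_i,\ell_i'$ sharing a common image $\overline\ell_i$. For any $y$ on the hyperplane $y^t\gamma_k = 0$, the corresponding periodic function $\psi = F^{-1}(y)$ takes equal values at the endpoints of each translate of $\ell_k$, so it descends to $\overline\cT$; the inequalities $y^t\gamma_i > 0$ and $y^t\gamma_i' > 0$ for $i \neq k$ both translate into the same positivity condition on $\psi$ along $\overline\ell_i$. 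Therefore $\del_kF\cR(\cT) = \del_kF\cR(\mu_k\cT)$; equivalently, $\gamma_i' - \gamma_i$ is a scalar multiple of $\gamma_k$ for each $i \neq k$, as can also be read off from Lemma \ref{lem: for tree mutation}. Finally, clause (1) of Definition \ref{def: mutation of T} gives $\gamma_k' = -\gamma_k$, placing $F\cR(\mu_k\cT)$ in the opposite half-space $\{y^t\gamma_k < 0\}$.

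Since the open chambers $F\cR(\cT'')$ for distinct $n$-periodic trees are pairwise disjoint by Theorem \ref{thm:every pi gives a unique T} and their closures (together with the image of the $D(\beta)$ under $E_\varepsilon^t$) cover $\RR^n$, exactly one such chamber can lie across the facet $\del_kF\cR(\cT)$ from $F\cR(\cT)$. Both $F\cR(\mu_k\cT)$ and $F\cR(\cT')$ fit this description, so they coincide, proving $\mu_k\cT = \cT'$. The main obstacle is the wall-preservation step $\del_kF\cR(\cT) = \del_kF\cR(\mu_k\cT)$: it requires verifying that each of the cases (2)--(4) of Definition \ref{def: mutation of T} preserves the collapsed tree $\overline\cT$. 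This is already implicit in the proof of Proposition \ref{prop: mutation of tree is tree}, but making it fully rigorous demands handling case (4) carefully, since there two edges change simultaneously and one must check that the composite change does not move the shared face off the hyperplane $\{y^t\gamma_k=0\}$.
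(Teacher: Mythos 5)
Your proof is correct, and it takes a recognizably different route from the paper's, though both arguments pivot on the same key geometric fact: that collapsing $\ell_k$ and its translates produces the same quotient graph $\overline\cT$ from either $\cT$ or $\mu_k\cT$ (as observed in the proof of Proposition~\ref{prop: mutation of tree is tree}). Where you diverge is in what you do with that fact. The paper's proof converts it directly into the statement that the functions $\psi^i_\infty$ (Definition~\ref{def: psi infty}) agree for $\cT$ and $\mu_k\cT$ whenever $i\neq k$; by the formula $F\psi^i_\infty=E_\varepsilon^t\undim M_i$ of Corollary~\ref{cor: components of cluster from topology of tree}, this means the two cluster tilting objects share the $n-1$ summands $M_i$, $i\neq k$, and uniqueness of the complement forces the objects to be $k$-mutations of each other. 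You instead run a wall-crossing argument in $\RR^n$: the common facet of $F\cR(\cT)$ and $F\cR(\mu_k\cT)$ equals the image under $E_\varepsilon^t$ of the facet shared by $\cR(M)$ and $\cR(\mu_kM)$, and by disjointness of the chambers there is a unique chamber on the far side, forcing $F\cR(\mu_k\cT)=E_\varepsilon^t\cR(\mu_kM)$. The paper's version is more economical because it works at the level of individual summands and bypasses the need to justify that exactly one chamber abuts a given interior wall; your version is closer in spirit to the fan-theoretic picture and makes the wall identity $\del_k F\cR(\cT)=\del_k F\cR(\mu_k\cT)$ explicit, which is a pleasant reformulation. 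One caution: your parenthetical invocation of Lemma~\ref{lem: for tree mutation} (and implicitly Proposition~\ref{second prop}) to read off $\gamma_i'-\gamma_i\in\ZZ\gamma_k$ pulls in material from Section~\ref{sec3}, which appears later; fortunately your primary argument via descent of $\psi$ to $\overline\cT$ does not rely on it, so the logic is not circular.
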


\begin{proof} For each $i\neq k$, the function $\psi_\infty^i$ is the same for both $\cT$ and $\mu_k\cT$. The reason is that $\psi_\infty^i$ is given by collapsing all the edges of $\cT$ other than $\ell_i$. But, when $\ell_k\neq\ell_i$ is collapsed, $\cT,\mu_k\cT$ become equal to the same tree $\overline \cT$ used in the proof of Proposition \ref{prop: mutation of tree is tree}. 

By Corollary \ref{cor: components of cluster from topology of tree}, $\psi_\infty^i$ determines the $i$th summand $M_i$ of the cluster tilting object. Therefore, the cluster tilting objects corresponding to $\cT$ and $\mu_k\cT$ differ only in their $k$th summands. So, they are mutations of each other in the $k$th direction.
\end{proof}


\subsection{Example}\label{ss2.5: example}
We illustrate the main Theorem \ref{thm:correspondence between trees and cluster tilting objects} and its Corollary \ref{cor: components of cluster from topology of tree} on the $4$-periodic tree given in Figure \ref{fig0a}. The sign function is $-,+,+,+$. So, the Euler matrix and its inverse are:
\[
	E_\varepsilon=\mat{1 & 0 & 0 & 0\\
	-1 & 1 & 0 & 0\\
	0 & -1 & 1 & 0\\
	-1 & 0 & -1 & 1}
	,\quad
	E_\varepsilon^{-1}=\mat{1 & 0 & 0 & 0\\
	1 & 1 & 0 & 0\\
	1 & 1 & 1 & 0\\
	2 & 1 & 1 & 1}.
\]
The rows of $E_\varepsilon^{-1}$ are the dimension vectors of the projective modules. The edges in the tree are $(p_0,p_2),-(p_2,p_4),(p_3,p_4),-(p_3,p_5)$. So
\[
	\Gamma=\mat{1 & 0 & 0 & -1\\
	1 & 0 & 0 & 0\\
	0 & -1 & 0 & 0\\
	0 & -1 & 1 & -1}	,\quad
	E_\varepsilon \Gamma=\mat{1 & 0 & 0 & -1\\
	0 & 0 & 0 & 1\\
	-1 & -1 & 0 & 0\\
	-1 & 0 & 1 & 0}.
\]
The periodic tree $\cT$ corresponds to a cluster tilting object $M=\bigoplus M_i$ whose components $M_i$ are given as follows. The computational formula \eqref{Hugh Thomas equation} gives the dimension vectors of the $M_i$ as the rows of $(E_\varepsilon \Gamma)^{-1}$. The geometric formula for $M_i$, given in Corollary \ref{cor: components of cluster from topology of tree}, is as follows.
\begin{enumerate}
\item $M_1$ is preprojective since it corresponds to the edge $\ell_1=(p_0,p_2)$ which has positive slope and is part of the infinite path of this zero slope tree. The dimension vector of $M_1$ is given by ``flattening'' the other edges to give:
\[
\xymatrixrowsep{10pt}\xymatrixcolsep{10pt}
\xymatrix{
&&&&&&&&&p_6\ar@{-}[r] &p_7\ar@{-}[r] &p_8 \ar@{-}[r]& \\
\psi_\infty^1: &&&&&p_2\ar@{-}[r] & p_3\ar@{-}[r] & p_4\ar@{-}[rru]^{\ell_1^+}\ar@{-}[r] &p_5\\
&\,\ar@{-}[r] & p_{-1}\ar@{-}[r] & p_0\ar@{-}[r]\ar@{-}[rru]^{\ell_1} &p_1
} 
\]
Then, $F(\psi_\infty^1)=(0,1,0,0)^t$. So, $M_1=P_2$ with $(\undim M_1)^t=(1,1,0,0)$.
\item $M_2$ is either preinjective or shifted projective according to Corollary \ref{cor: components of cluster from topology of tree}(3).
\[
\xymatrixrowsep{10pt}\xymatrixcolsep{10pt}
\xymatrix{
&& p_{-1}\ar@{-}[r] & p_0\ar@{-}[r] &p_1\ar@{-}[r]&p_2 \ar@{-}[rrd]^{\ell_2} &\\
\psi_\infty^2: &&&&& & p_3\ar@{-}[r] & p_4 \ar@{-}[r] &p_5 \ar@{-}[r] & p_6 \ar@{-}[rrd]^{\ell_2^+}& \\
&&&&&&&&& &p_7\ar@{-}[r] &p_8\ar@{-}[r] & \, 
} 
\]
Then, $F(\psi_\infty^2)=(0,0,-1,0)^t$. So, $M_2=P_3[1]$ with $(\undim M_2)^t=(0,0,-1,-1)$.
\item $M_3$ is regular since $\ell_3$ lies on a branch of $\cT$.
\[
\xymatrixrowsep{1pt}\xymatrixcolsep{10pt}
\xymatrix{
 &\,\ar@{-}[rr]&&p_0\ar@{-}[rr] &&p_2 \ar@{-}[rr]&  & p_4 \ar@{-}[rr] && p_6 \ar@{-}[rr] && \\
\psi_\infty^3: \\
&&p_{-1}\ar@{-}[ruu]^(.4){\ell_3}\ar@{-}[rr]&&p_1&&p_3\ar@{-}[rr]\ar@{-}[ruu]^(.4){\ell_3^+}&&p_5& &
} 
\]
Making $F(\psi_\infty^3)=(-1,1,-1,1)^t$. So, there is an exact sequence:
\[
	0\to P_1\oplus P_3\to P_2\oplus P_4\to M_3\to 0
\]
and $(\undim M_3)^t=(1,1,0,1)$.
\item $M_4$ is also regular.
\[
\xymatrixrowsep{1pt}\xymatrixcolsep{10pt}
\xymatrix{
 &\,\ar@{-}[r]&p_{-1}\ar@{-}[r]&p_0\ar@{-}[rr] &&p_2 \ar@{-}[r]& p_3 \ar@{-}[r]& p_4 \ar@{-}[rr] && p_6 \ar@{-}[rr] && \\
\psi_\infty^4: \\
&&&&p_1\ar@{-}[uull]\ar@{-}[uull]^{\ell_4}&&&&p_5\ar@{-}[uull]^{\ell_4^+}& &
} 
\]
Making $F(\psi_\infty^4)=(-1,1,0,0)^t$. This means there is an exact sequence:
\[
	0\to P_1\to P_2\to M_4\to 0
\]
So, $M_4=S_2$ is simple with $(\undim M_4)^t=(0,1,0,0)$.
\end{enumerate}
Putting these together we get:
\[
\mat{ (\undim M_1)^t\\
(\undim M_2)^t\\
(\undim M_3)^t\\
(\undim M_4)^t} = 
\mat{ 1 & 1 & 0 & 0\\
-1 & -1 & -1 & 0\\
1 & 1 & 0 & 1\\
0 & 1 & 0 & 0}=(E_\varepsilon\Gamma)^{-1}
\]
as claimed by the first formula.

\begin{eg}
One more example: Consider the edge $\ell_1$ in Figure \ref{fig1}
\[
\xymatrixrowsep{10pt}\xymatrixcolsep{10pt}
\xymatrix{
&&&&&&p_1\ar@{-}[rrrrrrr]&&&&&& &p_8 \ar@{-}[r]& p_9\\
\psi_\infty^1:&&&p_{-2}\ar@{-}[rrrrrrr] &  & & &  &  &  & p_5\ar@{-}[r]\ar@{-}[ullll]_(.35){\ell_1^+} & p_6\\
p_{-5}\ar@{-}[rrrrrrr] &&& &  &  &  & p_2\ar@{-}[r]\ar@{-}[ullll]_(.35){\ell_1} & p_3\\
\,\ar@{-}[rrrr] & &  &  & p_{-1}\ar@{-}[r]\ar@{-}[ullll]_(.3){\ell_1^-} & p_0
} 
\]
Then $F(\psi_\infty^1)=(3,-2,0)^t$. So, there is an exact sequence:
\[
	0\to P_2\oplus P_2\to P_1\oplus P_1\oplus P_1\to M_1\to 0
\]
making $M_1$ the preprojective module with $(\undim M_1)^t=(3,3,4)$.
\end{eg}


\section{Edge vectors are negative $c$-vectors}\label{sec3}

In Section \ref{sec3} we review the definition of the $c$-vectors of a cluster tilting object and show that the edge vectors of a periodic tree are equal to the negatives of the $c$-vectors of the corresponding cluster tilting object. 



\subsection{Exchange matrix and cluster tilting objects}\label{ss3.1: exchange matrix and cluster tilting objects}

We review the definition of the exchange matrix of a cluster tilting object in the simply laced case. 

\begin{defn}\label{def: exchange matrix of a cluster tilting object}
Let $Q$ be a quiver without oriented cycles and let M=$\bigoplus M_i$ be a cluster tilting object in the cluster category of $\kk Q$. Then the \emph{exchange matrix} $B=B_M=(b_{ij})$ of the cluster tilting object is defined to be the skew-symmetric integer matrix given by
\[
	b_{ij}=\dim_\kk \underline\Hom(M_j,M_i)-\dim_\kk \underline\Hom(M_i,M_j)
\]
where $\underline\Hom(M_i,M_j)$ is the quotient of $\Hom_{\cD^b}(M_i,M_j)$ by the subspace of all morphisms $M_i\to M_j$ in the cluster category which factors through some $M_k$ where $k\neq i,j$.
\end{defn}

For example, take the cluster tilting object $\Lambda[1]$ whose components are the shifted projective objects $P_1[1],\cdots,P_n[1]$. By Remark \ref{rem: edge matrix of initial cluster tilting object}, this is the cluster tilting object which corresponds to the straight line tree $\cT_0$ with edge vectors $-\beta_{i-1,i}$ and edge matrix equal to the negative identity matrix $\Gamma_{\cT_0}=-I_n$. The exchange matrix is $B_{\Lambda[1]}=E_\varepsilon^t-E_\varepsilon$. We call this the \emph{initial exchange matrix}.

Suppose that $M_k'$ is the unique object of the cluster category not isomorphic to $M_k$ so that $\mu_k(M):=M\backslash M_k\cup M_k'$ is a cluster tilting object. Then the basic theorem of cluster theory is:

\begin{thm}\label{thm: exchange matrix transforms according to FZ}
The exchange matrix of $\mu_k(M)$ is the matrix $B'=(b_{ij}')$ given as follows.
\begin{enumerate}
\item $b_{ij}'=-b_{ij}$ if either $i=k$ or $j=k$.
\item $b_{ij}'=b_{ij}+b_{ik}|b_{kj}|$ if $i,j\neq k$ and $b_{ik},b_{kj}$ have the same sign.
\end{enumerate}
\end{thm}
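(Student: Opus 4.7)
The plan is to prove the theorem via the two exchange triangles in the cluster category $\cC$ of $\kk\widetilde{A}_{n-1}^\varepsilon$. Given $M=\bigoplus M_i$ and the summand $M_k$ to be mutated, $M_k'$ is characterized up to isomorphism by the existence of two non-split triangles
\[
M_k \xrightarrow{f} B \xrightarrow{g} M_k' \to M_k[1], \qquad M_k' \xrightarrow{f'} B' \xrightarrow{g'} M_k \to M_k'[1]
\]
with $B, B' \in \add(M/M_k)$ and $f, f'$ minimal left approximations. The preliminary step I would carry out is to establish that the multiplicity of $M_j$ as a summand of $B$ equals $[b_{jk}]_+ := \max(b_{jk}, 0)$, and dually for $B'$; this is exactly how $|b_{kj}|$ enters the mutation formula representation-theoretically, since a minimal left $\add(M/M_k)$-approximation of $M_k$ is built from a $\kk$-basis of $\underline{\Hom}(M_k, M_j)$ at each $j$.

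For part (1), skew-symmetry of the exchange matrix reduces the claim to $b_{kj}' = -b_{kj}$ for $j \neq k$. I would apply $\Hom_\cC(-, M_j)$ to the first exchange triangle; since $B \in \add(M/M_k)$, the middle term consists entirely of morphisms factoring through $M/M_k$, so it is killed in $\underline{\Hom}(-, M_j)$. Combining with the $2$-Calabi-Yau duality $\Ext^1_\cC(X, Y) \cong D\Ext^1_\cC(Y, X)$ should yield $\dim\underline{\Hom}(M_k', M_j) = \dim\underline{\Hom}(M_j, M_k)$, and the symmetric argument on the second triangle gives $\dim\underline{\Hom}(M_j, M_k') = \dim\underline{\Hom}(M_k, M_j)$. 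Subtracting produces $b_{kj}' = -b_{kj}$.

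For part (2), apply $\Hom_\cC(M_i, -)$ to the first exchange triangle; the resulting long exact sequence involves $\Hom_\cC(M_i, B) = \bigoplus_j [b_{jk}]_+ \Hom_\cC(M_i, M_j)$, which is where the factor $|b_{kj}|$ enters. Passing to $\underline{\Hom}$ and comparing the factorizations through $M_k$ in $M$ with those through $M_k'$ in $\mu_k M$, a case split on the signs of $b_{ik}, b_{kj}$ should produce the advertised formula: when both signs are positive one invokes the first triangle, when both are negative one uses the second triangle to get the same sign of the correction term $b_{ik}|b_{kj}|$, while in the mismatched case the contributions cancel and $b_{ij}' = b_{ij}$.

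Main obstacle: The chief technical difficulty is the precise bookkeeping of the $\underline{\Hom}$ quotient through these long exact sequences. A morphism $M_i \to M_j$ which factored through $M_k$ before mutation may or may not factor through $M_k'$ afterward, and tracking exactly which factorizations survive requires the minimality of $f, f'$ combined with the 2-Calabi-Yau property to pair up compositions dually. This is the step where one genuinely uses the cluster-category structure --- rather than merely the existence of a rigid object with $n$ summands --- and it is also where the no-loops / one-way-arrows property of the Gabriel quiver of $\End_\cC(M)$ must be invoked to guarantee that at most one of $\underline{\Hom}(M_i, M_j)$ and $\underline{\Hom}(M_j, M_i)$ is nonzero, so that $[b_{jk}]_+$ unambiguously records the multiplicities.
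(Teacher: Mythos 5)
The paper does not prove this theorem at all. It is introduced with the phrase ``the basic theorem of cluster theory'' and is simply cited as a known result from the cluster-tilting literature (the line of work beginning with Buan--Marsh--Reineke--Reiten--Todorov \cite{BMRRT} and continued in the cluster-tilted algebra papers of Buan--Marsh--Reiten and others). The paper's own contribution in Section~\ref{sec3} is to verify the parallel mutation rule for the bottom $n\times n$ block $-\Gamma_\cT$ and to combine it with this cited theorem via the Nakanishi--Zelevinsky duality; it does not re-derive the exchange matrix mutation rule itself. So there is no in-paper proof for your sketch to be compared against.

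That said, your outline does follow the standard literature route (exchange triangles, minimal approximations, $2$-Calabi--Yau duality, long exact sequences), and you honestly flag the real technical hurdle yourself: tracking which factorizations survive the passage from $\underline\Hom$ computed in $\add(M)$ to $\underline\Hom$ computed in $\add(\mu_k M)$. Two further points deserve care if you were to flesh this out against the specific definition of $b_{ij}$ used in this paper (Definition~\ref{def: exchange matrix of a cluster tilting object}). First, when you apply $\Hom_\cC(-,M_j)$ to the exchange triangle and claim the middle term dies in $\underline\Hom$ because $B\in\add(M/M_k)$, this is not automatic: $B$ may well contain $M_j$ as a direct summand, and a map factoring through the $M_j$-component of $B$ is \emph{not} killed in the quotient as the paper defines it, so the $j$-component of $B$ needs separate treatment. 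Second, the identification of the $M_j$-multiplicity in $B$ with $[b_{jk}]_+$ rather than $|b_{kj}|$ must be matched carefully against the paper's convention $b_{ij}=\dim\underline\Hom(M_j,M_i)-\dim\underline\Hom(M_i,M_j)$; the two agree only in the sign cases where the correction term is actually invoked, which is precisely the case split your sketch defers. These are exactly the points where the published proofs spend their effort, so your assessment of the ``main obstacle'' is accurate.
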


\begin{rem} With the notation $B'=\mu_k(B)$, the theorem says: $B_{\mu_k(M)}=\mu_k(B_M)$. 
\end{rem}



\subsection{Statement of the theorem}\label{ss3.2: statement of the theorem}

We can now give several equivalent formulations of the theorem that edge vectors are negative $c$-vectors.

\begin{defn}
Given an $n$-periodic tree $\cT$ with sign function $\varepsilon$, let $M=\bigoplus M_i$ be the corresponding cluster tilting object in the cluster category of $\kk \widetilde{A}_{n-1}^\varepsilon$ and let $\Gamma_\cT$ be the {edge matrix} of $\cT$ with columns in the corresponding order. We define the \emph{extended exchange matrix} $\widetilde B$ of $\cT$ and $M$ to be the $2n\times n$ matrix 
\[
	\widetilde B:=\mat{B_M\\ -\Gamma_\cT}
\]
where $B_M$ is given in Definition \ref{def: exchange matrix of a cluster tilting object} above.
\end{defn}

For example, if $\cT=\cT_0$ and $M=P$, we have the \emph{initial extended exchange matrix}
\[
	\widetilde B_0=\mat{E_\varepsilon^t-E_\varepsilon\\ I_n}.
\]
The main theorem about edge vectors and cluster tilting objects is the following.

\begin{thm}\label{thm: mutation of extended exchange matrix}
Under mutation of the tree $\cT$ and corresponding mutation of the cluster tilting object $M$ (Corollary \ref{cor: bijection respects mutation}), the extended exchange matrix mutates by the same rules as in Theorem \ref{thm: exchange matrix transforms according to FZ}. In other words:
\[
	\mu_k(\widetilde B)=\mat{B_{\mu_kM}\\ -\Gamma_{\mu_k\cT}}.
\]
\end{thm}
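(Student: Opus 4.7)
The plan is to verify directly that the FZ mutation of $\widetilde B$ produces the extended exchange matrix of the mutated pair $(\mu_k \cT, \mu_k M)$; by Corollary \ref{cor: bijection respects mutation}, the correspondence between trees and cluster tilting objects commutes with mutation, so both halves of the matrix have well-defined counterparts after mutation. The top half transforms as $B_M \mapsto B_{\mu_k M}$ by Theorem \ref{thm: exchange matrix transforms according to FZ}, reducing the theorem to showing that the FZ column mutation applied to $-\Gamma_\cT$ yields $-\Gamma_{\mu_k \cT}$.

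Using the sign-coherence of edge vectors (Proposition \ref{sign coherence of c-inverse}(3)), the column mutation rule simplifies to
\[
\gamma_k' = -\gamma_k, \qquad \gamma_j' = \gamma_j + \max(-\delta_k b_{kj},0)\,\gamma_k \quad (j \neq k),
\]
where $\delta_k \in \{\pm 1\}$ is the slope sign of $\ell_k$. The first identity is immediate from Definition \ref{def: mutation of T}(1), which reverses the orientation of $\ell_k$ and hence negates its edge vector. For $j \neq k$, I would check each of cases (2)--(5) of Definition \ref{def: mutation of T} individually: the additivity $\beta_{ab} + \beta_{bc} = \beta_{ac}$ (with signs absorbed by the slope signs) forces the edge-vector change $\gamma_j' - \gamma_j$ to be a small integer multiple of $\gamma_k$, taking values in $\{0, \pm\gamma_k, \pm 2\gamma_k\}$ depending on which sliding rule applies and on the local sign configuration $\varepsilon_a,\varepsilon_b$ at the endpoints of $\ell_k$. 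Case (4) is the one that can produce the coefficient $\pm 2\gamma_k$, since there the bottom endpoint slides from $p_b$ to $p_a$ and the top endpoint slides from $p_{a+sn}$ to $p_{b+sn}$ simultaneously.

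The hard part will be matching the resulting combinatorial coefficient with $\max(-\delta_k b_{kj},0)$, which a priori requires knowing the cluster-theoretic quantity $b_{kj} = \dim_{\kk}\underline\Hom(M_j,M_k) - \dim_{\kk}\underline\Hom(M_k,M_j)$. My preferred strategy is to bypass this direct computation by exploiting equation \eqref{Hugh Thomas equation}, $V^t E_\varepsilon \Gamma_\cT = I_n$, which holds both before and after mutation. Since $V_{\mu_k M}$ differs from $V$ only in its $k$-th column (determined by the cluster exchange triangle for $M_k \to M_k'$, with the non-$k$ summands appearing with multiplicities $\max(b_{ik},0)$ or $\max(-b_{ik},0)$), the two identities $V^t E_\varepsilon \Gamma_\cT = I_n$ and $V_{\mu_k M}^t E_\varepsilon \Gamma_{\mu_k\cT} = I_n$ together pin down $\Gamma_{\mu_k \cT}$ purely algebraically in terms of $\Gamma_\cT$, $V$, and the $k$-th column of $B_M$. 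Comparing this algebraic formula with the combinatorial edge matrix of $\mu_k \cT$ computed from Definition \ref{def: mutation of T} then yields both the desired match with the FZ column update of $-\Gamma_\cT$ and, as a byproduct, the precise value of $b_{kj}$ in each case. Since the correspondence with cluster tilting objects is bijective and mutation-equivariant, this one-step verification is all that is needed.
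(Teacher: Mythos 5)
Your approach is genuinely different from the paper's, and the divergence is worth spelling out. The paper never takes the cluster-theoretic exchange matrix $B_M$ as an input; instead it defines a purely combinatorial ``candidate'' matrix $\Gamma_\cT^t(E_\varepsilon^t-E_\varepsilon)\Gamma_\cT$ (computed in Proposition \ref{first prop} via the Euler pairings $\left<\gamma_i,\gamma_j\right>$ and the covering formula), describes edge-vector mutation combinatorially via the sliding rules (Proposition \ref{second prop}), and then checks case by case in Lemma \ref{lem: mutation of Gamma} that the stacked matrix $\bigl[\begin{smallmatrix}\Gamma^t(E^t-E)\Gamma\\-\Gamma\end{smallmatrix}\bigr]$ transforms by the FZ rule, concluding by the initial-condition argument that the candidate equals $B_M$. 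You instead take $B_M$ and Theorem \ref{thm: exchange matrix transforms according to FZ} as given and propose to pin down $\Gamma_{\mu_k\cT}$ algebraically through the Hugh Thomas equation \eqref{Hugh Thomas equation} and the exchange-triangle formula for $\ul\dim M_k'$. This is a legitimate and shorter-looking route, but it has two genuine gaps as written. First, the index/dimension-vector mutation formula $\ul\dim M_k' = -\ul\dim M_k + \sum_i[\pm b_{ik}]_+\ul\dim M_i$ is a nontrivial fact about cluster categories (Dehy--Keller/Palu index theory) that is nowhere developed in the paper, and in the paper's ``virtual representation'' framework the additivity of dimension vectors along a triangle in the cluster category is not automatic and needs justification; you cannot simply read it off the triangle. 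Second, the sign ambiguity (which of the two exchange triangles to use, i.e.\ $[b_{ik}]_+$ versus $[-b_{ik}]_+$) is flagged but never resolved; resolving it is precisely the content of Nakanishi--Zelevinsky-type sign-coherence duality, and deferring it to ``compare with the combinatorial edge matrix'' means you still have to carry out the full case analysis of Definition \ref{def: mutation of T} that your strategy was meant to bypass. In short, the detour through the Hugh Thomas equation does not actually save the combinatorial work of Propositions \ref{first prop}, \ref{second prop} and Lemma \ref{lem: mutation of Gamma}, and it additionally imports external machinery that would have to be set up. The paper's bottom-up verification is more self-contained and, arguably, no longer.
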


Since cluster mutation acts transitively on the set of all cluster tilting objects, this theorem implies and in fact is equivalent to the following.

\begin{thm}\label{thm: edge vectors are negative c-vectors}
The edge vectors of a periodic tree $\cT$ are equal to the negatives of the $c$-vectors of the corresponding cluster tilting object $M$.
\end{thm}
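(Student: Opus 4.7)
The plan is to prove the equivalent formulation given as Theorem \ref{thm: mutation of extended exchange matrix}: that the extended exchange matrix $\widetilde B = \begin{pmatrix} B_M \\ -\Gamma_\cT \end{pmatrix}$ transforms under tree mutation (which, by Corollary \ref{cor: bijection respects mutation}, corresponds to cluster-tilting-object mutation) via the Fomin--Zelevinsky rules. Since cluster mutation acts transitively on cluster tilting objects, and the $c$-vectors are defined as the bottom block produced by iterated FZ-mutation of the initial extended exchange matrix $\widetilde B_0 = \begin{pmatrix} E_\varepsilon^t - E_\varepsilon \\ I_n \end{pmatrix}$, the theorem will then follow by induction on a mutation sequence from $\cT_0$. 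The base case is immediate from Remark \ref{rem: edge matrix of initial cluster tilting object}: $\Gamma_{\cT_0} = -I_n$, so $-\Gamma_{\cT_0} = I_n$ agrees with the standard initial $c$-matrix.

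The heart of the proof is the inductive step. Writing $\gamma_i := \gamma(\ell_i) = \delta_{\ell_i}\beta(\ell_i)$ for the columns of $\Gamma_\cT$, the sign-coherent form of the $c$-vector mutation rule translates into the requirement
\[
\gamma(\ell_k') = -\gamma(\ell_k), \qquad \gamma(\ell_i') = \gamma(\ell_i) + a_i\,\gamma(\ell_k) \quad (i \neq k),
\]
where the $a_i$ are nonnegative integers depending on $B_M$ and on the slope sign $\delta_{\ell_k}$. One verifies this by working through the five cases of Definition \ref{def: mutation of T}. Case (1) gives $\gamma(\ell_k') = -\gamma(\ell_k)$ directly. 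In Cases (2) and (3), a single endpoint of $\ell_i$ slides across $\ell_k$, and the telescoping identity $\beta_{a,c} = \beta_{a,b} + \beta_{b,c}$ for $a < b < c$ (together with its signed variants) yields $\gamma(\ell_i') = \gamma(\ell_i) \pm \gamma(\ell_k)$. Case (4), the double slide $(p_b, p_{a+sn}) \mapsto (p_a, p_{b+sn})$, gives $\gamma(\ell_i') = \gamma(\ell_i) \pm 2\gamma(\ell_k)$; the factor of $2$ comes from the identity $\beta_{a,b+sn} - \beta_{b,a+sn} = 2\beta_{a,b}$, derived from $\beta_{a,b+sn} = \beta_{a,b} + s\eta$ and $\beta_{b,a+sn} = s\eta - \beta_{a,b}$, and is independent of the translation index $s$. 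Case (5) gives $a_i = 0$.

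It then remains to confirm that the integer coefficients $a_i \in \{0,1,2\}$ extracted above match the FZ prescription $a_i = [-\delta_{\ell_k}\, b_{ki}]_+$. One approach is to compute the exchange matrix entries $b_{ki} = \dim\underline{\Hom}(M_k, M_i) - \dim\underline{\Hom}(M_i, M_k)$ directly from the tree geometry, using the formula $F(\psi_\infty^i) = E_\varepsilon^t \undim M_i$ of Corollary \ref{cor: components of cluster from topology of tree} together with the Euler--Ringel identity \eqref{Ringel form}. A cleaner alternative is to exploit the compatibility between Definition \ref{def: mutation of T} and the identity $V^t E_\varepsilon \Gamma_\cT = I_n$ of Theorem \ref{thm:correspondence between trees and cluster tilting objects}, which constrains how $V$ and $\Gamma_\cT$ must co-transform. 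The principal obstacle lies in Case (4): both endpoints of $\ell_i$ move and the translation index $s$ can be arbitrary, so one must carefully track signs and the position of $\ell_i$ relative to the periodic infinite path in $\cT$ in order to confirm that the computed coefficient is nonnegative and agrees precisely with the FZ rule.
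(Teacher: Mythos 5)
Your overall route is the same as the paper's: prove the equivalent Theorem~\ref{thm: mutation of extended exchange matrix} by induction along a mutation sequence from $\cT_0$, with base case from Remark~\ref{rem: edge matrix of initial cluster tilting object}, and inductive step from the tree-mutation rules of Definition~\ref{def: mutation of T}. Your case analysis of how $\gamma_j'$ relates to $\gamma_j$ and $\gamma_k$ is essentially the paper's Proposition~\ref{second prop}, and your Case (4) identity $\beta_{a,b+sn} - \beta_{b,a+sn} = 2\beta_{ab}$ is correct. But the proof has a genuine gap at precisely the point you flag as ``the principal obstacle'': matching the extracted coefficients $a_i \in \{0,1,2\}$ to the Fomin--Zelevinsky prescription involving the actual exchange matrix $B_M$. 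Neither of your two suggested routes closes this gap. The first (compute $b_{ki}$ from the tree geometry via $F(\psi_\infty^i) = E_\varepsilon^t\undim M_i$ and the Euler--Ringel form) does not work as stated: equation~\eqref{Ringel form} gives $\dim\Hom - \dim\Ext$, not the difference of $\dim\underline{\Hom}$'s that defines $b_{ij}$ in Definition~\ref{def: exchange matrix of a cluster tilting object}, and the two are not interchangeable. The second (exploit $V^t E_\varepsilon \Gamma_\cT = I_n$) is not developed at all.

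What the paper does instead, and what is missing from your proposal, is to introduce a \emph{candidate exchange matrix} $\Gamma_\cT^t(E_\varepsilon^t - E_\varepsilon)\Gamma_\cT$, whose entries are $\left<\gamma_j,\gamma_i\right> - \left<\gamma_i,\gamma_j\right>$, computed directly from the edge vectors rather than from $\underline{\Hom}$ groups. This completely sidesteps the need to identify $B_M$ in advance. The paper's Proposition~\ref{first prop} then evaluates these pairings geometrically (number of shared endpoints of $\ell_i,\ell_j$, with sign given by cyclic orientation around the shared vertex), using the covering-quiver formula~\eqref{eq: pairing is sum over coverings} and a case-by-case $\hom$/$\ext$ computation (Lemmas~\ref{lem 1a}--\ref{lem 1d}). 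Combined with the edge-vector mutation rule, this yields Lemma~\ref{lem: mutation of Gamma}, the precise sign-coherent statement $\gamma_j' = \gamma_j + |b_{kj}|\gamma_k$ when $b_{kj}$ and $\gamma_k$ have opposite signs, and $\gamma_j' = \gamma_j$ otherwise. The paper then applies a Nakanishi--Zelevinsky-style calculation (the last lemma in Section~\ref{ss3.6: prop 3}) to show that the candidate matrix itself mutates by the FZ rules, and the base-case identification $B_{\cT_0} = E_\varepsilon^t - E_\varepsilon = B_{\Lambda[1]}$ closes the induction. Your sketch has the geometric half of this argument but not the representation-theoretic half (the pairing computation), nor the conceptual device of the candidate matrix that makes the two halves connect; without these, the claimed matching with the FZ prescription is not established. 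One further point to tighten: you write $\gamma_i' = \gamma_i \pm \gamma_k$ and $\gamma_i' = \gamma_i \pm 2\gamma_k$, but for sign coherence the coefficients must be the unambiguous nonnegative integers $+1$ and $+2$ (as in Proposition~\ref{second prop}); the $\pm$ suggests the sign bookkeeping is not yet under control, and this is exactly what must be controlled to match the FZ rule.
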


By a theorem of Nakanishi and Zelevinsky \cite{NZ} and the fact that edge vectors are sign coherent, these statements are equivalent to the following.

\begin{thm}\label{thm: NZ formula for B}
The exchange matrix $B_M$ of a cluster tilting object $M$ is related to the edge matrix $\Gamma_\cT$ of the corresponding $n$-periodic tree $\cT$ by the following formula.
\[
	B_M=\Gamma_\cT^t (E_\varepsilon^t-E_\varepsilon)\Gamma_\cT.
\]
In other words,
\[
	b_{ij}=\left<\gamma_j,\gamma_i\right>-\left<\gamma_i,\gamma_j\right>.
\]
\end{thm}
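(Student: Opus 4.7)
The plan is to prove Theorem \ref{thm: mutation of extended exchange matrix}, from which Theorem \ref{thm: NZ formula for B} follows together with the Nakanishi--Zelevinsky theorem and the sign coherence of edge vectors. Sign coherence is immediate from the definition $\gamma(\ell)=\delta_\ell\beta(\ell)$ since $\beta(\ell)$ is a sum of standard basis vectors and therefore has nonnegative entries. Because cluster mutation acts transitively on cluster tilting objects and Corollary \ref{cor: bijection respects mutation} intertwines cluster and tree mutation, the proof reduces to a base case and a single inductive mutation step.

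For the base case I would take $(\cT_0,\Lambda[1])$ from Remark \ref{rem: edge matrix of initial cluster tilting object}. There $\Gamma_{\cT_0}=-I_n$, and a direct computation in the cluster category of $\kk\widetilde A_{n-1}^\varepsilon$ using $\underline{\Hom}(P_i[1],P_j[1])=0$ and the shape of the Auslander--Reiten quiver gives $B_{\Lambda[1]}=E_\varepsilon^t-E_\varepsilon$. The extended exchange matrix is then
\[
\widetilde B_0=\mat{E_\varepsilon^t-E_\varepsilon\\ I_n},
\]
and the formula in Theorem \ref{thm: NZ formula for B} reduces to $(-I_n)^t(E_\varepsilon^t-E_\varepsilon)(-I_n)=E_\varepsilon^t-E_\varepsilon$. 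Equivalently, $c_i=e_i=-\gamma_i$ for each initial direction, which is the base of the identification of edge vectors with negative $c$-vectors.

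For the inductive step, the top block $B_M$ transforms according to Theorem \ref{thm: exchange matrix transforms according to FZ} by the standard theory of cluster categories. The content is to show that the columns of $-\Gamma_\cT$ transform according to the same rule, which under sign coherence takes the form
\[
-\gamma_i'=\begin{cases}\gamma_k &\text{if }i=k,\\ -\gamma_i-[\,\epsilon_k\, b_{ki}\,]_+\gamma_k &\text{if }i\ne k,\end{cases}
\]
with $\epsilon_k=-\delta_{\ell_k}$. I would verify this by going through the cases of Definition \ref{def: mutation of T} line by line. In cases (2) and (3) one edge changes and the new edge vector is $\gamma_i\pm\gamma_k$ via the additivity $\beta_{ac}=\beta_{ab}+\beta_{bc}$ for $a<b<c$; in case (4) two edges change simultaneously, but the shift of endpoints from $p_b\to p_a$ and from $p_{a+sn}\to p_{b+sn}$ produces precisely the vector $\gamma_i+2\gamma_k$ or $\gamma_i-2\gamma_k$ once the signs $\delta_{\ell_i}$ and $\delta_{\ell_k}$ are tracked. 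On the representation-theoretic side, the $[\epsilon_k b_{ki}]_+$ factor is a count of dimensions of irreducible maps in the cluster category between $M_i$ and $M_k$, which by Corollary \ref{cor: components of cluster from topology of tree} can be read off the tree as the number of translates of $\ell_k$ one encounters in the (unique monotone) path in $\cT$ between the endpoints of $\ell_i$, matching Lemma \ref{lem: for tree mutation}.

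The hardest part will be bookkeeping in case (4): the slope $\delta_{\ell_i}$ can change, and it must be shown that the asserted sign in the Fomin--Zelevinsky rule agrees with the change in sign of $\delta_{\ell_i}$. My strategy for this is to first establish a self-contained lemma stating that $\sgn(b_{ki})=\sgn(\langle\gamma_i,\gamma_k\rangle-\langle\gamma_k,\gamma_i\rangle)$ at every stage of the induction (which in turn follows inductively from the entry-wise formulation of Theorem \ref{thm: NZ formula for B}), and then to use this to translate the FZ sign into the combinatorial condition on whether the monotone path from $\ell_i$ to $\ell_k$ in $\cT$ ascends or descends. With this translation the case-by-case verification becomes routine; combining it with the base case completes the induction and thus proves all three of Theorems \ref{thm: mutation of extended exchange matrix}, \ref{thm: edge vectors are negative c-vectors}, and \ref{thm: NZ formula for B}.
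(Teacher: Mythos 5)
Your proposal is correct and follows essentially the same route as the paper. The paper also reduces everything to the base case $(\cT_0,\Lambda[1])$ plus a single-step mutation argument and packages the computation into three propositions: a formula for $\langle\gamma_j,\gamma_k\rangle-\langle\gamma_k,\gamma_j\rangle$ in terms of how many endpoints the edges $\ell_j,\ell_k$ share (Proposition \ref{first prop}, proved via the Euler--Ringel form and the covering quiver $\widetilde{A_\varepsilon}$), a formula for how edge vectors change under tree mutation (Proposition \ref{second prop}), and a verification that the resulting ``candidate'' extended matrix $\bigl[\begin{smallmatrix}\Gamma_\cT^t(E_\varepsilon^t-E_\varepsilon)\Gamma_\cT\\ -\Gamma_\cT\end{smallmatrix}\bigr]$ mutates by the Fomin--Zelevinsky rule (Proposition \ref{third prop}, whose heart is Lemma \ref{lem: mutation of Gamma} and the Nakanishi--Zelevinsky matrix identity Lemma \ref{lem: mutation of Gamma}ff.). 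The one organizational difference is that the paper defines the candidate matrix purely combinatorially and shows it evolves by FZ rules, avoiding the explicit inductive hypothesis you invoke; your framing maintains the entry-wise identity $b_{ki}=\langle\gamma_i,\gamma_k\rangle-\langle\gamma_k,\gamma_i\rangle$ as an induction hypothesis, which works but is a bit less clean and requires you to state that you carry the full formula (not just the sign) through the induction, since the magnitude $\lvert b_{ki}\rvert\in\{0,1,2\}$ is needed to match the $0$, $\gamma_k$, or $2\gamma_k$ increments of Proposition \ref{second prop}. Your suggestion to compute $[\epsilon_k b_{ki}]_+$ via counts of irreducible maps read off from $\psi_\infty^i$ is not developed in the paper and would need its own justification; the cleaner route (and the paper's) is to compute $\langle\gamma_j,\gamma_k\rangle$ directly via the covering formula \eqref{eq: pairing is sum over coverings}, reducing to a few local configurations of adjacent or disjoint edges. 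Your worry about sign bookkeeping in case (4) is real but turns out to be harmless: Lemma \ref{lem 1d} shows that when two edges share both endpoints the contributions from the two endpoints never cancel, and the resulting $\lvert b_{kj}\rvert=2$ matches the $\gamma_j+2\gamma_k$ shift of Proposition \ref{second prop}(4) without any need to track whether $\delta_{\ell_j}$ flips.
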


When we compute the numbers $\left<\gamma_i,\gamma_j\right>$ for the edge vectors of a periodic tree $\cT$, we will be able to compute the exchange matrix $B_M$ and thereby obtain the quiver $Q_M$ of the corresponding cluster tilting object $M$.

\begin{thm}\label{thm: formula for QM}
The quiver $Q_M$ of the cluster tilting object $M$ corresponding to a periodic tree $\cT$ is dual to the tree in the following sense.
\begin{enumerate}
\item The quiver $Q_M$ has one vertex $v_i$ for every edge vector $\gamma_i=\beta_{pq}$.
\item Two vertices of $Q_M$ are connected by one or two arrows $v_i\to v_j$ if the corresponding edges meet at one or two vertices of $\cT$ respectively.
\item The orientation of the arrow $v_i\to v_j$ is always counterclockwise around each vertex of $\cT$. (See Figure \ref{fig2}.)
\end{enumerate}
\end{thm}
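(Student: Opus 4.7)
The plan is to deduce Theorem~\ref{thm: formula for QM} from Theorem~\ref{thm: NZ formula for B}, which gives $b_{ij} = \gamma_i^t(E_\varepsilon^t - E_\varepsilon)\gamma_j$. The matrix $E_\varepsilon^t - E_\varepsilon$ records arrow orientations of $\widetilde A_{n-1}^\varepsilon$, so writing $\gamma_i = \delta_i\beta_{a_ib_i}$ expresses $b_{ij}$ as $\delta_i\delta_j$ times a signed sum over the arrows of $\widetilde A_{n-1}^\varepsilon$; only those arrows contribute which are on the ``boundary'' of at least one of the cyclic supports of $\beta_{a_ib_i}, \beta_{a_jb_j}$. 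Since the endpoints of $\ell_i$ correspond exactly to the boundary arrows of $\beta_{a_ib_i}$, this reduces the problem to a case analysis based on how $\{a_i, b_i\}$ and $\{a_j, b_j\}$ relate modulo $n$.

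The main combinatorial input is that conditions T1--T4 rule out ``interleaved'' edges: one checks that if $a < c < b < d$ (mod $n$), then $\ell_i = (p_a, p_b)$ and $\ell_j = (p_c, p_d)$ cannot both be edges of the same periodic tree, since chasing covering relations through T4 would produce a cycle $p_a < p_b < p_c < p_a$. So two edges in $\cT$ must be either disjoint, nested, or share a vertex modulo $n$. I would then compute $b_{ij}$ in each case: (a) disjoint edges give $b_{ij} = 0$ directly since the boundary arrows of one support lie in the zero region of the other; (b) nested edges give contributions from two boundary arrows that cancel, using T4 to pin down the signs $\varepsilon_c, \varepsilon_d$ and thus the orientations of the two arrows; (c) edges sharing one vertex give $|b_{ij}| = 1$ from a single surviving contribution; (d) edges sharing two vertices give $|b_{ij}| = 2$, which by Proposition~\ref{cor:edge vectors are Schur roots} only occurs for regular edges of the same length.

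Having identified the support of $Q_M$ (cases (c), (d)), the orientation must still be checked. At a shared vertex $p_v$, conditions T1--T4 determine the cyclic order of incident edges in the planar embedding $\overline\psi(\cT)$ of Theorem~\ref{thm:linear embedding of T}: for $\varepsilon_v = +$, a unique parent edge goes up and up to two child edges go down with the left child to the left of the right child, while for $\varepsilon_v = -$ the roles of up and down reverse. Matching this cyclic order against the sign $(E_\varepsilon^t - E_\varepsilon)_{k, k+1}\delta_i\delta_j$ of the surviving contribution should yield precisely the counterclockwise convention. A useful sanity check is the initial case $\cT_0 \leftrightarrow \Lambda[1]$, where $\Gamma_{\cT_0} = -I_n$ forces $B_{\Lambda[1]} = E_\varepsilon^t - E_\varepsilon$, and one verifies by direct inspection that the counterclockwise orientation around each $p_v$ in the straight-line tree $\cT_0$ reproduces exactly the opposite of the arrows of $\widetilde A_{n-1}^\varepsilon$.

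The main obstacle will be the sign verification in the previous paragraph, since three sign conventions must be coordinated simultaneously: the slope signs $\delta_i$, the vertex signs $\varepsilon_v$, and the planar counterclockwise convention. A cleaner alternative would be induction via mutation: the initial case serves as the base, and by Corollary~\ref{cor: bijection respects mutation} together with Theorem~\ref{thm: exchange matrix transforms according to FZ} it suffices to verify that tree mutation (Definition~\ref{def: mutation of T}) implements Fomin--Zelevinsky quiver mutation on the dual quiver. The local rearrangements (1)--(5) of Definition~\ref{def: mutation of T} are designed to mirror exactly the changes that quiver mutation $\mu_k$ produces in the arrows incident to vertex $v_k$, so this verification, while tedious, should be routine once the base case is established.
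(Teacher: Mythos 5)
Your overall plan—extract $b_{ij}=\gamma_i^t(E_\varepsilon^t-E_\varepsilon)\gamma_j$ from Theorem~\ref{thm: NZ formula for B} and evaluate it by a case analysis on how the two edges $\ell_i,\ell_j$ sit relative to each other—is essentially the paper's plan (Proposition~\ref{first prop} and its supporting lemmas). However, the claim you call ``the main combinatorial input'' is false. You assert that T1--T4 forbid interleaved edges, i.e., that $\ell_i=(p_a,p_b)$ and $\ell_j=(p_c,p_d)$ with $a<c<b<d$ cannot coexist in a periodic tree. They can. In the planar embedding one segment simply lies above the other over the overlapping range $[c,b]$; Corollary~\ref{monotonic cor} then forces $\varepsilon_c=+$ and $\varepsilon_b=-$ (say), which is perfectly consistent. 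Indeed the tree of Figure~\ref{fig1} already exhibits this: $\ell_1=(p_1,p_5)$ and the translate $(p_4,p_{11})$ of $\ell_3$ are interleaved and disjoint. The paper's proof of Lemma~\ref{lem 1a} explicitly treats this as Case~3 and shows that $\left<\beta_{ij},\beta_{k\ell}\right>=0$ holds there too. Your sketched contradiction ``$p_a<p_b<p_c<p_a$'' does not materialize because T4 only constrains the sign $\varepsilon_k$ of an intermediate vertex, not a covering relation between $p_k$ and the two endpoints; it never produces the claimed poset cycle.

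A second, smaller gap: you work directly with the entries of $\beta_{ij}\in\ZZ^n$, treating their ``cyclic supports.'' But when $j-i>n$ these vectors have entries $\ge 2$, so the support is not a clean cyclic interval and ``boundary arrows'' are not well defined. The paper avoids this entirely via the covering formula $\left<\alpha,\beta\right>=\sum\left<\tilde\alpha,\tilde\beta\right>$ (and Remark~\ref{rem: assume n is large}), which replaces $\beta_{ij}$ by its $0$--$1$ lifts over the infinite covering quiver $\widetilde{A_\varepsilon}$, making each summand an honest interval computation. You will want this (or an equivalent device) to make the case analysis rigorous, especially for edges of length exceeding $n$ and for the ``two shared endpoints'' case of Lemma~\ref{lem 1d}, where two translates of one edge hit both endpoints of the other. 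Your fallback mutation argument is sound in spirit, but it only re-establishes Theorem~\ref{thm: NZ formula for B}; to get the combinatorial description in Theorem~\ref{thm: formula for QM} you still need the explicit evaluation of $\left<\gamma_j,\gamma_i\right>-\left<\gamma_i,\gamma_j\right>$ in Proposition~\ref{first prop}, so it does not replace the case analysis.
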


%
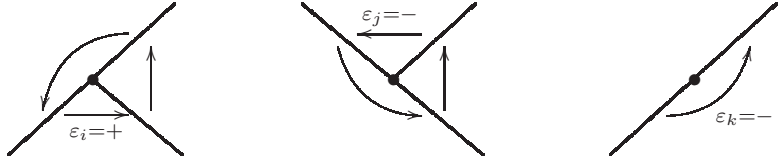
\begin{figure}[htbp]
\begin{center}
%
{
\setlength{\unitlength}{1cm}
{\mbox{
\begin{picture}(10,2.5)
      \thicklines
\put(0.1,.5){
\qbezier(-.1,0)(1,1)(2.1,2)
\qbezier(1,1)(1.6,0.5)(2.2,0)
\put(.92,.9){$\bullet$}
}
\put(0,2){
$
\xymatrix{
& \quad \ar@/_1pc/[dl]\\
\quad \ar[r]_{\varepsilon_i=+} & \quad \ar[u]
}
$
} 
\put(4.1,.5){
\qbezier(1,1)(1.55,1.5)(2.1,2)
\qbezier(-.2,2)(1,1)(2.2,0)
\put(.92,.9){$\bullet$}
}
\put(3.9,2){
$
\xymatrix{
 \quad \ar@/_1pc/[dr] & \quad \ar[l]_{\varepsilon_j=-}\\
& \quad \ar[u]
}
$
} 
  \put(8,0){\put(0.1,.5){
\qbezier(-.1,0)(1,1)(2.1,2)
\put(.92,.9){$\bullet$}
}
\put(0,2){
$
\xymatrix{
& \quad \\
\quad \ar@/_1pc/[ru]_{\varepsilon_k=-}
}
$
} }
\end{picture}}
}}
\caption{Orientation of $Q_M$ around $p_i$ where $\varepsilon_i=+$: Parent $\to$ left Child $\to$ right Child $\to$ Parent. Around $p_j$ where $\varepsilon_j=-$, the arrows are oriented: Child $\to$ right Parent $\to$ left Parent $\to$ Child. In the third figure, there is only one arrow Child $\to$ right Parent since $\varepsilon_k=-$. (The absent left Parent blocks any arrow going the other way.)}
\label{fig2}
\end{center}
\end{figure}

As an example of the theorem we construct the quiver $Q_M$ of the quiver corresponding to the periodic tree in Figure \ref{fig1}. Here the three edges $\ell_1,\ell_2,\ell_3$ meet at one vertex $p_5$ and are ordered counterclockwise around that vertex. So, the corresponding edges of the quiver $Q_M$ form a triangle oriented: $v_1\to v_2\to v_3\to v_1$. The edges $\ell_1,\ell_2$ also meet at vertex $p_1$ in $\cT$ where, again, $\ell_1$ is clockwise from $\ell_2$. So, there are two arrows $v_1\to v_2$. Therefore, the exchange matrix is
\[
	B_M=\mat{0 & 2 & -1\\
	-2 & 0 & 1\\
	1 & -1 & 0}
\]
and the quiver is:
\[
\xymatrixrowsep{10pt}\xymatrixcolsep{10pt}
\xymatrix{
&&v_1 \ar[dd]\ar@/_1pc/[dd]\\
Q_M=&& & v_3\ar[ul]\\
& &v_2\ar[ru]
	}
\]

\subsection{Outline of the proof of the theorems}\label{ss3.3: outline of proof}

These theorems follows from three propositions. The first, \ref{first prop}, gives the calculation of $\left<\gamma_j,\gamma_k\right>$ for all pairs of edge vectors of any periodic tree $\cT$. The second, \ref{second prop}, gives a description of mutation of a periodic tree in terms of its edge vectors. 

Define the ``candidate exchange matrix'' to be the $n\times n$ matrix $\Gamma_\cT^t(E_\varepsilon^t-E_\varepsilon)\Gamma_\cT$ with entries
\[
	b_{ij}=\left<\gamma_j,\gamma_i\right>-\left<\gamma_i,\gamma_j\right>.
\]
The first two propositions give a formula for how this matrix changes under mutation of periodic trees. The third proposition \ref{third prop} states that this mutation formula agrees with the formula given in Theorem \ref{thm: exchange matrix transforms according to FZ}. Since the initial value of the candidate exchange matrix is equal to $B_{\Lambda[1]}$, we conclude that the candidate exchange matrix is equal to the exchange matrix in all cases, proving Theorems \ref{thm: NZ formula for B} and \ref{thm: exchange matrix transforms according to FZ}.

The third proposition gives slightly more. If we define the ``extended candidate exchange matrix'' of $\cT$ to be the $2n\times n$ matrix
\[
	\mat{
	\Gamma_\cT^t(E_\varepsilon^t-E_\varepsilon)\Gamma_\cT\\
	-\Gamma_\cT
	}
\]
the third proposition states that this larger matrix transforms according to the Fomin-Zelevinsky mutation rules given in Theorem \ref{thm: exchange matrix transforms according to FZ}. This proves Theorem \ref{thm: mutation of extended exchange matrix}.

Finally, the first proposition, giving the values of $\left<\gamma_j,\gamma_k\right>$, implies Theorem \ref{thm: formula for QM} since we now know that $Q_M=\Gamma_\cT^t(E_\varepsilon^t-E_\varepsilon)\Gamma_\cT$. This proves all versions of the result.


\subsection{First proposition}\label{ss3.4: prop 1}

Suppose that $\gamma_a,\gamma_b$ are edge vectors of an $n$-periodic tree $\cT$. Then, there are three possibilities. Either the edges are disjoint, they share one endpoints or they share two endpoints.

\begin{lem}\label{lem 1a}
If $\gamma_a,\gamma_b$ are edge vectors of $\cT$ which correspond to disjoint edges of $\cT$ then $\left<\gamma_a,\gamma_b\right>=0$.
\end{lem}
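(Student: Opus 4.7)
The plan is to compute the Euler form $\langle \gamma_a, \gamma_b\rangle$ directly. Writing $\gamma_a = \delta_a \beta_{ij}$ and $\gamma_b = \delta_b \beta_{kl}$, the sign factors $\delta_a, \delta_b$ only produce an overall sign, so it suffices to evaluate $\langle \beta_{ij}, \beta_{kl}\rangle = \beta_{ij}^t E_\varepsilon \beta_{kl}$. Since each edge vector only depends on the $\ZZ$-orbit of its edge, the hypothesis that $\ell_a$ and $\ell_b$ are disjoint (for any representatives in their respective orbits) translates to the combinatorial condition $\{\overline i, \overline j\} \cap \{\overline k, \overline l\} = \emptyset$ in $\ZZ/n$, where $\overline{\phantom x}$ denotes reduction modulo $n$.

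Expanding $E_\varepsilon = I - A_\varepsilon$, with $A_\varepsilon$ the adjacency matrix of $\widetilde{A}_{n-1}^\varepsilon$, yields
\[
\langle \beta_{ij}, \beta_{kl}\rangle = \beta_{ij}\cdot\beta_{kl} - \#\bigl\{\text{arrows of }\widetilde{A}_{n-1}^\varepsilon\text{ from }\mathrm{supp}(\beta_{ij})\text{ to }\mathrm{supp}(\beta_{kl})\bigr\},
\]
with all counts taken with multiplicity in $\ZZ/n$. To control these multiplicities I would lift to the universal covering quiver $\widetilde{A_\varepsilon}$: write $\mathrm{supp}(\beta_{kl})$ as the multiset union over $s\in\ZZ$ of its translates $\{k+sn+1,\ldots,l+sn\}$, so that the Ringel form on the finite quiver becomes a sum, indexed by $s\in\ZZ$, of two-interval Euler contributions on the covering quiver between $[i+1,j]$ and $[k+sn+1,l+sn]$.

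The heart of the proof is then a case analysis on the relative position of these lifted intervals. By Theorem \ref{thm:linear embedding of T} the tree $\cT$ embeds in $\RR^2$ without self-intersection, so for each $s$ the integer intervals $[i+1,j]$ and $[k+sn+1,l+sn]$ are either completely disjoint or strictly nested. In the disjoint sub-case at most one arrow of $\widetilde{A_\varepsilon}$ can connect the two supports, namely when the adjacent endpoints agree in $\ZZ$; in that situation the single overlap contribution (from the shared residue modulo $n$) is exactly cancelled by the single arrow contribution. In the nested sub-case, say $i < k+sn < l+sn < j$, condition T4 of Definition \ref{def: conditions T1234} applied to the outer edge pins down the signs $\varepsilon_{k+sn}$ and $\varepsilon_{l+sn}$, and these forced signs dictate exactly which arrows of $\widetilde{A_\varepsilon}$ enter the inner support from the outer support, producing the arrow count needed to cancel the overlap of size $l-k$.

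The main obstacle I expect is the arithmetic bookkeeping over translates $s$ when the edges are long: Proposition \ref{cor:edge vectors are Schur roots} excludes edge lengths divisible by $n$ but otherwise allows them to exceed $n$ (in the preprojective/preinjective case when $\varepsilon_i\neq\varepsilon_j$), so the supports of $\beta_{ij}$ and $\beta_{kl}$ may wrap around $\ZZ/n$ and interact with several translates simultaneously. The disjointness hypothesis, which prevents shared endpoints modulo $n$, is precisely what rules out the ``ambiguous'' boundary configurations, and I would organize the computation so that each pair $(s,s')$ of translates contributes either zero individually or contributes a matched overlap/arrow pair that cancels on the nose.
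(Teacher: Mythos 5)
Your strategy is essentially the same as the paper's: pass to the universal covering quiver $\widetilde{A_\varepsilon}$ via the covering formula \eqref{eq: pairing is sum over coverings}, and show each term $\left<\tilde\gamma_a,\tilde\gamma_b\right>$ vanishes by a case analysis on the relative position of the two lifted intervals; expanding $E_\varepsilon = I - A_\varepsilon$ and separately counting support overlap versus arrows is equivalent bookkeeping for the same Euler-form computation the paper does by splitting $\beta_{k\ell}$ into consecutive pieces.

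The gap is in the case analysis itself. You claim that the planar embedding of Theorem~\ref{thm:linear embedding of T} forces the lifted intervals $[i+1,j]$ and $[k+sn+1,\,l+sn]$ to be either disjoint or strictly nested, and you only treat those two configurations. That inference is false: a straight-line planar embedding prevents two disjoint edges from \emph{crossing}, but it does not prevent their $x$-projections from overlapping without nesting --- one edge can lie entirely above (or below) the other over the overlap. This is exactly Case~3 of the paper's proof, with $i < k < j < \ell$ after translation. In that configuration, T4 applied to each edge (together with non-crossing) forces the two dangling endpoints to carry \emph{opposite} signs, $\varepsilon_k = +$ and $\varepsilon_j = -$ up to reflection; these opposite signs supply exactly one inward-pointing arrow at a boundary of the overlap, which cancels the overlap of size $j-k$. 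Without this configuration your proof is incomplete. A secondary issue: your handling of the disjoint sub-case (a ``single overlap contribution cancelled by a single arrow contribution'' when adjacent endpoints agree in $\ZZ$) is spurious --- under the hypothesis $\{\overline i,\overline j\}\cap\{\overline k,\overline l\}=\emptyset$, a lifted interval can never abut another, so the disjoint sub-case contributes zero to both the dot product and the arrow count, with nothing to cancel.
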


Thus, the candidate exchange matrix has a zero as $(a,b)$-entry if the corresponding edges $\ell_a,\ell_b$ are disjoint. A useful lemma in the calculation is:

\begin{lem}
Given two roots $\alpha,\beta$ of $\widetilde{A}_{n-1}^\varepsilon$, let $\tilde\beta$ be any fixed lifting of $\beta$ to the (infinite) universal covering quiver $\widetilde{A_\varepsilon}$ of $\widetilde{A}_{n-1}^\varepsilon$. Then,
\begin{equation}\label{eq: pairing is sum over coverings}
	\left<\alpha,\beta\right>=\sum \left<\tilde\alpha,\tilde\beta\right>
\end{equation}
where the sum is over all liftings $\tilde \alpha$ of $\alpha$ to $\widetilde{A_\varepsilon}$.
\end{lem}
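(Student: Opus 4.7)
The plan is to expand both sides of \eqref{eq: pairing is sum over coverings} using the explicit formula for the Euler--Ringel form as a sum over vertices minus a sum over arrows. Let $p\colon\widetilde{A_\varepsilon}\to\widetilde A_{n-1}^\varepsilon$ denote the universal covering. For any dimension vectors $\tilde\alpha,\tilde\beta$ on the cover,
\[
	\left<\tilde\alpha,\tilde\beta\right>
	=\sum_{\tilde v}\tilde\alpha_{\tilde v}\tilde\beta_{\tilde v}-\sum_{\tilde i\to\tilde j}\tilde\alpha_{\tilde i}\tilde\beta_{\tilde j},
\]
where the sums range over vertices and arrows of $\widetilde{A_\varepsilon}$. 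I would fix $\tilde\beta$ once and for all, note that it has finite support, and sum this identity over all liftings $\tilde\alpha$ of $\alpha$. Since $\tilde\beta$ is finitely supported, only the finitely many translates of a chosen base lifting $\tilde\alpha^0$ whose support meets that of $\tilde\beta$ contribute nontrivially, so the sum on the right-hand side of \eqref{eq: pairing is sum over coverings} is a legitimate finite sum.

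The core step is a swap of summation justified by the following observation. For any fixed vertex $\tilde v$ of $\widetilde{A_\varepsilon}$, as $\tilde\alpha$ ranges over all liftings of $\alpha$ (equivalently, as $\tilde\alpha^0$ is translated by multiples of $n$), the values $\tilde\alpha_{\tilde v}$ run through precisely the coordinates of $\tilde\alpha^0$ at the vertices lying over $p(\tilde v)$, so
\[
	\sum_{\tilde\alpha}\tilde\alpha_{\tilde v}
	=\sum_{\tilde w\in p^{-1}(p(\tilde v))}\tilde\alpha^0_{\tilde w}
	=\alpha_{p(\tilde v)}.
\]
Substituting into the vertex term and grouping the vertices $\tilde v$ by their image under $p$ yields
\[
	\sum_{\tilde\alpha}\sum_{\tilde v}\tilde\alpha_{\tilde v}\tilde\beta_{\tilde v}
	=\sum_v\alpha_v\sum_{\tilde v\in p^{-1}(v)}\tilde\beta_{\tilde v}
	=\sum_v\alpha_v\beta_v,
\]
which is exactly the vertex term of $\left<\alpha,\beta\right>$.

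The arrow term is handled in the same way, using the key fact that $p$ is a covering map: for each $\tilde j$ the arrows of $\widetilde{A_\varepsilon}$ ending at $\tilde j$ are in bijection under $p$ with the arrows of $\widetilde A_{n-1}^\varepsilon$ ending at $p(\tilde j)$ (and symmetrically for arrows starting at a given vertex). Applying the same summation swap and grouping gives
\[
	\sum_{\tilde\alpha}\sum_{\tilde i\to\tilde j}\tilde\alpha_{\tilde i}\tilde\beta_{\tilde j}
	=\sum_{i\to j}\alpha_i\beta_j,
\]
which is the arrow term of $\left<\alpha,\beta\right>$. Subtracting gives the desired formula. There is no real obstacle; the only point that needs care is the justification that the double sum can be rearranged, which is immediate from the finite support of $\tilde\beta$ and the local finiteness of $\widetilde{A_\varepsilon}$.
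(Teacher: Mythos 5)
Your argument is correct, but it takes a different route from the paper. The paper's proof is a one-liner: both sides of \eqref{eq: pairing is sum over coverings} are bilinear (the right-hand side being linear in a fixed lifting $\tilde\alpha^0$ and in $\tilde\beta$, with the other liftings being its translates), so it suffices to verify the identity when $\alpha$ and $\beta$ are simple roots, where it reduces to matching a single diagonal entry and a single arrow between two vertices in the base quiver against the corresponding data in the cover. You instead unfold the Euler--Ringel form into its vertex term $\sum_v \alpha_v\beta_v$ and arrow term $\sum_{i\to j}\alpha_i\beta_j$, and for each term perform the double-sum interchange over liftings of $\alpha$ and vertices (resp.\ arrows) of the cover, using the facts that the translates of one lifting sweep out each fiber exactly once and that the covering map is a local bijection on arrows. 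The paper's reduction is quicker once you accept the bilinearity of the right-hand side and the ``clear'' base case; your computation is more self-contained and makes the role of the covering property and of finite support explicit, at the cost of being longer. Both are valid, and your care about absolute convergence (finite support of $\tilde\beta$ plus local finiteness of $\widetilde{A_\varepsilon}$) is exactly the right hygiene for the sum interchange.
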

\begin{proof}
Since Equation \eqref{eq: pairing is sum over coverings} is linear in $\alpha,\beta$, it suffices to show that it holds for simple roots. But this case is clear.
\end{proof}

We will use the notation $\tilde\ell$ to refer to the translate of $\ell$ corresponding to $\tilde\gamma$.

\begin{proof}[Proof of Lemma \ref{lem 1a}]
We use the covering formula \eqref{eq: pairing is sum over coverings} and show that every term in this formula is zero. So, let $\tilde\gamma_a=\pm\beta_{ij},\tilde\gamma_b=\pm\beta_{k\ell}$ be two edge vectors corresponding to disjoint edges $\tilde \ell_a,\tilde\ell_b$ of the infinite tree $\cT$. By vertical symmetry, there are three cases as indicated below.
\[
\xymatrixrowsep{5pt}\xymatrixcolsep{10pt}
\xymatrix{
	  \text{Case 1} &&&& & \text{Case 2} &&&&& \text{Case 3}\\
p_i\ar@{-}[r]^{\tilde\ell_a} &
	p_j & & && & p_i\ar@{-}[r]^{\tilde\ell_a}& p_j &&& p_i\ar@{-}[rr]^{\tilde\ell_a}&& p_j \\
	&& p_k \ar@{-}[r]^{\tilde\ell_b}& p_\ell & &p_k\ar@{-}[rrr]_{\tilde\ell_b}&&& p_\ell &&&p_k\ar@{-}[rr]_{\tilde\ell_b}&& p_\ell  
	}
\]
\begin{enumerate}
\item If $i<j<k<\ell$ then clearly $\left<\beta_{ij},\beta_{k\ell}\right>=\left<\beta_{k\ell},\beta_{ij}\right>=0$.
\item If $k<i<j<\ell$ then $\varepsilon_i=\varepsilon_j=-$. So, $ext(\beta_{ij},\beta_{ki})=0$, $ext(\beta_{ij},\beta_{j\ell})=1$ and
\[
	\left<\beta_{ij},\beta_{k\ell}\right>=\left<\beta_{ij},\beta_{ki}\right>+\left<\beta_{ij},\beta_{ij}\right>+\left<\beta_{ij},\beta_{j\ell}\right>=0+1-1=0.
\]
Similarly, $\left<\beta_{k\ell},\beta_{ij}\right>=0$.
\item If $i<k<j<\ell$ then $\varepsilon_k=+,\varepsilon_j=-$.  So, $ext(\beta_{ik},\beta_{k\ell})=0$, $ext(\beta_{kj},\beta_{j\ell})=1$ and
\[
	\left<\beta_{ij},\beta_{k\ell}\right>=\left<\beta_{ik},\beta_{k\ell}\right>+\left<\beta_{kj},\beta_{kj}\right>+\left<\beta_{kj},\beta_{j\ell}\right>=0+1-1=0.
\]
Similarly, $\left<\beta_{k\ell},\beta_{ij}\right>=0$.
\end{enumerate}
So,, $\left<\beta_{ij},\beta_{k\ell}\right>=0$ in $\widetilde{A_\varepsilon}$ in all cases where $i,j,k,\ell$ are distinct making $\left<\gamma_a,\gamma_b\right>=0$.
\end{proof}

If $\ell_a,\ell_b$ share one endpoint, there are three cases. Either they have the same left endpoint, they have the same right endpoint or the endpoint they share is the right endpoint of one and the left endpoint of the other. If $\ell_a,\ell_b$ share a left endpoint, say $p_i$, then one must be ascending from $p_i$ and one must be descending. 

\begin{rem}\label{rem: assume n is large}
By the proof of the previous lemma, we may assume that $n$ is much bigger than the lengths of the edges of $\cT$. (In the covering formula, $\left<\tilde\gamma_a,\tilde\gamma_b\right>=0$ unless one of the endpoints of $\tilde\ell_a$ is equal to one of the endpoints of $\tilde\ell_b$. So, all terms in \eqref{eq: pairing is sum over coverings} are zero except for the ones which look like the case under discussion without any ``wrapping around'', i.e., we can ignore the possibility that the edges have length greater than $n/3$.)
\end{rem}

\begin{lem}\label{lem 1b}
Suppose that $\gamma_a,\gamma_b$ are edge vectors of $\cT$ and either $\gamma_a=\beta_{ij}$ and $\gamma_b=-\beta_{ik}$ or $\gamma_b=-\beta_{ij}$ and $\gamma_a=\beta_{kj}$ (so that $\ell_b$ is clockwise from $\ell_a$). Then, $\left<\gamma_a,\gamma_b\right>=0$ and $\left<\gamma_b,\gamma_a\right>=-1$.
\end{lem}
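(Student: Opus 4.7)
The plan is to apply the covering formula \eqref{eq: pairing is sum over coverings}, which expresses $\langle\gamma_a,\gamma_b\rangle$ and $\langle\gamma_b,\gamma_a\rangle$ as sums of pairings over all lifts to the universal cover $\widetilde{A_\varepsilon}$. By the computation inside the proof of Lemma \ref{lem 1a}, every pair of lifts whose underlying edges are disjoint contributes $0$, so (as in Remark \ref{rem: assume n is large}) only the unique pair of lifts meeting at the shared vertex survives. Hence it suffices to verify the two identities with $\beta_{ij}$ and $\beta_{ik}$ (Case 1), resp.\ $\beta_{ij}$ and $\beta_{kj}$ (Case 2), regarded as dimension vectors of string modules over $\kk\widetilde{A_\varepsilon}$.

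I would then split into four subcases according to the relative ordering of the three indices. In Case 1 either $i<k<j$ (Subcase 1b) or $i<j<k$ (Subcase 1a); in Case 2 either $i<k<j$ (Subcase 2a) or $k<i<j$ (Subcase 2b). In each subcase the admissibility condition T4 applied to whichever of the two edges is the ``outer'' one (the edge whose index interval contains the remaining, middle index) pins down the sign at that middle index: $\varepsilon_k=+$ in Subcases 1b and 2a (since there $p_k<\min(p_i,p_j)$), $\varepsilon_j=-$ in Subcase 1a (since there $p_j>\max(p_i,p_k)$), and $\varepsilon_i=-$ in Subcase 2b (since there $p_i>\max(p_k,p_j)$).

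Given the sign information, I would finish by expanding each pairing with bilinearity, writing the longer root as a sum of two shorter roots meeting at the middle index (for example $\beta_{ij}=\beta_{ik}+\beta_{kj}$ in Subcases 1b and 2a, $\beta_{ik}=\beta_{ij}+\beta_{jk}$ in Subcase 1a, and $\beta_{kj}=\beta_{ki}+\beta_{ij}$ in Subcase 2b), and invoking the three elementary identities
\begin{equation*}
\langle\beta_{pq},\beta_{pq}\rangle=1,\quad
\langle\beta_{pq},\beta_{qr}\rangle=\begin{cases}-1 & \varepsilon_q=-\\ 0 & \varepsilon_q=+\end{cases},\quad
\langle\beta_{qr},\beta_{pq}\rangle=\begin{cases}-1 & \varepsilon_q=+\\ 0 & \varepsilon_q=-\end{cases}.
\end{equation*}
Each of these is immediate from the definition of $E_\varepsilon$, once one records that $\varepsilon_q=+$ means the arrow $q+1\to q$ in $\widetilde{A_\varepsilon}$ (so $E_{q+1,q}=-1$) and $\varepsilon_q=-$ means the arrow $q\to q+1$ (so $E_{q,q+1}=-1$). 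In each subcase exactly one of the two nontrivial terms of the expansion survives in addition to the self-pairing $=1$, and plugging in the sign at the middle index produces $\langle\gamma_a,\gamma_b\rangle=0$ and $\langle\gamma_b,\gamma_a\rangle=-1$, after accounting for the overall minus signs carried by $\gamma_b=-\beta_{ik}$ (Case 1) and $\gamma_b=-\beta_{ij}$ (Case 2).

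The main obstacle is purely bookkeeping: keeping straight the dictionary between the sign $\varepsilon_q$ and the direction of the corresponding arrow in $\widetilde{A_\varepsilon}$, and tracking the $\pm 1$ attached by the slope $\delta_\ell$ to each $\gamma$ versus $|\gamma|$. With those conventions fixed, each of the four subcases is a one-line check, and the same calculation simultaneously confirms the counterclockwise orientation of the arrows of $Q_M$ around the shared vertex (as pictured in Figure \ref{fig2}).
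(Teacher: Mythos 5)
Your proposal is correct and follows essentially the same route as the paper's proof: reduce to a single pair of lifted edges via the covering formula and Remark \ref{rem: assume n is large}, split into the four subcases by the ordering of the three indices, use T4 to pin down the sign at the middle index, and then evaluate the Euler--Ringel pairings. The one point of difference is how the final evaluation is done: the paper computes $\langle\cdot,\cdot\rangle$ via $\dim\Hom - \dim\Ext$ for the corresponding string modules, while you expand bilinearly along the decomposition of the longer root into a sum of two shorter roots and read the three elementary identities directly off the Euler matrix $E_\varepsilon$. Your version is slightly more explicit and self-contained (the paper only works out the subcase $i<j<k$ and asserts the rest are similar), and I verified that your dictionary between $\varepsilon_q$ and the direction of the arrow $q\leftrightarrow q+1$, the resulting identities for $\langle\beta_{pq},\beta_{qr}\rangle$ and $\langle\beta_{qr},\beta_{pq}\rangle$, and the four-subcase bookkeeping all check out; but conceptually it is the same argument.
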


\begin{proof} Take the first case $\gamma_a=\beta_{ij}$ and $\gamma_b=-\beta_{ik}$. Then there are two subcases: $i<j<k$ or $i<k<j$. Take the first. By Remark \ref{rem: assume n is large}, we may assume $k-i<<n$. Then $\varepsilon_j=-$. So, $ext(\beta_{ij},\beta_{jk})=1$ and $hom(\beta_{ij},\beta_{ij})=1$ making $\left<\beta_{ij},\beta_{ik}\right>=0$ and $hom(\beta_{ik},\beta_{ij})=1$ so $\left<\beta_{ik},\beta_{ij}\right>=1$. A similar calculation gives the same result in all four subcases, namely: $hom(|\gamma_a|,|\gamma_b|)=ext(|\gamma_a|,|\gamma_b|)=0$ making $\left<\gamma_a,\gamma_b\right>=0$ and $hom(|\gamma_b|,|\gamma_a|)=1$ making $\left<\gamma_b,\gamma_a\right>=-1$ as claimed.
\end{proof}

\begin{lem}\label{lem 1c}
Suppose that $\gamma_a,\gamma_b$ are edge vectors of $\cT$ and $\gamma_a=\pm\beta_{ij}$, $\gamma_b=\pm\beta_{jk}$ where $k-i$ is not divisible by $n$. Then 
\[
	\left<\gamma_a,\gamma_b\right>-\left<\gamma_b,\gamma_a\right>=(\sgn\gamma_a)(\sgn\gamma_b)\varepsilon_j.
\]
\end{lem}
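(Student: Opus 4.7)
The plan is to exploit the bilinearity of the Euler--Ringel form: writing
\[
\left<\gamma_a,\gamma_b\right>-\left<\gamma_b,\gamma_a\right> = (\sgn\gamma_a)(\sgn\gamma_b)\bigl[\left<\beta_{ij},\beta_{jk}\right>-\left<\beta_{jk},\beta_{ij}\right>\bigr],
\]
so the proof will reduce to showing that the bracketed quantity equals $\varepsilon_j$ (with $\varepsilon_j\in\{\pm 1\}$). This reduction sidesteps a four-case sign analysis: the admissibility conditions T1--T3 restrict which combinations $(\sgn\gamma_a,\sgn\gamma_b)$ can actually occur at a shared vertex of sign $\varepsilon_j$, but I will not need those restrictions because the identity for $\beta_{ij}$ and $\beta_{jk}$ holds uniformly.

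Next I would invoke the covering formula \eqref{eq: pairing is sum over coverings} together with Remark \ref{rem: assume n is large} to pass to the universal covering quiver $\widetilde{A_\varepsilon}$, assuming $n$ is much larger than all edge lengths. Then only the ``untranslated'' term in the sum over liftings contributes: the supports of the two roots in the cover are the disjoint consecutive intervals $\{i{+}1,\dots,j\}$ and $\{j{+}1,\dots,k\}$, and any nontrivial $\ZZ$-translate places these intervals far enough apart that all diagonal and arrow terms in the Euler form vanish. The hypothesis that $k-i$ is not divisible by $n$ is exactly what ensures these intervals remain distinct in the cover.

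The remaining step is a direct reading of the Euler matrix on $\widetilde{A_\varepsilon}$. The diagonal part of both $\left<\beta_{ij},\beta_{jk}\right>$ and $\left<\beta_{jk},\beta_{ij}\right>$ vanishes because the supports are disjoint, so the only potential contribution comes from the unique arrow between vertex $j$ and vertex $j{+}1$, whose orientation is prescribed by $\varepsilon_j$. When $\varepsilon_j=-$ the arrow is $j\to j{+}1$, contributing $-1$ to $\left<\beta_{ij},\beta_{jk}\right>$ and $0$ to $\left<\beta_{jk},\beta_{ij}\right>$; when $\varepsilon_j=+$ the arrow is $j{+}1\to j$ and the two contributions swap. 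In both cases the signed difference equals $\varepsilon_j$, which finishes the proof.

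I do not foresee a serious obstacle: the argument is essentially a one-arrow computation in the Euler matrix once the covering reduction is in place. The only care required is to pull out $(\sgn\gamma_a)(\sgn\gamma_b)$ cleanly at the start---these signs refer to slopes of edges in the tree, not to signs appearing inside the Euler form---and to verify that nontrivial covering translates contribute nothing, which is exactly the content of Remark \ref{rem: assume n is large}.
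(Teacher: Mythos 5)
Your proposal is correct and follows essentially the same route as the paper: both pull out $(\sgn\gamma_a)(\sgn\gamma_b)$ by bilinearity, invoke Remark~\ref{rem: assume n is large} to make the computation local at the shared vertex, and then evaluate the pairing of $\beta_{ij}$ with $\beta_{jk}$. The only stylistic difference is that you read the answer directly off the Euler matrix $E_\varepsilon$ (one off-diagonal entry at the arrow between $j$ and $j+1$), whereas the paper phrases the same one-step calculation in terms of hom-orthogonality and the dimensions $\operatorname{ext}(\beta_{ij},\beta_{jk})$, $\operatorname{ext}(\beta_{jk},\beta_{ij})$ via \eqref{Ringel form}; the two bookkeepings are equivalent.
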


\begin{proof}
By Remark \ref{rem: assume n is large}, we may assume $k-i<<n$. So, $\beta_{ij},\beta_{jk}$ are hom-orthogonal. Then the formula follows from the observation that, if $\varepsilon_j=+$, then $ext(\beta_{ij},\beta_{jk})=0$ and $ext(\beta_{jk},\beta_{ij})=1$ and, if $\varepsilon_j=-$, then $ext(\beta_{ij},\beta_{jk})=1$ and $ext(\beta_{jk},\beta_{ij})=0$.
\end{proof}

Finally, it can happen that two edges share both endpoints, as in Figure \ref{fig1}.

\begin{lem}\label{lem 1d}
Suppose that $\gamma_a,\gamma_b$ are edge vectors of $\cT$ and $\gamma_a=\pm\beta_{ij}$, $\gamma_b=\pm\beta_{jk}$ where $k-i$ is divisible by $n$. Then 
\[
	\left<\gamma_a,\gamma_b\right>-\left<\gamma_b,\gamma_a\right>=(\sgn\gamma_a)(\sgn\gamma_b)\varepsilon_j-(\sgn\gamma_a)(\sgn\gamma_b)\varepsilon_i\neq0.
\]
In other words, the two endpoints $p_i,p_j$ give separate contributions to $\left<\gamma_a,\gamma_b\right>-\left<\gamma_b,\gamma_a\right>$ following Lemma \ref{lem 1c} and these contributions never cancel each other.
\end{lem}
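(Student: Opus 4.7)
The plan is to apply the covering formula for $\left<\cdot,\cdot\right>$ and lift $\gamma_a$ to all its translates $\tilde\gamma_{a,s}=\pm\beta_{i+sn,j+sn}$ in the universal cover $\widetilde{A_\varepsilon}$, while fixing $\tilde\gamma_b=\pm\beta_{jk}$. As in the proof of Lemma \ref{lem 1a}, the only terms $\left<\tilde\gamma_{a,s},\tilde\gamma_b\right>$ that are nonzero are those for which $\tilde\ell_{a,s}$ meets $\tilde\ell_b$ at a vertex. In the setting of Lemma \ref{lem 1d}, the hypothesis $k\equiv i\pmod n$ produces exactly two such indices: $s=0$, where the shared vertex is $p_j$ (the second index of $\tilde\gamma_{a,0}$ and the first index of $\tilde\gamma_b$), and $s=m$ where $k=i+mn$, where the shared vertex is $p_k$ (the first index of $\tilde\gamma_{a,m}$ and the second index of $\tilde\gamma_b$). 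All other summands vanish, so $\left<\gamma_a,\gamma_b\right>-\left<\gamma_b,\gamma_a\right>$ decomposes into a sum of two contributions, one per shared vertex.

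The contribution at $p_j$ is computed exactly as in Lemma \ref{lem 1c}: the shared vertex sits at the end of $\tilde\gamma_{a,0}$ and the start of $\tilde\gamma_b$, matching the configuration of that lemma, so it contributes $(\sgn\gamma_a)(\sgn\gamma_b)\varepsilon_j$. The contribution at $p_k$ is obtained by the same argument applied to the pair $(\tilde\gamma_b,\tilde\gamma_{a,m})$, i.e., with the roles swapped, because at $p_k$ the second index belongs to $\tilde\gamma_b$ and the first index belongs to $\tilde\gamma_{a,m}$. Lemma \ref{lem 1c} then yields $\left<\tilde\gamma_b,\tilde\gamma_{a,m}\right>-\left<\tilde\gamma_{a,m},\tilde\gamma_b\right>=(\sgn\gamma_b)(\sgn\gamma_a)\varepsilon_k=(\sgn\gamma_a)(\sgn\gamma_b)\varepsilon_i$, since $\varepsilon_k=\varepsilon_i$ by $n$-periodicity. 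Its contribution to $\left<\gamma_a,\gamma_b\right>-\left<\gamma_b,\gamma_a\right>$ therefore carries the opposite sign, giving $-(\sgn\gamma_a)(\sgn\gamma_b)\varepsilon_i$. Summing the two contributions produces the displayed formula.

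It remains to verify $\varepsilon_i\neq\varepsilon_j$, or equivalently that the two contributions do not cancel. The hypothesis of the lemma is equivalent to saying that $\ell_a$ and $\ell_b$ project to a length-$2$ multi-edge between $[p_i]$ and $[p_j]$ in $\cT/\ZZ$, so together they constitute the entire cycle of the quotient graph. When $\cT$ has slope zero this cycle is unoriented, so at $p_j$ both cycle-edges point upward (or both downward); this forces two parents or two children at $p_j$, which by T2 or T3 determines $\varepsilon_j$, and the symmetric analysis at $p_i$ forces the opposite sign. When $\cT$ has nonzero slope the cycle is oriented, and both edges are Hasse edges of the monotonic infinite path; if $\varepsilon_i=\varepsilon_j$, Proposition \ref{cor:edge vectors are Schur roots} forces both edge-lengths to be $<n$ and hence $|m|=1$, every path-vertex has its unique parent-slot occupied by the next vertex on the path, every branch must therefore descend as a child-subtree from the path, and Lemma \ref{leaf corollary} combined with surjectivity of $\varepsilon$ produces a contradiction because a sign-opposite leaf cannot be accommodated anywhere in the descending branches. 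The delicate coordination of T1 through T4 with the leaf formula in this oriented-cycle case is the main obstacle of the proof.
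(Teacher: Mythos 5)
Your first two paragraphs — the covering-formula decomposition into two nonzero terms (one per shared endpoint of $\tilde\ell_a$ and $\tilde\ell_b$), the application of Lemma~\ref{lem 1c} at $p_j$, and the role-reversal at $p_k$ producing the opposite sign together with $\varepsilon_k=\varepsilon_i$ — are correct and match the paper's argument, just spelled out in more detail.

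The argument that $\varepsilon_i\neq\varepsilon_j$ is where you diverge, and it has a genuine gap. Your slope-zero case is fine (and arguably more direct than the paper's): a $2$-cycle in $\cT/\ZZ$ that is unoriented forces two parents at one vertex and two children at the other, so T2/T3 directly pin down $\varepsilon_j=-$, $\varepsilon_i=+$ (or vice versa). But your nonzero-slope case does not go through as written. You correctly observe that when $\varepsilon_i=\varepsilon_j=+$ (say) every vertex on the monotone infinite path has its unique parent-slot occupied, so all branches hang downward as children. The claim you then need is that \emph{every} vertex on those branches has sign $+$, not merely the leaves: a branch vertex $p_s$ with $\varepsilon_s=-$ is allowed by T1--T4 to have a single child and one or two parents without being a leaf, so Lemma~\ref{leaf corollary} and surjectivity give no immediate contradiction. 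You acknowledge the difficulty yourself (``the main obstacle of the proof''), which is an admission that the step is missing rather than a proof of it. The missing ingredient is exactly the planar separation: by Theorem~\ref{thm:linear embedding of T}, the translates of $\ell_a$ and $\ell_b$ embed as a properly embedded curve that divides the plane, and all branches live strictly on one side of it; for any branch vertex $p_s$ the vertical line $x=s$ crosses the interior of a path edge on the opposite side of $(s,\psi(p_s))$, and Theorem~\ref{thm:linear embedding of T}(2)/(3) then forces $\varepsilon_s$ to agree with $\varepsilon_i=\varepsilon_j$. This yields a constant $\varepsilon$, contradicting surjectivity. This is the paper's (terse) argument; it handles both slope cases uniformly and is what your nonzero-slope branch of the proof needs in order to close.
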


\begin{proof}
Use the covering formula. There are only two terms which are nonzero: the term where $\tilde\ell_a$ is adjacent to $\tilde\ell_b$ from the left and the term where it is adjacent to $\tilde\ell_b$ from the right.  By the previous lemma, each contributes a separate summand as indicated. 

It remains to show that $\varepsilon_i\neq\varepsilon_j$. To see this, note that the two edges are connected end to end. So, the translates of the two edges give an infinite curve dividing the plane in half. If both signs were, say, negative then the rest of the tree must be above this infinite curve and therefore all signs of all vertices must be negative. But this is excluded by assumption since it corresponds to the case when the quiver $\widetilde{A}_{n-1}^\varepsilon$ has an oriented cycle. 
\end{proof}

These lemmas together can be summarized as follows.

\begin{prop}\label{first prop}
Suppose that $\gamma_a,\gamma_b$ are edge vectors of an $n$-periodic tree $\cT$. Then $\left<\gamma_b,\gamma_a\right>-\left<\gamma_a,\gamma_b\right>$ is equal, in absolute value, to the number of endpoints that the corresponding two edges $\ell_a,\ell_b$ share. The sign of this quantity is positive if and only if $\ell_b$ is counterclockwise from $\ell_a$ at each vertex that they share.
\end{prop}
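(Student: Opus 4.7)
The plan is to assemble Lemmas \ref{lem 1a}--\ref{lem 1d} via the covering formula \eqref{eq: pairing is sum over coverings}. Fixing a lift $\tilde\gamma_a$ of $\gamma_a$ in the universal cover $\widetilde{A_\varepsilon}$, I would expand
\[
	\left<\gamma_b,\gamma_a\right>-\left<\gamma_a,\gamma_b\right>=\sum_{\tilde\gamma_b}\bigl(\left<\tilde\gamma_b,\tilde\gamma_a\right>-\left<\tilde\gamma_a,\tilde\gamma_b\right>\bigr),
\]
the sum ranging over lifts of $\gamma_b$. By Lemma \ref{lem 1a}, any pair $(\tilde\ell_a,\tilde\ell_b)$ of disjoint lifted edges contributes zero, so only pairs sharing at least one vertex in $\widetilde{A_\varepsilon}$ survive. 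With $\tilde\gamma_a$ fixed, the number of lifts $\tilde\gamma_b$ meeting $\tilde\ell_a$ at a vertex is exactly the number of endpoints shared by $\ell_a$ and $\ell_b$ in $\cT$ (counted with multiplicity, since as in Lemma \ref{lem 1d} a single translate of $\gamma_b$ can be adjacent to $\tilde\gamma_a$ at two different vertices simultaneously).

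By Lemma \ref{lem 1c} (with the degenerate case covered by Lemma \ref{lem 1b}), each such adjacent pair contributes a value in $\{\pm\varepsilon_j\}=\{\pm 1\}$ at the shared vertex $p_j$, and Lemma \ref{lem 1d} guarantees that when the orbit of $\gamma_b$ meets $\tilde\gamma_a$ at two endpoints the two contributions have the same sign (because $\varepsilon_i\neq\varepsilon_j$ at those two shared endpoints, which is exactly what prevents cancellation). This establishes
\[
	\bigl|\left<\gamma_b,\gamma_a\right>-\left<\gamma_a,\gamma_b\right>\bigr|\in\{0,1,2\}
\]
as the number of shared endpoints.

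What remains is to identify the sign of each $\pm 1$ contribution with the counterclockwise convention of Figure \ref{fig2}. I would check this case by case at a shared vertex $p_j$. When $\varepsilon_j=+$, Condition T2 limits incident edges at $p_j$ to at most one parent plus one left child and one right child, giving three unoriented pair-configurations and hence six ordered pairs; a direct computation using Lemma \ref{lem 1c} together with the dictionary between $\sgn\gamma$ (the slope sign of an edge) and the position of its ``free'' endpoint (left vs.\ right of $p_j$, and parent vs.\ child) shows that the product $\pm(\sgn\gamma_a)(\sgn\gamma_b)\varepsilon_j$ equals $+1$ precisely when $\ell_b$ follows $\ell_a$ in the cyclic order Parent $\to$ left Child $\to$ right Child $\to$ Parent. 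The case $\varepsilon_j=-$ is dual. The main obstacle is purely bookkeeping: one must reconcile three different sign conventions---slope sign, left/right position of the free endpoint relative to $p_j$, and the counterclockwise angular order---consistently across all subcases. Once this sign matching is done in one subcase, the remaining cases follow by the symmetry built into the formulas, and the proposition follows immediately.
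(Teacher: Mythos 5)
Your assembly of Lemmas \ref{lem 1a}--\ref{lem 1d} via the covering formula is exactly the paper's intended proof: Proposition \ref{first prop} is introduced with the sentence ``These lemmas together can be summarized as follows,'' so the argument is precisely the one you describe. Two small corrections to the framing, neither of which changes the outcome. In the two-shared-endpoint case (Lemma \ref{lem 1d}) it is \emph{not} a single translate of $\gamma_b$ meeting $\tilde\ell_a$ at two vertices --- such a translate would have to coincide with $\tilde\ell_a$, impossible since $\ell_a\neq\ell_b$ --- but rather two \emph{distinct} translates of $\gamma_b$ each meeting $\tilde\ell_a$ at one of the two shared vertices, exactly as the paper's proof of Lemma \ref{lem 1d} puts it (``the term where $\tilde\ell_a$ is adjacent to $\tilde\ell_b$ from the left and the term where it is adjacent to $\tilde\ell_b$ from the right''). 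Your count of nonzero summands is unaffected. Also, Lemma \ref{lem 1b} is not a degenerate case of Lemma \ref{lem 1c}: Lemma \ref{lem 1c} treats the end-to-end configuration where the shared vertex is the right endpoint of one lifted edge and the left endpoint of the other, while Lemma \ref{lem 1b} treats the genuinely different configuration where both lifted edges lie on the same side of the shared vertex (so the $\varepsilon_j$ in Lemma \ref{lem 1c}'s formula refers to an interior index and does not literally apply). Together they cover all single-sharing cases, which is all you need. The sign bookkeeping you sketch at the end is the remaining content; the paper leaves it implicit, and it does go through as you describe, since $\delta_\ell$ together with $\varepsilon_j$ determines which of the parent/child positions at $p_j$ each edge occupies and hence its place in the cyclic order of Figure \ref{fig2}.
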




\subsection{Second proposition}\label{ss3.5: prop 2}

Let $\cT$ be an $n$-periodic tree and let $\gamma_k=\beta_{ab}$ be a positive edge vector of $\cT$. Let $\cT'=\mu_k(\cT)$ be the mutation of $\cT$ in the $k$th direction. We give a formula for the edge vectors of the mutated tree $\cT'$.

\begin{prop}\label{second prop}
For every edge vector $\gamma_j$ of $\cT$ there is a corresponding edge vector $\gamma_j'$ of $\cT'$ given as follows.
\begin{enumerate}
\item $\gamma_k'=-\gamma_k$.
\item $\gamma_j'=\gamma_j+\gamma_k$ if $\ell_j$ connects $p_b$ to $p_c$ which is a left/only parent of $p_b$ for $\varepsilon_b=-/+$.
\item $\gamma_j'=\gamma_j+\gamma_k$ if $\ell_j$ connects $p_a$ to $p_d$ which is a right/only child of $p_a$ for $\varepsilon_a=+/-$.
\item $\gamma_j'=\gamma_j+2\gamma_k$ if $\ell_j$ connects $p_b$ to a translate $p_{a+sn}$ of $p_a$ so that $p_{a+sn}$ is a left/only parent of $p_b$ for $\varepsilon_b=-/+$, respectively.
\item $\gamma_j'=\gamma_j$ in all other cases.
\end{enumerate}
\end{prop}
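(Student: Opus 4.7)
The plan is to verify the proposition case-by-case using the explicit combinatorial description of $\mu_k\cT$ in Definition \ref{def: mutation of T}. That description already tells us which edge $\ell_j'$ of $\cT'$ corresponds to each edge $\ell_j$ of $\cT$, and Proposition \ref{prop: mutation of tree is tree} guarantees the result is indeed an $n$-periodic tree, so all that remains is to read off $\gamma_j'=\delta_{\ell_j'}\,\beta(\ell_j')$ from the integer indices of the endpoints of $\ell_j'$ and its slope sign in $\cT'$. Cases (1) and (5) of the Proposition are immediate: reversing $\ell_k$ negates $\gamma_k$, and an edge left untouched by the mutation has an unchanged edge vector. The remaining cases rest on two simple identities in $\ZZ^n$: additivity $\beta_{ij}+\beta_{jk}=\beta_{ik}$ for $i<j<k$ in $\ZZ$, and periodicity $\beta_{i+n,j+n}=\beta_{ij}$.

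For Case (2), $\ell_j=(p_b,p_c)$ with $p_c$ the designated parent of $p_b$ and $c\not\equiv a\pmod n$, so Definition \ref{def: mutation of T}(2) gives $\ell_j'=(p_a,p_c)$ in $\cT'$. I would split into three subcases according as $c>b$, $a<c<b$, or $c<a$. In each subcase one writes $\gamma_j$ using that $p_c$ sits above $p_b$ in $\cT$ (positive slope iff $c>b$) and $\gamma_j'$ using that $p_c$ sits above $p_a$ in $\cT'$ (positive slope iff $c>a$); a single application of additivity then yields $\gamma_j'=\gamma_j+\gamma_k$. For example, when $a<c<b$, $\gamma_j=-\beta_{cb}$ and $\gamma_j'=\beta_{ac}$, and $\beta_{ac}+\beta_{cb}=\beta_{ab}$ rearranges to the claimed formula. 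Case (3) is the horizontal reflection of Case (2), swapping parents with children and the roles of $p_a$ and $p_b$, and proceeds identically by three dual subcases.

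For Case (4), $\ell_j=(p_b,p_{a+sn})$ and $\ell_j'=(p_a,p_{b+sn})$ for some $s\neq 0$. Taking $s\ge 1$ for definiteness (the case $s\le -1$ is symmetric), we have $a<b<a+sn<b+sn$, both $\ell_j$ and $\ell_j'$ have positive slope in their respective trees (since the higher-indexed endpoint is the parent), and so $\gamma_j=\beta_{b,a+sn}$ and $\gamma_j'=\beta_{a,b+sn}$. Additivity gives $\beta_{a,b+sn}=\beta_{ab}+\beta_{b,a+sn}+\beta_{a+sn,b+sn}$, and periodicity collapses the last term to $\beta_{ab}=\gamma_k$, yielding $\gamma_j'=\gamma_j+2\gamma_k$ as required.

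The only real obstacle is the bookkeeping of subcases in (2) and (3) and correctly assigning the slope sign $\delta_\ell$ in both $\cT$ and $\cT'$ by tracking which endpoint is the parent and which the child at each stage. The $\varepsilon$-dependent conditions appearing in Definition \ref{def: mutation of T} enter only into determining which mutation rule applies to a given edge; once we are inside Case (2), (3), or (4), the edge-vector computation is purely a matter of integer indices and slope signs, making the identifications transparent.
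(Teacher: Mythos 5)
Your proposal is correct and follows essentially the same route as the paper: both proofs start from the combinatorial description of $\mu_k\cT$ in Definition~\ref{def: mutation of T} and then read off the new edge vectors, with the additive identity $\beta_{ij}+\beta_{jk}=\beta_{ik}$ (and $n$-periodicity of $\beta$) supplying the algebra. The only difference is that the paper compresses the computation into a single sentence (``Each of these moves will add the edge vector $\gamma_k=\beta_{ab}$\dots''), whereas you make the subcase bookkeeping explicit; this is a fair expansion, not a different argument.
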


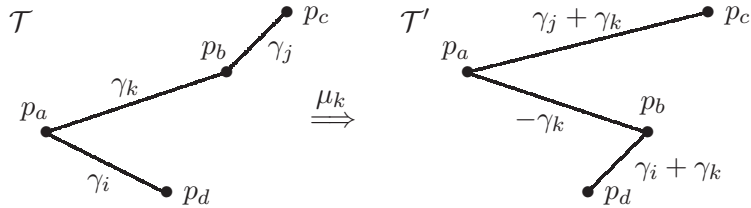
\begin{figure}[htbp]
\begin{center}
%
{
\setlength{\unitlength}{.8cm}
{\mbox{
\begin{picture}(11,3)
      \thicklines
  %
     \put(0,0){ 
\put(-.1,-.1){  \put(2,0){$\bullet\ p_d$} \put(-.5,2.8){$\cT$}  \put(0,1){$\bullet$}
    \put(3,2){$\bullet$}
    \put(0.8,.2){$\gamma_i$}
    }
\put(-.4,0){     \put(0,1.3){$p_a$}
    \put(3,2.3){$p_b$}
    \put(4.1,2.3){$\gamma_j$}
    \put(1.5,1.7){$\gamma_k$}
    }
      \qbezier(0,1)(1,.5)(2,0) 
      \qbezier(0,1)(1.5,1.5)(3,2) 
    \qbezier(3,2)(3.5,2.5)(4,3) 
    \put(3.9,2.9){$\bullet\ p_c$}
    }
    \put(7,0){ 
\put(-.1,-.1){ \put(2,0){$\bullet\ p_d$} \put(-1,2.8){$\cT'$}  \put(0,2){$\bullet$}
    \put(3,1){$\bullet$}}
\put(-.4,0){     \put(0,2.3){$p_a$}
    \put(3.3,1.4){$p_b$}
        \put(1.5,2.8){$\gamma_j+\gamma_k$}
    \put(1.2,1.1){$-\gamma_k$}
        \put(3.2,0.3){$\gamma_i+\gamma_k$}
}
      \qbezier(3,1)(2.5,.5)(2,0) 
      \qbezier(0,2)(1.5,1.5)(3,1) 
    \qbezier(0,2)(2,2.5)(4,3) 
        \put(3.9,2.9){$\bullet\ p_c$}
    }
    \put(4.5,1.5){$\mu_k$}
    \put(4.4,1.1){$\Longrightarrow$}
\end{picture}}
}}
\caption{Additive formula for $\cT'=\mu_k\cT$: Right child of $p_a$ slides over to $p_b$ and $\gamma_i$ becomes $\gamma_i+\gamma_k$. If $\varepsilon_b=+$, the only parent of $p_b$ slides over to $p_a$ and $\gamma_j$ becomes $\gamma_j+\gamma_k$.}
\label{fig3}
\end{center}
\end{figure}

\begin{proof}
When the slope of the $k$th edge changes from positive in $\cT$ to negative in $\cT'$, $p_a$ will become a left parent of $p_b$ and $p_b$ will become a right child of $p_a$. By Definition \ref{def: mutation of T}, any right or only child of $p_a$ in $\cT$ becomes a new child of $p_b$ in $\cT'$ and any left or only parent of $p_b$ in $\cT$ becomes a new parent of $p_a$ in $\cT'$. Each of these moves will add the edge vector $\gamma_k=\beta_{ab}$ to the edge vector which is being modified. In case (4) translates of the edge $\ell_k$ are added to both ends of $\ell_j$ and $\gamma_j'=\gamma_j+2\gamma_k$.
\end{proof}


\subsection{Third proposition}\label{ss3.6: prop 3}


We come to the final proposition which proves the theorem that edge vectors are negative $c$-vectors.

\begin{prop}\label{third prop}
Under mutation of periodic trees, the candidate extended exchange matrix
\[
	\mat{\Gamma_\cT^t (E_\varepsilon^t-E_\varepsilon)\Gamma_\cT\\
	-\Gamma_\cT
}
\]
transforms according to the Fomin-Zelevinsky rules (as given in the statement of Theorem \ref{thm: exchange matrix transforms according to FZ}).
\end{prop}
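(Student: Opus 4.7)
The proof separates into two parts. Proposition \ref{second prop} already furnishes the bottom half: writing $\gamma_k'=-\gamma_k$ and $\gamma_j'=\gamma_j+c_j\gamma_k$ for $j\neq k$ with $c_j\in\{0,1,2\}$, one has $\Gamma_{\mu_k\cT}=\Gamma_\cT N$ where $N$ is the $n\times n$ integer matrix whose $k$-th column is $-e_k$ and whose $j$-th column ($j\neq k$) is $e_j+c_je_k$. The top half then transforms as $\Gamma_{\mu_k\cT}^t(E_\varepsilon^t-E_\varepsilon)\Gamma_{\mu_k\cT}=N^tBN$, where $B=\Gamma_\cT^t(E_\varepsilon^t-E_\varepsilon)\Gamma_\cT$ has entries $b_{ij}=\langle\gamma_j,\gamma_i\rangle-\langle\gamma_i,\gamma_j\rangle$. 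My plan is (A) to identify $c_j$ in terms of $b_{kj}$ using Proposition \ref{first prop}, and then (B) to verify by a bilinear computation that both halves transform according to Fomin--Zelevinsky.

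For (A), I claim $c_j=[-\delta_k b_{kj}]_+$, where $\delta_k=\sgn(\gamma_k)$ is the slope-sign of $\ell_k$. Assume by symmetry $\delta_k=+1$; the opposite case follows by swapping ``clockwise'' and ``counterclockwise'', which simultaneously negates $\delta_k$ and every $b_{kj}$. Proposition \ref{first prop} evaluates $b_{kj}$ as the signed count of endpoints shared by $\ell_k$ and $\ell_j$, contributing $+1$ when $\ell_j$ lies immediately counterclockwise of $\ell_k$ in the cyclic edge order at a shared vertex (determined by $\varepsilon$ there, as recorded in Figure \ref{fig2}) and $-1$ otherwise. Case (2) of Proposition \ref{second prop} places $\ell_j$ clockwise from $\ell_k$ at the shared vertex $p_b$, giving $b_{kj}=-1$ and $c_j=1$; case (3) is symmetric at $p_a$; case (4) doubles the contribution to $b_{kj}=-2$ and $c_j=2$, with the two endpoint contributions agreeing in sign by Lemma \ref{lem 1d}, which forces $\varepsilon_a\neq\varepsilon_b$. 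Case (5) splits: disjoint edges give $b_{kj}=0$ by Lemma \ref{lem 1a}, while the remaining shared-endpoint configurations not matched by (2)--(4) (a right parent of $p_b$ when $\varepsilon_b=-$, a right child of $p_b$ when $\varepsilon_b=+$, a left parent of $p_a$ when $\varepsilon_a=-$, or a left child of $p_a$ when $\varepsilon_a=+$) each place $\ell_j$ immediately counterclockwise of $\ell_k$ by inspection of Figure \ref{fig2}, giving $b_{kj}=+1$ and $c_j=0$.

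Granted (A), the bottom-half rule reads $\gamma_j'=\gamma_j+[-\delta_k b_{kj}]_+\gamma_k$; reparametrizing by $\hat c_i:=-\gamma_i$ and $\epsilon:=\sgn(\hat c_k)=-\delta_k$ recasts this as $\hat c_j{}'=\hat c_j+[\epsilon b_{kj}]_+\hat c_k$, which is the Fomin--Zelevinsky $c$-vector mutation (the required sign coherence of $\hat c_k$ holds automatically because $\gamma_k=\delta_k\beta_k$ is a signed positive root). For the top half, bilinearity of $\langle\cdot,\cdot\rangle$ and skew-symmetry $b_{ji}=-b_{ij}$ yield, for $i,j\neq k$,
\[
b'_{ij}=\langle\gamma_j+c_j\gamma_k,\,\gamma_i+c_i\gamma_k\rangle-\langle\gamma_i+c_i\gamma_k,\,\gamma_j+c_j\gamma_k\rangle=b_{ij}+c_ib_{kj}+c_jb_{ik},
\]
while the $k$-th row and column collapse to $b'_{kj}=-b_{kj}$ and $b'_{ik}=-b_{ik}$. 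Substituting $c_i=[b_{ik}]_+$ and $c_j=[-b_{kj}]_+$ from (A) and splitting on the four sign combinations of $(b_{ik},b_{kj})$ verifies $b'_{ij}=b_{ij}+[b_{ik}]_+[b_{kj}]_+-[-b_{ik}]_+[-b_{kj}]_+$, which is the standard Fomin--Zelevinsky formula for $B$.

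The principal obstacle is the geometric bookkeeping of step (A). One must enumerate every parent/child slot a shared endpoint of $\ell_j$ can occupy at $p_a$ or $p_b$ under all four combinations of $\varepsilon_a,\varepsilon_b$, and in each case consult the cyclic order in Figure \ref{fig2} to decide whether $\ell_j$ is counterclockwise or clockwise from $\ell_k$; the doubly-shared case (4) additionally requires the non-cancellation furnished by Lemma \ref{lem 1d}. Once this enumeration is carried out, the two routine algebraic verifications in step (B) complete the proof.
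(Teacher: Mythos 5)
Your proposal is correct and follows essentially the same two-step route as the paper: the paper's Lemma \ref{lem: mutation of Gamma} carries out your step (A), translating Propositions \ref{first prop} and \ref{second prop} into the statement $c_j=[-\delta_k b_{kj}]_+$, and the paper's final lemma (attributed to \cite{NZ}) carries out your step (B), the bilinear computation $b'_{ij}=b_{ij}+c_ib_{kj}+c_jb_{ik}$ and the sign-case check. Your reparametrization $\hat c_i=-\gamma_i$, $\epsilon=-\delta_k$, and the explicit matrix $N$ are cosmetic repackagings of the same argument.
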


Since the matrix has two parts, the proof is in two parts describing the mutation of $\Gamma_\cT$ and the resulting mutation of $B=\Gamma_\cT^t (E_\varepsilon^t-E_\varepsilon)\Gamma_\cT$. Let $b_{ij}$ denote the $ij$ entry of this candidate exchange matrix $B$.

\begin{lem}\label{lem: mutation of Gamma}
Under mutation of a periodic tree $\cT$ in the $k$ direction, the edge vectors $\gamma_j$ change to vectors $\gamma_j'$ given as follows.
\begin{enumerate}
\item $\gamma_k'=-\gamma_k$.
\item If $j\neq k$ then
\[
	\gamma_j'=\begin{cases} \gamma_j+|b_{kj}|\gamma_k & \text{if $b_{kj}$, $\gamma_k$ have opposite signs}\\
  \gamma_j  & \text{otherwise}
    \end{cases}
\]
\end{enumerate}
\end{lem}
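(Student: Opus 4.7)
The plan is to match the geometric mutation of edge vectors described in Proposition~\ref{second prop} with the orientation formula of Proposition~\ref{first prop}. Assertion~(1) of the lemma, $\gamma_k' = -\gamma_k$, is immediate from case~(1) of Proposition~\ref{second prop}. For assertion~(2), I first reduce to the case in which $\ell_k$ has positive slope, so $\gamma_k = \beta_{ab} > 0$ with $a < b$; the negative-slope case follows from the left/right mirror symmetry built into Definition~\ref{def: mutation of T}, since reflecting simultaneously flips $\sgn\gamma_k$ and swaps ``clockwise'' with ``counterclockwise'' in Proposition~\ref{first prop}, and therefore preserves the condition that $b_{kj}$ and $\gamma_k$ have opposite signs. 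In the reduced setting the lemma's condition becomes $b_{kj} < 0$, which by Proposition~\ref{first prop} is equivalent to $\ell_j$ lying clockwise-adjacent to $\ell_k$ in the cyclic order at every endpoint they share.

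The core of the argument is then a case check against the five cases of Proposition~\ref{second prop}. By Theorem~\ref{thm: formula for QM}, the counterclockwise cyclic order at a vertex $p_v$ is $\text{Parent} \to \text{leftChild} \to \text{rightChild}$ when $\varepsilon_v = +$ and $\text{Child} \to \text{rightParent} \to \text{leftParent}$ when $\varepsilon_v = -$. A direct inspection of these orders shows that in cases~(2) and~(3) of Proposition~\ref{second prop}, the slot occupied by $\ell_j$ (the unique/left parent of $p_b$ in case~(2), the right/only child of $p_a$ in case~(3)) is precisely the slot one step clockwise from $\ell_k$, so $|b_{kj}| = 1$ with $b_{kj} < 0$, matching $\gamma_j' = \gamma_j + \gamma_k$. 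In case~(4), $\ell_j$ shares both endpoints with $\ell_k$ in the universal cover; verifying clockwise-adjacency at each yields $b_{kj} = -2$ and $\gamma_j' = \gamma_j + 2\gamma_k$. All remaining configurations are case~(5): either $\ell_j$ is disjoint from $\ell_k$ (so $b_{kj} = 0$ by Lemma~\ref{lem 1a}) or $\ell_j$ occupies a counterclockwise-adjacent slot (so $b_{kj} > 0$, the same sign as $\gamma_k$), and in either sub-case Proposition~\ref{second prop} correctly returns $\gamma_j' = \gamma_j$.

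The main obstacle is the bookkeeping in case~(4), where one must check clockwise-adjacency at two distinct lifts: the identity translate of $\ell_j$ shares the endpoint $p_b$ with $\ell_k$, while the $(-s)$-translate shares $p_a$ and runs from $p_a$ to $p_{b-sn}$. The required dual behavior---that this translate plays the right/only child role at $p_a$ exactly dual to the left/only parent role $\ell_j$ plays at $p_b$---is forced by $\ZZ$-equivariance of the poset and by the constraint imposed by $\varepsilon_a$ on the cyclic order at $p_a$. Here Lemma~\ref{lem 1d} is crucial: it rules out the potentially problematic configuration $\varepsilon_a = \varepsilon_b$, in which the translate at $p_a$ would sit in a counterclockwise slot and cancel the $-1$ contribution from $p_b$. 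Once this case is excluded, each shared endpoint contributes $-1$ to $b_{kj}$, producing $b_{kj} = -2$ and completing the verification.
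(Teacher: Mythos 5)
Your proof is correct and follows essentially the same case-analytic approach as the paper: you verify, for each of the five cases of Proposition~\ref{second prop}, that the combinatorial relationship between $\ell_j$ and $\ell_k$ produces the right value of $b_{kj}$ via Proposition~\ref{first prop}, and that the mutation formula then agrees. The paper does the same check but writes out two explicit tables, while you organize the verification around the cyclic order of edges at a shared vertex; this is a cleaner way to see the same bookkeeping, and your explicit use of Lemma~\ref{lem 1d} to rule out a cancelling mixed configuration in case~(4) matches what the paper does.

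One thing to fix before this could be dropped in: citing Theorem~\ref{thm: formula for QM} for the counterclockwise cyclic order is logically out of place. That theorem is a downstream corollary of the identity $B_M = \Gamma_\cT^t(E_\varepsilon^t - E_\varepsilon)\Gamma_\cT$, which in turn rests on the very lemma you are proving; invoking it here at least looks circular. The content you actually want -- that the counterclockwise cyclic order of edges around $p_v$ in the planar embedding is $\text{parent}\to\text{left child}\to\text{right child}$ when $\varepsilon_v=+$, and $\text{child}\to\text{right parent}\to\text{left parent}$ when $\varepsilon_v=-$ -- is purely geometric, independent of $Q_M$, and is exactly what Figure~\ref{fig2} depicts and what underlies the clockwise/counterclockwise language in Proposition~\ref{first prop} and Lemmas~\ref{lem 1b}--\ref{lem 1d}. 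Cite that instead. Also, the reduction to positive slope via mirror symmetry is plausible but somewhat loose (the reflection also changes the quiver $\widetilde A_{n-1}^\varepsilon$ and hence $E_\varepsilon$); a cleaner reduction is to use the involutivity $\mu_k^2 = \mathrm{id}$: if $\gamma_k'<0$ in $\cT'$, write $\cT'=\mu_k\cT$ with $\gamma_k>0$ in $\cT$, apply the already-established positive-slope case to $\cT$, and invert the formula. Neither issue is a gap in the underlying argument, but both should be cleaned up.
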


\begin{proof}[Proof of Lemma \ref{lem: mutation of Gamma}]
We need to verify the formula for $\gamma_j'$. There are several cases.

If $j=k$ then we have $\gamma_k'=-\gamma_k$ by definition of $\mu_k$.

If the $j$th edge $\ell_j$ is disjoint from the $k$th edge $\ell_k$ then $b_{kj}=0$ by Lemma \ref{lem 1a} and $\gamma_j'=\gamma_j$ by Proposition \ref{second prop}. So the formula holds in this case.

Now suppose that $\gamma_k=\beta_{ab}$ and $\ell_j$ shares one endpoint with $\ell_k$. By symmetry we assume it is the right endpoint $p_b$. Then there are five cases summarized by the following chart.
\[
\begin{array}{c|c|c|c|c}
\varepsilon_b & \gamma_j & b_{kj} & \gamma_j' & \text{relation of $\ell_j$ to $p_b$}\\
\hline
- & -\beta_{cb} & -1 & \gamma_j+\gamma_k & \text{left parent}\\
- & \beta_{bc} & +1 & \gamma_j & \text{right parent}\\
+ & -\beta_{bc} & +1 & \gamma_j & \text{right child}\\
+ & -\beta_{cb} & -1 & \gamma_j+\gamma_k & \text{only parent}\\
+ & \beta_{bc} & -1 & \gamma_j+\gamma_k & \text{only parent}\\
\end{array}
\]
In detail: if $\varepsilon_b=-1$ then $\gamma_j$ is either a left parent or right parent of $p_b$. In the first case, $b_{kj}=-1$ by Proposition \ref{first prop} and $\gamma_j'=\gamma_j+\gamma_k$ by Proposition \ref{second prop}(3). In the second case, $b_{kj}=1$ and $\gamma_j'=\gamma_j$. If $\varepsilon_b=+1$ then $\gamma_j$ is either a right child of $p_b$ or the only parent. In the first case, $b_{kj}=1$ and $\gamma_j'=\gamma_j$. In the second case, $b_{kj}=-1$ by Proposition \ref{first prop} and $\gamma_j'=\gamma_j+\gamma_k$ by Proposition \ref{second prop}. So, the formula holds in both cases.

Finally, suppose that $\ell_j$ shares both of its endpoints with $\ell_k$. Then there are four possibilities as outlined in the following table.
\[
\begin{array}{c|c|c|c|c|c}
\varepsilon_a & \varepsilon_b & \gamma_j & b_{kj} & \gamma_j' & \text{relation of $\ell_j$ to $p_a$ and $p_b$}\\
\hline
- & + & \beta_{b,a+sn} & -2 & \gamma_j+2\gamma_k & \text{only child of $p_a$, only parent of $p_b$}\\
\pm &\mp & -\beta_{a,b+sn} (s\neq0) & -2 & \gamma_j+2\gamma_k & \text{right child of $p_a$, left parent of $p_b$}\\
- &+ & -\beta_{b,a+sn} & +2 & \gamma_j & \text{left parent of $p_a$, right child of $p_b$}\\
+ &-& \beta_{b,a+sn} & +2 & \gamma_j & \text{left child of $p_a$, right parent of $p_b$}\\
\end{array}
\]
The second item in this table is illustrated in Figure \ref{fig1}. The formula for $b_{kj}=\left<\gamma_j,\gamma_k\right>-\left<\gamma_k,\gamma_j\right>$ is given by Proposition \ref{first prop}. The formula for $\gamma_j'$ is given by Proposition \ref{second prop}.
\end{proof}

By Nakanishi and Zelevinsky, this lemma (together with the sign coherence of the vector $\gamma_k$) implies that $B=\Gamma_\cT^t (E_\varepsilon^t-E_\varepsilon)\Gamma_\cT$ mutates correctly and is therefore equal to the exchange matrix. The details are given as follows.

\begin{lem}\cite{NZ}
If $\Gamma'$ is the mutation of $\Gamma$ as given in the previous lemma, then the entries $b_{ij}'$ of the matrix $B'=(\Gamma_\cT')^t (E_\varepsilon^t-E_\varepsilon)\Gamma_\cT'$ are given by
\begin{enumerate}
\item $b_{ij}'=-b_{ij}$ if $i=k$ or $j=k$.
\item $b_{ij}'=b_{ij}+|b_{ik}|b_{kj}$ if $i,j\neq k$ and $b_{ik}b_{kj}\ge0$
\item $b_{ij}'=b_{ij}$ otherwise.
\end{enumerate}
\end{lem}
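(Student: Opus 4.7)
The plan is to prove the formula by a direct computation of the skew-symmetric form, following the approach of Nakanishi--Zelevinsky. Set $S := E_\varepsilon^t - E_\varepsilon$, so that $b_{ij} = \gamma_i^t S \gamma_j$ and $\gamma_k^t S \gamma_k = 0$. Let $\epsilon \in \{\pm 1\}$ denote the common sign of the nonzero entries of $\gamma_k$, which is well-defined by the sign coherence of edge vectors (Proposition~\ref{sign coherence of c-inverse}). The mutation formula of Lemma~\ref{lem: mutation of Gamma} can then be written uniformly as $\gamma_j' = \gamma_j + c_j \gamma_k$ for $j \neq k$, where $c_j := \max(-\epsilon\, b_{kj},\, 0)$; and using $b_{ki} = -b_{ik}$, one similarly has $c_i = \max(\epsilon\, b_{ik},\, 0)$.

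The case $j = k$ of conclusion (1) is then immediate:
\[
b_{ik}' = (\gamma_i + c_i \gamma_k)^t S(-\gamma_k) = -\gamma_i^t S \gamma_k - c_i\, \gamma_k^t S \gamma_k = -b_{ik},
\]
since $S$ is skew-symmetric; the case $i = k$ is symmetric. For $i,j \neq k$, expanding gives
\[
b_{ij}' = (\gamma_i + c_i \gamma_k)^t S (\gamma_j + c_j \gamma_k) = b_{ij} + c_j b_{ik} + c_i b_{kj},
\]
and it remains to identify $c_j b_{ik} + c_i b_{kj}$ with $|b_{ik}|\,b_{kj}$ when $b_{ik} b_{kj} \ge 0$ and with $0$ otherwise. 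This reduces to a finite case split on the signs of $b_{ik}$, $b_{kj}$ and $\epsilon$: when $b_{ik}$ and $b_{kj}$ share a sign, exactly one of $c_i, c_j$ is nonzero (the choice determined by $\epsilon$) and the resulting contribution evaluates to $|b_{ik}|\,b_{kj}$; when $b_{ik}$ and $b_{kj}$ have opposite signs, either both $c_i, c_j$ vanish or both are positive with the two terms canceling by direct substitution.

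The only real obstacle is bookkeeping, since the eight subcases are elementary but sign-error prone. To streamline them I would use the tropical identity $[x]_+ - [-x]_+ = x$, where $[x]_+ := \max(x,0)$, together with $b_{ki} = -b_{ik}$, to write $c_j b_{ik} + c_i b_{kj}$ in a single closed form. This collapses the case analysis into one uniform line and produces exactly the Fomin--Zelevinsky mutation rule, completing the proof.
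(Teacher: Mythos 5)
Your proof is correct and follows essentially the same approach as the paper: both write $\gamma_j' = \gamma_j + c_j\gamma_k$ with $c_j$ determined by the sign of $b_{kj}$ relative to that of $\gamma_k$, expand $b_{ij}'$ by bilinearity of the skew form, and finish with a sign case split (which you merely compress via the $[x]_+$ notation). One small correction: the sign coherence of $\gamma_k$ comes directly from the definition of edge vectors as $\pm\beta_{ij}$ (so every nonzero entry of $\gamma_k$ has the common sign $\delta_\ell$), not from Proposition~\ref{sign coherence of c-inverse}, which concerns the columns of $\Gamma_\cT^{-1}$.
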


\begin{proof}
Suppose $i$ or $j$ is equal to $k$. Say, $i=k$. Let
\[
	c_j:=\begin{cases} |b_{kj}| & \text{if $b_{kj},\gamma_k$ have opposite sign}\\
   0 & \text{otherwise}
    \end{cases}
\]
so that $\gamma_j'=\gamma_j+c_j\gamma_k$ for all $j\neq k$. Then, we get:
\[
	b_{kj}'=\left<\gamma_j+c_j\gamma_k,-\gamma_k,\right>-\left<-\gamma_k,\gamma_j+c_j\gamma_k\right>=-b_{kj}+(-c_j+c_j)\left<\gamma_k,\gamma_k\right>=-b_{kj}.
\]

 For $i,j\neq k$ we get
 \begin{eqnarray*}
 	b_{ij}'&=&\left<\gamma_j',\gamma_i'\right>-\left<\gamma_i',\gamma_j'\right>\\
	&=& \left<\gamma_j+c_j\gamma_k,\gamma_i+c_i\gamma_k\right>-\left<\gamma_i+c_i\gamma_k,\gamma_j+c_j\gamma_k\right>
	\\
	&=& b_{ij}+c_j\left(
	\left<\gamma_k,\gamma_i\right>-
	\left<\gamma_i,\gamma_k\right>
	\right)
	+c_i \left(
	\left<\gamma_j,\gamma_k\right>-\left<\gamma_k,\gamma_j\right>
	\right)\\
	&=& b_{ij}+c_j b_{ik}+c_ib_{kj}
\end{eqnarray*}

There are two cases. If $b_{ki},b_{kj}$ have the same sign (or one is zero) then either $c_i,c_j$ are both zero or $c_i=|b_{ki}|$ and $c_j=|b_{kj}|$ in which case $c_j b_{ik}+c_ib_{kj}=-|b_{ki}| b_{ki}+|b_{ki}|b_{kj}=0$. So, both subcases give $b_{ij}'=b_{ij}$. 

The second case is when $b_{ki},b_{kj}$ are nonzero with opposite signs. Say, $b_{kj}\gamma_k$ is positive and $b_{ki}\gamma_k$ is negative. Then $c_i=|b_{ki}|$ and $c_j=0$ making $b_{ij}'=b_{ij}+ |b_{ki}|b_{kj}$ as claimed. The other subcase is similar.
\end{proof}

This concludes the proof that the edge vectors of an $n$-periodic tree are the negatives of the $c$-vectors of the corresponding cluster tilting object.

%
%


\end{document}

%